\let\oldmarginpar\marginpar 
\renewcommand\marginpar[1]{\-\oldmarginpar{\raggedright\small\sf #1}}
\title[Ramification of Galois representations]{Quantitative level lowering for Galois representations}
\author{Najmuddin Fakhruddin} 
\author{Chandrashekhar Khare}
\author{Ravi Ramakrishna}
 \address{School of Mathematics, Tata Institute of Fundamental Research, 
Homi Bhabha Road, Mumbai 400005, India} 
\address{Department of Mathematics, UCLA, Los Angeles, CA 90095-1555,
  USA}
\address{Department of Mathematics, Cornell University, Ithaca, USA}
\email{naf@math.tifr.res.in, shekhar@math.ucla.edu, ravi@math.cornell.edu}
\newcommand{\nc}{\newcommand}
\nc{\rnc}{\renewcommand}
\nc{\bs}{\backslash}
\nc{\te}{\otimes}
\nc{\lf}{\lfloor} 
\nc{\rf}{\rfloor}
\nc{\lc}{\lceil}  
\nc{\rc}{\rceil}
\nc{\lr}{\longrightarrow}
\nc{\sr}{\stackrel}
\nc{\dar}{\dashrightarrow}
\nc{\thra}{\twoheadrightarrow}
\nc{\la}{\langle}
\nc{\ra}{\rangle} 
\nc{\ms}{\mathscr}
\nc{\mc}{\mathcal}
\nc{\mb}{\mathbb}
\nc{\mf}{\mathbf}
\nc{\mr}{\mathrm}
\nc{\mg}{\mathfrak}
\nc{\bP}{\mathbb{P}}
\rnc{\P}{\mathbb{P}}
\nc{\Q}{\mathbb{Q}}
\nc{\Z}{\mathbb{Z}}
\nc{\C}{\mathbb{C}}
\nc{\R}{\mathbb{R}}
\nc{\A}{\mathbb{A}}
\nc{\V}{\mathbb{V}}
\nc{\W}{\mathbb{W}}
\nc{\N}{\mathbb{N}}
\nc{\D}{\mathbb{D}}
\nc{\G}{\mathbb{G}}
\nc{\F}{\mathbb{F}}
\nc{\qb}{\overline{\mathbb{Q}}}
\def\aQ{\overline{{\mathbb Q}}}
\def\aQp{\overline{\mathbb Q}_p}
\def\ra{\rightarrow}
\def\rhobar{ {\bar {\rho} } }
\def\Q{{\mathbb Q}}
\def\Z{{\mathbb Z}}
\def\T{{\mathbb T}}
\def\D{ {\mathcal D}}
\def\L{ {\mathcal L}}
\def\cO{{\mathcal O}}
\def\spec{{\rm Spec}}
\def\LP{ \left(\begin{array}}
\def\RP{ \end{array}\right)}
\def\LB{ \left[\begin{array}}
\def\RB{ \end{array}\right]}
\def\la{ \Lambda }
\def\ad{Ad^0\bar{\rho}}
\def\Ann{{\rm Ann}}
\def\Oc{\mathcal O}
\DeclareFontFamily{U}{wncy}{}
\DeclareFontShape{U}{wncy}{m}{n}{<->wncyr10}{}
\DeclareSymbolFont{mcy}{U}{wncy}{m}{n}
\DeclareMathSymbol{\Sh}{\mathord}{mcy}{"58} 
\nc{\del}{\partial}
\nc{\wt}{\widetilde}
\nc{\wh}{\widehat}
\nc{\ov}{\overline}
\nc{\un}{\underline}
\nc{\aff}{{\A}^1}
\nc{\naive}{\!\sim_n}
\nc{\Spec}{\mr{Spec}}
\nc{\omx}{\omega_X}
\nc{\ep}{\epsilon}
\nc{\ve}{\varepsilon}
\nc{\vt}{\vartheta}
\rnc{\l}{\lambda}
\rnc{\k}{\kappa}
\nc{\ovl}{\ov{\lambda}}
\nc{\vl}{\mb{V}_{\ovl}}
\nc{\dl}{\mb{D}_{\ovl}}
\nc{\mnb}{\ov{\mr{M}}_{0,n}}
\nc{\mn}{\mr{M}_{0,n}}
\nc{\mel}{\ov{\mr{M}}_{1,1}}
\nc{\mfb}{\ov{\mr{M}}_{0,4}}
\nc{\mof}{\mr{M}_{0,4}}
\nc{\mgnb}{\ov{\mr{M}}_{g,n}}
\nc{\mgn}{\ov{\mr{M}}_{g,n}}
\nc{\omc}{\ov{\mr{M}}}
\nc{\im}{\operatorname{Im}}
\nc{\ann}{\operatorname{Ann}}
\rnc{\ker}{\operatorname{Ker}}
\nc{\Ext}{\operatorname{Ext}}
\nc{\Hom}{\operatorname{Hom}}
\nc{\Fitt}{\operatorname{Fitt}}
\nc{\depth}{\operatorname{depth}}
\rnc{\sl}{\shoveleft}
\nc{\pic}{\operatorname{Pic}}
\nc{\gal}{\operatorname{Gal}}
\nc{\fr}{\operatorname{Fr}}
\nc{\ed}{\operatorname{ed}}
\nc{\rank}{\operatorname{rank}}
\nc{\h}{\operatorname{H}}
\nc{\ch}{\operatorname{char}}
\nc{\sw}{\operatorname{sw}}
\nc{\rsw}{\operatorname{rsw}}
\nc{\supp}{\operatorname{supp}}
\rnc{\ker}{\operatorname{Ker}}
\nc{\Mor}{\operatorname{Mor}}
\nc{\Per}{\operatorname{Per}}
\nc{\prep}{\operatorname{Prep}}
\nc{\End}{\operatorname{End}}
\nc{\Orb}{\operatorname{Orb}}
\nc{\rbar}{\bar{\rho}}
\nc{\rbarg}{\bar{\rho}(\mg{g}^{\mr{der}})}
\DeclareFontFamily{U}{wncy}{}
\DeclareFontShape{U}{wncy}{m}{n}{<->wncyr10}{}
\DeclareSymbolFont{mcy}{U}{wncy}{m}{n}
\DeclareMathSymbol{\Sh}{\mathord}{mcy}{"58}
\newtheorem{thm}{Theorem}[section]
\newtheorem{prop}[thm]{Proposition}
\newtheorem{cor}[thm]{Corollary}
\newtheorem{lem}[thm]{Lemma}
\theoremstyle{definition}
\newtheorem{defn}[thm]{Definition}
\newtheorem{ques}[thm]{Question}
\newtheorem{rem}[thm]{Remark}
\newtheorem{claim}[thm]{Claim}
\newtheorem{ass}[thm]{Assumption}
\newtheorem{ex}[thm]{Example}
\newtheorem{thmalph}{Theorem}
\numberwithin{equation}{section}
\begin{document}

\begin{abstract}
  We use Galois cohomology methods to produce optimal mod $p^d$ level
  lowering congruences to a $p$-adic Galois representation that we
  construct as a well chosen lift of a given residual mod $p$
  representation. Using our explicit Galois cohomology methods,  for $F$ a number field,  
  $\Gamma_F$  its absolute Galois group and $G$ a reductive group,  $k$ a finite field,   and a suitable 
  representation $\rhobar: \Gamma_F \ra G(k)$, ramified  at a
  finite set of primes $S$, we construct  under favorable conditions 
   lifts $\rho$, $\{\rho^q\}$ of $\rhobar$ to $G(W(k))$
  for $q \in Q$ with $Q$ a finite set of places of $F$. The lifts $\{\rho^q\}$ have  the following properties: $\rho: \Gamma_F \to G(W(k))$ is
  ramified precisely at  $S \cup Q$; for $q \in Q$,  $\rho^q:G_F \ra G(W(k))$ is unramified outside
  $S \cup Q \backslash \{q\}$ and $\rho$ and $\rho^q$ are congruent
  mod $p^d$  if  $\rho$ mod $p^d$ is   unramified
  at $q$. Furthermore, the Galois representations $\{\rho^q\}$ are
  ``independent''.

\end{abstract}
\maketitle

\section{Introduction}

The study of congruences between modular forms is an important ingredient in studying
the relationship between deformation rings of Galois representations and Hecke algebras.  The work of Ribet on level raising and level lowering congruences between modular forms, cf. \cite{Ribet}, \cite{Ribet1}, used cohomological and  arithmetic-geometric properties of Jacobians of modular curves.  More recently the lifting method of \cite{KW}, and its generalizations in   \cite{BLGGT},  produces level raising and lowering congruences between Galois representations  (and {\it a fortiori}  automorphic forms) using automorphy lifting methods which have their origin in \cite{W}.  

 \subsection{Our results}
 In this paper we use Galois cohomology to produce congruences between
 Galois representations which do not seem accessible by the usual
 methods. 
For instance, in the classical case of irreducible odd representations
$\rhobar:\Gamma_{\Q} \ra GL_2(k)$ with $k$ a finite field of
characteristic $p>3$, we prove the following theorem which does not
seem amenable to geometric (cf.  \cite{Ribet}, \cite{Ribet1})  or automorphy lifting  (cf. \cite{KW})  methods.  Here for a field $F$ we denote by $\Gamma_F$ its absolute Galois group.

\begin{thmalph}[see Theorem \ref{thm:augmentations}]\label{thm:example}
{\it Let 
$\rhobar:\Gamma_{\Q} \ra GL_2(\F_p)$  be an odd, irreducible modular mod $p>3$ surjective representation of  square-free conductor $N(\rhobar)=N$ and finite flat at $p$ with determinant the mod $p$ cyclotomic character.  Assume that the minimal Selmer group is non-zero. Then  there are  newforms
$f \in S_2(\Gamma_0(N\Pi_{q \in Q}q))$, with $Q=\{q_1,\cdots,q_r\}$  a finite ordered   set of primes that are  coprime to $Np$, such that the corresponding Galois
representation associated to an embedding
$\iota: \aQ \hookrightarrow \aQp$,
$\rho_{f,\iota}:\Gamma_{\Q} \rightarrow GL_2(\Z_p)$ lifts $\rhobar$ and has the following  properties:
\begin{itemize}
\item 
$\rho_{f,\iota}(\tau_q)$, for $\tau_q$ a generator of the $\Z_p$-quotient of the inertia group $I_q$ at $q$,   is of the form
$\begin{pmatrix}
  1 &  p^d\\
  0 & 1 \\
\end{pmatrix},$ for all $q \in Q$ and for some integer $d \geq 1$.
\item For each $1 \leq i \leq r$, there
is a subset $Q_i$ of $Q$ that omits $q_i$ and contains
$\{q_1,\cdots,q_{i-1}\}$, and there is a newform $f_i$ in
$S_2(\Gamma_0(N\Pi_{q \in Q_i}q))$, new at $Q_i$, with $f_i$ congruent to $f$
modulo $p^d$.
\end{itemize}  }
\end{thmalph}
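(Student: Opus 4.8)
The plan is to deduce this from the general group-theoretic result (Theorem~\ref{thm:augmentations}) by specializing $G = GL_2$, $F = \Q$, $k = \F_p$, and $W(k) = \Z_p$, and then translating the output back into the language of classical modular forms via modularity lifting. First I would check that the hypotheses of the general theorem are met: oddness, irreducibility and surjectivity of $\rhobar$ give a large image so that the relevant local and global cohomological conditions (the ``favorable conditions'' of the abstract) hold; the finite-flat-at-$p$ condition together with $\det\rhobar = \bar\ep$ cyclotomic determines the local deformation problem at $p$ (a finite flat, fixed-determinant condition), and the square-free conductor means $\rhobar$ is Steinberg or unramified-twist at each $q \mid N$, so the local deformation problems at $S = \{q : q \mid N\}\cup\{p\}$ are the ones the machine is set up to handle. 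The nonvanishing of the minimal Selmer group is exactly the hypothesis that lets one choose a nonzero cohomology class to kill, which is what forces $d \geq 1$ (i.e.\ a genuinely new congruence at level $N\prod q$ rather than a trivial one).

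Next, I would invoke Theorem~\ref{thm:augmentations} to obtain the lifts $\rho = \rho_f$ and $\{\rho^{q_i}\} = \{\rho_{f_i}\}$ valued in $GL_2(\Z_p)$, ramified exactly at $S \cup Q$ and at $S \cup Q\setminus\{q_i\}$ respectively, with the prescribed local form $\bigl(\begin{smallmatrix}1 & p^d\\ 0 & 1\end{smallmatrix}\bigr)$ on a generator $\tau_q$ of the tame inertia quotient at each $q \in Q$ (this is the ``Steinberg-type'' lift at the auxiliary primes, and $p^d$ is the precise depth coming from the construction), and with the congruence $\rho \equiv \rho^{q_i} \bmod p^d$. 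The set $Q_i$ is produced by the theorem with the stated combinatorial properties ($q_i \notin Q_i$, $\{q_1,\dots,q_{i-1}\}\subseteq Q_i$), and the ``independence'' in the abstract is what guarantees the auxiliary primes can be chosen so that each $\rho^{q_i}$ is genuinely new at all of $Q_i$ rather than old at some of them.

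Then comes the modularity input: $\rho_f$ lifts a modular $\rhobar$, is finite flat at $p$ with cyclotomic determinant, and is minimally (or nearly minimally) ramified away from $Q$ with Steinberg-type behaviour at $Q$, so by an $R = \TT$ modularity lifting theorem (e.g.\ the Taylor--Wiles--Kisin circle of results, applicable since $p > 3$ and the image is large) $\rho_f$ is modular, arising from a newform $f \in S_2(\Gamma_0(N\prod_{q\in Q} q))$ via some $\iota\colon \aQ \hookrightarrow \aQp$; the weight $2$ and the exact level come from the finite-flat condition at $p$ and the conductor computation at the bad primes, and newness at $Q$ follows from the nontrivial $\bigl(\begin{smallmatrix}1 & p^d\\0&1\end{smallmatrix}\bigr)$ at each $q \in Q$. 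The same argument applied to each $\rho^{q_i}$ produces $f_i \in S_2(\Gamma_0(N\prod_{q\in Q_i} q))$ new at $Q_i$, and the mod $p^d$ congruence $\rho_f \equiv \rho^{q_i}$ of Galois representations translates, by Carayol's lemma and multiplicity one, into a congruence $f_i \equiv f \bmod p^d$ of Hecke eigenvalues. The main obstacle I expect is not the modularity step, which is by now standard, but ensuring that the Galois-cohomological construction in Theorem~\ref{thm:augmentations} genuinely applies in this setting, i.e.\ that the nonvanishing of the minimal Selmer group is both necessary and sufficient to run the argument and to force $d \geq 1$ (so that the congruence is optimal), and that the auxiliary primes $Q$ can be chosen with the ``independence'' needed for all $r$ of the $f_i$ simultaneously — this is precisely the content that makes the theorem inaccessible to the usual geometric or automorphy-lifting methods, since those produce one congruence at a time rather than a coherent independent family.
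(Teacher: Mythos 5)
Your overall strategy (specialize the group-theoretic theorem to $G=GL_2$, $F=\Q$, then translate the lifts into newforms) is the right skeleton, but the modularity step is handled differently in the paper, and as you have set it up there is a genuine gap. The gap is the claim that each $f_i$ is \emph{new at $Q_i$}. Your argument gets newness at a prime $q$ from ramification of the Galois representation at $q$; that is fine for $f$, since Theorem \ref{thm:main} does guarantee that $\rho_{S\cup Q}^{Q-new}$ is ramified at every $q\in Q$. But for the level-lowered representations the theorem guarantees no such thing: the remark following Theorem \ref{thm:main} explicitly says the construction does \emph{not} show that $\rho_{S\cup Q_i}^{Q_i-new}$ is ramified at all primes of $Q_i$ (it only controls behaviour at $q_i$, where the representation is special mod $p^{d-1}$ but not mod $p^d$). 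So applying an $R=\T$ modularity lifting theorem to $\rho_{S\cup Q_i}^{Q_i-new}$ only gives an eigenform of level dividing $N\prod_{q\in Q_i}q$, possibly old (even unramified) at some $q\in Q_i$, and your conclusion ``$f_i$ new at $Q_i$'' does not follow.

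The paper closes this gap, and simultaneously avoids automorphy lifting, by running the argument in the opposite order: since $\rhobar$ is modular and the primes in $Q$ (resp.\ $Q_i$) are residually Steinberg with $q\not\equiv\pm1\bmod p$, the Diamond--Taylor level raising theorem \cite{DT} produces a newform of level $N\prod_{q\in Q}q$ (resp.\ $N\prod_{q\in Q_i}q$) new at $Q$ (resp.\ $Q_i$) lifting $\rhobar$ and satisfying the $Q$-new (resp.\ $Q_i$-new) local conditions; its Galois representation is then a $W(k)$-point of $R_{S\cup Q}^{Q-new}\cong W(k)$ (resp.\ $R_{S\cup Q_i}^{Q_i-new}\cong W(k)$), hence must coincide with the versal deformation. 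This identification yields modularity, the level, newness at $Q_i$, and the mod $p^d$ congruence $f_i\equiv f$ all at once (and, as a byproduct, the ramification at $Q_i$ that the deformation-theoretic argument alone does not provide), with no Taylor--Wiles--Kisin input. That choice is not merely aesthetic: \S\ref{sec:R=T} uses Theorem \ref{thm:augmentations} together with Wiles's numerical criterion to give a \emph{new} proof of non-minimal $R=\T$ in the style of \cite{K}, so importing a full modularity lifting theorem at this stage, as you propose, would defeat (and partly render circular) the intended application. If you replace your $R=\T$ step by the Diamond--Taylor level raising plus the rigidity $R^{Q_i-new}_{S\cup Q_i}\cong W(k)$, your argument matches the paper's.
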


We regard our result as a {\it quantitative} level
lowering result for the $p$-adic Galois representation
$\rho_{f,\iota}$. As we show in \S \ref{sec:R=T}, the web of  ``independent''
level-lowering congruences we produce in such a result can be used in
proving automorphy lifting results.   Note that the terminology of
quantitative level-lowering has been used earlier in \cite{PW}.  (The
technical condition of the initial dual Selmer being non-trivial for
our applications is not too burdensome, otherwise the minimal
deformation ring is smooth and hence proven to be isomorphic to the
corresponding Hecke algebra by the results of \cite{DT}.)
  
Theorem \ref{thm:example} is a particular case of the main result of
this paper, Theorem \ref{thm:main}, which we now state. We refer to \S
\ref{sec:n} for the basic notation used in the theorem, and the main
body of the paper for the precise assumptions.
 
\begin{thmalph} [see Theorem \ref{thm:main}]\label{thm:main-intro}
{\it   Let $F$ be any number field, $S$ a finite set of finite primes of
  $F$ and $\rbar: \Gamma_S \to G(k)$ a continuous representation.  We
  assume $\rbar$ has ``large image'' (i.e., satisfies Assumption
  \ref{ass:1}) and $p=\ch(k)$ is sufficiently large (i.e., the
  hypotheses of Proposition \ref{prop:nice} hold).  We assume further
  that we are given smooth local conditions
  $\mc{N} = \{\mc{N}_v\}_{v \in S}$ for $\rbar$ which are
  ``balanced'', i.e., the dimensions of the corresponding Selmer and
  dual Selmer groups are equal (see Assumption \ref{ass:2}) to an
  integer $n$ which we assume is nonzero.  Then there exists an ordered  set
  $Q = \{q_1,q_2,\dots,q_m\}$ of nice primes of $F$ (see Definition
  \ref{def:nice}), $m \in \{n+1,n+3\}$, and an integer $d \geq 1$ such
  that
\begin{itemize}
\item $Q$ is auxiliary (see Definition \ref{def:aux}) and the versal
  deformation
  $\rho_{S \cup Q}^{Q-new}: \Gamma_{S \cup Q} \to G(R_{S \cup
    Q}^{Q-new}) = G(W(k))$ is ramified mod $p^{d+1}$ at all $q \in Q$.
\item for each $1 \leq i \leq m$ there is an auxiliary set $Q_i
  \subset Q$ satisfying
  \begin{itemize}
  \item $\{q_1,q_2,\dots,q_{i-1}\} \subset Q_i$,
  \item $q_i \notin Q_i$,
  \item $\rho_{S \cup Q_i}^{Q_i-new} \equiv \rho_{S \cup Q}^{Q-new}$
    mod $p^{d}$, $\rho_{S \cup Q_i}^{Q_i-new}$ mod $p^{d}$ is
    special (see Definition \ref{def:special}) at $q_i$ but
    $\rho_{S \cup Q_i}^{Q_i-new}$ mod $p^{d+1}$ is not special at $q_i$.
  \end{itemize}
\end{itemize}}
\end{thmalph}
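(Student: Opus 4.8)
The plan is to run the Galois-cohomological lifting method (Ramakrishna's method, in the generality set up above for $G$), but to keep track of the exact power of $p$ at which Steinberg ramification at the auxiliary primes is forced, and then to re-run the method from a common $\bmod\ p^d$ truncation in order to produce the family $\{Q_i\}$. First I would construct the ``big'' deformation. Since $\mathcal N$ is smooth and balanced, the Selmer group $H^1_{\mathcal N}$ and the dual Selmer group $H^1_{\mathcal N^*}$ have common dimension $n\ge 1$, and by Assumption \ref{ass:1} together with the Chebotarev argument behind Proposition \ref{prop:nice}, for each nonzero class of $H^1_{\mathcal N^*}$ there is a positive-density set of nice primes $q$ (Definition \ref{def:nice}) at which that class is not orthogonal to the special condition $\mathcal N_q^{new}$; adjoining $\mathcal N_q^{new}$ is a balanced local move that strictly decreases both Selmer and dual Selmer. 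Doing this successively, and adjoining one or three further nice primes (to force genuine ramification and to leave ``reserve'' primes for the third step --- this is what makes $m$ equal $n+1$ or $n+3$), produces an ordered auxiliary set $Q=\{q_1,\dots,q_m\}$ (Definition \ref{def:aux}), with $R_{S\cup Q}^{Q-new}=W(k)$ and a versal lift $\rho_{S\cup Q}^{Q-new}\colon\Gamma_{S\cup Q}\to G(W(k))$ that is special at every $q\in Q$.

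Next I would read off the level $d$. Carrying out the lift of $\rho_{S\cup Q}^{Q-new}$ one power of $p$ at a time, let $d\ge1$ be the first level at which the minimal obstruction class in $H^2$ (for the condition $\mathcal N$) is nonzero; the nice primes of $Q$ are adjoined precisely in order to annihilate this class (and all later ones), and one arranges $Q$ so that this obstruction genuinely involves each $q\in Q$, so that every $q$ first becomes ramified at the same level. Hence $\rho_{S\cup Q}^{Q-new}\bmod p^{d}$ is unramified at each $q\in Q$, while $\rho_{S\cup Q}^{Q-new}\bmod p^{d+1}$ is special, with Steinberg parameter of $p$-adic valuation exactly $d$, at each $q\in Q$; this is the first bullet.

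For the second bullet, fix $i$. The truncation $\rho_{S\cup Q}^{Q-new}\bmod p^{d}$ is unramified at $q_i$, hence is a $Q_i$-new deformation mod $p^d$ for any $Q_i\subseteq Q$ with $q_i\notin Q_i$; so it suffices to produce such a $Q_i$ containing $\{q_1,\dots,q_{i-1}\}$ that is again auxiliary. Then $R_{S\cup Q_i}^{Q_i-new}=W(k)$, its versal lift $\rho_{S\cup Q_i}^{Q_i-new}$ is the unique $Q_i$-new lift, and comparing the mod $p^d$ reductions --- each equal to the unique $Q_i$-new deformation mod $p^d$, namely $\rho_{S\cup Q}^{Q-new}\bmod p^d$ --- forces $\rho_{S\cup Q_i}^{Q_i-new}\equiv\rho_{S\cup Q}^{Q-new}\bmod p^d$. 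This lift is special at $q_i$ mod $p^d$ (it inherits the Steinberg Frobenius ratio from $\rho_{S\cup Q}^{Q-new}$) but not special mod $p^{d+1}$ (being genuinely unramified at $q_i$, and, by the defining property of the nice prime $q_i$, with Frobenius ratio there already deviating from $q_i$ modulo $p^{d+1}$; here Definition \ref{def:special} is used). The auxiliary $Q_i$ is available thanks to the reserve primes: deleting $q_i$ from $Q$ and adjoining the reserve prime in $\{q_{i+1},\dots,q_m\}$ responsible for the dual Selmer class that $q_i$ had handled again yields an auxiliary set, which contains $\{q_1,\dots,q_{i-1}\}$ and omits $q_i$.

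The hard part is that the single ordered set $Q$ must meet all of these demands simultaneously: $Q$ auxiliary; for every $i$ an auxiliary completion of $\{q_1,\dots,q_{i-1}\}$ inside $Q\setminus\{q_i\}$; and a common transition level $d$ for every one of the versal lifts $\rho_{S\cup Q_i}^{Q_i-new}$ and $\rho_{S\cup Q}^{Q-new}$. Thus the nice primes cannot be chosen one dual Selmer class at a time --- the Chebotarev selections must be coordinated with the entire nested family $\{Q_i\}$ and with the uniformity of $d$ --- and it is this coordination, together with the parity constraint in the Wiles formula, that produces the count $m\in\{n+1,n+3\}$ and accounts for the bulk of the technical work.
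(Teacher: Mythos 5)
Your skeleton (kill the dual Selmer group with nice primes carrying the special condition, get $R_{S\cup Q}^{Q-new}=W(k)$, and then use uniqueness of the $Q_i$-new deformation mod $p^d$ to force the congruence $\rho_{S\cup Q_i}^{Q_i-new}\equiv\rho_{S\cup Q}^{Q-new}$ mod $p^d$) matches the paper's, but the two points you wave at are exactly the content of the proof, and your substitutes for them do not work. First, there is no ``minimal obstruction class in $H^2$'' to read $d$ off from: the $Q$-new problem is unobstructed by construction, and one cannot simply ``arrange $Q$ so that every $q$ first becomes ramified at the same level.'' The paper manufactures the uniform level $d$ a posteriori: it fixes local classes $g_v$ (zero on $S$, suitably chosen ramified classes in $\mc{N}_{q_i}$ on $\tilde Q$), observes that the resulting local condition problem is globally insoluble because the Selmer group for $S\cup\tilde Q$ vanishes, and then uses global duality (Proposition \ref{prop:hv}) to produce, after adjoining one new nice prime $q$, a global class $f^{(q)}$ restricting to every $g_v$; adjusting $\rho_{S\cup\tilde Q}^{\tilde Q-new}$ mod $p^d$ by a suitable multiple of $f^{(q)}$ (or of $f^{(q_{n+1})}+f^{(q_{n+2})}+f^{(q_{n+3})}$) and invoking uniqueness of the $Q$-new deformation is what makes $\rho_{S\cup Q}^{Q-new}$ ramified mod $p^{d+1}$ at \emph{all} of $Q$ simultaneously. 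The trichotomy on the value $f^{(q)}(\mathrm{Frob}_q)\in\{0\},\{1\}$, or neither --- together with the positive-upper-density limiting arguments needed because these subsets are not Chebotarev sets --- is what forces $m\in\{n+1,n+3\}$; your attribution of this to a parity constraint in the Greenberg--Wiles formula is not the actual mechanism, and none of this case analysis appears in your proposal.

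Second, your justification that $\rho_{S\cup Q_i}^{Q_i-new}$ mod $p^{d+1}$ is \emph{not} special at $q_i$ is incorrect: niceness is a residual condition and gives no ``defining property'' that the Frobenius ratio at $q_i$ deviates from $\mathrm{Norm}(q_i)$ modulo $p^{d+1}$; an unramified lift can perfectly well be special. In the paper this non-specialness has to be engineered: for the added primes it is imposed as a Chebotarev condition via Lemma \ref{lem:torlift}~(2) (the Frobenius image is twisted by the matrix $A_n$ so the lift is special mod $p^d$ but not mod $p^{d+1}$), and for $q_i$ with $i\le n$ it is obtained by adjusting by a suitable nonzero multiple of $f_i$, using the normalizations $f_i(\mathrm{Frob}_{q_i})=f_i(\mathrm{Frob}_{q_{n+1}})=1$ fixed by Lemma \ref{lem:kill}, and by the duality/scaling statements of Lemma \ref{lem:nice}~(3),(4) in Case~3. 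Likewise, that each nested $Q_i$ is auxiliary is not a matter of ``re-using a reserve prime for the dual Selmer class $q_i$ handled'': it is the invertibility of the relevant minors of the Frobenius matrices $[f_i(\mathrm{Frob}_{q_j})]$ and $[\phi_i(\mathrm{Frob}_{q_j})]$ (Lemma \ref{lem:aux}), which again requires the coordinated choices above. Since you defer precisely these coordinated Chebotarev and density arguments --- the ``bulk of the technical work,'' as you say --- the proposal as written does not constitute a proof.
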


\smallskip

Our Galois cohomological method to produce congruences has its origin
in the method of lifting mod $p$ Galois representations to
characteristic $0$ introduced in \cite{Ravi}.  The last author showed
in loc.~cit.~that given an odd irreducible representation
$\rhobar: \Gamma_{\Q} \rightarrow GL_2(k)$ satisfying mild technical conditions, with $k$ a finite field of
characteristic $p$, there is a geometric lift that is uniquely
determined by suitably chosen ramification conditions (e.g., the
condition of being Steinberg at the finite set $Q$ of auxiliary
primes). This method has been generalized by Patrikis in \cite{pat-ex}
to the setting of {\it odd} representations
$\rhobar: \Gamma_F \rightarrow G(k)$, with $G$ a reductive algebraic
group over $W(k)$ and $F$ a CM field. (The results in loc.~cit.~are proved under several
technical hypotheses which we will need to assume in our work as well,
see Theorem \ref{thm:main} below: for example, the image of $\rhobar$
is assumed to be large in a suitable sense. We note that the recent
paper \cite{FKP} addresses lifting residual representations without
assuming they have large image.)
 
In the lifting method of \cite{Ravi} in the classical case (and its
generalization in \cite{pat-ex}) one considers deformations of $\rhobar$ (to complete Noetherian local $W(k)$-algebras with residue field $k$)
that are unramified outside $S \cup Q$ with $Q$ a suitable auxiliary
finite set of places. Here $S$ consists of places at which $ \rhobar$
is ramified and the places above $p,\infty$ with balanced (a condition that we explain below), smooth,
minimal deformation conditions at these places.  The set $Q$ consists
of residually Steinberg places $v$, i.e., $v$ such that (in the
2-dimensional case) $\rhobar({{\rm Frob}}_v)$ has eigenvalues
$\alpha_v,\beta_v$ with ratio $\alpha_v/\beta_v=q_v \neq \pm 1 $ mod
$p$. The deformation condition for $v \in Q$ is  the Steinberg condition at
$v$, i.e., deformations such that the image of (a lift of)
${{\rm Frob}}_v$ has eigenvalues $\tilde{\alpha_v},\tilde{\beta_v}$,
which lift $\alpha_v,\beta_v$ and their ratio
$\tilde{\alpha_v}/\tilde{\beta_v}=q_v$. This condition is smooth, is
{\it non-minimal},  has  Zariski tangent space of  dimension $h^0(\Gamma_v,\ad)$, and cuts out a subspace of the local cohomology
that intersects the unramified cohomology in a codimension one
subspace.
The corresponding deformation problem is balanced and thus  the resulting
deformation ring $R_{S \cup Q}^{Q-new}$ has a presentation of the form
$W(k)[[X_1,\cdots,X_r]]/(f_1,\cdots, f_r)$ with
$r=h^1_\L=h^1_{\L^\perp}$, the dimension of the  corresponding Selmer and dual Selmer groups.

The main innovation of \cite{Ravi}, as formulated in \cite{T03}, is a method of killing the dual
Selmer group for a residual representation $\rhobar$ when deformation
conditions considered at auxiliary places are smooth.  (Killing dual
Selmer for $\rhobar$ with smooth conditions is essential to lift
$\rhobar$ to characteristic 0. The method  in \cite{W}  
to kill dual Selmer allows  unrestricted  ramification
at auxiliary primes considered there, and the vanishing of the  dual Selmer group  achieved in \cite{W}   does not imply the existence of liftings of
$\rhobar$, as the unrestricted local conditions at the auxiliary primes considered in \cite{W} are not smooth.) It produces a set $Q$ consisting of finitely many
residually Steinberg places for $\rhobar$ as above with the Steinberg
deformation condition at these places, such that the global
deformation problem is smooth. The corresponding balanced smooth
deformation ring $R_{S\cup Q}^{Q-new}$ is $W(k)$.

The work of \cite{KR1} in the classical case, and its generalization
in \cite{Z} and \cite{Patrikis-Ann} to the context of $G$-valued representations considered in
\cite{pat-ex}, shows that we can choose $Q$ with the further property
that the universal representation $\rho^{Q-new}_{S\cup Q}$ is ramified at primes
in $Q$. This gives that the augmentation
$\pi:R_{S\cup Q} \rightarrow R_{S\cup Q}^{Q-new}$,
where $R_{S\cup Q}$ parametrizes
deformations 
with  arbitrary ramification allowed at $Q$ but the same balanced  deformation conditions at $v \in S$,  
 has finite
cotangent space $\ker (\pi)/\ker(\pi)^2$, and thus the $Q$-new quotient  $R_{S \cup Q}^{Q-new}=W(k)$ is an irreducible
component of $R_{S\cup Q}$. The main work in this paper is to construct such
$Q$ with the additional property that there are congruences of $\pi$
to other augmentations which can be thought of as level lowering
congruences.

Building on \cite{Ravi} and its generalization in  \cite{pat-ex} to $G$-valued representations, and upon
the ``doubling method'' of \cite{klr},  we prove Theorem \ref{thm:main}  below in \S \ref{sec:Main}.   
The second author had conceived of  a result like Theorem \ref{thm:example}   at the time of writing \cite{K}, but was unable to prove it at the time. Our proof draws upon developments of the method of the third author \cite{Ravi} since that time (cf. \cite{klr}).

\subsection{Motivation}

One of the motivations of our work is an attempt to find a technique
to prove that certain deformation rings are of the expected
dimension. Let $\rhobar:\Gamma_F \rightarrow G(k)$ be a continuous,
irreducible representation with $G$ a reductive algebraic group over
$W(k)$.  Let $S$ be a finite set of places $S$ of $F$, where $S$
includes the places of $F$ above $p$, all the infinite places of $F$, and places at which $\rhobar$ is
ramified.  We consider deformations $\rho:\Gamma_F \rightarrow G(R)$
of $\rhobar$ to complete noetherian local $W(k)$-algebras $R$ with
residue field $k$. At places outside $S$ the deformation condition is
that of being unramified, and at places in $S$ the deformations $\D_v$
satisfy an infinitesimal lifting property (i.e., smoothness), and the
corresponding tangent spaces $\L_v$ satisfy one of the following:
\begin{itemize}
\item[--] {\it favorable} condition
  $\sum_{v \in S} {\rm dim}_k \L_v \geq \sum_{v \in S}
  h^0(\Gamma_v,\rbarg)$, or the more restrictive,
\item[--] {\it balanced} condition
  $\sum_{v \in S} {\rm dim}_k \L_v =\sum_{v \in S}
  h^0(\Gamma_v,\rbarg)$.
\end{itemize}
Here $\rbarg$ denotes the Lie algebra of the derived group of $G$ with
the action of $\Gamma_F$ induced by composing $\rbar$ with the adjoint
representation of $G$.  Typically this is achieved by imposing
deformation conditions at places of $S$ not above $p$ (and $\infty$)
such that ${\rm dim}_k \L_v=h^0(\Gamma_v,\rbarg)$, and at places above
$p$ that
$\sum_{v \in S_p} {\rm dim}_k \L_v \geq \sum_{v \in
  S_p}h^0(\Gamma_v,\rbarg) + \sum_{v \in S_\infty}
h^0(\Gamma_v,\rbarg)$.  (The spaces $\L_v$ are trivial for $v$ the
infinite places of $F$ (for $p>2$).)  We have the corresponding Selmer
and dual Selmer groups $H^1_\L(\Gamma_{F}, \rbarg)$ and
$H^1_{\L^\perp}(\Gamma_{F},\rbarg)$ whose dimension over $k$ is
denoted by $h^1_\L$ and $ h^1_{\L^\perp}$.  When deformation
conditions are favorable we have the inequality
$h^1_\L \geq h^1_{\L^\perp}$, and in the balanced case the equality
$h^1_\L = h^1_{\L^\perp}$.  We remark that known methods to produce
geometric  liftings succeed only when these dimensions satisfy
the favorable inequality $h^1_\L \geq h^1_{\L^\perp}$.

As we are assuming the deformation conditions are smooth  and balanced at places in
$S$ (e.g., minimal deformation conditions), the corresponding
deformation ring $R_S$ has a presentation as a quotient
${W(k)}[[X_1,\cdots, X_r]]/(f_1,\cdots,f_r)$, with
$r=h_\L^1=h_{\L^\perp}^1$. In favorable situations one expects that
if in addition the local conditions are natural (unrestricted,
ordinary,\ldots) then the corresponding deformation ring is a finite
flat complete intersection of relative dimension
$h_\L^1-h_{\L^\perp}^1$ over $W(k)$. Thus  in balanced situations one
might expect that in many cases the ring $R_S$ is a finite, and flat,
$W(k)$-module. (This may not always be the case as was pointed out to
us by G.~Boxer and F.~Calegari. \footnote{Here is an  example
  suggested by Boxer and Calegari which  illustrates  that favorable
  deformation conditions may not always imply that the generic fiber
  of the corresponding deformation ring has the expected relative  dimension
  $h_\L^1-h_{\L^\perp}^1$.  Consider 2-dimensional ordinary
  representations for a CM field $K$ with totally real subfield $F$,
  $\rhobar:\Gamma_K \ra GL_2(k)$, and impose the favorable deformation
  condition that the weights are equal at complex conjugate places of
  $K$ above $p$.  Then the dimension of the ordinary deformation ring
  of $\rhobar$ will be bigger than the expected dimension ($=0$) if
  $\rhobar$ is restricted from the totally real subfield. The locus of
  lifts which come by restriction from $F$ gives a component of
  dimension $[F:\Q]$.
The ``discrepancy'' of the actual dimension from the expected  one   is ``explained by functoriality''.})

Let $R_{S\cup Q}^{Q-new}$ be the deformation ring   that parametrizes deformations $\rho:\Gamma_{S \cup Q} \ra G(A)$ of $\rhobar$ (with fixed multiplier $\nu$),  with $A$ a complete Noetherian local $W(k)$-algebra with residue field $k$,    that are unramified outside $S \cup Q$ and satisfy  suitable balanced  deformation conditions $\mc{D}_v$ at $v \in S \cup Q$.   Further let $R_{S \cup Q}$ be the deformation ring that parametrizes deformations   $\rho:\Gamma_{S \cup Q} \ra G(A)$ of $\rhobar$  with the same balanced conditions $\mc{D}_v$ at $v \in S$,  but  the conditions at $ v \in Q$ are relaxed  (cf. \S 3).
Using Theorem \ref{thm:main} we get an auxiliary set
$Q$ such that $R_{S\cup Q}^{Q-new}$ is $W(k)$.   We thus get  an
augmentation $\pi:R_{S \cup Q} \ra R_{S\cup Q}^{Q-new}=W(k)$ which gives rise to the
Galois representation
$\rho_{S \cup Q}^{Q-new}: \Gamma_{S \cup Q} \to G(R_{S \cup
  Q}^{Q-new}) = G(W(k))$ such that  
  that $\rho_{S \cup Q}^{Q-new}$ is ramified at all the primes in $Q$. Furthermore there is an integer $d$ such that $\rho_{S \cup Q}^{Q-new}$ mod $p^d$  is unramified at all the primes in $Q$, and  $\rho_{S \cup Q}^{Q-new}$ mod $p^{d+1}$  is ramified at all the primes in $Q$. 
  
  This 
allows one to prove that the cotangent space at $\pi$ is finite, and
in fact we can compute it to be exactly $\oplus_{q \in Q} {W(k)/p^{d}}$ (Theorem
\ref{thm:selmer}). Wiles's numerical criterion, which we attempt to
enhance in \S \ref{sec:app} (Proposition \ref{prop:ci}), suggests
that if we can verify his numerical coincidence
$\# \Phi_\pi=\# {W(k)}/\eta_\pi$, with $\Phi_\pi$ the cotangent space of
$\pi$ and $\eta_\pi = \pi( \Ann_{R_{S\cup Q}} (\ker \pi))$, in this situation we
could get a grip on $R_{S\cup Q}$ (and hence $R_S$). The depth of
$ \pi( \Ann_{R_{S\cup Q}} (\ker \pi))$ is related to congruences between $\pi$ and
other augmentations, and we produce in a sense optimal level lowering
congruences to $\rho_{S \cup Q}^{Q-new}$. This still falls short of
getting the Wiles numerical coincidence, but is suggestive of it.  
\smallskip

    Here is a description of the contents of the paper. In the key \S \ref{sec:Main} we prove our main theorem Theorem \ref{thm:main}.  In \S \ref{deformation} we give an exact computation of Selmer groups associated to the Galois representation we construct and prove that it is a smooth point of the relevant deformation ring.  In \S \ref{sec:R=T}  we illustrate the  use of  our main theorem, in conjuction with  Wiles's numerical isomorphism criterion,  to  prove automorphy lifting in the classical case. In \S \ref{sec:app} we refine Wiles's numerical isomorphism criterion, which was prompted by the motivation we mention above.


\subsection{Notation} \label{sec:n}

For a field $K$, $\Gamma_K$ will denote the absolute Galois group of
$K$. If $F$ is a number field and $q$ is a prime of $\mc{O}_F$, the
ring of integers of $F$, then the absolute Galois group of the
completion of $F$ at $q$ will be denoted by $\Gamma_q$ with
$I_q \subset \Gamma_q$ denoting the inertia subgroup; we will
implicitly identify, by making a choice, $\Gamma_q$ with a
decomposition subgroup of $\Gamma_F$. For $X$ a finite set of (finite)
primes of $\mc{O}_F$, $\Gamma_X$ denotes the group $\gal(F_X/F)$,
where $F_X$ is the maximal extension of $F$ (in a fixed algebraic
closure) unramified outside primes in $X$ (and the infinite
primes). Throughout this paper we fix a rational prime $p$ and denote
let $\kappa: \Gamma_F \to \Z_p^{\times}$ be the $p$-adic cyclotomic
character; we also use $\kappa$ to denote the characters arising from
$\kappa$ by restriction to subgroups of $\Gamma_F$, on quotients of
$\Gamma_F$ by subgroups containing $\ker(\kappa)$, or by extension of
scalars to any $\Z_p$-algebra $R$. We let $k$ be a finite field of
characteristic $p$ and $W(k)$ the ring of Witt vectors of $k$.

For $M$ a finitely generated free module over a $\Z_p$-algebra $R$ we
let $M^{\vee}$ denote the module $\Hom_R(M,R)$. If $M$ also has a
Galois action we denote by $M^*$ the Galois module
$\Hom_R(M, R(\kappa))$. For $M$ a finite $W(k)$-module,
$\ell(M)$ denotes the length of $M$.

Let $G$ be a split (connected) reductive group scheme over $W(k)$ and
let $G^{\mr{sc}}$ be the simply connected cover of the derived group
scheme $G^{\mr{der}} =[G,G]$; we assume that $G^{\mr{der}}$ is almost
simple. (We could work with somewhat more general $G$, but we have made
the above assumptions in order to avoid technicalities.)
Let $\mu: G \to C$ be the maximal torus quotient of $G$. Let $\mg{g}$
be the Lie algebra of $G$ and $\mg{g}^{\mr{der}}$ the Lie algebra of
$G^{\mr{der}}$; it is also the kernel of the map on Lie algebras
induced by $\mu$. We assume throughout that $p$ is greater than the
Coxeter number of $G$: this implies that the Killing form on
$\mg{g}^{\mr{der}}$ is nondegenerate (by, e.g.,
\cite[I.4.8]{springer-steinberg}).

\subsection{Acknowledgements}

N.F. was supported by the DAE, Government of India, PIC
12-R\&D-TFR-5.01-0500. C.K.  was partially supported by NSF grant
DMS-1601692 and a Humboldt Research Award, and thanks TIFR for its
hospitality in periods during which the work was carried out.  R.R.
was partially supported by Simons Collaboration grant \#524863. We
thank Mohammed Zuhair for helpful conversations. 

We thank the referee for carefully reading the manuscript and making many helpful suggestions.

\section{Quantitative level lowering for Galois representations}\label{sec:Main}

The main result of this section is Theorem \ref{thm:main}. We begin by
introducing nice primes and prove several local results related to
them.  After this, the main theorem is stated precisely in \S
\ref{sec:main}. We then prove some global lemmas and the main steps of
the proof are given in \S \ref{sec:mat}, \S \ref{sec:ker} and
\S \ref{sec:proof}.

\smallskip

Let $F$ be a number field and and $\rbar: \Gamma_F \to G(k)$ a
continuous homomorphism. Let $S$ be a finite set of places of
$\mc{O}_F$ including all primes ramified in $\rbar$, all primes
lying over $p$ and all infinite places; henceforth we view $\rbar$ as a map
$\Gamma_S \to G(k)$. Fix $\nu: \Gamma_S \to C(W(k))$ a continuous lift
of the map $\mu \circ \rbar: \Gamma_S \to C(k)$ and use the same
notation for all maps induced by $\nu$ (in the same way as for
$\kappa$). All (local and global) deformations that we consider will
have ``fixed determinant'', i.e., give $\nu$ after composing with
$\mu$.

Denote by $\rbarg$ the composition of $\rbar$ with the adjoint
representation of $G(k)$ on $\mg{g}_{k}^{\mr{der}}$.  The tangent
space of the fixed determinant deformation functor corresponding to
$\rbar$ is given by $H^1(\Gamma, \rbarg)$, where $\Gamma$ is a local
or global Galois group.

From now on, unless explicitly stated otherwise, we make the following
assumption on $\rbar$:
\begin{ass} \label{ass:1} %
$\rbar(\Gamma_S)$ contains the image of $G^{\mr{sc}}(k)$ in
  $G(k)$.
\end{ass}

\begin{rem} \label{rem:abs} %
  Since $p$ is greater than the Coxeter number of $G$, $p$ is a ``very
  good'' prime for $G$, hence $\mg{g}^{\mr{der}}_k$ is an irreducible
  representation of $G$; this is a consequence of the simplicity of
  the Lie algebra $\mg{g}^{\mr{der}}_k$ (e.g., \cite[\S
  2.6]{steinberg-auto}). It follows from \cite[Theorem
  43]{steinb-chev} that $\mg{g}_k^{\mr{der}}$ is an absolutely
  irreducible representation of $G^{\mr{sc}}(k)$; moreover, the only
  $\F_p[G^{\mr{sc}}(k)]$--submodules of $\mg{g}_k^{\mr{der}}$ are $0$
  and $\mg{g}_k^{\mr{der}}$. To see the ``moreover'', we observe that
  by Steinberg's theorem the $\F_p[G^{\mr{sc}}(\F_p)]$--submodules of
  $\mg{g}^{\mr{der}}_{\F_p}$ are $0$ and $\mg{g}^{\mr{der}}_{\F_p}$,
  and $\mg{g}^{\mr{der}}_{\F_p}$ generates $\mg{g}_k^{\mr{der}}$ as an
  $\F_p[G^{\mr{sc}}(k)]$--module.

  It then follows from the first condition that
  $h^0(\Gamma_S, \rbarg) = h^0(\Gamma_S, \rbarg^*) = 0$, so the global
  deformation functor is representable.
\end{rem}

\subsection{Nice primes}

Under our assumptions on $\rbar$ we prove the existence of (a
Chebotarev set of) primes $q$ that will make up the auxiliary set $Q$
of Theorem \ref{thm:main}.

\begin{defn}\label{def:nice}
A prime $q$ of $\mc{O}_F$ is called a \emph{nice} prime for $\rbar$ if:
\begin{enumerate}
\item $\mr{Norm}(q)$ is not congruent to $\pm 1 \mod p$;
\item $\rbar(I_q) = \{1\}$ and $\rbar({\rm Frob}_q)$ is a regular semisimple 
element of $G(k)$;
\item there is a \emph{unique} root $\alpha$ of $\Phi(G,T)$, with $T$
  the identity component of the centralizer of $\rbar({\rm Frob}_q)$ in $G$
  (assumed to be a maximal split torus of $G$), such that $\Gamma_q$
  acts on $\mg{g}_{\alpha}$ (the corresponding root space) by
  $\kappa$.
\end{enumerate}
\end{defn}

The uniqueness of the root $\alpha$ plays an essential role in our
proofs.

\begin{prop} \label{prop:nice} %
Let $\rbar$ be as above and assume in addition that:
\begin{itemize}
\item $[F(\mu_p): F] = p-1$;  
\item $p -1$ is greater than the maximum of $8 \#Z_{G^{\mr{sc}}}$ and
  \[
  \begin{cases}
    (2h-2)  \#Z_{G^{\mr{sc}}} & \mbox{ if }  \#Z_{G^{\mr{sc}}} \mbox{ is even,
      or} \\
    (4h-4)  \#Z_{G^{\mr{sc}}} & \mbox{ if }  \#Z_{G^{\mr{sc}}} \mbox{ is odd,}
    \end{cases}
\]
where $h$ is the Coxeter number of $G$ and $Z_{G^{\mr{sc}}}$ is the
(finite) centre of $G^{\mr{sc}}$.
 \end{itemize}
Then
\begin{enumerate}
\item $h^1(\gal(K/F), \rbarg) = h^1(\gal(K/F), \rbarg^*) = 0$, where
  $K = F(\rbarg, \mu_p)$;
\item  there exists a non-empty Chebotarev set of nice primes for
  $\rbar$.
\end{enumerate}
\end{prop}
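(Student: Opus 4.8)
The plan is to prove the two assertions of Proposition \ref{prop:nice} separately, the cohomology vanishing (1) first, and then use it together with Assumption \ref{ass:1} to establish the Chebotarev set in (2).

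\smallskip

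For part (1), I would first use the inflation--restriction sequence to reduce the vanishing of $h^1(\gal(K/F),\rbarg)$ and $h^1(\gal(K/F),\rbarg^*)$ to a statement about the two groups $\gal(F(\rbarg)/F)$ and $\gal(F(\mu_p)/F)$ and how they fit inside $\gal(K/F)$. The hypothesis $[F(\mu_p):F] = p-1$ forces $\gal(F(\mu_p)/F) \cong (\Z/p)^\times$ to have order prime to $p$, so any cohomology with $\mu_p$-coefficients in that direction vanishes; what remains is to handle the part coming from $G^{\mr{der}}(\F_p)$-type quotients. By Assumption \ref{ass:1}, the relevant quotient of $\gal(F(\rbarg)/F)$ surjects onto the image of $G^{\mr{sc}}(\F_p)$ in $G(\F_p)$ acting on $\mg{g}^{\mr{der}}_k$, which by Remark \ref{rem:abs} is (absolutely) irreducible with only trivial submodules. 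The core input is then a theorem on vanishing of $H^1$ for groups of Lie type acting on their adjoint (or Steinberg) modules --- this is where the numerical hypothesis ``$p-1$ greater than $8\#Z_{G^{\mr{sc}}}$, and the $(2h-2)$ or $(4h-4)$ bounds'' enters: these bounds on $p$ in terms of the Coxeter number $h$ and $\#Z_{G^{\mr{sc}}}$ are exactly the conditions under which the first cohomology of $G^{\mr{sc}}(\F_{p})$ (and its extensions by the central/diagonal part) with coefficients in $\mg{g}^{\mr{der}}$ vanishes, by results in the literature on cohomology of finite groups of Lie type (Cline--Parshall--Scott, and the refinements used by Patrikis in \cite{pat-ex} and by \cite{klr}). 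I would cite the relevant statement and check that our $p$-bound matches its hypotheses, also dealing with the $\mu_p$-twist for the dual module $\rbarg^*$ using $[F(\mu_p):F]=p-1$.

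\smallskip

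For part (2), having (1) in hand, I would argue as follows. We need a prime $q$, equivalently an element $\sigma \in \gal(K'/F)$ for a suitable finite extension $K'/F$ (enlarging $K$ to also see $\kappa$ with enough precision and the relevant root data), whose Frobenius class witnesses all three conditions of Definition \ref{def:nice}: $\mr{Norm}(q) \not\equiv \pm 1 \pmod p$, $\rbar(I_q)=1$ with $\rbar(\fr_q)$ regular semisimple, and the unique-root condition on $\mg{g}_\alpha$. The strategy is to produce an element $\sigma$ of $\gal(K/F)$ (or a slightly larger group) of the required shape by working in the group $\rbar(\Gamma_S) \times \gal(F(\mu_p)/F)$, which by Assumption \ref{ass:1} and $[F(\mu_p):F]=p-1$ contains a large chunk of $G^{\mr{sc}}(k) \times (\Z/p)^\times$: choose $\sigma$ so that its image in $G(k)$ is a regular semisimple element $t$ lying in a maximal split torus $T$ with the property that exactly one root $\alpha$ of $(G,T)$ satisfies $\alpha(t) = \k(\sigma)$ (i.e., matches the cyclotomic value), and so that $\k(\sigma) \not\equiv \pm1$. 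The existence of such $t$ together with a compatible cyclotomic value is an elementary statement about root systems and the torus $T(k)$ --- one picks the cyclotomic coordinate and the torus element independently (possible because the $\mu_p$-extension is linearly disjoint and of the right degree) and counts: for $p$ large relative to $h$ and $\#Z_{G^{\mr{sc}}}$ there are enough elements of $T(k)$ realizing ``$\alpha(t)$ equals a prescribed value for exactly one $\alpha$''. The numerical bounds on $p$ in the Proposition are again what guarantees this root-theoretic genericity (avoiding the finitely many coincidences $\alpha(t) = \beta(t)$ or $\alpha(t) = \pm 1$). Finally, vanishing from part (1) ensures that the conjugacy class of $\sigma$ in $\gal(K/F)$ is not ``obstructed'' --- more precisely, it lets us lift the chosen torus element to a genuine element of the Galois group and apply the Chebotarev density theorem to conclude that the set of primes $q$ with $\fr_q$ in this class is non-empty (indeed of positive density).

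\smallskip

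I expect the main obstacle to be part (2), and specifically the verification that the explicit numerical bounds on $p$ in terms of $h$ and $\#Z_{G^{\mr{sc}}}$ suffice to guarantee the \emph{uniqueness} of the root $\alpha$ in condition (3) of Definition \ref{def:nice}. This is the condition singled out after the definition as playing ``an essential role'', and it is the delicate point: one must choose the regular semisimple element $t \in T(k)$ and the cyclotomic value $\k(\sigma) \in \F_p^\times$ simultaneously so that among all roots $\alpha \in \Phi(G,T)$ --- whose number grows with $h$ --- \emph{precisely one} has $\alpha(t)$ equal to the prescribed scalar. Controlling this requires a careful count of the number of $t \in T(k)$ failing the uniqueness (because two roots collide, or no root matches, or a matching root also gives $\pm 1$), showing this ``bad locus'' has size bounded by a constant times $p^{\,\mathrm{rk}-1}\cdot(\text{something involving } h)$ while the total is $\sim p^{\,\mathrm{rk}}$, so that for $p$ beyond the stated threshold a good $t$ exists; the interaction with the central subgroup $Z_{G^{\mr{sc}}}$ (which is why $\#Z_{G^{\mr{sc}}}$ appears, and with a parity distinction) is the fiddly part, since the image of $G^{\mr{sc}}(k)$ in $G(k)$ only sees $T(k)$ up to that central quotient and one must ensure the chosen $t$ actually lies in the image. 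The cohomology vanishing in part (1), while technically substantial, I would treat as a citation to the known bounds in the Lie-type cohomology literature as used in \cite{pat-ex} and \cite{klr}, adjusted for the fixed-determinant (derived Lie algebra) normalization.
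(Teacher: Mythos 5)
Your plan for part (2) has two genuine gaps, and they are precisely the points the paper's proof is engineered around. First, you claim the torus element and the cyclotomic coordinate of the Frobenius can be prescribed \emph{independently} ``because the $\mu_p$-extension is linearly disjoint and of the right degree.'' The hypotheses give only $F(\mu_p) \nsubset F(\rbarg)$: the intersection $L = F(\mu_p)\cap F(\rbarg)$ can be a nontrivial extension of $F$ (of degree dividing $\#Z_{G^{\mr{sc}}}$), so the images of a single element of $\gal(K/F)$ in $\gal(F(\rbarg)/F)$ and in $\gal(F(\mu_p)/F)$ must agree on $L$ and cannot be chosen freely. Since condition (3) of Definition \ref{def:nice} forces the cyclotomic value to be $\alpha(x)$, one needs $x \in \rbar(\Gamma_L)\cap G^{\mr{der}}(k)$ \emph{and} $\alpha(x) \in \gal(F(\mu_p)/L)$ simultaneously; this compatibility over $L$ is exactly the content of Claim \ref{claim:x}, it is where $\#Z_{G^{\mr{sc}}}$ (with the parity dichotomy, via the choice $t = g^{\#Z_{G^{\mr{sc}}}/2}$ or $g^{\#Z_{G^{\mr{sc}}}}$) enters the bound, and it cannot be deferred as a ``fiddly'' afterthought --- without it the Chebotarev element you want need not exist.

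Second, your counting argument for the uniqueness of $\alpha$ does not close under the stated hypotheses. Excluding all coincidences $\beta(t)=\gamma(t)$ (and $\beta(t)=\pm1$) for $t$ ranging over $T(k)$ requires avoiding on the order of $|\Phi|^2$ codimension-one subgroups, i.e.\ it needs $p$ large compared with $|\Phi|^2$, a bound that grows with the rank (quartic in $n$ for types $A_{n-1}$ or $C_n$), whereas the proposition only assumes $p-1 > (2h-2)\#Z_{G^{\mr{sc}}}$ resp.\ $(4h-4)\#Z_{G^{\mr{sc}}}$, linear in $h$. The paper avoids any such count by an explicit construction: take $x = 2\rho^{\vee}(t)$, so that $\beta(x) = t^{2\,\mr{ht}\beta}$ depends only on the height of $\beta$; then one only needs the $2h-2$ elements $t^{\pm 2},\dots,t^{\pm(2h-2)}$ to be distinct and $\neq \pm 1$, and the highest root --- the unique root of height $h-1$ --- furnishes the unique $\alpha$. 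Relatedly, in part (1) you misplace the role of the numerical hypothesis: it is not the input to a Cline--Parshall--Scott-type vanishing statement (for that, $p$ greater than the Coxeter number, already assumed throughout, is what is used via the citation); its role is to give $[F(\mu_p):F] = p-1 > \#Z_{G^{\mr{sc}}} \geq [L:F]$, hence $F(\mu_p) \nsubset F(\rbarg)$, which is the hypothesis needed to invoke \cite[Lemma 6.6]{pat-ex}.
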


\begin{proof}
  We follow the proof of \cite[Theorem 6.4]{pat-ex}.

\smallskip

  Set $L = F(\mu_p) \cap F(\rbarg)$.  It is shown after the proof of
  \cite[Lemma 6.6]{pat-ex} that $[L:F] \divides \# Z_{G^{\mr{sc}}}$, so the
  two assumptions together imply that $F(\mu_p) \nsubset
  F(\rbarg)$. Then (1) follows from \cite[Lemma 6.6]{pat-ex}.

\smallskip

  The differences in our setup as compared to that of \cite[Theorem
  6.4]{pat-ex} are the extra conditions (1) and (3) in the definition of
  nice primes, so we explain the necessary modifications.

  \begin{claim} \label{claim:x} %
    There is a regular semisimple element
    $x \in \rbar(\Gamma_L) \cap G^{\mr{der}}(k)$ contained in a split
    maximal torus $T$ of $G$ and a root $\alpha \in \Phi(G,T)$, such that
  \begin{itemize}
  \item $\beta(x) \neq \alpha(x)$ for any root $\beta \neq \alpha$,
  \item $\alpha(x)$ lies in $\gal(F(\mu_p)/L)$, where we use the
    canonical identification $\gal(F(\mu_p)/F) \cong \F_p^{\times}$,
  \item $\alpha(x) \neq \pm 1$.
  \end{itemize}
\end{claim}      

    Assuming the claim, choose $\tau \in \Gamma_L$ so that
    $\rbar(\tau) = x$. Then there exists $\sigma \in \gal(K/L)$ such
    that
    \begin{itemize}
    \item $\sigma \mapsto \tau|_{F(\rbarg)} \in \gal(F(\rbarg)/L)$,
    \item $\sigma \mapsto \alpha(x) \in \gal(F(\mu_p)/F)$.
    \end{itemize}
    The desired Chebotarev set of nice primes is the set of primes $q$
    of $\mc{O}_F$ whose Frobenius element in $\gal(K/F) $ is
    (conjugate to) $\sigma$. The image of $\sigma$ in
    $G^{\mr{ad}}(k)$ is equal to the image of $x$, so
    the image of $\sigma$ in $G(k)$ lies in $T(k)$ and is regular
    semisimple.

\smallskip

We now prove the claim:

For any generator $g \in k^{\times}$, we let $t = g^{\frac{\#
    Z_{G^{\mr{sc}}}}{2}}$ or $t = g^{\# Z_{G^{\mr{sc}}}}$ depending on whether
$\# Z_{G^{\mr{sc}}}$ is even or odd. The element $2 \rho^{\vee}(g)$ (where
$2\rho^{\vee}$ is the sum of the positive coroots) is in the image of
$G^{\mr{sc}}(k)$, hence in $\rbar(\Gamma_F)$, so by the discussion at
the beginning of the proof $2 \rho^{\vee}(t) \in \rbar(\Gamma_L)$. It
is regular, since for all positive roots $\beta$, $\beta(x) = t^{2
  \mr{ht} \beta}$, the maximum height of a root is $h - 1$, and none
of $t^2,t^4, \dots, t^{2h-2}$ equals $1$ (or even $-1$) by our
assumption on $p$. Also, for any positive root $\beta$, $\beta(x) \in
\gal(F(\mu_p)/L)$.

We then take $\alpha$ to be the highest root (usually denoted by
$\delta$), i.e., the unique positive root of height $h -1$. All the
elements $t^2,t^4, \dots, t^{2h-2}, t^{-2},t^{-4}, \dots, t^{-(2h-2)}$
of $k^{\times}$ are distinct (and not equal to $\pm 1$) by our
assumptions on $p$, so $\alpha(x) \neq \beta(x)$ for any root
$\beta \neq \alpha$, proving the claim.
\end{proof}

\begin{rem} 
  \label{rem:gl2} The bounds for $p$ in
  Proposition \ref{prop:nice} are not the best possible and can be
  improved for particular choices of $G$.  If $G = GL_2$ and
  $p \geq 5$ then $[L:F] \leq 2$, so the condition
  $F(\mu_p) \nsubset F(\rbarg)$ (used in the proof) is satisfied
  whenever $[F(\mu_p):F] > 2$. Also, from \cite[Lemma 2.48]{DDT} one
  sees that the vanishing in (1) holds whenever $|k| > 5$ or
  $k = \F_5$ and $\rbar(\Gamma_F) = GL_2(\F_5)$.

  Let $\alpha$ be the standard positive root and set
  $d := [F(\mu_p):L]$. Since $\bar{\rho}(\Gamma_L) \supset SL_2(k)$,
  if $[L:F] = 2$ we may take $x$ to be 
  $\Bigl [ \begin{smallmatrix} b & 0 \\ 0 & b^{-1} \end{smallmatrix}
  \Bigr ]$, with $ b = a^{(p-1)/2d}$ for $a$ any generator of
  $\F_p^{\times}$.  Then $\alpha(x) = a^{(p-1)/d} \neq \pm 1$ if
  $d> 2$.  If $F = L$ we may take $x$ to be 
  $\Bigl [ \begin{smallmatrix} b & 0 \\ 0 & b^{-1} \end{smallmatrix}
  \Bigr ]$, with $ b = a^{(p-1)/d}$ for $a$ any generator of
  $\F_p^{\times}$.  Then $\alpha(x) = a^{2(p-1)/d} \neq \pm 1$ if
  $d> 4$.  In either case, we see that the bulleted conditions in
  Claim \ref{claim:x} are satisfied if $[F(\mu_p): F] > 4$ and so (2)
  also holds in this case.  \smallskip

  Now suppose $[F(\mu_p):F] = 4$ and
  $\bar{\rho}(\Gamma_F) \supset GL_2(\F_p)$.  If $F = L$ we may take
  $x$ to be
  $\Bigl [ \begin{smallmatrix} b & 0 \\ 0 & 1\end{smallmatrix} \Bigr
  ]$, with $b = a^{(p-1)/4}$ and $a$ any generator of $\F_p^{\times}$.
  Then $\alpha(x) = b \neq \pm 1$ so again (2) holds.  If $F \neq L$,
  suppose that $\mr{det}(\bar{\rho})$ is $\kappa$
  (resp.~$\kappa^{-1}$), so $p=5$ (since
  $\bar{\rho}(\Gamma_F) \supset GL_2(\F_p)$). Without appealing to
  (the proof of) Proposition \ref{prop:nice}, one sees in this case
  that we may take any prime whose Frobenius in $\gal(F(\rbar)/F)$ is
  $\Bigl [ \begin{smallmatrix} a & 0 \\ 0 & 1\end{smallmatrix} \Bigr
  ]$
  (resp.~$\Bigl [ \begin{smallmatrix} 1 & 0 \\ 0 & a\end{smallmatrix}
  \Bigr ]$).

\end{rem}

\begin{lem} \label{lem:nonsplit} 
  Let $p\geq
  5$ be a very good prime for $G$ and let $H$ be a subgroup of
  $G^{\mr{ad}}(W(k)/p^nW(k))$ whose image in $G^{\mr{ad}}(k)$ contains the image
  of $G^{\mr{sc}}(k)$. Then
\begin{enumerate}
\item $H$ contains the kernel of the reduction map $G^{\mr{ad}}(W(k)/p^nW(k))
  \to G(k)$. 
\item For $t \leq n$, let $H_t$ be the image of $H$ in
  $G^{\mr{ad}}(W(k)/p^tW(k))$.  The sequence
  \[
  0 \to K_t \to H_t \to H_{t-1} \to 1
  \]
  is nonsplit for all $t \geq 2$. Here $K_t$ is the kernel of the
  reduction map $H_t \to H_{t-1}$ which, using (1), can be seen to be
  isomorphic to $\mg{g}_k^{\mr{der}}$.
\end{enumerate}
\end{lem}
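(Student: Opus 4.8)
The plan is to reduce both assertions to facts about $\mg{g}_k^{\mr{der}}$ as a module over $G^{\mr{sc}}(k)$, which are already available (Remark \ref{rem:abs}). For (1), the strategy is standard: the kernel $N$ of $G^{\mr{ad}}(W(k)/p^nW(k)) \to G^{\mr{ad}}(k)$ has a filtration by the kernels $N^{(t)}$ of $G^{\mr{ad}}(W(k)/p^nW(k)) \to G^{\mr{ad}}(W(k)/p^tW(k))$, with successive quotients $N^{(t-1)}/N^{(t)} \cong \mg{g}_k^{\mr{der}}$ (here one uses that $p$ is very good so $\mg{g}^{\mr{ad}} = \mg{g}^{\mr{der}}$ over $W(k)$, up to the nondegeneracy of the Killing form, and that $G^{\mr{ad}}$ is smooth). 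The subgroup $H$ surjects onto $G^{\mr{sc}}(k)$'s image in $G^{\mr{ad}}(k)$, so $H$ acts on each graded piece $N^{(t-1)}/N^{(t)}$ through a group containing the image of $G^{\mr{sc}}(k)$; since the only such submodules are $0$ and $\mg{g}_k^{\mr{der}}$ by Remark \ref{rem:abs}, I would show inductively (descending on $t$) that $H \cap N^{(t)}$ surjects onto $N^{(t)}/N^{(t+1)}$. The one thing to check is that the relevant commutator or conjugation map $H \times N^{(t)} \to N^{(t)}/N^{(t+1)}$ is nonzero, i.e.\ that $H$ does not act trivially on the graded piece — but $G^{\mr{sc}}(k)$ acts nontrivially on $\mg{g}_k^{\mr{der}}$ (it is a faithful-enough irreducible), so the image of $H$ does too, and irreducibility then forces surjectivity. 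This gives $N^{(t)} \subseteq H$ for all $t$, in particular $N = N^{(1)} \subseteq H$.

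For (2): once (1) is known, $K_t$ (the kernel of $H_t \to H_{t-1}$) is exactly the image of $N^{(t-1)}/N^{(t)}$, hence $\cong \mg{g}_k^{\mr{der}}$ as claimed, and it sits centrally inside the kernel of $H_t \to G^{\mr{ad}}(k)$ in the appropriate sense. Suppose for contradiction the extension $0 \to K_t \to H_t \to H_{t-1} \to 1$ splits, giving a section $s\colon H_{t-1} \to H_t$. Restrict $s$ to the perfect subgroup $\Pi_{t-1} \subseteq H_{t-1}$ equal to the image of $G^{\mr{sc}}(k)$ (it is perfect since $p \geq 5$ is very good, so $G^{\mr{sc}}(k)$ is its own commutator subgroup — this is where the hypothesis on $p$ and the exclusion of small cases enters). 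Then $s(\Pi_{t-1})$ is a complement to $K_t$ above $\Pi_{t-1}$ inside the preimage $\widetilde{\Pi}_t$ of $\Pi_{t-1}$ in $H_t$. But $\widetilde{\Pi}_t$ is a subgroup of $G^{\mr{ad}}(W(k)/p^tW(k))$ whose image in $G^{\mr{ad}}(k)$ contains the image of $G^{\mr{sc}}(k)$, so by part (1) (applied with $n$ replaced by $t$) it contains the full kernel $K_t$, and the extension $0 \to K_t \to \widetilde{\Pi}_t \to \Pi_{t-1} \to 1$ would also split. I would then derive a contradiction from the known fact that the reduction maps between the groups $G^{\mr{sc}}(W(k)/p^tW(k))$ (equivalently $G^{\mr{ad}}$, which differs by the constant central group $Z_{G^{\mr{sc}}}$) have \emph{no} section over the perfect subgroup generated by the $k$-points, i.e.\ $H^1$-type nonvanishing / the nonexistence of a lift of $G^{\mr{sc}}(k) \hookrightarrow G^{\mr{sc}}(W(k)/p^{t-1}W(k))$ to $G^{\mr{sc}}(W(k)/p^tW(k))$. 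Concretely, such a splitting would produce a copy of $G^{\mr{sc}}(k)$ (or of $\Pi_{t-1}$) inside $G^{\mr{ad}}(W(k)/p^tW(k))$ mapping isomorphically to its reduction mod $p^{t-1}$; projecting and using that $\mg{g}_k^{\mr{der}}$ is a nontrivial irreducible $\Pi_{t-1}$-module with $H^1(\Pi_{t-1}, \mg{g}_k^{\mr{der}})$ handled appropriately, or more robustly by comparing orders and using that the reduction map $G^{\mr{sc}}(\Z/p^t) \to G^{\mr{sc}}(\Z/p^{t-1})$ is already nonsplit on the level-$p$ congruence subgroups, I get the contradiction.

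The main obstacle I anticipate is the last step of (2): cleanly pinning down \emph{why} the extension restricted to the perfect part $\Pi_{t-1}$ cannot split. The cheap route — "it's a known fact that $SL_n(\Z/p^t)$ doesn't split over $SL_n(\Z/p^{t-1})$" — needs to be transported to general almost-simple simply connected $G$ and phrased for $G^{\mr{ad}}$; the conceptual route is to show $\mg{g}_k^{\mr{der}}$, viewed as the module governing the extension class, pairs nontrivially with $\Pi_{t-1}$ in group cohomology, which again comes down to perfectness of $\Pi_{t-1}$ together with the irreducibility and self-duality (via the Killing form, nondegenerate since $p > h$) of $\mg{g}_k^{\mr{der}}$. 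I would likely cite the analogous statement in \cite{pat-ex} or \cite{klr} if available, and otherwise argue via the explicit one-parameter unipotent subgroups $u_\beta$: a section would force $u_\beta(1)$ to lift to an element of order $p$ in $G^{\mr{ad}}(W(k)/p^tW(k))$ commuting appropriately, contradicting that the mod-$p^t$ unipotent has order $p^{\lceil t/? \rceil} > p$ for $t \geq 2$. That order computation for the elementary unipotent is the concrete fact that does all the work, and making sure it is stated for the right group ($G^{\mr{ad}}$, $p$ very good) is the delicate point.
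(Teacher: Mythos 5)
There is a genuine gap, and it sits exactly at the one place where all the content of the lemma lives. In your argument for (1), you reduce to showing that the image of $H \cap N^{(t)}$ in the graded piece $N^{(t)}/N^{(t+1)} \cong \mg{g}_k^{\mr{der}}$ is nonzero, and you claim this follows because the conjugation action of $H$ on that graded piece is nontrivial together with irreducibility. That inference is not valid: irreducibility (Remark \ref{rem:abs}) only tells you the image of $H \cap N^{(t)}$ is $0$ or everything; nontriviality of the \emph{action} of $H$ on the graded piece gives you no element of $H$ inside $N^{(t)}$ to start with. The induction needs a seed, and producing that seed at the first level is precisely the nonsplitness statement: if $H$ met the kernel $K_2$ of $G^{\mr{ad}}(W(k)/p^2W(k)) \to G^{\mr{ad}}(k)$ trivially, its image mod $p^2$ would be a complement, i.e.\ a splitting over a subgroup containing the image of $G^{\mr{sc}}(k)$. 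So your (1) secretly depends on (2) for $t=2$, while your (2) is the step you leave vague (and the appeal to (1) inside your (2) is vacuous anyway, since the preimage $\widetilde{\Pi}_t$ of $\Pi_{t-1}$ in $H_t$ contains $K_t$ by definition). The paper avoids this circularity by citing Vasiu: part (1) is obtained by pulling $H$ back to $G^{\mr{sc}}(W(k))$ and applying \cite[Theorem 1.3]{vasiu-surj}, together with the very-good-prime fact that $\mathrm{Lie}(G^{\mr{sc}}) \to \mathrm{Lie}(G^{\mr{ad}})$ is an isomorphism so the congruence kernels match up; part (2) for $t=2$ is \cite[Proposition 4.4.1]{vasiu-surj}.

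The second gap is the concrete obstruction in (2), which you flag but do not resolve, and your proposed element is the wrong one. A section of $H_t \to H_{t-1}$ preserves orders, so you want an element of $H_{t-1}$ of order $p$ admitting no order-$p$ lift to $G^{\mr{ad}}(W(k)/p^tW(k))$. The element $u_\beta(1)$ does not do this for $t \geq 3$: mod $p^{t-1}$ it has order $p^{t-1}$, not $p$, so a section only forces an order-$p^{t-1}$ lift and no contradiction is immediate. The paper's choice is the deep unipotent $A = \bigl[\begin{smallmatrix} 1 & p^{t-2} \\ 0 & 1 \end{smallmatrix}\bigr]$ inside the root $SL_2$ (the general case reduces to $SL_2$ for split $G$ with $p \geq 5$ very good, per the first paragraph of loc.\ cit.): it has order $p$ in $SL_2(W(k)/p^{t-1}W(k))$, no lift to $SL_2(W(k)/p^{t}W(k))$ has order $p$, and it lies in $H_{t-1}$ thanks to part (1) (for $t=2$ it lies in the image of $G^{\mr{sc}}(k)$ by hypothesis). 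Note also that your detour through a ``perfect subgroup $\Pi_{t-1}$ equal to the image of $G^{\mr{sc}}(k)$'' is both imprecise (there is no canonical copy of that image inside $H_{t-1}$ for $t \geq 3$) and unnecessary: one applies the putative section to the single element $A$ and compares orders; no perfectness or $H^1$ input is needed.
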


\begin{proof}
  This is a simple consequence of results from \cite{vasiu-surj}.

  \smallskip

  To prove (1), let $H'$ be the inverse image of $H$ in
  $G^{\mr{sc}}(W(k))$. Since $H$ contains the image of
  $G^{\mr{sc}}(k)$, it follows that $H'$ surjects onto
  $G^{\mr{sc}}(k)$ so by \cite[Theorem 1.3]{vasiu-surj}
  $H' = G^{\mr{sc}}(W(k))$. Since $p$ is very good, the map on Lie
  algebras induced by the homomorphism $G^{\mr{sc}} \to G^{\mr{ad}}$
  is an isomorphism, so $\ker(G^{\mr{sc}}(W(k)) \to G^{\mr{sc}}(k))$
  maps onto $\ker(G^{\mr{ad}}(W(k)) \to G^{\mr{ad}}(k))$, giving (1).

  The statement in (2) for $t=2$ is Propostion 4.4.1 of
  \cite{vasiu-surj}, where it is the main ingredient in the proof of the
  main theorem. In fact, in our setting of a split group with $p \geq
  5$ a very good prime, (2) can be reduced to the case of $SL_2$ (see
  the first paragraph of loc.~cit.). In this case it follows from the
  fact that the matrix $A = \bigl [ \begin{smallmatrix} 1 & p^{t-2} \\ 0 &
    1 \end{smallmatrix} \bigr ]$, viewed as an element of
  $SL_2(W(k)/p^{t-1}W(k))$, is of order $p$ but there is no matrix of
  order $p$ in $SL_2(W(k)/p^{t}W(k))$ reducing to $A$.
\end{proof}

The following analogue of Fact 5 from \cite{klr} plays a crucial role in
allowing us to find nice primes with various desired properties.
\begin{lem}  \label{lem:kill} %
  Let $\rho_n$ be a deformation of $\rbar$ to
  $W(k)/p^nW(k)$. 
  Let $\{f_1,f_2,\dots,f_r\}$ be $k$-linearly independent in
  $H^1(\Gamma_F, \rbarg)$ and $\{\phi_1, \phi_2,\dots,\phi_s\}$ be
  $k$-linearly independent in $H^1(\Gamma_F, \rbarg^*)$. Let
  $K = F(\rbarg, \mu_p)$ as before, and let $K_{f_i}$
  (resp.~$K_{\phi_j}$) be the fixed field of the kernel of the
  restriction of $f_i$ (resp.~$\phi_j$) to $\Gamma_K$. Then as an
  $\F_p[\gal(K/F)]$-module $\gal(K_{f_i}/K)$
  (resp.~$\gal(K_{\phi_j}/K)$) is isomorphic to $\rbarg$
  (resp.~$\rbarg^*$). Let $K_{P_n}$ be the fixed field of the kernel
  of the composite of $\rho_n$ with the adjoint representation
  $G(W(k)/p^nW(k)) \to GL(\mg{g}^{\mr{der}}_{W(k)/p^nW(k)})$. Then
  each of the fields $K_{f_i}$, $i=1,\dots,r$, $K_{\phi_j}$,
  $j =1,\dots,s$, $K_{P_n}$, $K(\mu_{p^n})$ is linearly disjoint over
  $K$ with the compositum of the others.

  Let $L = F(\mu_p) \cap F(\rbarg)$, and let $x \in \rbar(\Gamma_L)
  \cap G^{\mr{der}}(k)$, $T$ and $\alpha$ be as in Claim
  \ref{claim:x}. For $i=1,2,\dots,r$, let $a_i \in \mg{g}_k^{\mr{der}}$
  be any element fixed by $x$ and for $j =1,2,\dots,s$, let $b_j \in
  (\mg{g}_k^{\mr{der}})^{\vee}$ be any element on which $x$ acts by
  $\alpha^{-1}(x)$. Furthermore, let $y$ be any element of the image
  of $\rho_n$ reducing to $x$. There exists a Chebotarev set of nice
  primes $q$ such that
  \begin{enumerate}
  \item $\rbar({\rm Frob}_q) = x$,
  \item for $i=1,2,\dots,r$, $f_i|_{\Gamma_q}$ (which is an element of
    $H^1_{nr}(\Gamma_q, \rbarg)$ so can be viewed as a homomorphism
    $\Gamma_q/I_q \to (\mg{g}_k^{\mr{der}})^{{\rm Frob}_q}$) is given by ${\rm Frob}_q
    \mapsto a_i$,
  \item for $j=1,2,\dots,s$, $\phi_j|_{\Gamma_q}$ (which is an element
    of $H^1_{nr}(\Gamma_q, \rbarg^*)$ so can be viewed as a
    homomorphism $\Gamma_q/I_q \to ((\mg{g}_k^{\mr{der}})^*)^{{\rm Frob}_q}$) is
    given by ${\rm Frob}_q \mapsto b_j$,
  \item $\rho_n({\rm Frob}_q) = y$.
  \end{enumerate}

\end{lem}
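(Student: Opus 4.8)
The plan is to prove the lemma in two stages: first establish the linear disjointness over $K$ of $K_{f_i}$, $K_{\phi_j}$, $K_{P_n}$, $K(\mu_{p^n})$, and then run a Chebotarev argument over the compositum to produce nice primes carrying the prescribed local data. For the module statements: since $K\supseteq F(\rbarg)$, the group $\Gamma_K$ acts trivially on $\rbarg$, so $f_i|_{\Gamma_K}$ is a homomorphism $\Gamma_K\to\rbarg$; a short cocycle computation using $\mr{Ad}\,\rbar|_{\Gamma_K}=1$ shows it is $\gal(K/F)$-equivariant for the conjugation action on the source and the adjoint action on the target, so its image is a $\gal(K/F)$-submodule of $\rbarg$. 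By Proposition~\ref{prop:nice}(1) and inflation--restriction, $H^1(\Gamma_F,\rbarg)\to H^1(\Gamma_K,\rbarg)$ is injective, whence $f_i|_{\Gamma_K}\neq 0$; the irreducibility of $\rbarg$ recalled in Remark~\ref{rem:abs} then forces the image to be all of $\rbarg$, so $\gal(K_{f_i}/K)\cong\rbarg$, and symmetrically $\gal(K_{\phi_j}/K)\cong\rbarg^*$.

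For the disjointness the inputs are: $\rbarg$ and $\rbarg^*$ are absolutely irreducible (Remark~\ref{rem:abs}) and mutually non-isomorphic as $\gal(K/F)$-modules, indeed already as $\gal(K/F(\rbarg))$-modules, since $\rbarg$ is trivial there while $\rbarg^*\cong\rbarg\otimes\kappa$ (nondegeneracy of the Killing form) and $\kappa|_{\gal(K/F(\rbarg))}\neq 1$ because $F(\mu_p)\not\subset F(\rbarg)$ (as in the proof of Proposition~\ref{prop:nice}). Using injectivity of restriction, the $k$-linear independence of the $f_i$ (resp.~$\phi_j$) forces the restriction map $\gal(\prod_i K_{f_i}/K)\to\rbarg^r$ to be onto, hence an isomorphism, and likewise for the $\phi_j$; since $\rbarg^r$ and $(\rbarg^*)^s$ share no Jordan--H\"older factor, a submodule of $\rbarg^r\oplus(\rbarg^*)^s$ surjecting onto each summand must be everything, so $\gal(\prod_i K_{f_i}\,\prod_j K_{\phi_j}/K)\cong\rbarg^r\oplus(\rbarg^*)^s$. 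One then adjoins $K(\mu_{p^n})$, whose Galois group over $K$ is cyclic with sole Jordan--H\"older factor the one-dimensional $\F_p(\bar\kappa)$ --- not isomorphic to $\rbarg$ or $\rbarg^*$ for dimension reasons, nor a factor of the $p$-group $\gal(K_{P_n}K/K)$ --- and $K_{P_n}$, for which a coprimality-of-degrees argument gives $K_{P_n}\cap K=F(\rbarg)$. The step I expect to be the main difficulty is the disjointness of $K_{P_n}$ from $\prod_i K_{f_i}$: the $p$-group $\gal(K_{P_n}K/K)$ again has $\rbarg$ as its only Jordan--H\"older factor, so composition factors alone do not separate it from the $K_{f_i}$. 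Following the proof of Fact~5 in \cite{klr}, I would fix a set-section of $G^{\mr{ad}}(W(k)/p^nW(k))\to G^{\mr{ad}}(k)$, describe the successive layers of $\rho_n$ over $K$ by explicit cohomology classes in $H^1(\Gamma_F,\rbarg)$ --- with the non-splitness in Lemma~\ref{lem:nonsplit}(2) controlling how consecutive layers are glued --- and compare these with $\{f_1,\dots,f_r\}$, concluding that for $\mc{K}$ the compositum one has $\gal(\mc{K}/K)\cong\rbarg^r\times(\rbarg^*)^s\times\gal(K_{P_n}K/K)\times\gal(K(\mu_{p^n})/K)$.

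Granting this, set $\mc{K}':=\mc{K}\cdot F(\rho_n)$ and compute $\gal(\mc{K}'/F)$ as the evident fibre product over $\gal(K/F)$ (note $K_{P_n}\subseteq F(\rho_n)$). Let $\tau\in\Gamma_L$ with $\rbar(\tau)=x$, for $x,T,\alpha$ as in Claim~\ref{claim:x}, and let $y$ be a lift of $x$ in the image of $\rho_n$. I would then exhibit $\sigma\in\gal(\mc{K}'/F)$ whose $F(\rho_n)$-component is $y$ (so $\rho_n(\mr{Frob}_q)=y$ and $\rbar(\mr{Frob}_q)=x$), whose image in $\gal(F(\mu_p)/F)\cong\F_p^\times$ is $\alpha(x)$, and whose $\gal(K_{f_i}/K)$- and $\gal(K_{\phi_j}/K)$-components are $a_i$ and $b_j$; these constraints are mutually compatible by Claim~\ref{claim:x}, and the hypotheses that $x$ fixes $a_i$ and acts on $b_j$ by $\alpha^{-1}(x)$ are exactly what make them consistent and conjugation-invariant, so they cut out a nonempty union of conjugacy classes. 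The nice primes are those $q\notin S$ lying outside the ramification loci of the $f_i$ and $\phi_j$ with $\mr{Frob}_q$ conjugate to $\sigma$; these form a Chebotarev set. For such $q$ one has $\rbar(I_q)=1$ and $\rbar(\mr{Frob}_q)=x$ regular semisimple; $\mr{Norm}(q)\equiv\kappa(\mr{Frob}_q)=\alpha(x)\not\equiv\pm1\pmod p$; and $\Gamma_q$ acts on each root space $\mg{g}_\beta$ (with $T$ the centralizer of $x$) through the unramified character $\beta(x)$, which equals $\kappa$ exactly when $\beta=\alpha$ by the distinctness in Claim~\ref{claim:x} --- so $q$ is nice. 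Finally $f_i|_{\Gamma_q}$ and $\phi_j|_{\Gamma_q}$ are unramified by the choice of $q$ and hence determined by $f_i(\mr{Frob}_q)=a_i$ and $\phi_j(\mr{Frob}_q)=b_j$, while $\rho_n(\mr{Frob}_q)=y$ by construction, giving (1)--(4).
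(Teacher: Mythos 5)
Your overall route is the paper's: module statements via Proposition \ref{prop:nice}(1), irreducibility and injectivity of restriction; pairwise disjointness of the $K_{f_i}$ and $K_{\phi_j}$ from $\rbarg\ncong\rbarg^*$; and a Chebotarev choice in the compositum at the end (your verification that the resulting primes are nice, and that the prescribed components are compatible by Claim \ref{claim:x}, is fine and is essentially what the paper leaves implicit). The genuine gap is at the step you yourself flag as the crux: disjointness of $K_{P_n}$ from the compositum of the $K_{f_i}$. You propose to ``describe the successive layers of $\rho_n$ over $K$ by explicit cohomology classes in $H^1(\Gamma_F,\rbarg)$'' and compare them with $f_1,\dots,f_r$. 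This cannot work as stated: the content of Lemma \ref{lem:nonsplit}(2) is precisely that the extensions $1\to\mg{g}^{\mr{der}}_k\to\im(\tilde{\rho}_t)\to\im(\tilde{\rho}_{t-1})\to 1$ are non-split, so the layers of $K_{P_n}$ over $K$ are \emph{not} cut out by any global $1$-cocycle; a set-theoretic section only produces $2$-cocycle data, and there is then no comparison with the $f_i$ available in $H^1$.

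What is missing is the complementary half of the dichotomy, which the paper supplies by inflation--restriction: for any global class $f\in H^1(\Gamma_F,\rbarg)$ the extension $1\to\gal(K_f/K)\to\gal(K_f/F)\to\gal(K/F)\to 1$ is \emph{split}, because its class is the image of $f|_{\Gamma_K}$ under the map $H^1(\Gamma_K,\rbarg)^{\gal(K/F)}\to H^2(\gal(K/F),\rbarg)$ occurring in the inflation--restriction sequence, hence vanishes. By irreducibility of $\rbarg$, any field $M$ with $K\subsetneq M\subseteq\prod_i K_{f_i}$, Galois over $F$ and $\gal(M/K)\cong\rbarg$, is such a $K_g$ (for $g$ in the $k$-span of the $f_i$), so gives a split extension of $\gal(K/F)$; if the first nontrivial layer of $K_{P_n}K/K$ lay in the compositum it would equal such an $M$, contradicting Lemma \ref{lem:nonsplit}(2) (after inflating along $\gal(K/F)\to\gal(F(\rbarg)/F)$, which is injective on $H^2$ since $\gal(K/F(\rbarg))$ has order prime to $p$). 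Without this splitness statement your non-splitness input has nothing to bite against. Two smaller slips, neither fatal: the $\gal(K/F)$-action on $\gal(K(\mu_{p^n})/K)$ is trivial (the cyclotomic extension of $F$ is abelian), so its unique Jordan--H\"older factor is the trivial module rather than $\F_p(\bar{\kappa})$ --- and it is exactly this triviality, against the absence of trivial quotients on the other side (Remark \ref{rem:abs}), that does the work; and $\gal(\mc{K}'/F)$ is not the fibre product of $\gal(\mc{K}/F)$ and $\gal(F(\rho_n)/F)$ over $\gal(K/F)$, since $K_{P_n}\subseteq\mc{K}\cap F(\rho_n)$ while $K_{P_n}\not\subseteq K$ in general.
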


\begin{proof}
  Our assumptions on $\rbar$ and $p$ and Remark \ref{rem:abs} imply
  that $\rbarg$ and $\rbarg^*$ are non-isomorphic irreducible
  $\F_p[\Gamma_F]$-modules (with endomorphism ring $k$), so the
  statements about the images of the cocycles and linear disjointness
  of $K_{f_i}$ and $K_{\phi_j}$ are immediate consequences of
  Proposition \ref{prop:nice}, (1).

  We have to do a little more work in order to include $K_{P_n}$. For
  this we consider the exact sequences
  \[
  1 \to \gal(K_{f_i}/K) \to \gal(K_{f_i}/F) \to \gal(K/F) \to 1
  \]
  and
  \[
  1 \to \gal(K_{\phi_j}/K) \to \gal(K_{\phi_j}/F) \to \gal(K/F) \to 1\ .
  \]
  The first short exact sequence, viewed as an element of
  $H^2(\gal(K/F), \rbarg)$, is obtained by first restricting $f_i$ to
  $\Gamma_K$ and then applying the map $H^1(\Gamma_K,
  \rbarg)^{\gal(K/F)} \to H^2(\gal(K/F), \rbarg)$ from the
  inflation-restriction sequence, the exactness of which then implying
  that this element is zero.  Thus, the sequence splits and the same
  argument implies that the second sequence also splits.

  For any $t \leq n$, let $\rho_t$ be the reduction of $\rho_n$ modulo
  $p^t$.  By Lemma \ref{lem:nonsplit} (1) the image of
  $\tilde{\rho}_t$, the composite of $\rho_t$ with the adjoint
  representation as above, contains the image of $G^{\mr{sc}}(W(k)/p^nW(k))$.
  Furthermore, it follows from Lemma \ref{lem:nonsplit} (2) that the
  induced extensions
  \[
  1 \to \mg{g}_k^{\mr{der}} \to \im(\tilde{\rho}_t) \to
  \im(\tilde{\rho}_{t-1}) \to 1 \] are non-split for $t \geq 2$.

Given the above, we get (strong) linear disjointness of all the
$K_{f_i}$, $K_{\phi_j}$ and $K_{P_n}$: the fields $K_{f_i}$ for
varying $i$ are strongly linearly disjoint because the $f_i$ are
$k$-linearly independent and similarly for the $K_{\phi_j}$ and The
fact that $K(\mu_{p^n})$ is linearly disjoint from the compositum of
all these fields follows from the fact that $\gal(K/F)$ acts trivially
on $\gal(K(\mu_{p^n})/K)$ while the Galois groups of the other
extensions have no such non-trivial quotient by Remark \ref{rem:abs}.

  To construct the desired Chebotarev set of nice primes we proceed as
  follows:

  We apply (the proof) of Proposition \ref{prop:nice} to get a
  Chebotarev set of nice primes $q$ for $\rbarg$ in $\mc{O}_F$
  determined by the choice of a suitable Frobenius element $\sigma \in
  \gal(K/F)$ (with $K = F(\rbarg, \mu_p)$). 

  The conditions in (2), (3) and (4) are all Chebotarev conditions on
  the field extensions $K_{f_i}/K$, $K_{\phi_j}/K$ and $K_{P_n}/K$ and
  then the linear disjointness proved above implies that all these
  conditions can be realised simultaneously by choosing a suitable
  element of $\gal(\wt{K}/F)$, with $\wt{K}$ the compositum of all the
  $K_{f_i}$, $K_{\phi_j}$ and $K_{P_n}$.
\end{proof}

\subsection{Local computations I} \label{sec:loc}

\subsubsection{}
Let $\Gamma$ be the absolute Galois group of any local field with
residue characteristic prime to $p$.  We recall that for any
$W(k)[\Gamma]$-module $M$ which is finite as a $W(k)$-module Tate's
Euler characteristic formula states (\cite[Theorem 2.8]{milne-adt})
:
\begin{equation} \label{eq:euler} 
  \ell(H^0(\Gamma, M)) - \ell(H^1(\Gamma, M))
  + \ell(H^2(\Gamma,M)) = 0.
\end{equation}
Furthermore, if $M$ is a free $W(k)/p^n$-module for some $n$, then
local Tate duality gives a perfect pairing (\cite[Corollary
2.3]{milne-adt}): 
\begin{equation} \label{eq:ltd}
H^i(\Gamma, M) \times H^{2-i}(\Gamma, M^*) \sr{\cup}{\to} H^2(\Gamma,
(W(k)/p^n)(\kappa)) \sr{inv_q}{\cong} W(k)/p^n.
\end{equation}


\subsubsection{The local condition at nice primes} \label{sec:lc} 

For a root $\alpha \in \Phi(G,T)$ we let $T_{\alpha}$ be the identity
component of $\ker(\alpha)$, $\mg{t}_{\alpha}$ its Lie algebra and we
let $\mg{l}_{\alpha}$ be the Lie algebra of the image of the coroot
associated to $\alpha$, so
$\mg{t} = \mg{t}_{\alpha} \oplus \mg{l}_{\alpha}$.  We denote by
$H_{\alpha}$ the subgroup of $G$ generated by $T$ and $U_{\alpha}$
(the corresponding root subgroup of $G$), so $\alpha$ extends to a
homomorphism $H_{\alpha} \to \G_m$.

Associated to a nice prime $q$ there is a smooth deformation condition
$\mc{D}_q$ introduced in \cite{pat-ex}, generalizing the one
introduced for $GL_2$ in \cite{Ravi}. Lifts $\rho: \Gamma_q \to G(A)$
(of $\rbar|_{\Gamma_q} \to G(k)$) to Artin local $W(k)$-algebras $A$
with residue field $k$ are in $\mc{D}_q$ if they are $\wh{G}(A)$
conjugate to lifts factoring through $H_{\alpha}(A)$ and such that
$\alpha \circ \rho: \Gamma_q \to A^{\times}$ is $\kappa$; this
condition forces $\rho(I_q)$ to be contained in $U_{\alpha}(A)$. Its
tangent space is
$\mc{N}_q = H^1(\Gamma_q, W) \subset H^1(\Gamma_q, \rbarg)$, where
$W = \mg{t}_{\alpha} \oplus \mg{g}_{\alpha}$.
We refer to \cite[\S 4.2]{pat-ex}
for the proofs.

\subsubsection{}
From now on we will assume that all nice primes $q$ we consider
correspond to some fixed element of $\rbar(\Gamma_S) \cap T(k)$ (for a
fixed split maximal torus $T$) and a fixed root
$\alpha \in \Phi(G,T)$.  That such primes suffice for our needs is due
to Remark \ref{rem:abs}.  To simplify notation in what follows, we fix
isomorphisms of $\mg{l}_{\alpha}$ and $\mg{g}_{\alpha}^*$ with $k$;
for $\mg{l}_{\alpha}$ we use the natural one corresponding to the map
on Lie algebras induced by $\alpha^{\vee}$ but for $\mg{g}_{\alpha}^*$
there is no canonical
choice.
For nice primes $q$ as above these isomorphisms are also isomorphisms
as $\Gamma_q$-modules (with the trivial action on $k$).  Using these
identifications, for $f \in H^1(\Gamma_q, \rbarg)$ we will denote by
$f({\rm Frob}_q)$ the element of $k$ obtained by evaluating the
$\mg{l}_{\alpha}$ component of $f$ (which is a homomorphism
$\Gamma_q \to k$) at ${\rm Frob}_q$.  Similarly, for
$\phi \in H^1(\Gamma_q, \rbarg^*)$ we will denote by
$\phi({\rm Frob}_q)$ the element of $k$ corresponding to the
$\mg{g}_{\alpha}^*$ component of $\phi$. (These elements are well
defined since $\mr{Norm}(q) \notequiv 1 \mod p$ implies that
$H^1(\Gamma_q, k) = H^1_{nr}(\Gamma_q, k)$.)

\smallskip

The lemma below is the reason why we work with nice primes rather than
the more general class of primes used in \cite[\S 4.2]{pat-ex}:
without the uniqueness of the root $\alpha$ none of the statements
would be true and we would not have enough control to make our global
arguments work.
\begin{lem} \label{lem:nice} 
  For $q$  a nice
  prime for $\bar{\rho}$ we have:
\begin{enumerate}
\item $h^1(\Gamma_q, \rbarg) - h^0(\Gamma_q, \rbarg) = 1$.
\item $h^1(\Gamma_q, \rbarg) - \dim{\mc{N}_q} = 1$.
\item Fix $f \in H^1(\Gamma_q, \rbarg)$ ramified at $q$. Then for all
  unramified $\phi \in H^1_{nr}(\Gamma_q, \rbarg^*)$,
  $inv_q(f \cup \phi) = \gamma_f\phi({\rm Frob}_q)$, where $\gamma_f$ is a
  nonzero constant depending only on $f$.
\item Fix $\phi \in H^1(\Gamma_q, \rbarg^*)$ ramified at $q$. After
  appropriately scaling $\phi$, for any
  $f \in H^1_{nr}(\Gamma_q, \rbarg)$ we have
  $inv_q(f \cup \phi) = f({\rm Frob}_q)$.
\end{enumerate}
\end{lem}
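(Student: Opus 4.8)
The plan is to work throughout with the decomposition $\mg{g}_k^{\mr{der}} = \mg{t}_\alpha \oplus \mg{l}_\alpha \oplus \bigoplus_{\beta} \mg{g}_\beta$ as a $\Gamma_q$-module, exploiting the hypotheses that define a nice prime: $\rbar(I_q) = 1$, $\rbar({\rm Frob}_q)$ is regular semisimple, $\mr{Norm}(q) \not\equiv \pm 1 \bmod p$, and—crucially—$\alpha$ is the \emph{unique} root on which $\Gamma_q$ acts by $\kappa$. Since $\rbar({\rm Frob}_q) \in T(k)$ acts on $\mg{g}_\beta$ by the scalar $\beta(x)$, and $I_q$ acts trivially on everything except the $\mg{g}_\beta$ summands where it may act through $\kappa|_{I_q}$, the only summands contributing to cohomology are $\mg{t}_\alpha$ and $\mg{l}_\alpha$ (trivial action, hence $H^i(\Gamma_q, k)$ with $H^0 = H^1 = k$ by $\mr{Norm}(q) \not\equiv 1$ and $H^2 = 0$ by $\mr{Norm}(q)\not\equiv 1$ via local duality against $k(\kappa)$ whose $H^0$ vanishes when $\mr{Norm}(q) \neq \pm 1$... here I need $k(\kappa)$ unramified with Frobenius acting by $\mr{Norm}(q)$, so $H^0(\Gamma_q, k(\kappa)) = 0$), and $\mg{g}_\alpha$ (on which $\Gamma_q$ acts by $\kappa$, with Frobenius eigenvalue $\mr{Norm}(q) \neq 1$ so $H^0 = 0$, and by Euler characteristic $H^1 = H^2$, with $H^2 \cong k$ by duality since $\mg{g}_\alpha^* \cong k$ trivial has $H^0 = k$). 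Summing, $h^0(\Gamma_q, \rbarg) = 2$ (from $\mg{t}_\alpha \oplus \mg{l}_\alpha$, each contributing $1$) and $h^1(\Gamma_q, \rbarg) = 3$ (one each from $\mg{t}_\alpha$, $\mg{l}_\alpha$, $\mg{g}_\alpha$), which gives (1); this is where uniqueness of $\alpha$ is essential, since a second such root would change all these counts.

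For (2), recall from \S\ref{sec:lc} that $\mc{N}_q = H^1(\Gamma_q, W)$ with $W = \mg{t}_\alpha \oplus \mg{g}_\alpha$. By the same computation, $h^1(\Gamma_q, W) = 1 + 1 = 2$ (the $\mg{t}_\alpha$ part unramified of dimension $1$, the $\mg{g}_\alpha$ part of dimension $1$), so $h^1(\Gamma_q, \rbarg) - \dim \mc{N}_q = 3 - 2 = 1$, giving (2). The point is that $\mc{N}_q$ captures the full $\mg{g}_\alpha$-line and the unramified $\mg{t}_\alpha$-line but misses the unramified $\mg{l}_\alpha$-line, which is exactly the line along which $f({\rm Frob}_q)$ was defined.

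For (3) and (4), I would use the self-duality of $\mg{g}_k^{\mr{der}}$ (via the Killing form, nondegenerate since $p > h$) together with the explicit structure of local Tate duality \eqref{eq:ltd}. Under the cup product $H^1(\Gamma_q, \rbarg) \times H^1(\Gamma_q, \rbarg^*) \to H^2(\Gamma_q, k(\kappa)) \cong k$, the pairing is block-diagonal with respect to the root-space decomposition: $\mg{t}_\alpha$ pairs with $\mg{t}_\alpha^* \cong \mg{t}_\alpha$, $\mg{l}_\alpha$ with $\mg{l}_\alpha^* \cong \mg{l}_\alpha$, and $\mg{g}_\alpha$ with $\mg{g}_{-\alpha} \cong \mg{g}_\alpha^*$. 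For (3): a ramified $f \in H^1(\Gamma_q, \rbarg)$ must have nonzero component in $H^1(\Gamma_q, \mg{g}_\alpha)$ (the only ramified line), and an unramified $\phi \in H^1_{nr}(\Gamma_q, \rbarg^*)$ pairs with it only through its $\mg{g}_\alpha^*$-component, which by the identification is $\phi({\rm Frob}_q)$ times a scalar; the local duality pairing $H^1(\Gamma_q, \mg{g}_\alpha) \times H^1_{nr}(\Gamma_q, \mg{g}_\alpha^*) \to k$ of a ramified class against an unramified class is the standard perfect pairing between the ramified and unramified parts, hence nonzero, and $\gamma_f$ is its value on $f$—a nonzero constant depending only on the $\mg{g}_\alpha$-component of $f$. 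Statement (4) is the mirror image: a ramified $\phi \in H^1(\Gamma_q, \rbarg^*)$ has its ramification in the $\mg{g}_\alpha^*$-line, an unramified $f$ pairs with it through $f$'s $\mg{g}_\alpha$-part... wait, $f \in H^1_{nr}$ has components in the unramified lines $\mg{t}_\alpha, \mg{l}_\alpha, \mg{g}_\alpha$; pairing against ramified $\phi$ (ramified in $\mg{g}_\alpha^*$) picks out $f$'s unramified $\mg{g}_\alpha$-component against $\phi$'s ramified part. Hmm—but $f({\rm Frob}_q)$ was defined as the $\mg{l}_\alpha$-component. So actually for (4) I should pair $f$'s \emph{unramified} part against $\phi$; the unramified parts of $\mg{t}_\alpha$ and $\mg{l}_\alpha$ pair against the unramified parts of $\phi$ in $\mg{t}_\alpha^*, \mg{l}_\alpha^*$—but $\phi$ ramified means its $\mg{g}_\alpha^*$-part is the ramified piece while it may still have unramified $\mg{t}_\alpha^*, \mg{l}_\alpha^*$ components, yet those don't contribute to $inv_q(f \cup \phi)$ since unramified cups unramified is zero. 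So the nonzero contribution is $f$'s unramified $\mg{g}_\alpha$-line against $\phi$'s ramified $\mg{g}_\alpha^*$-line. I would then observe that $f({\rm Frob}_q)$ (the $\mg{l}_\alpha$-evaluation) is \emph{not} obviously what appears—so the correct reading must be that after scaling $\phi$, one normalizes so that $inv_q(f \cup \phi)$ equals the chosen $\mg{l}_\alpha$-coordinate $f({\rm Frob}_q)$; this requires checking that the map $f \mapsto inv_q(f \cup \phi)$ on $H^1_{nr}(\Gamma_q, \rbarg)$, which is a linear functional, is (a nonzero multiple of) $f \mapsto f({\rm Frob}_q)$. For this I'd need the relation between the unramified $\mg{g}_\alpha$-component of $f$ and its $\mg{l}_\alpha$-component, which comes from the local condition structure—this is the delicate point and likely where \cite[\S 4.2]{pat-ex} must be invoked carefully, or a direct cocycle computation using that unramified classes in $\mg{g}_\alpha$ and $\mg{l}_\alpha$ are both just $\mathrm{Frob}_q \mapsto (\text{scalar})$.

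The main obstacle I anticipate is statement (4): getting the normalization exactly right—that the functional $f \mapsto inv_q(f \cup \phi)$ coincides, after scaling $\phi$, with $f \mapsto f({\rm Frob}_q)$ rather than merely being proportional to \emph{some} coordinate—requires pinning down how the identifications of $\mg{l}_\alpha$ (via $\alpha^\vee$) and $\mg{g}_\alpha^*$ (non-canonically) interact with the Killing-form self-duality and the $inv_q$ normalization. Parts (1) and (2) are bookkeeping with Euler characteristics once the $\Gamma_q$-module decomposition is in hand; part (3) is a clean application of the fact that ramified-against-unramified local duality is perfect. I would structure the proof as: first fix the $\Gamma_q$-module decomposition and compute all local $h^i$; deduce (1), (2); then set up the block-diagonal duality pairing and deduce (3); finally do the careful normalization for (4).
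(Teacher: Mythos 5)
Your treatment of (1)--(3) is essentially the paper's own argument: decompose $\rbarg$ and $\rbarg^*$ into $\Gamma_q$-character summands, compute each summand's cohomology from \eqref{eq:euler} and \eqref{eq:ltd}, and use that the cup product only pairs a block with its dual block. Two small corrections there: $\mg{t}_\alpha\cap\mg{g}^{\mr{der}}$ has dimension one less than the semisimple rank, so your explicit counts $h^0=2$, $h^1=3$, $\dim\mc{N}_q=2$ are only right in rank two; in general $h^0(\Gamma_q,\rbarg)=\dim(\mg{t}\cap\mg{g}^{\mr{der}})$, $h^1=h^0+1$, $\dim\mc{N}_q=h^0$, and only the differences (which are indeed $1$) matter. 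The paper avoids the individual counts: exactly one character summand of $\rbarg^*$ is trivial, so $h^2(\Gamma_q,\rbarg)=h^0(\Gamma_q,\rbarg^*)=1$ and \eqref{eq:euler} gives (1), while (2) follows from $\dim\mc{N}_q=h^1_{nr}(\Gamma_q,\rbarg)=h^0(\Gamma_q,\rbarg)$ quoted from \cite{pat-ex}. Your (3) is the paper's proof.

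The genuine gap is in (4), and it stems from a concrete bookkeeping error about which blocks are ramified after dualizing. Since $q\nmid p$, $\kappa$ is unramified at $q$ with ${\rm Frob}_q\mapsto \mr{Norm}(q)\neq 1$. Hence $H^1(\Gamma_q,\mg{g}_\alpha)\cong H^1(\Gamma_q,k(\kappa))$ is one-dimensional with $H^1_{nr}(\Gamma_q,\mg{g}_\alpha)=0$, whereas $H^1(\Gamma_q,\mg{g}_\alpha^*)\cong H^1(\Gamma_q,k)$ is one-dimensional and entirely unramified, and $H^1(\Gamma_q,(\mg{t}\cap\mg{g}^{\mr{der}})^*)$ (character $\kappa$ on each coordinate) has trivial unramified part. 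So an unramified $f$ has \emph{no} $\mg{g}_\alpha$-component at all (it lies entirely in the $\mg{t}_\alpha\oplus\mg{l}_\alpha$ block), and a ramified $\phi$ has its ramification in the $\mg{t}^*$-block, not in the $\mg{g}_\alpha^*$-line as you assert. Your proposed source of the nonzero contribution in (4), ``$f$'s unramified $\mg{g}_\alpha$-line against $\phi$'s ramified $\mg{g}_\alpha^*$-line,'' pairs two spaces that are both zero, and the ``delicate point'' you leave unresolved (relating a supposed unramified $\mg{g}_\alpha$-component of $f$ to its $\mg{l}_\alpha$-component) is an artifact of this misidentification; as written you have no proof of (4).

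The correct argument, which is what the paper means by ``dual to (3)'': for unramified $f$ and ramified $\phi$, the only possibly nonzero contribution to $inv_q(f\cup\phi)$ is the pairing of the $\mg{t}$-block of $f$ against the $\mg{t}^*$-block of $\phi$, which is perfect by \eqref{eq:ltd}; in particular the $\mg{l}_\alpha$-coordinate of $f$ at ${\rm Frob}_q$ --- which is by definition $f({\rm Frob}_q)$ --- pairs perfectly with the $\mg{l}_\alpha^*$-component of $\phi$'s ramification, and scaling $\phi$ normalizes that constant to $1$. (Once the blocks are correctly identified there remains, for semisimple rank $>1$, the point that the $\mg{t}_\alpha$-components of $f$ pair with the $\mg{t}_\alpha^*$-part of $\phi$; the paper's own proof of (4) is terse here, and in the applications the relevant classes have vanishing $\mg{t}_\alpha$-components locally, cf.\ the conditions imposed in \S\ref{sec:mat} --- a point any complete write-up should make explicit.)
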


\begin{proof}
  Let $\chi: \Gamma_q \to k^{\times}$ be any character and denote the
  associated $\Gamma_q$--module by $k(\chi)$. We then have that
  $H^i(\Gamma_q, k(\chi)) = 0$ for all $i$ if $\chi$ is not trivial
  or the cyclotomic character: this well-known fact follows from
  \eqref{eq:ltd} and \eqref{eq:euler} since the computation of $h^0$
  is trivial, $h^2$ is computed using duality, and then $h^1$ can be
  determined using the vanishing of the Euler characteristic. For the
  trivial character we have $h^0(\Gamma_q, k) = h^1(\Gamma_q, k) = 1$
  and $h^2(\Gamma_q, k) = 0$ and for the cyclotomic character $\kappa$ we
  have $h^0(\Gamma_q, k(\kappa)) = 0$ and
  $h^1(\Gamma_q, k(\kappa)) = h^2(\Gamma_q, k(\kappa)) = 1$.

  From the definition of niceness, it follows that $\rbarg^*$ is a sum
  of characters of $\Gamma_q$, precisely one of which is trivial, so
  $h^0(\Gamma_q, \rbarg^*) = h^2(\Gamma_q, \rbarg) = 1$ which gives
  (1) using \eqref{eq:euler}. The claim in (2) follows from (1) since
  (\cite[Lemma 4.11 (2)]{pat-ex})
\[
\dim(\mc{N}_q) = h^1_{nr}(\Gamma_q, \rbarg) = h^0(\Gamma_q, \rbarg).
\]

To prove (3) and (4) we use the fact that $H^1_{nr}(\Gamma_q, \rbarg)$
pairs trivially with $H^1_{nr}(\Gamma_q, \rbarg^*)$. We also have
$H^1(\Gamma_q,\mg{t}) = H^1_{nr}(\Gamma_q, \mg{t})$ since the action
is trivial and $H^1(\Gamma_q, \mg{g}_{\beta}) = H^1_{nr}(\Gamma_q,
\mg{g}_{\beta})$ for any root $\beta \neq \alpha$ (and this is
nonzero iff the action on $\mg{g}_{\beta}$ is trivial).

Thus, $inv_q(f \cup \phi)$ in 3) is determined by the
$\mg{g}_{\alpha}$ component of $f$ and this pairs non-degenerately
with the $\mg{g}_{\alpha}^*$ component of $\phi$, so (3) follows. The
proof of (4) is dual to this, so we skip the details.
\end{proof}
\begin{rem}
While the proofs of parts (3) and (4) of Lemma~\ref{lem:nice} are dual to one another, we will apply them as asymmetrically stated, especially in Case 3) of the proof of
Theorem~\ref{thm:main}.
\end{rem}

\begin{defn} \label{def:special} %
  We will say that a lift of $\rbar$ to $G(W(k)/p^nW(k))$ is
  \emph{special}\footnote{For groups $G$ of rank $> 1$, our usage of
    this term conflicts with more established terminology, but for
    lack of a pithy alternative we have still used it. We apologize to
    the reader for this abuse.} at a nice prime $q$ if its restriction
  to $\Gamma_q$ lies in $\mc{D}_q$.
\end{defn}

For $\alpha$ a root associated to a nice prime as constructed in
Proposition \ref{prop:nice}, let $G_{\alpha}$ be the subgroup of $G$
generated by $U_{\alpha}$ and $U_{-\alpha}$ (which is isomorphic to
$SL_2$ or $PGL_2$).  Let $S_{\alpha}$ be the subgroup of $G$ generated
by $G_{\alpha}$ and $T$; it is a (split) reductive group of semisimple
rank one, so is isomorphic to $GL_2 \times T'$, $SL_2 \times T'$ or
$PGL_2 \times T'$, for some split torus $T'$. Henceforth, we fix such
a decomposition with the property that $\alpha$ corresponds to the
standard positive root of $GL_2$, $SL_2$ or $PGL_2$ and $T = T'D$ with
$D$ the (image of) the diagonal matrices in the first factor.

\begin{lem} \label{lem:torlift} %
  Let $q$ be a nice prime for $\rbar$ and let
  $\rho_n: \Gamma_F \to G(W(k)/p^nW(k))$, $n \geq2$, be a lift of
  $\rbar$.  
  \begin{enumerate}
    \item
    There is a Chebotarev class of nice primes $q'$ such that
    $\rbar({\rm Frob}_q) = \rbar({\rm Frob}_{q'})$,
    $\rho_n({\rm Frob}_{q'}) \in T(W(k)/p^nW(k))$ and $\rho_n$ is special at
    $q'$ (with respect to the root $\alpha$).
  \item There is a Chebotarev class of nice primes $q''$ with
    $\mr{Norm}(q'') \equiv \mr{Norm}(q') \mod p^n$ and such that
    $\rho_n({\rm Frob}_{q''}) \in T(W(k)/p^nW(k))$ is equal to
    $\rho_n({\rm Frob}_{q'}) \cdot (A_n, Id_{T'})$, with $q'$ as above and
    $A_n$ the matrix
    $\bigl [ \begin{smallmatrix} 1 - p^{n-1} & 0 \\ 0 & 1 +
      p^{n-1} \end{smallmatrix} \bigr ]$. Here $A_n$ is viewed as an
    element of $SL_2$ or $GL_2$ and in the $PGL_2$ case we replace it by
    its image in $PGL_2$. Thus, $\rho_n$ is not special at $q''$ but
    $\rho_{n-1}$ is special at $q''$.
    \end{enumerate}
  \end{lem}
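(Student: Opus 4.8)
The plan is to deduce both assertions from Lemma \ref{lem:kill}, which already packages the Chebotarev machinery we need; the work is to arrange the data $(x,y,a_i,b_j)$ so that its conclusion produces primes with the stated local behavior. For part (1), I would first apply Lemma \ref{lem:torlift}'s hypothesis together with Lemma \ref{lem:kill} with the trivial collections $\{f_i\}$ and $\{\phi_j\}$ (so $r=s=0$), taking $x = \rbar({\rm Frob}_q)$ and letting $y$ be \emph{any} element of $\im(\rho_n)$ reducing to $x$ that happens to lie in $T(W(k)/p^nW(k))$. The existence of such a $y$ needs an argument: since $\rbar({\rm Frob}_q) = x$ is regular semisimple and lies in the split torus $T(k)$, I would lift $x$ through $\rho_n$ along the maximal torus, i.e.\ observe that the centralizer-of-$x$ argument (or a direct Hensel-type lift in the smooth group scheme $T$) produces an element of $T(W(k)/p^nW(k))$ in the image of $\rho_n$ reducing to $x$; here one uses that the image of $\rho_n$ is large (Lemma \ref{lem:nonsplit}) so contains the full kernel of reduction, hence in particular the kernel of $T(W(k)/p^nW(k)) \to T(k)$, so one can adjust any lift of $x$ by a kernel element to land in $T$. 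With such a $y$ chosen, Lemma \ref{lem:kill}(1),(4) gives a Chebotarev set of nice primes $q'$ with $\rbar({\rm Frob}_{q'}) = x = \rbar({\rm Frob}_q)$ and $\rho_n({\rm Frob}_{q'}) = y \in T(W(k)/p^nW(k))$. Finally I must check $\rho_n$ is special at $q'$: since $q'$ is nice, $\rho_n(I_{q'})$ is pro-$p$ and $\rho_n({\rm Frob}_{q'})$ is (a lift of) a regular semisimple torus element, so $\rho_n|_{\Gamma_{q'}}$ is conjugate into $T \subset H_\alpha$ and the $\alpha$-component records a character of $\Gamma_{q'}$; one checks this character is forced to be $\kappa$ because $\rbar$ already satisfies condition (3) of Definition \ref{def:nice} at a nice prime and $H^1(\Gamma_{q'}, k(\kappa))$ is one-dimensional, so the unique deformation of the $\alpha$-valued character unramified-or-not is the cyclotomic one — wait, more carefully: since $\rho_n({\rm Frob}_{q'}) \in T$ and $\rho_n(I_{q'})$ lands in the pro-$p$ radical which for a torus image is trivial, $\rho_n$ is unramified at $q'$, and then speciality is the condition that $\alpha \circ \rho_n = \kappa$; this I arrange by additionally imposing, via the $\phi$-part of Lemma \ref{lem:kill} or by a direct Chebotarev condition on $\mu_{p^n}$, that ${\rm Norm}(q') \equiv (\text{the value forcing }\alpha)$, equivalently by choosing $y$ with $\alpha(y) = \kappa({\rm Frob}_{q'})$, which is possible because the Frobenius in $\gal(K(\mu_{p^n})/K)$ can be prescribed independently (linear disjointness in Lemma \ref{lem:kill}).

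For part (2), I would run the same argument with the target torus element replaced by $\rho_n({\rm Frob}_{q'}) \cdot (A_n, \mathrm{Id}_{T'})$. The key points are: (i) $A_n = \bigl[\begin{smallmatrix} 1-p^{n-1} & 0 \\ 0 & 1+p^{n-1}\end{smallmatrix}\bigr]$ lies in the kernel of reduction $T(W(k)/p^nW(k)) \to T(W(k)/p^{n-1}W(k))$, hence by Lemma \ref{lem:nonsplit}(1) (applied to $G^{\mathrm{ad}}$, then pulled back) it lies in $\im(\rho_n)$, so $y' := \rho_n({\rm Frob}_{q'}) \cdot (A_n, \mathrm{Id})$ is a legitimate target for Lemma \ref{lem:kill}(4); (ii) because $A_n \equiv 1 \bmod p^{n-1}$, we get $\rho_{n-1}({\rm Frob}_{q''}) = \rho_{n-1}({\rm Frob}_{q'})$, so $\rho_{n-1}$ is special at $q''$ by part (1); (iii) $\alpha(A_n) = (1-p^{n-1})/(1+p^{n-1}) \equiv 1 - 2p^{n-1} \not\equiv 1 \bmod p^n$ (using $p \neq 2$), so $\alpha \circ \rho_n$ differs from $\kappa$ at ${\rm Frob}_{q''}$ unless I cancel this by also shifting ${\rm Norm}(q'')$; but the lemma instead wants ${\rm Norm}(q'') \equiv {\rm Norm}(q') \bmod p^n$, so I keep the cyclotomic Frobenius condition exactly as for $q'$, forcing $\alpha \circ \rho_n$ to be off from $\kappa$ by the factor $1 - 2p^{n-1}$, i.e.\ $\rho_n$ is \emph{not} special at $q''$. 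Imposing ${\rm Norm}(q'') \equiv {\rm Norm}(q') \bmod p^n$ is a Chebotarev condition on $\gal(F(\mu_{p^n})/F)$, linearly disjoint from $K_{P_n}/K$ by the disjointness statement in Lemma \ref{lem:kill}, so it can be imposed simultaneously.

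The main obstacle I anticipate is bookkeeping around the three reductive types $GL_2, SL_2, PGL_2$ in the factor $S_\alpha \cong \{GL_2,SL_2,PGL_2\} \times T'$, and in particular making sense of $A_n$ uniformly: in the $SL_2$ case $A_n$ genuinely has determinant $1 - p^{2(n-1)} \neq 1$, so "viewed as an element of $SL_2$" is not literally correct and one must either rescale or (as the statement hints) pass to the $PGL_2$ image or absorb the determinant discrepancy into the fixed-multiplier condition via $\nu$; I would handle this by working in $G^{\mathrm{ad}}$ for the ramification/speciality computation (where everything is clean and Lemma \ref{lem:nonsplit} applies directly) and only at the end lifting back to $G$, using that the deformation has fixed multiplier $\nu$ to pin down the ambiguity. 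A secondary technical point is verifying that "$\rho_n$ is special at $q''$ fails but $\rho_{n-1}$ is special" is exactly the dichotomy $\alpha\circ\rho_n \neq \kappa$ but $\alpha\circ\rho_{n-1} = \kappa$, which comes down to the mod-$p^{n-1}$ versus mod-$p^n$ comparison of $\alpha(A_n)$ with $1$ — routine once $p$ is odd, but worth stating carefully.
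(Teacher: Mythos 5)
Your overall route (prescribe $\rho_n(\mathrm{Frob})$ and the cyclotomic component simultaneously via the linear disjointness packaged in Lemma \ref{lem:kill}, then multiply the target Frobenius by $(A_n,\mathrm{Id}_{T'})$ for part (2)) is the same as the paper's, and your computations for part (2) --- $A_n \equiv 1 \bmod p^{n-1}$ giving specialness of $\rho_{n-1}$, and $\alpha(A_n) \equiv 1-2p^{n-1} \not\equiv 1 \bmod p^n$ destroying specialness mod $p^n$ while keeping $\mathrm{Norm}(q'')\equiv\mathrm{Norm}(q') \bmod p^n$ --- match the intended argument. But there is a genuine gap in your part (1): you justify the existence of a target $y \in \im(\rho_n)\cap T(W(k)/p^nW(k))$ lifting $x$ by claiming that $\im(\rho_n)$ ``contains the full kernel of reduction'' $G(W(k)/p^nW(k)) \to G(k)$. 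That is false in general: all deformations here have fixed multiplier ($\mu\circ\rho_n=\nu$), so $\mu(\im(\rho_n))$ is constrained and the image cannot contain kernel-of-reduction elements in the central/toral directions (already for $G=GL_2$ with $\nu$ of finite prime-to-$p$ order). Lemma \ref{lem:nonsplit}(1) only gives the kernel of reduction in $G^{\mathrm{ad}}$, equivalently the ``derived'' part of the kernel, and your adjustment element $g^{-1}t$ (with $t$ an arbitrary lift of $x$ in $T$) need not lie in that part. The paper's proof sidesteps this exactly at this point: it observes that $g\in T(W(k)/p^nW(k))$ if and only if the image of $g$ in $G^{\mathrm{ad}}$ lies in $T^{\mathrm{ad}}(W(k)/p^nW(k))$, so one prescribes the Frobenius in $\gal(K_{P_n}K(\mu_{p^n})/F)$ so that the \emph{adjoint} image lands in $T^{\mathrm{ad}}$ (which is possible by Lemma \ref{lem:nonsplit}(1)) and membership of $\rho_n(\mathrm{Frob}_{q'})$ in $T$ follows for free; alternatively your argument can be repaired by choosing the torus lift $t$ with $\mu(t)=\mu(g)$, so that $g^{-1}t$ lies in the derived part of the kernel, which is in the image.

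Two smaller remarks. First, your anticipated ``main obstacle'' about $A_n$ not lying in $SL_2$ is a non-issue: $\det A_n = 1-p^{2(n-1)}$ is exactly $1$ in $W(k)/p^nW(k)$ because $2(n-1)\geq n$ for $n\geq 2$ --- this is precisely why the hypothesis $n\geq 2$ is there --- so no rescaling or detour through $G^{\mathrm{ad}}$ is needed; what the paper does use for part (2) is that the image of $SL_2(W(k)/p^nW(k))$ in $G(W(k)/p^nW(k))$ is contained in $\rho_n(\Gamma_F)$ (again via Lemma \ref{lem:nonsplit}(1), with the central ambiguity absorbed since that group is perfect), which is the correct version of your ``pulled back'' step since $(A_n,\mathrm{Id}_{T'})$ lies in the derived part. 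Second, your digression about $\rho_n(I_{q'})$ and a ``pro-$p$ radical'' is not needed: $\rho_n$ is unramified at $q'$ simply because $q'$ is chosen outside the ramification locus of $\rho_n$, and then speciality is the single congruence $\alpha(\rho_n(\mathrm{Frob}_{q'}))\equiv \mathrm{Norm}(q') \bmod p^n$, which you do arrange correctly by the independent choice of the cyclotomic component.
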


\begin{proof}
  An element $g \in G(W(k)/p^nW(k))$ lies in $T(W(k)/p^nW(k))$ iff its
  image in $ G^{\mr{ad}}(W(k)/p^nW(k))$ lies in
  $T^{\mr{ad}}(W(k)/p^nW(k))$, where $T^{\mr{ad}}$ is the image of $T$
  in $G^{\mr{ad}}$. From this observation and Lemma \ref{lem:nonsplit}
  (1), (1) of this lemma follows by considering the extension
  $K_{P_n}K(\mu_{p^n})/F$ and choosing an appropriate element in its
  Galois group. Item (2) follows in the same way once we note that the
  image of $SL_2(W(k)/p^nW(k))$ in $G(W(k)/p^nW(k))$ is contained in
  $\rho_n(\Gamma_F)$ by Lemma \ref{lem:nonsplit} (1).
\end{proof}

\subsection{Main theorem} \label{sec:main} 

Let $X$ be a finite set of primes of $\mc{O}_F$.
Let $M$ be a finite $W(k)[\Gamma_X]$-module and for each $v \in X$ let
$\mc{L}_v$ be a submodule of $H^1(\Gamma_v, M)$. We call
$\mc{L} := \{\mc{L}_v\}_{v \in X}$ a Selmer condition.
\begin{defn} \label{def:selmer}
  The \emph{Selmer group} $H^1_{\mc{L}}(\Gamma_X, M)$ is defined to be
  the kernel of the (global to local) restriction map
\[
  H^1(\Gamma_X, M) \to \bigoplus_{v \in X}
  \frac{H^1(\Gamma_v,M)}{{\mc{L}_v}} .
\]
If $M$ as above is a free $W(k)/p^n$-module, let
$\mc{L}_v^{\perp} \subset H^1(\Gamma_v, M^*)$ be the annihilator of
$\mc{L}_v$ with respect to the pairing \ref{eq:ltd}. Then the
\emph{dual Selmer group} is defined to be
$H^1_{\mc{L}^{\perp}}(\Gamma_X, M^*)$.
\end{defn}

From now on we make the following additional assumption on $\rbar$.

\begin{ass} \label{ass:2} 
  For each prime $v$ in $S$ we are given a smooth local deformation
  condition $\mc{D}_v$ for $\rbar|_{\Gamma_{v}}$ with tangent space
  $\mc{N}_v \subset H^1(\Gamma_v, \rbarg)$ such that the set of Selmer
  conditions $\mc{N}$ is \emph{balanced}, i.e.,
  \[
  \dim H^1_{\mc{N}}(\Gamma_S, \rbarg) = \dim
  H^1_{\mc{N}^{\perp}}(\Gamma_S, \rbarg^*).
\]
\end{ass}

\begin{rem}
  It is not known exactly when this assumption is satisfied,
  especially at primes dividing $p$. For $ v \nmid p$ it is known to
  hold for classical groups, albeit after increasing $k$, and for
  $v \mid p$ in the Fontaine--Laffaille case (see
  \cite{CHT},\cite{booher-minimal}, \cite{booher-geometric}).
\end{rem}

\begin{defn} \label{def:aux} 
  An \emph{auxiliary set} for $\rbar$ is a finite set $Q$ of nice
  primes (with $Q \cap S = \emptyset$) such that the new-at-$Q$ Selmer
  group $H^1_{\mc{N}}(\Gamma_{S \cup Q}, \rbarg) = 0$.
\end{defn}
Here $\mc{N}$ is defined using the given local conditions at primes in
$S$ and the condition described above at primes $q$ in $Q$.  In this
situation the universal deformation ring $R_{S \cup Q}^{Q-new}$, defined
using the given local deformation conditions at $v \in S$ and the
above conditions at $q \in Q$, is isomorphic to $W(k)$.

\begin{thm} \label{thm:main} 
  Let $\rbar: \Gamma_S \to G(k)$ satisfy Assumptions \ref{ass:1} and
  \ref{ass:2} and let $p=\ch(k)$ be such that the hypotheses of
  Proposition \ref{prop:nice} hold.  Assume that the Selmer and dual
  Selmer groups of $\rbar$ with respect to the given local conditions
  at primes in $S$ are nontrivial of (the same) dimension $n$. Then
  there exists an ordered  set $Q = \{q_1,q_2,\dots,q_m\}$ of nice primes,
  $m \in \{n+1,n+3\}$, and an integer $d \geq 2$ such that
\begin{itemize}
\item $Q$ is auxiliary and the versal deformation $\rho_{S \cup
    Q}^{Q-new}: \Gamma_{S \cup Q} \to G(R_{S \cup Q}^{Q-new}) = G(W(k))$ is
  ramified mod $p^d$ at all $q \in Q$.
\item for each $1 \leq i \leq m$ there is an auxiliary set $Q_i
  \subset Q$ satisfying
  \begin{itemize}
  \item $\{1,2,\dots,q_{i-1}\} \subset Q_i$,
  \item $q_i \notin Q_i$,
  \item  $\rho_{S \cup Q_i}^{Q_i-new} \equiv  \rho_{S \cup Q}^{Q-new}$
    mod $p^{d-1}$, $\rho_{S \cup Q_i}^{Q_i-new}$ mod $p^{d-1}$  is
    special at $q_i$ but $\rho_{S \cup Q_i}^{Q_i-new}$ mod $p^d$ is
    not special at $q_i$. 
  \end{itemize}
\end{itemize}
\end{thm}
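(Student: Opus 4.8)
The plan is to build the ordered set $Q = \{q_1,\dots,q_m\}$ and the auxiliary subsets $Q_i$ by an inductive, "doubling"-style construction in the spirit of \cite{klr}, controlling at each stage both the Selmer and dual Selmer groups and the ramification behaviour of the versal lift modulo successive powers of $p$. First I would set up the starting data: choose bases $\{f_1,\dots,f_n\}$ of $H^1_{\mc{N}}(\Gamma_S,\rbarg)$ and $\{\phi_1,\dots,\phi_n\}$ of $H^1_{\mc{N}^\perp}(\Gamma_S,\rbarg^*)$, both nonzero since $n\ge 1$. The key tool throughout is Lemma~\ref{lem:kill}, which lets me add a nice prime $q$ to the allowed ramification set while prescribing $\rho_n({\rm Frob}_q)$, and prescribing the values $f_i({\rm Frob}_q)$ and $\phi_j({\rm Frob}_q)$ (via the chosen identifications of $\mg{l}_\alpha$ and $\mg{g}_\alpha^*$ with $k$); Lemma~\ref{lem:nice} then converts these Frobenius values into the local cup-product pairings that govern how adding $q$ changes the Selmer and dual Selmer groups. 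The standard Poitou--Tate/Wiles mechanism is: adding a nice prime $q$ at which some $\phi_j$ is unramified and nonzero on ${\rm Frob}_q$ strictly shrinks $H^1_{\mc{N}^\perp}$ (we kill that $\phi_j$), while the global Euler characteristic formula forces $H^1_{\mc{N}}$ to shrink in tandem or forces a new unramified-at-$q$ Selmer class; choosing the $f_i({\rm Frob}_q)$ appropriately lets me control which happens. Iterating, after roughly $n$ primes I can arrange $H^1_{\mc{N}}(\Gamma_{S\cup Q'},\rbarg) = H^1_{\mc{N}^\perp}(\Gamma_{S\cup Q'},\rbarg^*) = 0$, so $Q'$ is auxiliary and $R^{Q'-new}_{S\cup Q'} = W(k)$; the parity/dimension bookkeeping is what produces the dichotomy $m \in \{n+1, n+3\}$ (one sometimes needs an extra pair of primes to fix a stubborn class, exactly as in the doubling method).

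The genuinely new part — and the main obstacle — is arranging \emph{simultaneously} that (a) the resulting versal lift $\rho := \rho^{Q-new}_{S\cup Q}\colon \Gamma_{S\cup Q}\to G(W(k))$ is ramified at every $q\in Q$, with a \emph{uniform} "depth" $d$: ramified mod $p^d$ but unramified mod $p^{d-1}$ at each $q$; and (b) for each $i$, the set $Q_i$ obtained by reverting $q_i$ to the unramified (minimal) local condition and keeping the $Q$-new condition at the other primes is again auxiliary, and the associated lift $\rho^{Q_i-new}_{S\cup Q_i}$ agrees with $\rho$ mod $p^{d-1}$, is special at $q_i$ mod $p^{d-1}$, but not special mod $p^d$. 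To get (a) I would, after the primes $q_1,\dots,q_{m-1}$ are chosen and the lift $\rho$ is pinned down modulo each $p^t$, use Lemma~\ref{lem:torlift}: given a nice prime and a prescribed reduction $\rho_n$, part (1) produces a nice prime $q'$ at which $\rho_n$ is special with Frobenius in $T$, and part (2) produces a nearby nice prime $q''$ at which $\rho_{n-1}$ is special but $\rho_n$ is not — precisely the "$\begin{smallmatrix}1-p^{n-1} & 0\\ 0 & 1+p^{n-1}\end{smallmatrix}$" trick. Running this at the correct level $n = d$ for each new prime forces the depth to be exactly $d$; the constant $d\ge 2$ emerges from the construction rather than being chosen in advance, and one has to check the depth comes out the same at all primes of $Q$ (here the uniqueness of the root $\alpha$ attached to a nice prime, stressed after Definition~\ref{def:nice}, is what makes the local analysis rigid enough).

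For (b), the point is that the $Q$-new deformation ring at $S\cup Q$ being $W(k)$ means $\rho$ is rigid, and Lemma~\ref{lem:nice}(2) says the special (Steinberg-type) condition $\mc{N}_q$ at a nice prime is codimension one inside $H^1(\Gamma_q,\rbarg)$ and meets the unramified subspace in codimension one; so relaxing $q_i$ to the unramified condition enlarges the allowed local space by one dimension in a controlled way. I would compute $H^1_{\mc{N}}(\Gamma_{S\cup Q_i},\rbarg)$ and its dual using the same Poitou--Tate exact sequences, with the prescribed Frobenius values from Lemma~\ref{lem:kill} ensuring the relevant local pairings ($inv_{q_j}(f\cup\phi)$ for $j\ne i$) are nondegenerate exactly where needed; the ordering constraint $\{q_1,\dots,q_{i-1}\}\subset Q_i$ and $q_i\notin Q_i$ is arranged by choosing, at the $i$-th inductive step, the Frobenius data of $q_i$ so that it is the prime whose reversion "undoes" precisely the depth-$d$ ramification while leaving the depth-$(d-1)$ lift special — this is the content of the asymmetric use of parts (3) and (4) of Lemma~\ref{lem:nice} flagged in the remark after that lemma, and of Case 3) of the proof. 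The congruence $\rho^{Q_i-new}_{S\cup Q_i}\equiv\rho\ \mathrm{mod}\ p^{d-1}$ then follows because both lift the same mod-$p^{d-1}$ representation and the deformation conditions at all primes (including $q_i$, where $\rho$ mod $p^{d-1}$ is unramified hence special, and where $\rho^{Q_i-new}$ mod $p^{d-1}$ is also special) agree mod $p^{d-1}$, while the obstruction to lifting the agreement to mod $p^d$ is exactly the depth-$d$ ramification of $\rho$ at $q_i$, which by construction $\rho^{Q_i-new}_{S\cup Q_i}$ does not see. The main technical burden is keeping all the linear-disjointness and Chebotarev conditions of Lemma~\ref{lem:kill} compatible across the $\sim n$ simultaneous choices while also imposing the depth conditions from Lemma~\ref{lem:torlift}, which is where I expect the bulk of the careful case analysis (the "Case 1)/2)/3)" structure) to live.
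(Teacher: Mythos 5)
Your outline reproduces the general toolkit of the paper (nice primes, Lemma~\ref{lem:kill}, Lemma~\ref{lem:nice}, auxiliary sets, adjustment by cocycles, the trick of Lemma~\ref{lem:torlift}), but it has a genuine gap at the central point: how the versal lift gets ramified at \emph{all} primes of $Q$ at a common level $d$. You propose to ``force the depth to be exactly $d$'' by running Lemma~\ref{lem:torlift} at level $d$ when each new prime is chosen, but that lemma only controls the Frobenius image of a \emph{given} mod $p^n$ representation at a \emph{newly chosen} prime; it gives no control over whether the versal deformation is ramified at the primes already in the set, and the ramification depths of $\rho_{S\cup\tilde Q}^{\tilde Q-new}$ at the initial auxiliary primes are not prescribable at all. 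The paper's mechanism is different and is the heart of the proof: one first fixes an auxiliary set $\tilde Q$ of size exactly $n$ by dual-basis Frobenius conditions (Lemma~\ref{lem:sds}), \emph{defines} $d$ from the (uncontrolled) ramification of $\rho_{S\cup\tilde Q}^{\tilde Q-new}$ (or sets $d=2$), poses the insolvable local condition problem $(g_v)_{v\in S\cup\tilde Q}$ with ramified classes $g_{q_i}\in\mc N_{q_i}$, and uses Proposition~\ref{prop:hv} (a Poitou--Tate rank count comparing $Res_X$ and $Res^X_{X\cup\{q\}}$) to produce a new prime $q$ and a global class $f^{(q)}$ with exactly those restrictions; adjusting $\rho_{S\cup\tilde Q}^{\tilde Q-new}$ mod $p^d$ by suitable multiples of such classes and invoking uniqueness of lifts for auxiliary sets is what introduces ramification mod $p^d$ at all $q_i$ simultaneously and, for the $Q_i$, removes it and destroys specialness at $q_i$. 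Nothing in your sketch supplies this mechanism, and without it neither the first bullet of the theorem nor the congruence/nonspecialness claims for the $Q_i$ are established.

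Relatedly, your explanation of $m\in\{n+1,n+3\}$ as ``parity bookkeeping'' in killing Selmer classes is not the actual source of the dichotomy: Selmer and dual Selmer are already killed by the $n$ primes of $\tilde Q$. The extra one or three primes are dictated by the value $f^{(q)}({\rm Frob}_q)$, giving the paper's trichotomy: if some $q\in\mc Q_0$ has $f^{(q)}({\rm Frob}_q)\neq 0,1$ one prime suffices ($m=n+1$); if $f^{(q)}({\rm Frob}_q)$ is identically $1$ (resp.\ $0$) on a positive upper density subset one needs three carefully chosen primes ($m=n+3$). The serious obstruction there, which your proposal does not confront, is that these subsets are almost certainly \emph{not} Chebotarev sets (the class $f^{(q)}$ depends on $q$), so the matrix entries $f_{n+i}({\rm Frob}_{q_{n+j}})$ cannot simply be prescribed via Lemma~\ref{lem:kill}; the paper gets around this with the reciprocity identities \eqref{eq9} and \eqref{eq12}, using the scaled dual classes $\phi_e$ and the asymmetric parts (3),(4) of Lemma~\ref{lem:nice} to convert conditions on the not-yet-existing class $f_{n+2}$ at an old prime into Chebotarev conditions on a known dual class at the new prime, together with a limiting density argument to permit re-choosing $q_{n+1}$, $q_{n+2}$. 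These two ingredients --- the Proposition~\ref{prop:hv} construction of $f^{(q)}$ with prescribed local restrictions, and the trichotomy with its reciprocity/density workaround --- are the substance of the proof and are missing from your proposal.
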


\begin{rem}
  Implicit in the formulation of the theorem is that
  $\rho_{S \cup Q}^{Q-new}: \Gamma_{S \cup Q} \to G(R_{S \cup
    Q}^{Q-new}) = G(W(k))$ is ramified at all $q \in Q$ mod $p^d$ and
  is unramified at all $q \in Q$ mod $p^{d-1}$. The proof will show
  that we can in fact take $d=2$.
  
  Note that we are not claiming that $\rho_{S \cup Q_i}^{Q_i-new}$ is
  ramified at all $v \in S \cup Q_i$, although it would be nice to get
  this refinement of the theorem.
\end{rem}

\begin{ex} \label{ex:gl2} 
  Using Remark
  \ref{rem:gl2} one sees that if $G = GL_2$, sufficient conditions for
  the main theorem to hold are that
  $\bar{\rho}(\Gamma_F) \supset SL_2(k)$ and
\begin{itemize}
  \item $[F(\mu_p):F] > 4$ (so we must have $p> 5$), or 
  \item$[F(\mu_p):F] = 4$, $\rbar(\Gamma_F) \supset GL_2(\F_p)$, and
    \begin{itemize}
    \item $F(\rbarg) \cap F(\mu_p) = F$, or
    \item $\det(\rbar) = \kappa^{\pm 1}$.
    \end{itemize}
  \end{itemize}
\end{ex}
As mentioned in the introduction, the theorem will be proved below
using only Galois cohomology (and Chebotarev's theorem).

\subsection{Global to local restriction maps}

\subsubsection{}

Controlling the dimensions of Selmer and dual Selmer groups is fundamental to our
arguments and the main tool for doing this is the Greenberg--Wiles
formula (\cite[Theorem 2.19]{DDT}): 
\begin{equation} \label{eq:gw}
  \ell(H^1_{\mc{L}}(\Gamma_X, M)) -
  \ell(H^1_{\mc{L}^{\perp}}(\Gamma_X, M^*)) = \ell(H^0(\Gamma_X, M)) -
  \ell(H^0(\Gamma, M^*)) + \sum_{v \in X(\infty)}(\ell(L_v) -
  \ell(H^0(\Gamma_v,M))),
\end{equation}
where $X(\infty)$ denotes the union of $X$ (which contains all primes dividing $|M|$) and the infinite primes of
$F$ and $\mc{L}_v$ for an infinite prime is taken to be $\{0\}$.

The following result from global duality theory, which also plays an
important role in our proofs, is the ``exactness in the middle'' of
the Poitou--Tate exact sequence (\cite[Theorem 4.10 (b)]{milne-adt}).
\begin{thm} \label{thm:gd} 
  Let $X$ be a finite set of primes of $F$ containing all primes above
  $p$ and $\infty$, and let $M$ be a finitely generated and free
  $W(k)/p^n$-module with a $W(k)$-linear action of
  $\Gamma_X$. Consider the restriction maps
  \[
    Res_X: H^1(\Gamma_X, M) \to \bigoplus_{v \in X} H^1(\Gamma_v,M)
  \]
  and
  \[
    Res_X^*:H^1(\Gamma_X, M^*) \to \bigoplus_{v \in X} H^1(\Gamma_v,M^*)
  \]
  The sum of the local duality pairings of \eqref{eq:ltd} induces a perfect
  pairing
  \[
    \bigoplus_{v \in X} H^1(\Gamma_v,M) \times \bigoplus_{v \in X}
    H^1(\Gamma_v,M) \to W(k)/p^n
  \]
  with respect to which $\im(Res_X)$ is the exact annihilator of
  $\im(Res_X^*)$.
\end{thm}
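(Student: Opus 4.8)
The assertion is the ``exactness in the middle'' of the Poitou--Tate sequence, so the plan is to deduce it by unwinding one segment of the nine-term Poitou--Tate exact sequence for the $\Gamma_X$-cohomology of $M$, together with the perfectness of local Tate duality \eqref{eq:ltd}. Concretely, for $X$ as in the statement (finite, containing $\infty$ and the primes above $p$, with $M$ unramified outside $X$), the nine-term sequence of \cite[Theorem 4.10]{milne-adt} contains the three-term segment
\[
  H^1(\Gamma_X, M) \xrightarrow{\ Res_X\ } \bigoplus_{v \in X} H^1(\Gamma_v, M) \xrightarrow{\ \beta\ } H^1(\Gamma_X, M^*)^{\vee},
\]
exact at the middle term by \cite[Theorem 4.10(b)]{milne-adt}; here $\beta$ sends $(c_v)_{v\in X}$ to the functional $b \mapsto \sum_{v\in X} inv_v(c_v \cup b|_{\Gamma_v})$ on $H^1(\Gamma_X, M^*)$. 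The first thing to check is that the hypotheses of loc.~cit.~apply to our $M$: a finite free $W(k)/p^n$-module is in particular a finite abelian group with $\Gamma_X$-action whose order is a power of $p$, and $X$ contains the archimedean places and those above $p$; moreover Milne's $\mu$-dual $\Hom(M,\mu_{p^n})$ is canonically identified, compatibly with the local invariant maps, with the Tate-twisted $W(k)$-linear dual $M^* = \Hom_{W(k)}(M, W(k)/p^n(\kappa))$ used here, the identification relying on freeness of $M$.

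To pass from this exactness to the annihilator statement, note first that since \eqref{eq:ltd} is a perfect pairing for each non-archimedean $v \in X$, and the archimedean groups $H^1(\Gamma_v, M)$ vanish because $p = \ch(k)$ is odd, summing the local pairings produces a perfect pairing
\[
  \bigoplus_{v\in X} H^1(\Gamma_v, M) \times \bigoplus_{v\in X} H^1(\Gamma_v, M^*) \longrightarrow W(k)/p^n, \qquad \big((c_v)_v,(d_v)_v\big) \mapsto \sum_{v\in X} inv_v(c_v\cup d_v)
\]
(the second factor being $H^1(\Gamma_v,M^*)$, which is where $Res_X^*$ lands). By the very definition of $\beta$ and of $Res_X^*$, an element $(c_v)_v$ lies in $\ker\beta$ exactly when it pairs to zero with $Res_X^*(b)$ for every $b\in H^1(\Gamma_X, M^*)$, i.e.~exactly when $(c_v)_v \in \im(Res_X^*)^{\perp}$. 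Combining this with exactness $\im(Res_X)=\ker\beta$ gives $\im(Res_X)=\im(Res_X^*)^{\perp}$. To obtain the symmetric ``mutually exact annihilators'' conclusion one takes annihilators once more, using that $W(k)/p^n$ is self-injective so that $B^{\perp\perp}=B$ for any submodule $B$ of a finite module under a perfect pairing; equivalently, one reruns the argument with $M^*$ in place of $M$ via the canonical identification $(M^*)^*\cong M$, getting $\im(Res_X^*)=\im(Res_X)^{\perp}$ directly.

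There is no substantive mathematical obstacle — this is the standard global duality theorem — and the only real work is bookkeeping: matching the normalizations of the cup products and local invariant maps of \cite{milne-adt} with those fixed in \eqref{eq:ltd}, and confirming that the middle arrow of the nine-term sequence is literally ``pair against $\im(Res_X^*)$''. I would regard that identification of $\beta$ as the one point meriting explicit verification; everything else is a direct translation, and working over $W(k)/p^n$ rather than over $\Z/p^n\Z$ adds nothing, since all the groups involved are finite.
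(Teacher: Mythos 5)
Your argument is correct and matches the paper, which offers no independent proof but simply cites the middle exactness of the nine-term Poitou--Tate sequence \cite[Theorem 4.10 (b)]{milne-adt}; your unwinding of the map $\beta$ as pairing against $\im(Res_X^*)$, together with perfectness of the summed local pairings (and vanishing at archimedean places for odd $p$), is exactly the intended deduction, and you also correctly read the second factor of the displayed pairing as $H^1(\Gamma_v, M^*)$.
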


\subsubsection{}
Let $X$ be a finite set of primes of $\mc{O}_F$ containing $S$, and
let $(h_v)_{v \in X}$ be a collection of elements of
$H^1(\Gamma_v, \rbarg)$. We call $(h_v)_{v \in X}$ a \emph{local
  condition problem} and we are interested in knowing whether there
exists a global class $h \in H^1(\Gamma_X, \rbarg)$ whose restriction
at $v \in X$ is $h_v$.

In general, such a class need not exist so let us suppose that this is
the case, i.e., $(h_v)_{v \in X}$ is not in the image of the
restriction map
\[
Res_X: H^1(\Gamma_X, \rbarg) \to \bigoplus_{v \in X} H^1(\Gamma_v, 
 \rbarg) \ .
\]
We will show that there exists a Chebotarev set $\mc{Q}$ of nice
primes $q$ such that $(h_v)_{v \in X}$ is in the image of the map
\[
Res_{X \cup \{q\}}^X: H^1(\Gamma_{X \cup \{q\}}, \rbarg) \to
\bigoplus_{v \in X} H^1(\Gamma_v, \rbarg) \ .
\]

\begin{lem} \label{lem:perp} 
Let $(h_v)_{v \in X}$ be a local condition problem such that 
the line $l$ spanned by $(h_v)_{v \in X}$ is not in the image of
\[
Res_X: H^1(\Gamma_X,  \rbarg) \to \bigoplus_{v \in X}
H^1(\Gamma_v, \rbarg) \ .
\]
Then the annihilator of $l$ in $\bigoplus_{v \in
  X}H^1(\Gamma_v, \rbarg^*)$ does not contain the image of the map
\[
Res_X^*: H^1(\Gamma_X, \rbarg^*) \to \bigoplus_{v \in X}
H^1(\Gamma_v, \rbarg^*) \ .
\]
\end{lem}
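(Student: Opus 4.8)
The statement is essentially a restatement of global duality: the obstruction to solving a local condition problem is detected by dual Selmer classes. The plan is to argue by contradiction. Suppose the annihilator of $l$ in $\bigoplus_{v \in X} H^1(\Gamma_v, \rbarg^*)$ \emph{does} contain $\im(Res_X^*)$. I would then invoke Theorem \ref{thm:gd}: the image $\im(Res_X)$ is the exact annihilator of $\im(Res_X^*)$ under the sum of the local Tate pairings of \eqref{eq:ltd}. Hence the exact annihilator of $\im(Res_X^*)$ is a subspace containing $\im(Res_X)$, and our assumption says this annihilator also contains the line $l$. Therefore $l + \im(Res_X)$ is contained in the annihilator of $\im(Res_X^*)$, which by Theorem \ref{thm:gd} equals $\im(Res_X)$ itself. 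This forces $l \subset \im(Res_X)$, contradicting the hypothesis that $(h_v)_{v \in X}$ (equivalently the line it spans) is not in the image of $Res_X$.

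The one genuine subtlety is a dimension/finiteness bookkeeping point: Theorem \ref{thm:gd} is phrased for $M$ a free $W(k)/p^n$-module, whereas $\rbarg$ is naturally a $k$-module; here one simply takes $n=1$, so $W(k)/p = k$ and the pairing lands in $k$, and the statement applies verbatim. One should also be slightly careful about what ``annihilator of $l$'' means: since $l$ is one-dimensional, its annihilator in $\bigoplus_v H^1(\Gamma_v, \rbarg^*)$ is a hyperplane (codimension one), and the content of the lemma is precisely that $\im(Res_X^*)$ is not contained in this hyperplane, i.e., some dual Selmer-type class pairs nontrivially with $(h_v)_{v\in X}$. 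The contrapositive phrasing in my argument above handles this cleanly.

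I expect no real obstacle here — the lemma is a formal consequence of Poitou--Tate duality once it is set up correctly, and the ``main step'' is just correctly identifying the perfect pairing of Theorem \ref{thm:gd} and applying the exact-annihilator statement in both directions (the annihilator of the annihilator is the original subspace, which holds because the pairing is perfect and the spaces are finite-dimensional over $k$). The only place to be attentive is to make sure one does not confuse the annihilator of $l$ \emph{inside the local sum} with a Selmer-type annihilator: everything takes place in $\bigoplus_{v \in X} H^1(\Gamma_v, -)$, with no local conditions imposed, so Theorem \ref{thm:gd} applies directly.
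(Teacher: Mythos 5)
Your proof is correct and is essentially identical to the paper's argument: both deduce from Theorem \ref{thm:gd} that $\ann(l) \supset \im(Res_X^*)$ is equivalent to $l \subset \im(Res_X)$, and conclude by contradiction with the hypothesis. The remarks about taking $n=1$ in the duality statement and working in the full local sum without Selmer conditions are fine but routine.
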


\begin{proof} 
  By Theorem \ref{thm:gd} the images of $Res_X$ and $Res_X^*$ are exact
  annihilators of one another, so
\[
\ann(l) \supset \im(Res^*_X) \Longleftrightarrow
l  \subset \im(Res_X) \ .
\]
The latter condition is false by hypothesis, so the former is false as
well.
\end{proof}

\begin{prop} \label{prop:hv} 
  Let $(h_v)_{v \in X}$ be a local condition problem such that the
  line $l$ spanned by it is not in the image of
\[
Res_X : H^1(\Gamma_X,  \rbarg) \to \bigoplus_{v \in X}
H^1(\Gamma_v, \rbarg).
\]
Then there is a basis $\{\zeta_1,\dots,\zeta_s,\zeta\}$ of
$H^1(\Gamma_X, \rbarg^*)$ such that $\{\zeta_1,\dots,\zeta_s\}$
all annihilate $l$. Let $\mc{Q}$ be the Chebotarev set of nice primes
satisfying 
\begin{itemize}
\item $\zeta_i|_{\Gamma_q} = 0$ for $i=1,2,\dots,s$ and
\item the $\mg{g}_{\alpha}^*$ component of $\zeta |_{\Gamma_q}$ is
  nonzero.
\end{itemize}
Then for any $q \in \mc{Q}$, the image of
\[
Res_{X \cup \{q\}}^X: H^1(\Gamma_{X \cup \{q\}}, \rbarg) \to
\bigoplus_{v \in X} H^1(\Gamma_v, \rbarg^*) \
\]
contains $l$.
\end{prop}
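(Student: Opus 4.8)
The plan is to use the cohomological machinery already set up, in particular Lemma~\ref{lem:kill} and Lemma~\ref{lem:perp}, to convert the obstruction to solving the local condition problem into a statement about a single global dual class. First I would establish the existence of the asserted basis. Since $l = \langle (h_v)_{v \in X} \rangle$ is not in $\im(Res_X)$, Lemma~\ref{lem:perp} says that $\ann(l) \cap \bigoplus_{v \in X} H^1(\Gamma_v, \rbarg^*)$ does not contain $\im(Res_X^*)$; equivalently, the subspace of $H^1(\Gamma_X, \rbarg^*)$ consisting of classes whose localization lies in $\ann(l)$ is a \emph{proper} subspace. This subspace is the kernel of the linear functional $\psi \mapsto \langle (h_v), \psi|_X \rangle_{PT}$ on $H^1(\Gamma_X, \rbarg^*)$ (using the sum of the local Tate pairings), hence has codimension exactly one. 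Choosing $\{\zeta_1,\dots,\zeta_s\}$ a basis of this hyperplane and $\zeta$ any complement gives the desired basis, with all $\zeta_i$ annihilating $l$ and $\langle (h_v), \zeta|_X\rangle_{PT} \neq 0$.

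Next I would produce the Chebotarev set $\mc{Q}$. The conditions ``$\zeta_i|_{\Gamma_q} = 0$ for all $i$'' and ``the $\mg{g}_\alpha^*$ component of $\zeta|_{\Gamma_q}$ is nonzero'' are exactly of the type handled by Lemma~\ref{lem:kill}: take the $\phi_j$'s there to be $\zeta_1,\dots,\zeta_s$ (we want $b_j = 0$, which is permissible since $0$ is fixed by $x$, forcing $\zeta_j|_{\Gamma_q}$ unramified with trivial Frobenius value, i.e.\ zero as a class since $\mr{Norm}(q) \not\equiv 1$) together with one more class built to detect $\zeta$ with nonzero $\mg{g}_\alpha^*$-component (choose the corresponding $b$ on which $x$ acts by $\alpha^{-1}(x)$ and which is nonzero). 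Strictly, one should first check $\zeta$ restricts to a nonzero class on $\Gamma_K$; if $\zeta|_{\Gamma_K} = 0$ then $\zeta$ would be inflated from $\gal(K/F)$, but $h^1(\gal(K/F), \rbarg^*) = 0$ by Proposition~\ref{prop:nice}(1), so $\zeta = 0$, a contradiction. Then Lemma~\ref{lem:kill} gives a non-empty Chebotarev set of nice primes realizing all these conditions simultaneously, which is our $\mc{Q}$.

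Finally I would deduce that $l \subset \im(Res_{X \cup \{q\}}^X)$ for $q \in \mc{Q}$. The point is a comparison of duality sequences for $X$ versus $X \cup \{q\}$. By Theorem~\ref{thm:gd} applied to $X \cup \{q\}$, the class $(h_v)_{v \in X}$ extends to $\bigoplus_{v \in X \cup \{q\}} H^1(\Gamma_v,\rbarg)$ lifting into $\im(Res_{X \cup \{q\}})$ if and only if, after choosing the $\{q\}$-component appropriately, it annihilates $\im(Res_{X\cup\{q\}}^*)$. Here one has the freedom to adjust the component at $q$: by Lemma~\ref{lem:nice}(3) the pairing of a ramified class $f \in H^1(\Gamma_q, \rbarg)$ with an unramified $\psi \in H^1_{nr}(\Gamma_q,\rbarg^*)$ is $\gamma_f \psi({\rm Frob}_q)$. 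A class $\psi \in H^1(\Gamma_{X\cup\{q\}},\rbarg^*)$ restricts into $\ann((h_v)_{v\in X})$ on $X$ iff $\psi$ is a combination of (the new-at-$q$ extensions of) $\zeta_1,\dots,\zeta_s$ plus something vanishing at $q$; the genuinely new dual classes ramified at $q$ are excluded precisely because $\zeta|_{\Gamma_q}$ has nonzero $\mg{g}_\alpha^*$-component, which by Lemma~\ref{lem:nice}(3) pairs nontrivially with a ramified $f$ at $q$, so adding such an $f$-component can cancel the remaining obstruction coming from $\zeta$. Carrying this bookkeeping through shows that an appropriate choice of the $q$-component, together with $(h_v)_{v\in X}$, annihilates all of $\im(Res_{X\cup\{q\}}^*)$, hence lies in $\im(Res_{X\cup\{q\}})$, and its restriction to $X$ is the original $(h_v)$. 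The main obstacle is precisely this last bookkeeping step: one must carefully track how the space of dual classes over $X \cup \{q\}$ decomposes relative to the old space over $X$, use that the only new contribution is either the ramified dual class at $q$ (killed by the $\zeta$-condition via Lemma~\ref{lem:nice}(3)) or unramified classes whose Frobenius value at $q$ we can match, and conclude via Theorem~\ref{thm:gd} that the obstruction vanishes after adjusting at $q$.
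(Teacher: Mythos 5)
Your first two steps are sound and essentially follow the paper: Lemma~\ref{lem:perp} plus the codimension-one observation gives the basis $\{\zeta_1,\dots,\zeta_s,\zeta\}$, and Lemma~\ref{lem:kill} (with the check, via Proposition~\ref{prop:nice}(1), that $\zeta$ survives restriction to $\Gamma_K$) gives the Chebotarev set $\mc{Q}$. The problem is the final step, which is also where you leave the paper's route. You want one element $h_q\in H^1(\Gamma_q,\rbarg)$ such that $((h_v)_{v\in X},h_q)$ annihilates all of $\im(Res^*_{X\cup\{q\}})$, but the constraints on this single $h_q$ are indexed by all of $H^1(\Gamma_{X\cup\{q\}},\rbarg^*)$, which can be strictly larger than $H^1(\Gamma_X,\rbarg^*)$: for a nice prime, $h^1(\Gamma_q,\rbarg^*)-h^1_{nr}(\Gamma_q,\rbarg^*)=h^0(\Gamma_q,\rbarg)\geq 1$, so classes ramified at $q$ will in general appear after enlarging $X$. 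Your claim that these new classes are ``excluded \dots because $\zeta|_{\Gamma_q}$ has nonzero $\mg{g}_{\alpha}^*$-component'' is not an argument: the conditions defining $\mc{Q}$ constrain only $\zeta_1,\dots,\zeta_s,\zeta$ and say nothing about classes that exist only over $\Gamma_{X\cup\{q\}}$, whose restrictions at the places $v\in X$ (and hence their pairings with the fixed $h_v$) are a priori uncontrolled; moreover the $q$-component chosen to cancel the $\zeta$-obstruction must \emph{simultaneously} satisfy the conditions imposed by all these new classes, and as written you treat these requirements as if they could be arranged independently. This is a genuine gap at the crux of the argument.

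The gap is fillable, and then your route is a genuinely different (and arguably more direct) proof than the paper's. Every constraint depends on $\psi\in H^1(\Gamma_{X\cup\{q\}},\rbarg^*)$ only through the fixed functional $\Lambda_0(\psi)=\sum_{v\in X} inv_v(h_v\cup\psi|_{\Gamma_v})$ and through $\psi|_{\Gamma_q}$; since local duality at $q$ is perfect, a suitable $h_q$ exists if and only if $\Lambda_0$ vanishes on the kernel of localization at $q$. That kernel consists of classes trivial (in particular unramified) at $q$, hence inflated from $\Gamma_X$, hence---because $\zeta_i|_{\Gamma_q}=0$ while $\zeta|_{\Gamma_q}\neq 0$---contained in the span of $\zeta_1,\dots,\zeta_s$, all of which annihilate $l$. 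That is the missing statement; with it, Theorem~\ref{thm:gd} applied to $X\cup\{q\}$ produces the global class and the proposition follows. The paper instead argues by dimensions: two applications of the Greenberg--Wiles formula \eqref{eq:gw} show that $h^1(\Gamma_{X\cup\{q\}},\rbarg)=h^1(\Gamma_X,\rbarg)+1$ while $\ker(Res^X_{X\cup\{q\}})=\ker(Res_X)$, so the image grows by exactly one dimension; if $l$ were still missed, the annihilator of $l+\im(Res^X_{X\cup\{q\}})$ would have codimension at least two in $\im(Res^*_X)$, contradicting (by reciprocity, using $\zeta_i|_{\Gamma_q}=0$) that all the $\zeta_i$ annihilate both $l$ and the new image. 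You should either supply the localization-kernel argument above or fall back on the paper's counting argument.
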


\begin{proof}
  By Lemma \ref{lem:perp} we may choose a basis
  $\{\zeta_1,\dots,\zeta_s,\zeta\}$ of $H^1(\Gamma_X, \rbarg^*)$ as
  required and we also have $Res_X^*(\zeta) \notin \ann(l)$. We may
  also assume that the $\zeta_i$ are ordered so that they first
  include a basis of $\Sh^1_X(\rbarg^*)$ ($ =\ker(Res_X^*)$).  The
  assumptions on $\rbar$ and the linear disjointness from Lemma
  \ref{lem:kill} imply that $\mc{Q}$ as in the statement is a set of
  primes of positive density.

  For $v \in X$, let $\mc{L}_v = H^1(\Gamma_v, \rbarg)$ and also let
  $\mc{L}_q = H^1(\Gamma_q, \rbarg)$.  Let
  $\mc{L} = (\mc{L}_v)_{v \in X}$ and let
  $\mc{L}' = (\mc{L}_v)_{v \in X \cup \{q\}}$.  Comparing the
  Greenberg--Wiles formula \eqref{eq:gw} applied to
  $\mc{L}$ and $\mc{L}'$, we see, %
  using the easy fact of local Galois cohomology  that
  $h^1_{nr}(\Gamma_q,M)=h^0(\Gamma_q,M)$, 
  that
  \begin{equation} \label{eq1}%
    \begin{split}
      & \left(h^1_{\mc{L}}(\Gamma_X, \rbarg) - h^1_{\mc{L}'}(\Gamma_{X \cup
        \{q\}}, \rbarg\right) - \left(\ h^1_{\mc{L}^{\perp}}(\Gamma_X, \rbarg^*)
      - h^1_{\mc{L}'^{\perp}}(\Gamma_{X \cup
        \{q\}}, \rbarg^*)\right)  \\
      = &\ h^1_{nr}(\Gamma_q, \rbarg) - h^1(\Gamma_q, \rbarg) \ .
    \end{split}
  \end{equation}
    By definition, $H^1_{\mc{L}}(\Gamma_X, \rbarg)$ is the
    full $H^1$ and $H^1_{\mc{L}^{\perp}}(\Gamma_X, \rbarg^*)$ is the
    $\Sh^1$. By assumption, we have $\zeta_i|_{\Gamma_q} = 0$ for all
    $i$, so 
    $H^1_{\mc{L}^{\perp}}(\Gamma_X, \rbarg^*) =
    H^1_{\mc{L}'^{\perp}}(\Gamma_{X \cup \{q\}},  \rbarg^*)$. Thus,
    \begin{equation} \label{eq2} h^1(\Gamma_{X \cup \{q\}}, \rbarg) =
      h^1(\Gamma_X, \rbarg) + h^1(\Gamma_q, \rbarg) -
      h^1_{nr}(\Gamma_q, \rbarg)  =  h^1(\Gamma_X, \rbarg) +1.
    \end{equation}
    
    We have shown the dimensions of the domains of
        \[
    Res_X: H^1(\Gamma_X, \rbarg) \to \bigoplus_{v \in X}
    H^1(\Gamma_v, \rbarg) 
    \]
    and 
    \[
    Res_{X \cup \{q\}}^X:  H^1(\Gamma_{X \cup \{q\}}, \rbarg) \to  \left(\bigoplus_{v \in X}
    H^1(\Gamma_v, \rbarg)\right) \oplus \frac{H^1(\Gamma_q, \rbarg)}{ H^1(\Gamma_q, \rbarg)}
    \]
    differ by $1$.
    
 We now show  their kernels have the same dimension so the image of $ Res_{X \cup \{q\}}^X$ properly contains that of $Res_X$. 
    For this we let $\mc{L}_v = 0$ for $v \in X$,
    $\mc{L}_q = H^1(\Gamma_q, \rbarg)$ and
    $\mc{L} = (\mc{L}_v)_{v \in X}$,
    $\mc{L}' = (\mc{L}_v)_{v \in X \cup \{q\}}$ and apply the
    Greenberg--Wiles formula again as in \eqref{eq1}.

    Now $H^1_{\mc{L}^{\perp}}(\Gamma_X, \rbarg^*) = H^1(\Gamma_X,
    \rbarg^*)$ which has basis $\{\zeta_1,\dots,\zeta_s,\zeta\}$. As
    $\mc{L}_q^{\perp} = 0$, $\zeta_i|_{\Gamma_q} = 0$ and
    $\zeta|_{\Gamma_q} \neq 0$, we have that
    $H^1_{\mc{L}'^{\perp}}(\Gamma_{X \cup \{q\}}, \rbarg^*))$ is the
    span of $\{\zeta_1, \dots, \zeta_s\}$. Combining \eqref{eq1} and
    \eqref{eq2} we get %
    \begin{equation} \label{eq3} (h^1(\Gamma_{X \cup \{q\}}, \rbarg) -
      \dim \ker(Res_{X \cup \{q\}}^X)) - ((h^1(\Gamma_X, \rbarg) -
      \dim \ker(Res_X)) = 1
\end{equation}
Thus,   the kernels of $Res_{X \cup \{q\}}^X$ and
$Res_X$ are identical so the rank of $Res_{X \cup \{q\}}^X$  is one greater than that of $Res_X$. 

If $l \nsubseteq \im(Res_{X \cup \{q\}}^X)$, then $\im(Res_X)$ is of
codimension at least two in $l + \im(Res_{X \cup \{q\}}^X)$, so
$\ann(l + \im(Res_{X \cup \{q\}}^X))$ is of codimension at least two
in $\ann(\im(Res_X))$. However, by Theorem \ref{thm:gd} the latter
space is precisely the image of $Res_X^*$, so spanned by the (images
of) $\zeta_i$ and $\zeta$. By construction, all the $\zeta_i$
annihilate $l$ and since $\zeta_i|_q = 0$ for all $i$, it follows by
reciprocity that all the $\zeta_i$ also annihilate
$\im(Res_{X \cup \{q\}}^X)$. We conclude that
$l \subseteq \im(Res_{X \cup \{q\}}^X)$.
\end{proof}


\subsection{Infinitesimal adjustment of lifts}

The process of ``adjusting'' a representation
$\rho: \Gamma_F \to G(W(k)/p^mW(k))$, $m>0$, by a cocyle
(representing) $f \in H^1(\Gamma_F, \rbarg)$ to get another
representation $\rho': \Gamma_F \to G(W(k)/p^mW(k))$ plays a key role
in the sequel.  We recall here what this means: the kernel of the
reduction map $G(W(k)/p^mW(k)) \to G(W(k)/p^{m-1}W(k))$ is naturally
identified \cite[\S 3.5]{tilouine-def} with $\mg{g}_{k}$ using the
first order exponential map and the generator $p^{m-1}$ of the kernel
of the reduction map $W(k)/p^{m}W(k) \to W(k)/p^{m-1}W(k)$. We denote
this identification by $x \mapsto \mr{exp}(p^{m-1}x)$,
$x \in \mg{g}_{k}$.

For $\rho, f$ as above, the map $\rho': \Gamma_F \to G(W(k)/p^mW(k))$
given by $\gamma \mapsto \mr{exp}(p^{m-1} f(\gamma))\cdot
\rho(\gamma)$ is a continuous homomorphism, equivalent to $\rho$ mod
$p^{m-1}$. Moreover, since $f$ takes values in $\mg{g}^{\mr{der}}_{k}
\subset \mg{g}_{k}$ and $\mg{g}^{\mr{der}}$ is the kernel of the map on
Lie algebras induced by $\mu: G \to C$, the maps $\mu \circ \rho$ and
$\mu \circ \rho'$ from $\Gamma_F$ to $C(W(k)/p^nW(k))$ are equal.

\begin{defn} \label{def:adjust} We call $\rho'$ the representation
obtained from $\rho$ by \emph{adjusting} it by $f$ and denote it by
$\mr{exp}(p^{m-1}f)\rho$.
\end{defn}
We use similar notation when $\Gamma_F$ is replaced by $\Gamma_v$, for
$v$ a prime of $\mc{O}_F$.

\subsection{A matricial condition for auxiliary sets} \label{sec:mat} 

Recall that $\mc{N} = (\mc{N}_v)_{v \in S}$ is the set of tangent
spaces to the (balanced) smooth local deformation conditions
$(\mc{D}_v)_{v \in S}$ from Assumption \ref{ass:2}.

Let $\{f_1, f_2,\dots, f_n\}$ be a basis of
$H^1_{\mc{N}}(\Gamma_S, \rbarg)$ and $\{\phi_1,\phi_2,\dots,\phi_n\}$
a basis of $H^1_{\mc{N}^{\perp}}(\Gamma_S, \rbarg^*)$; note that $n
\neq 0$ by the assumptions of Theorem \ref{thm:main}. By Assumption
\ref{ass:1} and Remark \ref{rem:abs}, it follows that
$f_i|_{\Gamma_K}: \Gamma_K \to \mg{g}_k^{\mr{der}}$,
$K = F(\rbarg, \mu_p)$, is a surjective homomorphism and similarly for
$\phi_i|_{\Gamma_K}$. Therefore, using the Chebotarev density theorem
and Lemma \ref{lem:kill}, we may pick a set
$\tilde{Q} = \{q_1,q_2,\dots,q_n\}$ of nice primes for $\rbar$ such
that
\begin{itemize}
\item the $\mg{t}_{\alpha} \oplus (\oplus_{\beta}\mg{g}_{\beta}$)
  component of $f_i$
  restricted to $\Gamma_{q_j}$ is $0$ for all $i, j$,
\item the $\mg{l}_{\alpha}$ component of $f_i$ restricted to
  $\Gamma_{q_j}$ is $0$ for $i\neq j$,
\item the $\mg{l}_{\alpha}$ component of $f_i$ restricted to
  $\Gamma_{q_i}$ is $1$ (using the identification fixed in \S \ref{sec:loc})
\end{itemize}
and
\begin{itemize}
\item the $\mg{t}^* \oplus (\oplus_{\beta \neq \alpha}\mg{g}_{\beta}^*)$
  component of $\phi_i$
  restricted to $\Gamma_{q_j}$ is $0$ for all $i, j$,
\item the $\mg{g}_{\alpha}^*$ component of $\phi_i$ restricted to
  $\Gamma_{q_j}$ is $0$ for $i\neq j$,
\item the $\mg{g}_{\alpha}^*$ component of $\phi_i$ restricted to
  $\Gamma_{q_i}$ is $1$ (using the identification fixed in \S
  \ref{sec:loc}).
\end{itemize}

\begin{lem} \label{lem:sds}
  If we augment $\mc{N}$ by using the condition of \S \ref{sec:lc} at
  the primes in $\tilde{Q}$, then the Selmer groups
  $H^1_{\mc{N}}(\Gamma_{S \cup \tilde{Q}}, \rbarg)$ and
  $H^1_{\mc{N}^{\perp}}(\Gamma_{S \cup \tilde{Q}}, \rbarg^*)$ are both
  trivial, so $\tilde{Q}$ is auxiliary.
\end{lem}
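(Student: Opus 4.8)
The plan is to compute both Selmer groups directly using the explicit choices of the primes $q_j \in \tilde{Q}$ made just above the lemma, exploiting the ``diagonal'' shape of the restriction maps those choices produce. First I would show $H^1_{\mc{N}}(\Gamma_{S \cup \tilde{Q}}, \rbarg) = 0$. A class $f$ in this Selmer group restricts, at each $v \in S$, into $\mc{N}_v$, and at each $q_j \in \tilde{Q}$ into $\mc{N}_{q_j}$, which by the description in \S \ref{sec:lc} means the $\mg{l}_\alpha$-component of $f|_{\Gamma_{q_j}}$ vanishes (i.e.\ $f$ is unramified at $q_j$ and, more precisely, $f|_{\Gamma_{q_j}} \in H^1(\Gamma_{q_j}, W)$). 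The strategy is a dimension count via the Greenberg--Wiles formula \eqref{eq:gw}: adding each nice prime $q_j$ with the condition of \S \ref{sec:lc} changes the relevant Euler-characteristic term by $\dim \mc{N}_{q_j} - h^1(\Gamma_{q_j}, \rbarg) = -1$ (by Lemma \ref{lem:nice}(1),(2)), so $h^1_{\mc{N}}(\Gamma_{S \cup \tilde{Q}}, \rbarg) - h^1_{\mc{N}^\perp}(\Gamma_{S \cup \tilde{Q}}, \rbarg^*)$ stays equal to $h^1_{\mc{N}}(\Gamma_S, \rbarg) - h^1_{\mc{N}^\perp}(\Gamma_S, \rbarg^*) = n - n = 0$. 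So it suffices to prove that \emph{one} of the two groups vanishes.

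Next I would prove the vanishing of the dual Selmer group $H^1_{\mc{N}^\perp}(\Gamma_{S \cup \tilde{Q}}, \rbarg^*)$, which is where the explicit choice of the $\phi_i$ and the orthogonality relations of Lemma \ref{lem:nice}(3) enter. Any class $\psi$ in this group lies in $H^1_{\mc{N}^\perp}(\Gamma_S, \rbarg^*)$ after forgetting the primes of $\tilde{Q}$ (enlarging the set of allowed ramification only relaxes conditions, but the dual condition at $q_j$, namely $\mc{N}_{q_j}^\perp$, is a \emph{restriction} when we pass back down — more precisely, a class unramified-plus-$\mc{N}^\perp_{q_j}$ at each $q_j$ that is in $H^1(\Gamma_{S \cup \tilde Q},\rbarg^*)$ and satisfies the $S$-conditions restricts to an element of $H^1_{\mc N^\perp}(\Gamma_S,\rbarg^*)$). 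Hence $\psi = \sum_i c_i \phi_i$ for scalars $c_i \in k$. Now I pair $\psi$ against the global class $f_j \in H^1_{\mc{N}}(\Gamma_S,\rbarg) \subset H^1_{\mc N}(\Gamma_{S\cup\tilde Q},\rbarg)$: by global reciprocity the sum of local invariants $\sum_{v} \inv_v(f_j \cup \psi)$ vanishes. The $S$-terms vanish because $f_j|_S \in \mc{N}_v$ and $\psi|_S \in \mc{N}_v^\perp$; the terms at $q_i$ for $i \neq j$ vanish because $f_j$ is unramified there and the $\mg{g}_\alpha^*$-component of $\phi_i$ at $q_i$ being the only potentially nonzero contribution, while $f_j$'s only ramified (more precisely $\mg{l}_\alpha$) component at $q_j$ pairs against it. Concretely, Lemma \ref{lem:nice}(3) (applied with the ramified class being $f_j$ at $q_j$) and the explicit normalizations give $\inv_{q_j}(f_j \cup \psi) = \gamma_{f_j} \cdot (\text{$\mg{g}_\alpha^*$-component of }\psi|_{\Gamma_{q_j}}) = \gamma_{f_j} c_j$ with $\gamma_{f_j} \neq 0$, since the $\mg{g}_\alpha^*$-component of $\phi_i$ at $q_j$ is $\delta_{ij}$ by construction. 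Reciprocity then forces $c_j = 0$ for every $j$, so $\psi = 0$.

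The last step is to promote ``$\psi$ restricts to an element of $H^1_{\mc N^\perp}(\Gamma_S,\rbarg^*)$'' to the literal statement $\psi = \sum c_i\phi_i$: I need that the restriction map $H^1_{\mc N^\perp}(\Gamma_{S\cup\tilde Q},\rbarg^*) \to H^1_{\mc N^\perp}(\Gamma_S,\rbarg^*)$ is injective (no new $\tilde Q$-ramified dual classes), which follows because the local dual condition $\mc N_{q_j}^\perp$ at a nice prime, intersected with the unramified subspace, is what it is — Lemma \ref{lem:nice}(2) says $\mc N_{q_j}$ has codimension one in $H^1(\Gamma_{q_j},\rbarg)$ and meets $H^1_{nr}$ in codimension one, so dually $\mc N_{q_j}^\perp$ is one-dimensional and meets $H^1_{nr}(\Gamma_{q_j},\rbarg^*)$ in $0$; hence any class in the new dual Selmer group that is nonzero must be ramified at some $q_j$ with nontrivial $\mg g_\alpha^*$-component there, and the reciprocity computation above rules this out directly without first restricting. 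This is the only genuinely delicate bookkeeping point — matching up which component of which cocycle pairs nontrivially with which — and I expect it to be the main obstacle; everything else is the Greenberg--Wiles count and a clean application of Lemma \ref{lem:nice}. Once $H^1_{\mc N^\perp}(\Gamma_{S\cup\tilde Q},\rbarg^*) = 0$, the dimension count gives $H^1_{\mc N}(\Gamma_{S\cup\tilde Q},\rbarg) = 0$ as well, so $\tilde Q$ is auxiliary by Definition \ref{def:aux}.
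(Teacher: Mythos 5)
Your overall strategy (Greenberg--Wiles to preserve balance, then kill the dual Selmer group using the explicit normalizations of the $f_i,\phi_i$ at the $q_j$) is workable, but the execution has two genuine errors at the crux. First, the reduction ``$\psi\in H^1_{\mc{N}^\perp}(\Gamma_S,\rbarg^*)$, hence $\psi=\sum c_i\phi_i$'' is exactly the nontrivial point and is not established: the local condition at $q_j$ is $\mc{N}_{q_j}^\perp$, which (as you yourself compute later) is a one-dimensional space of \emph{ramified} classes, namely $H^1(\Gamma_{q_j},\mg{l}_{\alpha}^*)$ inside the $\mg{t}^*$-part; so a class in $H^1_{\mc{N}^{\perp}}(\Gamma_{S\cup\tilde Q},\rbarg^*)$ may be ramified at the $q_j$ and need not come from level $S$. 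Your attempted patch misidentifies the component: a nonzero element of $\mc{N}_{q_j}^\perp$ has \emph{zero} $\mg{g}_{\alpha}^*$-component (the ramified direction is $\mg{l}_{\alpha}^*$), and it is also false that a nonzero class must be ramified at some $q_j$. Second, the reciprocity computation you rely on is incorrect: $f_j$ is a global class at level $S$, hence \emph{unramified} at $q_j$, so Lemma \ref{lem:nice}(3) does not apply with $f_j$ as the ramified class (its $\gamma_{f_j}$ would in any case be governed by the $\mg{g}_{\alpha}$-component of $f_j|_{\Gamma_{q_j}}$, which is $0$ by construction); and if $\psi$ is unramified at $q_j$ then $\inv_{q_j}(f_j\cup\psi)=0$ identically, so reciprocity detects nothing about $c_j$. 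In the unramified case the correct mechanism is purely local: $\mc{N}_{q_j}^\perp\cap H^1_{nr}(\Gamma_{q_j},\rbarg^*)=0$ together with $\phi_i({\rm Frob}_{q_j})=\delta_{ij}$ forces $c_j=0$. What a reciprocity argument against $f_j$ \emph{can} do (via Lemma \ref{lem:nice}(4), with $f_j|_{\Gamma_{q_j}}$ the unramified class whose $\mg{l}_{\alpha}$-value is $1$, paired against a possibly ramified $\psi|_{\Gamma_{q_j}}\in\mc{N}_{q_j}^\perp$) is kill precisely the ramified possibility: since $f_j|_{\Gamma_{q_i}}=0$ for $i\neq j$ and the $S$-contributions vanish, reciprocity gives $\inv_{q_j}(f_j\cup\psi)=0$, and the perfect pairing of the unramified $\mg{l}_{\alpha}$-line with the ramified $\mg{l}_{\alpha}^*$-line then forces $\psi|_{\Gamma_{q_j}}=0$. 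With the roles of the two cases and of parts (3) and (4) of Lemma \ref{lem:nice} sorted out this way, your plan can be repaired; as written, the central computation fails.

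For comparison, the paper does not argue this way at all: its proof records only that the normalizations give $f_i|_{\Gamma_{q_i}}\notin\mc{N}_{q_i}$ and $\phi_i|_{\Gamma_{q_i}}\notin\mc{N}_{q_i}^{\perp}$ (while $f_i,\phi_i$ restrict trivially at the other $q_j$), and then invokes the inductive Greenberg--Wiles argument of \cite[\S 5]{pat-ex}: adding the nice primes one at a time, at each step the existence of a Selmer class and a dual Selmer class with nontrivial image in the one-dimensional quotients $H^1(\Gamma_{q},\rbarg)/\mc{N}_{q}$, resp.\ modulo $\mc{N}_q^\perp$, drops both dimensions by exactly one, so after $n$ steps both groups vanish. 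That route never needs to decide whether a given global class is ramified at the new primes, which is precisely the point your draft stumbles on.
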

\begin{proof}
  The third condition on $f_i$ implies that $f_i|_{\Gamma_{q_i}} \notin
  \mc{N}_{q_i}$ and the third condition on $\phi_i$ implies that
  $\phi_i|_{\Gamma_{q_i}} \notin \mc{N}_{q_i}^{\perp}$; see \cite[Lemma 4.11]{pat-ex}.
 
  The rest of the argument consists in inductively applying the
  Greenberg--Wiles formula \ref{eq:gw}; see \cite[\S 5]{pat-ex},
  especially the argument after the statement of Lemma
  5.3.
\end{proof}

Note that not all the conditions imposed above are necessary for
$\tilde{Q}$ to be auxiliary, but they will all be used later in the
proof of Theorem \ref{thm:main}.

\begin{lem} \label{lem:aux}
  Let $X = S \cup \tilde{Q}$ with $\tilde{Q}$ auxiliary and let
  $\{q_{n+1},q_{n+2},\dots,q_{n+s}\}$ be a set of nice primes for
  $\rbar$ disjoint from $X$.  The kernel of the restriction maps
\[
Res^{n+i}:H^1(\Gamma_{X \cup \{q_{n+i}\}}, \rbarg) \to \bigoplus_{v
  \in X} \frac{H^1(\Gamma_v, \rbarg)}{\mc{N}_v}
\]
is one dimensional for all $i$ and we let $f_{n+i}$ be any nonzero
element in it.  Then
$\tilde{Q} \cup \{q_{n+1},q_{n+2},\dots,q_{n+s}\}$ is auxiliary iff
the matrix $[f_{n+i}({\rm Frob}_{q_{n+j}})]_{1 \leq i,j \leq s}$ is
invertible.
\end{lem}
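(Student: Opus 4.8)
The plan is to compute the Selmer group $H^1_{\mc{N}}(\Gamma_{S \cup \tilde{Q} \cup \{q_{n+1},\dots,q_{n+s}\}}, \rbarg)$ and show it vanishes precisely when the asserted matrix is invertible. First I would observe that each $f_{n+i}$, by definition of $Res^{n+i}$, is a class in $H^1(\Gamma_{X \cup \{q_{n+i}\}}, \rbarg)$ that is unramified and satisfies the prescribed local condition $\mc{N}_v$ at every $v \in X = S \cup \tilde{Q}$. Since $\tilde{Q}$ is auxiliary, $H^1_{\mc{N}}(\Gamma_X, \rbarg) = 0$, so by the analogue of Proposition \ref{prop:hv} / the dimension count via the Greenberg--Wiles formula \eqref{eq:gw} (exactly as in \eqref{eq2}, \eqref{eq3}) the kernel of $Res^{n+i}$ is at most one-dimensional; and it is nonzero because adjoining one nice prime $q_{n+i}$ with the relaxed condition $\mc{L}_{q_{n+i}} = H^1(\Gamma_{q_{n+i}}, \rbarg)$ raises $h^1$ by one while the dual Selmer condition can only drop if some $\phi \in H^1_{\mc{N}^\perp}(\Gamma_X, \rbarg^*)$ becomes ramified-trivial at $q_{n+i}$ — and by Lemma \ref{lem:nice}(1)--(2) the local condition at a nice prime is ``balanced'' ($h^1(\Gamma_q,\rbarg) - \dim \mc{N}_q = 1$), so in fact $h^1_{\mc{N}}$ on $\Gamma_{X \cup \{q_{n+i}\}}$ stays equal to the original $0$ only if $f_{n+i}$ is ramified at $q_{n+i}$, which it must be; this forces $\ker(Res^{n+i})$ to be exactly one-dimensional and $f_{n+i}$ to be ramified at $q_{n+i}$, hence $f_{n+i}({\rm Frob}_{q_{n+i}}) \neq 0$ after suitably interpreting the $\mg{l}_\alpha$-component as a homomorphism out of $I_{q_{n+i}}$ (or rather the ratio governing ramification). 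I should double-check the exact normalization here, since the statement evaluates at ${\rm Frob}_{q_{n+j}}$ and the relevant nonvanishing is that $f_{n+i}$ is ramified at $q_{n+i}$.

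Next I would set up the key linear-algebra reduction. A class $f = \sum_{i=1}^{s} c_i f_{n+i} + g$ with $g \in H^1(\Gamma_X, \rbarg)$ lies in $H^1_{\mc{N}}(\Gamma_{X \cup \{q_{n+1},\dots,q_{n+s}\}}, \rbarg)$ iff it satisfies $\mc{N}_v$ at all $v \in X$ (automatic for the $f_{n+i}$; forces $g \in H^1_{\mc{N}}(\Gamma_X,\rbarg) = 0$) and satisfies the nice-prime condition $\mc{N}_{q_{n+j}}$ at each $q_{n+j}$, $j = 1,\dots,s$. By Lemma \ref{lem:nice}(2), lying in $\mc{N}_{q_{n+j}}$ for an unramified-away-from-$q_{n+i}$ class is equivalent to the vanishing of the $\mg{l}_\alpha$-component evaluation, i.e. to $\sum_{i} c_i f_{n+i}({\rm Frob}_{q_{n+j}}) = 0$ (here $f_{n+i}$ for $i \neq j$ is unramified at $q_{n+j}$, so its restriction there is genuinely captured by $f_{n+i}({\rm Frob}_{q_{n+j}}) \in k$, while for $i = j$ this value detects ramification). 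Hence the Selmer condition at the new primes translates exactly into the linear system $M \mathbf{c} = 0$ where $M = [f_{n+i}({\rm Frob}_{q_{n+j}})]_{i,j}$. So $H^1_{\mc{N}}(\Gamma_{X \cup \{q_{n+1},\dots,q_{n+s}\}}, \rbarg) = 0$ iff $M$ is invertible.

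Finally I would handle the dual Selmer group to conclude ``auxiliary''. By the Greenberg--Wiles formula \eqref{eq:gw} applied to the balanced condition $\mc{N}$ on $\Gamma_{X \cup \{q_{n+1},\dots,q_{n+s}\}}$ — using that $\tilde{Q}$ is auxiliary so $h^1_{\mc{N}} = h^1_{\mc{N}^\perp} = 0$ on $\Gamma_X$, and that adjoining each nice prime with its condition $\mc{N}_{q_{n+j}}$ is balanced (the local contribution $\dim \mc{N}_{q_{n+j}} - h^0(\Gamma_{q_{n+j}}, \rbarg) = 0$ by Lemma \ref{lem:nice}(2), since $\dim \mc{N}_q = h^1_{nr}(\Gamma_q,\rbarg) = h^0(\Gamma_q,\rbarg)$) — the difference $h^1_{\mc{N}} - h^1_{\mc{N}^\perp}$ is unchanged and equals $0$. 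Therefore $h^1_{\mc{N}^\perp} = h^1_{\mc{N}}$ on the enlarged set, so both vanish simultaneously, and $\tilde{Q} \cup \{q_{n+1},\dots,q_{n+s}\}$ is auxiliary iff $H^1_{\mc{N}} = 0$ iff $M$ is invertible.

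The main obstacle I anticipate is pinning down the precise meaning of $f_{n+i}({\rm Frob}_{q_{n+i}})$ on the diagonal: the map $Res^{n+i}$ only records the conditions at $v \in X$, so $f_{n+i}$ restricted to $\Gamma_{q_{n+i}}$ is a priori an arbitrary (possibly ramified) class, and I need Lemma \ref{lem:nice} together with the nonvanishing of the kernel to guarantee it is genuinely ramified — equivalently that the diagonal entries of $M$ are the ``ramification detectors'' rather than the Frobenius-evaluations of unramified classes. Getting the bookkeeping of which component (the homomorphism on $I_q$ versus on $\Gamma_q/I_q$) is being evaluated consistent with the normalization in \S \ref{sec:loc} is the delicate point; once that is set up correctly, the rest is the routine Greenberg--Wiles dimension count plus the linear-algebra translation above.
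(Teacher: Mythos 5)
The most serious problem is your treatment of the diagonal entries. In the paper's normalization (fixed just before Lemma \ref{lem:nice}), $f({\rm Frob}_q)$ always denotes the evaluation at Frobenius of the $\mg{l}_{\alpha}$-component of $f|_{\Gamma_q}$, which is an unramified homomorphism $\Gamma_q \to k$ because $\mr{Norm}(q) \not\equiv 1 \bmod p$; the ramification of $f$ at $q$ lives entirely in the $\mg{g}_{\alpha}$-component, which is contained in $\mc{N}_q = H^1(\Gamma_q, \mg{t}_{\alpha}\oplus\mg{g}_{\alpha})$. So your claim that $f_{n+i}$ being ramified at $q_{n+i}$ forces $f_{n+i}({\rm Frob}_{q_{n+i}}) \neq 0$, and your stated plan to read the diagonal entries as ``ramification detectors,'' are wrong: the diagonal entries are Frobenius evaluations of exactly the same kind as the off-diagonal ones and may vanish --- indeed Case (3) of the proof of Theorem \ref{thm:main} is built on primes with $f^{(q)}({\rm Frob}_q)=0$ while $f^{(q)}$ is ramified at $q$. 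What is true, and what makes the matrix criterion work, is that for \emph{any} class, ramified or not, $f|_{\Gamma_q}\in\mc{N}_q$ iff $f({\rm Frob}_q)=0$, since $H^1(\Gamma_q,\rbarg)/\mc{N}_q$ is the one-dimensional $H^1(\Gamma_q,\mg{l}_{\alpha})=H^1_{nr}(\Gamma_q,k)$. Ramification of $f_{n+i}$ at $q_{n+i}$ is needed for a different purpose, namely the linear independence of the $f_{n+i}$, and it holds for the clean reason that a class unramified at $q_{n+i}$ descends to $\Gamma_X$ and then lies in the trivial Selmer group $H^1_{\mc{N}}(\Gamma_X,\rbarg)=0$; your first-paragraph argument for this is muddled, whereas a single application of \eqref{eq:gw} with the relaxed condition at $q_{n+i}$ (and the observation that the corresponding dual Selmer group sits inside the trivial one over $X$) gives at once that the kernel of $Res^{n+i}$ is exactly one dimensional.

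The second gap is your decomposition step: you test membership in the Selmer group only for classes of the special shape $\sum_i c_i f_{n+i} + g$ with $g\in H^1(\Gamma_X,\rbarg)$, but for the implication ``matrix invertible $\Rightarrow$ auxiliary'' you must know that \emph{every} class of $H^1_{\mc{N}}(\Gamma_{X\cup\{q_{n+1},\dots,q_{n+s}\}},\rbarg)$ has this shape, and this is not automatic. It is exactly the content of a further Greenberg--Wiles count that you do not perform: taking $\mc{N}_v$ at $v\in X$ and the full $H^1$ at all the new primes, the kernel $K$ of the restriction map to $\bigoplus_{v\in X} H^1(\Gamma_v,\rbarg)/\mc{N}_v$ has dual Selmer group contained in the trivial dual Selmer group over $X$, so \eqref{eq:gw} together with Lemma \ref{lem:nice}(1) gives $\dim K = s$; hence $K$ is spanned by the independent classes $f_{n+1},\dots,f_{n+s}$ and every Selmer class is of your shape with $g=0$. (Alternatively one can subtract suitable multiples of the $f_{n+j}$ to kill the ramification of a Selmer class at each $q_{n+j}$, using that the ramified quotient $H^1(\Gamma_{q_{n+j}},\rbarg)/H^1_{nr}(\Gamma_{q_{n+j}},\rbarg)$ is one dimensional.) Your third-paragraph computation only shows $h^1_{\mc{N}}=h^1_{\mc{N}^{\perp}}$ on the enlarged set, which does not supply this spanning statement and is in any case not needed, since ``auxiliary'' is defined by the vanishing of the Selmer group alone. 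With these two repairs your argument coincides with the paper's proof, which is the dimension count for $K$ followed by the same matrix translation.
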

\begin{proof}
 Recall that when we evaluate a cohomology class at a Frobenius, we are projecting along the 
$\mg{l}_\alpha$ or
$\mg{g}_{\alpha}^*$ component and evaluating as described prior to Lemma\ref{lem:nice}. 

  Let $\mc{L}_v$ = $\mc{N}_v$ for $v \in X$,
  $\mc{L}_{q_{n+i}} = H^1(\Gamma_{q_{n+i}}, \rbarg)$ and
  $\mc{L} = (\mc{L}_v)_{v \in X}$,
  $\mc{L}' = (\mc{L}_v)_{v \in X \cup \{{q_{n+i}}\}}$. Since
  $\tilde{Q}$ is auxiliary, both
  $H^1_{\mc{L}}(\Gamma_{X}, \rbarg)$ and
  $H^1_{\mc{L}^{\perp}}(\Gamma_{X}, \rbarg^*)$ vanish and so
  $ H^1_{\mc{L}'^{\perp}}(\Gamma_{X \cup \{{q_{n+i}}\}}, \rbarg^*)$
  also vanishes since it is contained in
  $H^1_{\mc{L}^{\perp}}(\Gamma_{X}, \rbarg^*)$. Applying the
  Greenberg--Wiles formula \eqref{eq:gw} as in \eqref{eq1} we get
\begin{equation} \label{eq7} 
  h^1_{\mc{L}'}(\Gamma_{X
    \cup \{{q_{n+i}}\}}, \rbarg)
  = h^1(\Gamma_{q_{n+i}}, \rbarg) - h^1_{nr}(\Gamma_{q_{n+i}}, \rbarg)
  = 1 ,
\end{equation}
where the last equality follows from Lemma \ref{lem:nice}.


  Let $K$ be the kernel of the restriction map
\[
  H^1(\Gamma_{X \cup \{q_{n+1},q_{n+2},\dots,
    q_{n+s}\}}, \rbarg) \to \bigoplus_{v \in X}
  \frac{H^1(\Gamma_v, \rbarg)}{\mc{N}_v}\ .
\]
The set $\{f_{n+1},f_{n+2}, \dots,f_{n+s}\}$ spans an $s$-dimensional
subspace of $K$ since $f_{n+i}$ is ramified at $q_{n+i}$ but not at
$q_{n+j}$ for $i \neq j$. Applying the Greenberg--Wiles formula
\eqref{eq:gw} one sees that $\dim(K) = s$, so we have equality. Thus,
the set is auxiliary iff the map
\[
\mr{Span}(\{f_{n+1},f_{n+2}, \dots,f_{n+s}\}) \to \bigoplus_{j =1}^{s}
\frac{H^1(\Gamma_{q_{n+j}}, \rbarg)}{\mc{N}_{q_{n+j}}}
\]
is an isomorphism. Each quotient on the RHS is one dimensional by (2)
of Lemma \ref{lem:nice} and the image is spanned by the row vectors
$[f_{n+i}({\rm Frob}_{q_{n+j}})]_{1 \leq j \leq s}$, so the lemma
follows.
\end{proof}

\begin{rem} \label{rem:matrix}%
  In the sequel, we will use similar, but not the same, ``matricial
  conditions'' as in the lemma to ensure that various sets of primes
  are auxiliary. They can all be deduced by minor variants of the
  same method.
\end{rem}

\subsection{Controlling the kernel of a restriction
  map} \label{sec:ker} 

In \S \ref{sec:mat} we have chosen a set $\tilde{Q}$ of nice primes
for $\rbar$ satisfying a list of conditions. By Lemma \ref{lem:sds}
$\tilde{Q}$ is auxiliary, so the ring $R_{S \cup \tilde{Q}}^{\tilde{Q}-new}$
is $W(k)$ and
$\rho_{S \cup \tilde{Q}}^{\tilde{Q}-new}: \Gamma_{S \cup \tilde{Q}} \to
G(W(k))$.

Let $n_i$ be the minimum of the set of integers such that
$\rho_{S \cup \tilde{Q}}^{\tilde{Q}-new}$ is ramified modulo $p^{n_i}$
at $q_i$. Set $d$ to be the minimum of all the $n_i$ if this is not
$\infty$, else set $d=2$. For each $v \in {S \cup \tilde{Q}}$, choose
$g_v \in H^1(\Gamma_v, \rbarg)$ as follows:
\begin{itemize}
\item For $v \in S$, let $g_v = 0$
\item For $q_i \in \tilde{Q}$ choose $0 \neq g_{q_i} \in \mc{N}_{q_i}$
  such that:
  \begin{itemize}
  \item If $\rho_{S \cup \tilde{Q}}^{\tilde{Q}-new}$ is unramified
    modulo $p^{d-1}$ at $q_i$ but is ramified modulo $p^d$ then choose
    $0 \neq g_{q_i} \in \mc{N}_{q_i}$ such that
    $\mr{exp}(p^{d-1}g_{q_i})(\rho_{S \cup
      \tilde{Q}}^{\tilde{Q}-new}|_{\Gamma_{q_i}})$ modulo $p^d$ is
    unramified at $q_i$. This $g_{q_i}$ is ramified at $q_i$.
  \item Otherwise choose $g_{q_i}$ to be any ramified element of
    $\mc{N}_{q_i}$.
      \end{itemize}
\end{itemize}

The point of these choices is that for all $a \in k$, 
$\mr{exp}(p^{d-1}ag_v)(\rho_{S \cup \tilde{Q}}^{\tilde{Q}-new}|_{\Gamma_v})$
is in the smooth local deformation condition $\mc{D}_v$  for $v \in S
\cup \tilde{Q}$ and when $ a \neq 1$ it is ramified at  $v = q_i \in
\tilde{Q}$. However, the triviality of the Selmer group for $S \cup \tilde{Q}$  implies that
the local deformation problem $(g_v)_{v \in {S \cup \tilde{Q}}}$ is
not solvable.

By Proposition \ref{prop:hv}, there exists a Chebotarev set of nice
primes $\mc{Q}_0$ such that for $q \in \mc{Q}_0$ there exists
$f^{(q)} \in H^1(\Gamma_{S \cup \tilde{Q} \cup \{q\}}, \rbarg)$ with
$f^{(q)}|_{\Gamma_v} = g_v$ for all $v \in S \cup \tilde{Q}$. We would like to use
$q \in \mc{Q}_0$ as $q_{n+1}$, so want all the sets
$\tilde{Q} \cup \{q_{n+1}\} - \{q_i\}$ to be auxiliary.

In order for this to hold, we need that the $n \times (n+1)$ matrices
$[f_i({\rm Frob}_{q_j})]$ and $[\phi_i({\rm Frob}_{q_j})]$ are
invertible after deleting any column, see Remark \ref{rem:matrix}. To
ensure the  first condition, using Lemma \ref{lem:kill} we impose the further
condition that $f_i({\rm Frob}_{q_{n+1}}) = 1$ for all $i$. 
This is
independent of the conditions in Proposition \ref{prop:hv} since they
involve cocycles for $\rbarg$ as opposed to $\rbarg^*$.  We also
require that 
\begin{equation} \label{eq4} 
\rho_{S \cup \tilde{Q}}^{\tilde{Q}-new}({\rm Frob}_{q_{n+1}}) \in
T(W(k)/p^dW(k)) \mbox{ mod } p^d \mbox{ is as in Lemma
  \ref{lem:torlift} (2)}
\end{equation}
(so $\rho_{S \cup \tilde{Q}}^{\tilde{Q}-new}$ is \emph{not} special at
$q_{n+1}$). 
To see
that this is achievable we use that our hypotheses on $p$ and $\rbar$
and Lemma \ref{lem:nonsplit} imply that the kernel of the reduction
map $G(W(k)/p^dW(k)) \to G(W(k)/pW(k))$ is contained in the image of $\rho_{S
  \cup \tilde{Q}}^{\tilde{Q}-new}$ mod $p^d$. This is also independent
from the previously imposed conditions by Lemma \ref{lem:kill}.

Finally, we also require that $\phi_i({\rm Frob}_{q_{n+1}}) \neq 0$ for all
$i$.  %
\begin{lem} \label{lem:conflict}
  These conditions do not conflict with the conditions imposed in
  Proposition \ref{prop:hv}
\end{lem}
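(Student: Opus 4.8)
The plan is to sort the conditions imposed on $q_{n+1}$ into three mutually independent families of Chebotarev conditions and then to show that the only family that could conceivably clash with the conditions of Proposition \ref{prop:hv} --- the requirements $\phi_i({\rm Frob}_{q_{n+1}}) \neq 0$ --- is in fact a \emph{consequence} of those conditions.

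First I would observe that, on top of the defining conditions for $q_{n+1}$ to be a nice prime (a fixed Chebotarev condition on $\gal(K/F)$ with which all of the following is automatically compatible by Lemma \ref{lem:kill}), the conditions in play split into three types according to the field extension of $K$ on which they constrain ${\rm Frob}_{q_{n+1}}$: (i) the conditions of Proposition \ref{prop:hv} ($\zeta_i|_{\Gamma_{q_{n+1}}} = 0$ for $i \leq s$ and nonzero $\mg{g}_{\alpha}^*$--component of $\zeta|_{\Gamma_{q_{n+1}}}$) together with $\phi_i({\rm Frob}_{q_{n+1}}) \neq 0$ for $1 \leq i \leq n$, all conditions on Galois groups of extensions cut out by cocycles valued in $\rbarg^*$; (ii) the conditions $f_i({\rm Frob}_{q_{n+1}}) = 1$ for $1 \leq i \leq n$, conditions on the $\gal(K_{f_i}/K)$, cut out by cocycles valued in $\rbarg$; and (iii) condition \eqref{eq4}, a condition on $\gal(K_{P_d}K(\mu_{p^d})/K)$. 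By Lemma \ref{lem:kill} the fields $K_{P_d}$, $K(\mu_{p^d})$ and the fields attached to cocycles valued in $\rbarg$ and in $\rbarg^*$ are pairwise linearly disjoint over $K$, so the three families may be realized independently; family (ii) is internally consistent because the $f_i$ are $k$-linearly independent (again Lemma \ref{lem:kill}), and family (iii) is realizable by Lemmas \ref{lem:nonsplit} and \ref{lem:torlift}. Hence everything reduces to the internal consistency of family (i).

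The crux is the claim that, writing $X = S \cup \tilde{Q}$ and letting $l \subset \bigoplus_{v \in X} H^1(\Gamma_v, \rbarg)$ be the line spanned by $(g_v)_{v \in X}$, the class $\phi_i$ does \emph{not} annihilate $l$, for each $i$. To prove this I would compute $\sum_{v \in X} inv_v(g_v \cup \phi_i|_{\Gamma_v})$: the terms with $v \in S$ vanish since $g_v = 0$ there, while for $v = q_j \in \tilde{Q}$ the class $g_{q_j}$ is ramified (by its construction in \S\ref{sec:ker}) and $\phi_i|_{\Gamma_{q_j}}$ is unramified, so Lemma \ref{lem:nice}(3) gives $inv_{q_j}(g_{q_j} \cup \phi_i|_{\Gamma_{q_j}}) = \gamma_{g_{q_j}}\,\phi_i({\rm Frob}_{q_j})$ with $\gamma_{g_{q_j}} \neq 0$, and the choices made in \S\ref{sec:mat} give $\phi_i({\rm Frob}_{q_j}) = \delta_{ij}$; so the sum equals $\gamma_{g_{q_i}} \neq 0$. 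Granting the claim, recall that $\{\zeta_1,\dots,\zeta_s\}$ spans precisely the subspace of $H^1(\Gamma_X, \rbarg^*)$ consisting of classes annihilating $l$ (Proposition \ref{prop:hv} and its proof), $\zeta$ being a complementary vector; thus in the basis expansion $\phi_i = \sum_{j=1}^{s} c^{(i)}_j \zeta_j + c^{(i)}\zeta$ we have $c^{(i)} \neq 0$. Choosing ${\rm Frob}_{q_{n+1}}$ --- via the strong linear disjointness over $K$ of the $K_{\zeta_j}$ and $K_{\zeta}$ from Lemma \ref{lem:kill} --- to be trivial on every $K_{\zeta_j}$ and to have nonzero $\mg{g}_{\alpha}^*$--component on $K_{\zeta}$ then forces $\phi_i|_{\Gamma_{q_{n+1}}} = c^{(i)}\,\zeta|_{\Gamma_{q_{n+1}}}$, whose $\mg{g}_{\alpha}^*$--component is nonzero; that is, $\phi_i({\rm Frob}_{q_{n+1}}) \neq 0$ holds automatically. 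So family (i) is consistent and no conflict with Proposition \ref{prop:hv} arises.

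I expect the main obstacle to be this last computation showing $\phi_i \notin \ann(l)$: it is the only step that forces one to combine the particular choices of the $g_{q_j}$ from \S\ref{sec:ker} with the particular $\mg{g}_{\alpha}^*$--components of the $\phi_i$ at the primes of $\tilde{Q}$ pinned down in \S\ref{sec:mat}, and it is where Lemma \ref{lem:nice}(3) is used in its deliberately asymmetric form --- pairing a ramified class for $\rbarg$ against an unramified class for $\rbarg^*$ --- as emphasized in the remark following Lemma \ref{lem:nice}.
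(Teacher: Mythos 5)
Your argument is correct and is essentially the paper's own proof: you show $\phi_i$ fails to annihilate the line $l$ by computing $inv_v(g_v\cup\phi_i)$ — zero at every $v\in S\cup\tilde{Q}$ except $q_i$, where Lemma \ref{lem:nice}(3) gives a nonzero value — deduce that the coefficient of $\zeta$ in the expansion of $\phi_i$ is nonzero, and evaluate at ${\rm Frob}_{q_{n+1}}$ using the Proposition \ref{prop:hv} conditions. The extra bookkeeping about the three independent families of Chebotarev conditions is fine but belongs to the surrounding discussion in \S\ref{sec:ker} (handled there by Lemma \ref{lem:kill}), not to the content of this lemma.
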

This is not obvious since both sets of conditions involve cocycles with
values in $\rbarg^*$.

\begin{proof}
  For $v \in S$ we have $g_v = 0$ and for $q_j \in \tilde{Q}\bs
  \{q_i\}$, we have by assumption that $\phi_i|_{\Gamma_{q_j}} =
  0$. Thus, $inv_v(g_v \cup \phi_i) = 0$ for $v \in S \cup \tilde{Q}\bs
  \{q_i\}$. However, $inv_{q_i}(g_{q_i} \cup \phi_i) \neq 0$ by Lemma
  \ref{lem:nice} (3), so $Res_{S \cup \tilde{Q}}(\phi_i)$ is not in the
  annihilator of the line $l$ spanned by $(g_v)_{v \in S \cup
    \tilde{Q}}$. It follows that writing $\phi_i = a_i \zeta + \sum_j
  b_{i,j}\zeta_j$, where $\{\zeta_1,\dots,\zeta_s,\zeta\}$ is the
  basis of $H^1(\Gamma_{S \cup \tilde{Q}}, \rbarg^*)$ as in
  Proposition \ref{prop:hv}, we have $a_i \neq 0$. By evaluating the
  RHS at ${\rm Frob}_{q_{n+1}}$, bearing in mind the conditions of
  Proposition \ref{prop:hv}, we see that $\phi_i({\rm Frob}_{q_{n+1}}) \neq
  0$.
\end{proof}

Thus, the nonempty Chebotarev set of nice primes $q \in \mc{Q}_0$
satisfying: 
\begin{equation} \label{eq5} 
\begin{split}
  & \zeta_i |_{\Gamma_q} = 0 \mbox{ for } i =1,2,\dots s, \\
  & \mbox{ the } \mg{g}_{\alpha}^* \mbox{ component of } \zeta
  |_{\Gamma_q} \mbox{ is
    nonzero}, \\
  & f_i({\rm Frob}_q) = 1 \mbox{ for all } i,\\
  & \rho_{S \cup \tilde{Q}}^{\tilde{Q}-new}({\rm Frob}_{q}) \in
T(W(k)/p^dW(k)) \mbox{ mod } p^d \mbox{ is as in Lemma
  \ref{lem:torlift} (2)},
\end{split}
\end{equation}
also satisfies 
\begin{equation} \label{eq6} 
\begin{split}
  & \phi_i({\rm Frob}_q) \neq 0 \mbox{ for all } i, \\
  &\exists \ f^{(q)} \in H^1(\Gamma_{{S \cup \tilde{Q}} \cup \{q\}},
  \rbarg) \mbox{ such that } f^{(q)}|_{\Gamma_v} = g_v \mbox{ for all
  } v \in {S \cup \tilde{Q}} \ .
\end{split}
\end{equation}
\begin{lem} \label{lem:onedimker} For $\tilde{Q}$ as above and any $q
  \in \mc{Q}_0$, $f^{(q)}$ spans the one dimensional kernel of the
  restriction map
\begin{equation}
H^1(\Gamma_{{S \cup \tilde{Q}} \cup \{q\}}, \rbarg) \to \bigoplus_{v \in S
  \cup \tilde{Q}} \frac{H^1(\Gamma_v, \rbarg)}{\mc{N}_v}\ .
\end{equation}
\end{lem}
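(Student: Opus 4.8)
The strategy is to compute the dimension of the kernel $K$ of the displayed restriction map using the Greenberg--Wiles formula \eqref{eq:gw}, and then to exhibit $f^{(q)}$ as a nonzero element of it. First I would set up the Selmer data: let $\mc{L}_v = \mc{N}_v$ for $v \in S \cup \tilde{Q}$ and $\mc{L}_q = H^1(\Gamma_q, \rbarg)$, so that $K = H^1_{\mc{L}}(\Gamma_{S \cup \tilde{Q} \cup \{q\}}, \rbarg)$. Since $\tilde{Q}$ is auxiliary (Lemma \ref{lem:sds}), both $H^1_{\mc{N}}(\Gamma_{S \cup \tilde{Q}}, \rbarg)$ and $H^1_{\mc{N}^{\perp}}(\Gamma_{S \cup \tilde{Q}}, \rbarg^*)$ vanish. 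Comparing the Greenberg--Wiles formula for $\mc{N}$ on $\Gamma_{S \cup \tilde{Q}}$ with that for $\mc{L}$ on $\Gamma_{S \cup \tilde{Q} \cup \{q\}}$, exactly as in \eqref{eq1} and \eqref{eq7}, the difference of the local terms at $q$ is $h^1(\Gamma_q, \rbarg) - h^1_{nr}(\Gamma_q, \rbarg) = 1$ by Lemma \ref{lem:nice}(2). The dual Selmer group $H^1_{\mc{L}^{\perp}}(\Gamma_{S \cup \tilde{Q} \cup \{q\}}, \rbarg^*)$ is contained in $H^1_{\mc{N}^{\perp}}(\Gamma_{S \cup \tilde{Q}}, \rbarg^*) = 0$, so it vanishes; hence $\dim K = 1$.

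Next I would check that $f^{(q)}$ is a nonzero element of $K$. By construction in \eqref{eq6}, $f^{(q)} \in H^1(\Gamma_{S \cup \tilde{Q} \cup \{q\}}, \rbarg)$ satisfies $f^{(q)}|_{\Gamma_v} = g_v$ for all $v \in S \cup \tilde{Q}$. For $v \in S$ we have $g_v = 0 \in \mc{N}_v$, and for $q_i \in \tilde{Q}$ we have $g_{q_i} \in \mc{N}_{q_i}$ by the choices made; thus $f^{(q)}|_{\Gamma_v} \in \mc{N}_v$ for all $v \in S \cup \tilde{Q}$, so $f^{(q)} \in K$. It remains to see $f^{(q)} \neq 0$: if it were zero then all the $g_v$ would vanish, but $g_{q_i}$ was chosen to be a \emph{ramified} (in particular nonzero) element of $\mc{N}_{q_i}$ for every $q_i \in \tilde{Q}$ (and $\tilde{Q}$ is nonempty since $n \neq 0$), a contradiction. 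Therefore $f^{(q)}$ spans the one-dimensional space $K$.

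The only subtlety to be careful about is the bookkeeping in the Greenberg--Wiles comparison: one must track that adding the single prime $q$ with the \emph{full} local condition $\mc{L}_q = H^1(\Gamma_q, \rbarg)$ changes the Selmer side by exactly the local discrepancy $h^1(\Gamma_q,\rbarg) - h^1_{nr}(\Gamma_q,\rbarg) = 1$ while leaving the dual Selmer side unchanged (indeed making it zero), which is where Lemma \ref{lem:nice} and the auxiliary property of $\tilde{Q}$ are both essential. I expect no real obstacle here — this is a routine variant of the arguments already carried out in the proofs of Lemma \ref{lem:aux} and Proposition \ref{prop:hv}, and the nonvanishing of $f^{(q)}$ is immediate from the explicit choice of the $g_{q_i}$.
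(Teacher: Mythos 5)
Your proposal is correct and follows essentially the same route as the paper: the paper proves one-dimensionality by exactly the Greenberg--Wiles comparison you describe (citing the first part of the proof of Lemma \ref{lem:aux}, where the vanishing of the Selmer and dual Selmer groups for the auxiliary set $\tilde{Q}$ and the local discrepancy $h^1(\Gamma_q,\rbarg)-h^1_{nr}(\Gamma_q,\rbarg)=1$ from Lemma \ref{lem:nice} do the work), and then notes, as you do, that $f^{(q)}$ lies in the kernel since $f^{(q)}|_{\Gamma_v}=g_v\in\mc{N}_v$ and is nonzero (your observation that $g_{q_i}\neq 0$ forces this is exactly the implicit reason), hence spans.
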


\begin{proof}
  The fact that the kernel is one dimensional follows from the same
  argument as in the first part of the proof of Lemma \ref{lem:aux}.
  Since $g_v \in \mc{N}_v$ for $v \in S \cup \tilde{Q}$, $f^{(q)}$
  lies in the kernel and is nonzero, so it must span.
\end{proof}

\subsection{Proof of  Theorem \ref{thm:main}} \label{sec:proof} 

\begin{proof}[Proof of Theorem \ref{thm:main}]

  Denote the  positive  density of the set of nice primes $\mc{Q}_0$ constructed in \S
  \ref{sec:ker} by $t_0$. At least one of the following must be true:
\begin{enumerate}
\item $\exists \ q \in \mc{Q}_0$ such that $f^{(q)}({\rm Frob}_q) \neq
  0,1$\ ,
\item $\exists \ \mc{Q}_1 \subset \mc{Q}_0$ having positive upper
  density such that $q \in \mc{Q}_1 \Rightarrow
  f^{(q)}({\rm Frob}_q) = 1$\ ,
\item  $\exists \  \mc{Q}_1 \subset \mc{Q}_0$ having positive upper
  density such that $q \in \mc{Q}_1 \Rightarrow
  f^{(q)}({\rm Frob}_q) = 0$\ .
\end{enumerate}
We expect that all three cases do actually occur, but this seems
difficult to verify so we prove the theorem by considering all three
cases separately.

For a prime $q_{n+i}$ we use the more compact notation $f_{n+i}$
instead of $f^{(q_{n+i})}$. At various points in the proof we will,
for auxiliary sets $A$ and $B$, move from $\rho_{S \cup A}^{A-new}$
mod $p^d$ to $\rho_{S \cup B}^{B-new}$ mod $p^d$ by adjusting by one
or two cohomology classes. While our choices may seem arbitrary, since
$A$ and $B$ are auxiliary, there is exactly one class that works and
after adjustment by this class the problem is uniquely liftable.

In each case we must determine the auxiliary sets $Q$ and $Q_i$ as in
Theorem \ref{thm:main}, show that $\rho_{S \cup Q}^{Q-new}$ is
ramified at all $q \in Q$ and also that $\rho_{S \cup Q_i}^{Q_i-new}$
is unramified and \emph{not} special at $q_i$. Note that
$\mr{exp}(p^{d-1}f_{n+1}) \rho_{S \cup \tilde{Q}}^{\tilde{Q}-new}$ mod
$p^d$ is only special at $q_{n+1}$ in Case 2).

\smallskip

\noindent {\bf Proof of Case (1)} Choose any $q_{n+1} \in \mc{Q}_0$
such that $f^{(q_{n+1})}({\rm Frob}_{q_{n+1}}) = a \neq 0,1$ and set
\[
Q = \tilde{Q} \cup \{q_{n+1}\}, \  Q_i = Q \bs \{q_i\}, \ i=1,2,\dots,n+1.
\]
We first show that $Q$ and all $Q_i$ are auxiliary. Recall that for
$v \in S \cup \tilde{Q}$, $f_{n+1}|_{\Gamma_v} \in \mc{N}_v$. The
map
\[
 H^1(\Gamma_{S \cup \tilde{Q}}, \rbarg) \to \bigoplus_{v \in S
  \cup \tilde{Q}} \frac{H^1(\Gamma_v, \rbarg)}{\mc{N}_v}\ 
\]
is an isomorphism since $\tilde{Q}$ is auxiliary and by Lemma
\ref{lem:onedimker} the kernel of the map
\[
H^1(\Gamma_{S  \cup Q}, \rbarg) \to \bigoplus_{v \in S
  \cup \tilde{Q}} \frac{H^1(\Gamma_v, \rbarg)}{\mc{N}_v}
\]
is spanned by $f_{n+1}$. As $f_{n+1}({\rm Frob}_{q_{n+1}}) \neq 0$, so
$f_{n+1}|_{\Gamma_{q_{n+1}}} \notin \mc{N}_{q_{n+1}}$, the restriction
map
\[
 H^1(\Gamma_{S \cup Q}, \rbarg) \to \bigoplus_{v \in S
  \cup Q} \frac{H^1(\Gamma_v, \rbarg)}{\mc{N}_v}\ 
\]
is an injection, so $Q$ is auxiliary.

Using the third part of \eqref{eq5} and the first part of \eqref{eq6}
we see that the $n \times (n + 1)$ matrix
$F = [f_i({\rm Frob}_{q_j})]$
(resp.~$ \Phi= [\phi_i({\rm Frob}_{q_j})]$) is the identity matrix
with an extra column all of whose entries are $1$
(resp.~nonzero). Such matrices are invertible upon the deletion of any
column so by Lemma \ref{lem:sds} and the arguments of Lemma
\ref{lem:aux} (see Remark \ref{rem:matrix}) the sets $Q_i$ are all
auxiliary.

We now show that $\rho_{S \cup Q}^{Q-new}$ is ramified at all
$q_i \in Q$. Recall that for $1 \leq i \leq n$, 
$f_i |_{\Gamma_{q_i}} \notin \mc{N}_{q_i}$ and for
$v \in S \cup \tilde{Q} \bs \{q_i\}$,  $f_i |_{\Gamma_v} \in
\mc{N}_v$.  By \eqref{eq4} we know that
$\rho_{S \cup \tilde{Q}}^{\tilde{Q}-new}$ mod $p^d$ is not special at
$q_{n+1}$ but is special at all $v \in S \cup \tilde{Q}$.  We adjust
$\rho_{S \cup \tilde{Q}}^{\tilde{Q}-new}$ mod $p^d$ by
$\frac{1}{a}f_{n+1}$, i.e., we consider the representation
$\rho' = \mr{exp}(p^{d-1} \frac{1}{a}f_{n+1}) (\rho_{S \cup
  \tilde{Q}}^{\tilde{Q}-new})$ mod
$p^d$. 
Then $\rho'$ is special at $q_{n+1}$ and is also ramified at $q_{n+1}$
since $f_{n+1}$ is. As $f_{n+1}|_{\Gamma_v} \in \mc{N}_v$ for all
$v \in S \cup \tilde{Q}$, $\rho'$ is $\tilde{Q}$-new
so (by uniqueness) we have
$\rho' = \rho_{S \cup Q}^{Q-new}$ mod $p^d$. As $a \neq 1$,
$\frac{1}{a}f_{n+1}|_{\Gamma_{q_i}} = \frac{1}{a} g_{q_i} \neq
g_{q_i}$, so this class introduces ramification at all $q_i$ that were
unramified and \emph{does not} remove ramification at any $q_i$. Thus
$\rho_{S \cup Q}^{Q-new}$ is ramified at all primes of $Q$ and this
ramification occurs mod $p^d$ for all these primes.

It remains to show that $\rho_{S \cup Q_i}^{Q_i-new}$ mod $p^d$ is unramified and not
special at $q_i$. As $Q_{n+1} = \tilde{Q}$ and
$\rho_{S \cup \tilde{Q}}^{\tilde{Q}-new}$ mod $p^d$ is not special at
$q_{n+1}$ the $i = n+1$ case is settled.

When $1 \leq i \leq n$, to get from
$\rho_{S \cup \tilde{Q}}^{\tilde{Q}-new}$ mod $p^d$ to
$\rho_{S \cup Q_i}^{Q_i-new}$ mod $p^d$ we need to remove (possible)
ramification at $q_i$ and make the representation nonspecial there
while ensuring specialness at all other $q_j \in Q$.  Since
$f_{n+1}|_{\Gamma_{v}} = g_v$, adjusting by $f_{n+1}$ removes any
ramification at $q_i$ (and perhaps some other $q_j$) while preserving
specialness there; if there is no ramification at $q_i$ we do not
perform this step. Our Case 1) assumption implies that this adjustment
also keeps the representation nonspecial at $q_{n+1}$. Since
$f_i({\rm Frob}_{q_{i}}) = f_i({\rm Frob}_{q_{n+1}}) = 1$, adjusting
further by a suitable nonzero multiple of $f_i$ makes the
representation special at $q_{n+1}$ and nonspecial at $q_i$ as
desired. As $f_i$ is in the Selmer group for $S$ and trivial when
restricted to $\Gamma_{q_j}$ for $j \leq n$, $j \neq i$, the
representation remains special at all other primes.

  \smallskip

\noindent {\bf Proof of Case (2)}
The sets $Q$ and $Q_i$ of Case (1) are auxiliary in this case as well,
but we cannot use them as we cannot guarantee ramification of
$\rho_{S \cup Q}^{Q-new}$ at all $q_i \in Q$. The problem is that as
$f_{n+1}({\rm Frob}_{q_{n+1}}) = 1$, adjusting by $f_{n+1}$ makes the
representation special at $q_{n+1}$ and liftable at all other primes,
but will remove ramification at all $q_i$ at which
$\rho_{S \cup \tilde{Q}}^{\tilde{Q}-new}$ mod $p^d$ is ramified. We
will need to add three primes to $\tilde{Q}$ to form $Q$ in this case.

\smallskip

For this case we are assuming that $\mc{Q}_1 = \{q \in \mc{Q}_0| f^{(q)}({\rm Frob}_q) = 1\}$
has positive upper density. Note that $\mc{Q}_1$ is almost certainly
not a Chebotarev set, though this seems difficult to settle one way or
the other.

In this case $Q = \tilde{Q} \cup \{q_{n+1}, q_{n+2}, q_{n+3}\}$ where
$q_i \in \mc{Q}_1$ are carefully chosen; the $\mc{Q}_i$ will be
described later. We will find three primes such that
$[f_{n+i}({\rm Frob}_{q_{n+j}} )]_{1\leq i,j\leq 3} =I_3$. While we could
make do with less, the argument is quite transparent with this choice
of matrix. Also, our method shows that the techniques of \cite{klr} can
be strengthened with more effort.

Choose $q_{n+1} \in \mc{Q}_1$. The $n \times (n+1)$ matrices
$F = [f_i({\rm Frob}_{q_j})]$ and $\Phi = [\phi_i({\rm Frob}_{q_j})]$ are the
$n \times n$ identity matrix augmented by a column with no zero
entries (by the third line of \eqref{eq5} and the first line of
\eqref{eq6}). For $q_{n+2} \in \mc{Q}_1$, the matrix
$[f_{n+i}({\rm Frob}_{q_{n+j}})]_{1 \leq i,j \leq 2} = \bigl
[ \begin{smallmatrix} 1 & x \\ z & 1 \end{smallmatrix} \bigr ]$.
For our eventual $3 \times 3$ matrix to be the identity we need
$x=z=0$.

We now choose an appropriate $q_{n+2}$. We need
$f_{n+1}({\rm Frob}_{q_{n+2}}) = 0$ and also $f_{n+2}({\rm Frob}_{q_{n+1}}) = 0$.
The first of these is a Chebotarev condition disjoint from those
determining $\mc{Q}_0$ and we will see that the second is as well!
Dualizing the proof of Lemma \ref{lem:onedimker} (taking
$\mc{L}_q = 0$ instead of $H^1(\Gamma_q, \rbarg)$) one sees that for
$e \in \{n+1, n+2\}$, the map
\[
H^1(\Gamma_{S \cup \tilde{Q} \cup \{q_e\}}, \rbarg^*) \to \bigoplus_{v \in
  S \cup \tilde{Q}} \frac{H^1(\Gamma_v, \rbarg^*)}{\mc{N}_v^{\perp} }
\]
has one dimensional kernel. Choose $\phi_e$ to span this kernel and
scale it---that this is possible follows from Lemma
\ref{lem:nice}---so that for any $h \in H^1_{nr}(\Gamma_{q_e},
\rbarg)$, $inv_{q_e}(h \cup \phi_e) = h({\rm Frob}_{q_e})$.

The first equality below is the global reciprocity law. The second
follows since for $v \in S \cup \tilde{Q}$,
$f_{n+1}|_{\Gamma_v} \in \mc{N}_v$ and
$\phi_{n+2}|_{\Gamma_v} \in \mc{N}_v^{\perp}$ annihilate each other.
\begin{equation} \label{eq9}
\begin{split}
0 &= \sum_{v \in S \cup \{q_1,q_2,\dots, q_{n+2}\}} inv_v(f_{n+1} \cup
\phi_{n+2}) \\
&= inv_{q_{n+1}}(f_{n+1} \cup
\phi_{n+2}) + inv_{q_{n+2}}(f_{n+1} \cup
\phi_{n+2}).
\end{split}
\end{equation}
We then have
\begin{multline*}
f_{n+1}({\rm Frob}_{q_{n+2}}) = 0 \Leftrightarrow inv_{q_{n+2}}(f_{n+1}
\cup \phi_{n+2}) = 0 \\ 
\Leftrightarrow  inv_{q_{n+1}}(f_{n+1} \cup \phi_{n+2}) = 0  
\Leftrightarrow \phi_{n+2}({\rm Frob}_{q_{n+1}}) = 0,
\end{multline*}
where the middle ``iff'' follows from \eqref{eq9} and the outer ones
follow from the choice of $\phi_{n+2}$.

We will need the set of $q_{n+2} \in \mc{Q}_1$ satisfying
\begin{equation} \label{eq9b}
(f_{n+1}, \phi_{n+1})({\rm Frob}_{q_{n+2}}) = (0,0)
\end{equation}
to have positive upper density. Note that the complement
 of these two
Chebotarev conditions on $f_{n+1}$ and $\phi_{n+1}$ is a set of
density $t_0\left(1 - \frac{1}{|k|^2}\right)$ (where $|k|$ is the order of the
field $k$) within $\mc{Q}_0$. Suppose that for our choice of prime
$q_{n+1}$ the set of desired second primes in $\mc{Q}_1$ has upper
density $0$. Rename $q_{n+1}$ to $\ell_1$ and let
$S_1 \subset \mc{Q}_1$ be the density $0$ set of primes satisfying the
above equation. Then $\mc{Q}_1 \bs (\{\ell_1\} \cup S_1)$ lies in a
subset of $\mc{Q}_0$ that is the complement of the conditions of
\eqref{eq9b} and which has density $t_0(1 - \frac{1}{|k|^2})$. Now
choose $\ell_2 \in \mc{Q}_1 \bs \{\ell_1\}$. If the set of second
primes, $S_2$, that gives us the desired Frobenius matrix has density
$0$, then $\mc{Q}_1 \bs (\{\ell_1,\ell_2\} \cup S_1 \cup S_2)$ is
contained in a subset of $\mc{Q}_0$ that is the complement of two sets
of independent conditions \eqref{eq9}. This set has density
$t_0(1 - \frac{1}{|k|^2})^2$.  Continuing in this way, we get that
$\mc{Q}_1 \bs (\{\ell_1,\ell_2,\dots,\ell_r\} \cup S_1 \cup S_2 \cup
\dots \cup S_r)$ is contained in a set of density
$t_0(1 - \frac{1}{|k|^2})^r$.  Since $\mc{Q}_1$ has positive upper
density, if each $S_i$ has density $0$ then we get a contradiction for
large $r$. Thus there is a $q_{n+1} \in \mc{Q}_1$ such that for a set
$\mc{Q}_2 \subset \mc{Q}_1$ of positive density
\[
(f_{n+1}, \phi_{n+1})({\rm Frob}_{q_{n+2}}) = (0,0) \Leftrightarrow
f_{n+1}({\rm Frob}_{q_{n+2}}) =0, \ f_{n+2}({\rm Frob}_{q_{n+1}}) = 0\ .
\]

Choose $q_{n+2} \in \mc{Q}_{n+2}$. For any $q_{n+3} \in \mc{Q}_2$ we
then have
$[f_{n+i}({\rm Frob}_{q_{n+j}})]_{1 \leq i,j \leq 3} = \Bigl
[ \begin{smallmatrix} 1 & 0 &0 \\ 0 & 1 & a \\ 0 & b &
  1 \end{smallmatrix} \Bigr ]$ and we need $a = b =0$.  As before,
\[
f_{n+3}({\rm Frob}_{q_{n+2}}) = 0 \Leftrightarrow \phi_{n+2}({\rm Frob}_{q_{n+3}})
= 0
\]
so we need the independent Chebotarev conditions
\begin{equation} \label{eq10}
(f_{n+2}, \phi_{n+2})({\rm Frob}_{q_{n+3}}) = (0,0)
\end{equation}
to hold. Suppose for our choice of $q_{n+2}$ there is no
$q_{n+3} \in \mc{Q}_2$ satisfying \eqref{eq10}. Repeating the same
limiting argument as above, we see that we can find some other
$q_{n+2} \in \mc{Q}_2$ so that there is a prime $q_{n+3} \in \mc{Q}_2$
satisfying \eqref{eq10}.

We have thus found primes $q_{n+1}, q_{n+2}, q_{n+3}$ so that
$[f_{n+i}({\rm Frob}_{q_{n+j}})]_{1 \leq i,j \leq 3} = \Bigl
[ \begin{smallmatrix} 1 & 0 &0 \\ 0 & 1 & 0 \\ 0 & 0 &
  1 \end{smallmatrix} \Bigr ]$.
Set 
\[Q = \tilde{Q} \cup \{ q_{n+1}, q_{n+2}, q_{n+3}\};
\]
that $Q$ is auxiliary then follows from Lemma \ref{lem:aux}. 

We now establish that $\rho_{S \cup Q}^{Q-new}$ is ramified at all
$q \in Q$. To get from $\rho_{S \cup \tilde{Q}}^{\tilde{Q}-new}$ mod
$p^d$ to $\rho_{S \cup Q}^{Q-new}$ mod $p^d$ requires making the
representation special at $q_{n+1}, q_{n+2}, q_{n+3}$, so we adjust by
$h = f_{n+1} + f_{n+2} + f_{n+3}$. Observe that
$h({\rm Frob}_{q_{n+i}}) = 1$ for $i=1,2,3$, and for
$v \in S \cup \tilde{Q}$, $h|_{\Gamma_v} = 3g_v \in \mc{N}_v$ and
$h|_{\Gamma_v} \neq g_v$, so if
$\rho_{S \cup \tilde{Q}}^{\tilde{Q}-new}$ is unramified at
$v \in \tilde{Q}$ we are introducing ramification and if it is
ramified at $v$ we are not removing the ramification. Also, adjusting
by $h$ introduces ramification at each of $q_{n+1}, q_{n+2}, q_{n+3}$
and makes the representation special at each of these primes so this
adjusted representation must be equal to $\rho_{S \cup Q}^{Q-new}$ mod
$p^d$ (which is then ramified at all $q \in Q$).

For $i \leq n$ our choice of $Q_i$ depends on whether or not $\rho_{S
  \cup \tilde{Q}}^{\tilde{Q}-new}$ mod $p^d$ is ramified at $q_i$. We
set
\[
Q_i = \begin{cases}
  Q\bs\{q_i\} & i=n+2,n+3\\
  \{q_1, q_2,\dots, q_n\}   & i = n+1 \\
  \{q_1,q_2,\dots,q_{n+1}\} \bs \{q_i\} & i \leq n, \ \rho_{S \cup
    \tilde{Q}}^{\tilde{Q}-new} \mbox{ mod } p^d \mbox{ is
    unramified at } q_i \\
  Q \bs \{q_i, q_{n+3}\} & i \leq n, \ \rho_{S \cup
    \tilde{Q}}^{\tilde{Q}-new} \mbox{ mod } p^d \mbox{ is ramified at
  } q_i\ .
\end{cases}
\]

We first show that each $Q_i$ is auxiliary and then establish that $\rho^{Q_i-new}_{S\cup Q_i}$ is nonpecial at $q_i$. 

When $i = n+2, n+3$, that $Q_i$ is auxiliary follows from Lemma
\ref{lem:aux} since the relevant principal minors of
$[f_{n+i}({\rm Frob}_{q_{n+j}})]_{1 \leq i,j \leq 3}$ are invertible.  That
$Q_{n+1}$ is auxiliary is clear since $Q_{n+1} = \tilde{Q}$.

For $1 \leq i \leq n$ we will show both choices of $Q_i$ are
auxiliary; we need both to guarantee nonspecialness at $q_i$. That the
first choice of $Q_i$ is auxiliary was demonstrated in Case (1) when
we saw that the $n \times (n+1)$ matrices $F = [f_i({\rm Frob}_{q_j})]$ and
$\Phi = [\phi_i({\rm Frob}_{q_j})]$ are the $n \times n$ identity matrix
augmented by a column with no zero entries.  We now show our second
choice of $Q_i$ is auxiliary. The set
$\{q_1,q_2,\dots,q_n,q_{n+1}\}\bs \{q_i\}$ is auxiliary and the
map
\[
H^1(\Gamma_{S \cup \tilde{Q} \bs \{q_i\}}, \rbarg) \to \bigoplus_{v
  \in S \cup \tilde{Q} \bs \{q_i\}} \frac{H^1(\Gamma_v, \rbarg)}{\mc{N}_v}
\]
has kernel spanned by $f_i$. We claim that the map
\begin{equation} \label{eq11}
H^1(\Gamma_{S \cup \tilde{Q} \cup \{q_{n+1}\} \bs \{q_i\}}, \rbarg) \to \bigoplus_{v
  \in S \cup \tilde{Q} \bs \{q_i\}} \frac{H^1(\Gamma_v, \rbarg)}{\mc{N}_v}
\end{equation}
also has kernel spanned by $f_i$. Clearly $f_i$ is in the kernel and
the only other possibility is that the kernel is two dimensional. If
it were, then changing the target direct sum as below
\[
H^1(\Gamma_{S \cup \tilde{Q} \cup \{q_{n+1}\} \bs \{q_i\}}, \rbarg)
\to \bigoplus_{v \in S \cup \tilde{Q} \cup \{q_{n+1}\} \bs \{q_i\}}
\frac{H^1(\Gamma_v, \rbarg)}{\mc{N}_v}
\]
would yield at least a one dimensional kernel which contradicts that
$\tilde{Q} \cup \{q_{n+1}\} \bs \{q_i\}$ is
auxiliary.

Thus the kernel of \eqref{eq11} is spanned by $f_i$ and so the kernel
of
\begin{equation} \label{eq13}
H^1(\Gamma_{S \cup \tilde{Q} \cup \{q_{n+1}, q_{n+2}\} \bs \{q_i\}},
\rbarg) \to \bigoplus_{v \in S \cup \tilde{Q} \bs \{q_i\}}
\frac{H^1(\Gamma_v, \rbarg)}{\mc{N}_v}
\end{equation}
is two dimensional. Set $g_{n+1} = f_i$ and
$g_{n+2} = f_{n+1} - f_{n+2}$. 
 By construction $g_{n+2}$ is in the kernel of \eqref{eq13} so $\{g_{n+1},g_{n+2} \}$ forms a basis for this kernel. 
The $2 \times 2$ matrix
$[g_{n+i}({\rm Frob}_{q_{n+j}})] = \bigl [ \begin{smallmatrix} 1 & 1 \\ 1 &
  -1 \end{smallmatrix} \bigr ]$
is invertible, so by (the proof of) Lemma \ref{lem:aux}
\[
H^1(\Gamma_{S \cup \tilde{Q} \cup \{q_{n+1}, q_{n+2}\} \bs \{q_i\}},
\rbarg) \to \bigoplus_{v \in S \cup \tilde{Q} \cup \{q_{n+1}, q_{n+2}\}
  \bs \{q_i\}} \frac{H^1(\Gamma_v, \rbarg)}{\mc{N}_v}
\]
is an isomorphism and $\mc{Q}_i$ is auxiliary.

Finally, we show that $\rho_{S \cup Q_i}^{Q_i-new}$ mod $p^d$ is
\emph{not} special at $q_i$ in all cases:
\begin{itemize}
\item $ i = n+2$: to get from
  $\rho_{S \cup \tilde{Q}}^{\tilde{Q}-new}$ mod $p^d$ to
  $\rho_{S \cup Q_{n+2}}^{Q_{n+2}-new}$ mod $p^d$ we adjust by
  $h = f_{n+1} + f_{n+3}$ to make the representation special at
  $q_{n+1}$ and $q_{n+3}$. As $h({\rm Frob}_{q_{n+2}}) = 0$
  and $\rho_{S \cup \tilde{Q}}^{\tilde{Q}-new}$ is not special at
  $q_{n+2}$ (by construction, cf.~\eqref{eq6}), the representations
  remains nonspecial at $q_{n+2}$.  For $v \in S \cup \tilde{Q}$,
  $h|_{\Gamma_v} = 2g_v \in \mc{N}_v$, so adjusting by $h$ keeps the
  representation special at $v$ without removing ramification. We
  conclude as before by uniqueness.
\item $i = n+3$: set $h = f_{n+1} + f_{n+2}$ and proceed as in the
  $i = n+2$ case.
\item $i = n+1$: as $Q_{n+1} = \tilde{Q}$ and
  $\rho_{S \cup \tilde{Q}}^{\tilde{Q}-new}$ mod $p^d$ is not special
  at $q_{n+1}$ by construction we are done.
\item $ i \leq n$: we go from
  $\rho_{S \cup \tilde{Q}}^{\tilde{Q}-new}$ mod $p^d$ to
  $\rho_{S \cup Q_{i}}^{Q_{i}-new}$ mod $p^d$. Our starting point
  is not special at $q_{n+1}$ and $q_{n+2}$.
  \begin{itemize}
  \item If $\rho_{S \cup \tilde{Q}}^{\tilde{Q}-new}$ mod $p^d$ is
    unramified at $q_i$ then adjust by a (nonzero) multiple of $f_i$
    to make the representation special at $q_{n+1}$. This makes the
    representation nonspecial at $q_i$ as desired (but preserves
    unramifiedness) and preserves liftability at the other primes in
    $S \cup \tilde{Q}$, so we are done. (If
    $\rho_{S \cup \tilde{Q}}^{\tilde{Q}-new}$ mod $p^d$ were ramified
    at $q_i$ and we used this $Q_i$, we would need to adjust by
    $f_{n+1}$ to remove ramification at $q_i$. But then the
    representation would be special at both $q_i$ and $q_{n+i}$ and
    liftable elsewhere. We need nonspecialness at $q_i$, which is why
    we need the $Q_i$ below in the ramified case.)
  \item If $\rho_{S \cup \tilde{Q}}^{\tilde{Q}-new}$ mod $p^d$ is
    ramified at $q_i$, then when going from
    $\rho_{S \cup \tilde{Q}}^{\tilde{Q}-new}$ mod $p^d$ to
    $\rho_{S \cup Q_{i}}^{Q_{i}-new}$ mod $p^d$ we need to make
    the representation special at $q_{n+1}$ and $q_{n+2}$ and unramified and
    nonspecial at $q_i$. The cohomology class $h$ that we use must
    satisfy:
    \begin{itemize}
    \item $h({\rm Frob}_{q_{n+i}}) = 1$ for $i=1,2$ (specialness at
      $q_{n+1}, q_{n+2}$),
    \item $h|_{\Gamma_{q_i}} = g_{q_i} +$ a nonspecial unramified
      class (removes ramification at $q_i$ and makes it nonspecial),
    \item $h$ is unramified at $q_{n+3}$,
    \item $h|_{\Gamma_v} \in \mc{N}_v$ for
      $v \in S \cup \tilde{Q} \bs \{q_i\}$ (preserves liftability at
      these places).
    \end{itemize}
    The class $h = \frac{1}{2}(f_i + f_{n+1} + f_{n+2})$ satisfies all
    these conditions, so we are done by uniqueness.  Note that in this
    case the representation may become unramified at some
    $v \in S \cup \tilde{Q}$ (besides $q_i$).
  \end{itemize}
\end{itemize}

\smallskip
\noindent{\bf Proof of Case 3)} This is the most delicate of the three
cases and makes important use of parts 3) and 4) of Lemma
\ref{lem:nice}.

For this case we are assuming $\mc{Q}_1 = \{q \in \mc{Q}_0 | f_{n+1}({\rm Frob}_q)=0\}$ has
positive upper density. Note that this is almost certainly \emph{not}
a Chebotarev set, but again this seems difficult to prove. Our set $Q$
will be $\tilde{Q} \cup \{q_{n+1},q_{n+2},q_{n+3}\}$ for three
carefully chosen primes in $\mc{Q}_1$ such that the matrix
\[
[f_{n+i}({\rm Frob}_{q_{n+j}})]_{1 \leq i,j \leq 3} = \Bigl
[ \begin{smallmatrix} 0 & 1 & 1 \\ 1 & 0 & 1 \\ 1 & 1 &
  0\end{smallmatrix} \Bigr ]\ .
\]
After constructing $Q$ we will describe the sets $Q_i$.

Choose $q_{n+1} \in \mc{Q}_1$. Then the map
\[
H^1(\Gamma_{S \cup \tilde{Q} \cup \{q_{n+1}\} }, \rbarg) \to \bigoplus_{v
  \in S \cup \tilde{Q}} \frac{H^1(\Gamma_v, \rbarg)}{\mc{N}_v}
\]
has kernel spanned by $f_{n+1}$ and since
$q_{n+1} \in \mc{Q}_1 \Rightarrow f_{n+1}|_{\Gamma_{q_{n+1}}} \in
\mc{N}_{q_{n+1}}$, we see that $f_{n+1}$ spans the kernel of
\[
H^1(\Gamma_{S \cup \tilde{Q} \cup \{q_{n+1}\} }, \rbarg) \to \bigoplus_{v
  \in S \cup \tilde{Q} \cup \{q_{n+1}\}} \frac{H^1(\Gamma_v,
  \rbarg)}{\mc{N}_v}
\]
as well.

For $q_{n+1}$ fixed, consider another prime $q_{n+2} \in \mc{Q}_1$.
The matrix
$[f_{n+i}({\rm Frob}_{q_{n+j}})]_{1 \leq i,j \leq 2} = \bigl
[ \begin{smallmatrix} 0 & x \\ z & 0 \end{smallmatrix} \bigr ]$
and we want $x = z = 1$. As the kernel of 
\[
H^1(\Gamma_{S \cup \tilde{Q} \cup \{q_{n+1}, q_{n+2}\} }, \rbarg) \to
\bigoplus_{v \in S \cup \tilde{Q} } \frac{H^1(\Gamma_v,
  \rbarg)}{\mc{N}_v}
\]
is spanned by $\{f_{n+1}, f_{n+2}\}$, Lemma \ref{lem:aux} implies that
$S \cup \tilde{Q} \cup \{q_{n+1}, q_{n+2}\}$ is auxiliary if this
holds.

For a potential second prime $q_{n+2} \in \mc{Q}_1$, we need that
$f_{n+1}({\rm Frob}_{q_{n+2}}) = 1$ and this is a Chebotarev condition
disjoint from the one determining $\mc{Q}_0$. We also need
$f_{n+2}({\rm Frob}_{q_{n+1}}) = 1$. For $e \in \{n+1, n+2\}$, let $\phi_e$
as before span the kernel of
\[
H^1(\Gamma_{S \cup \tilde{Q} \cup \{q_e\}}, \rbarg^*) \to \bigoplus_{v \in
  S \cup \tilde{Q}} \frac{H^1(\Gamma_v, \rbarg^*)}{\mc{N}_v^{\perp} }
\]
and scale it as in (4) of Lemma \ref{lem:nice}. The first equality
below is the global reciprocity law and the second holds as, for $v
\in S \cup \tilde{Q}$, $f_{n+2}|_{\Gamma_v} \in \mc{N}_v$ and
$\phi_{n+1}|_{\Gamma_v} \in \mc{N}_v^{\perp}$ annihilate each other.
The third uses parts (3) and (4) of Lemma \ref{lem:nice}; we finally
see the reason for the asymmetry in our statements for these parts
since we have the freedom to scale $\phi_{n+1}$ but not $f_{n+2}$ as
it comes from Proposition \ref{prop:hv}.  
\begin{equation} \label{eq12}
\begin{split}
0 &= \sum_{v \in S \cup \{q_1,q_2,\dots, q_{n+2}\}} inv_v(f_{n+2} \cup
\phi_{n+1}) \\
&= inv_{q_{n+1}}(f_{n+2} \cup
\phi_{n+1}) + inv_{q_{n+2}}(f_{n+2} \cup
\phi_{n+1}) \\
& = f_{n+2}({\rm Frob}_{q_{n+1}}) + \gamma_{q_{n+2}}
\phi_{n+1}({\rm Frob}_{q_{n+2}}).
\end{split}
\end{equation}
Note that $\gamma_{q_{n+2}}$ only depends on $f_{n+2}$ and thus on
$q_{n+2}$ but not $q_{n+1}$. Since $\mc{Q}_1$ has positive upper
density and $k^{\times}$ is finite, there is a $\gamma_0 \in k^{\times}$ and a
subset $\mc{Q}_2 \subset \mc{Q}_1$ of positive upper density such that
$q \in \mc{Q}_2 \Rightarrow \gamma_q = \gamma_0$. So for $q_{n+1},
q_{n+2} \in \mc{Q}_2$,
\[
f_{n+2}({\rm Frob}_{q_{n+1}}) + \gamma_0 \phi_{n+1}({\rm Frob}_{q_n+2}) = 0,
\]
or equivalently
\[
f_{n+2}({\rm Frob}_{q_{n+1}}) = - \gamma_0 \phi_{n+1}({\rm Frob}_{q_n+2}).
\]
Henceforth, we will only choose primes from $\mc{Q}_2$. Having chosen
$q_{n+1} \in \mc{Q}_2$, we want the set of $q_{n+2} \in \mc{Q}_2$
satisfying
\[
(f_{n+1}({\rm Frob}_{q_{n+2}}) = 1 \mbox{ and } f_{n+2}({\rm Frob}_{q_{n+1}}) = 1)
\Leftrightarrow (f_{n+1}, \phi_{n+1})({\rm Frob}_{q_{n+2}}) = (1,
-1/\gamma_0)
\]
to have positive upper density.  Note that the complement of these two
Chebotarev conditions associated to $f_{n+1}$ and $\phi_{n+1}$ on
primes of $\mc{Q}_2$ forms a set of density $t_0(1 - \frac{1}{|k|^2})$
within $Q_0$. Suppose that for our choice of $q_{n+1}$ the set of
desired second primes $q_{n+2} \in \mc{Q}_2$ has density $0$. Using
the limiting argument as before, we see that by changing $q_{n+1}$
there is a set $\mc{Q}_3 \subset \mc{Q}_2$ of positive upper density
such that for $q_{n+2} \in \mc{Q}_3$,
$f_{n+1}({\rm Frob}_{q_{n+2}}) = 1 = f_{n+2}({\rm Frob}_{q_{n+1}})$.

Having chosen $q_{n+1}$, for $q_{n+2}, q_{n+3} \in \mc{Q}_3$ we have
$[f_{n+i}({\rm Frob}_{q_{n+j}})]_{1 \leq i,j \leq 3} = \Bigl
[ \begin{smallmatrix} 0 & 1 & 1 \\ 1 & 0 & a \\ 1 & b &
  0\end{smallmatrix} \Bigr ]$. We want $a = b = 1$, so we fix $q_{n+2}$
and vary $q_{n+3}$. Choosing
\[
(f_{n+2}, \phi_{n+2})({\rm Frob}_{q_{n+3}}) = (1,
-1/\gamma_0)
\]
is a pair of Chebotarev conditions independent of those determining
$\mc{Q}_0$. If a $q_{n+3}$ exists satisfying these conditions we are
done. If not, using the same limiting argument as before we see that
we can change $q_{n+2}$ so that a $q_{n+3}$ satisfying these
conditions will indeed exist. Thus, we now have primes
$q_{n+1}, q_{n+2}, q_{n+3}$ such that
$[f_{n+i}({\rm Frob}_{q_{n+j}})]_{1 \leq i,j \leq 3} = \Bigl
[ \begin{smallmatrix} 0 & 1 & 1 \\ 1 & 0 & 1 \\ 1 & 1 &
  0\end{smallmatrix} \Bigr ]$.

We are now ready to settle Case (3). Set
\[
\begin{split}
Q = & \{q_1,q_2,\dots, q_{n+1},q_{n+2}, q_{n+3}\} \\
Q_i = &
\begin{cases}
\{q_1,q_2,\dots, q_n,q_{n+1}\} \bs \{q_i\} & i \leq n+1 \\
\{q_1, q_2,\dots, q_{n+1},q_{n+2}, q_{n+3}\} \bs \{q_i\} & i= n+2,
n+3\ .
\end{cases}
\end{split}
\]
That $Q$ is auxiliary follows from Lemma \ref{lem:aux}. For
$i\leq n+1$, that $Q_i$ is auxiliary follows as before from the fact
that the $n \times (n+1)$ matrices $F = [f_i({\rm Frob}_{q_j})]$ and
$\Phi = [\phi_i({\rm Frob}_{q_j})]$ are the $n \times n$ identity matrix
augmented by a column with no zero entries. That $Q_i$ is auxiliary
for $i = n+2, n+3$ follows from Lemma \ref{lem:aux} since the relevant
principal minors of the matrix
$[f_{n+i}({\rm Frob}_{q_{n+j}})]_{1 \leq i,j \leq 3}$ are invertible.

We show that $\rho_{S \cup Q}^{Q-new}$ mod $p^d$ is ramified at all
primes in $Q$. To get from $\rho_{S \cup \tilde{Q}}^{\tilde{Q}-new}$
mod $p^d$ to $\rho_{S \cup Q}^{Q-new}$ mod $p^d$ requires making the
representation special at $q_{n+1}, q_{n+2}, q_{n+3}$. The class $h =
\frac{1}{2}(f_{n+1} + f_{n+2} + f_{n+3})$ satisfies $h({\rm Frob}_{q_{n+i}})
= 1$, so adjusting by this class yields specialness \emph{and}
ramification at $q_{n+1}, q_{n+2}, q_{n+3}$. Note that $h|_{\Gamma_v}
= \frac{3}{2} g_v$ for $v \in S \cup \tilde{Q}$, so if $\rho_{S \cup
  \tilde{Q}}^{\tilde{Q}-new}$ mod $p^d$ is unramified at $v$ we are
introducing ramification and if it is ramified at $v$ we are not
removing ramification. By uniqueness as before, the adjusted
representation must be equal to $\rho_{S \cup Q}^{Q-new}$ mod $p^d$
which is therefore ramified at all primes $q \in Q$.

It remains to show that $\rho_{S \cup Q_i}^{Q_i-new}$ is not special
at $q_i$.  Recall that $\rho_{S \cup Q_i}^{Q_i-new}$ is unramified at
$q_i$ and $\rho_{S \cup \tilde{Q}}^{\tilde{Q}-new}$ mod $p^{d-1}$ is
unramified at $q_i$.
\begin{itemize}
\item $ i = n+2$: to get from
  $\rho_{S \cup \tilde{Q}}^{\tilde{Q}-new}$ mod $p^d$ to
  $\rho_{S \cup Q_{n+2}}^{Q_{n+2}-new}$ mod $p^d$ we adjust by
  $h = f_{n+1} + f_{n+3}$ to make the representation ramified and
  special at $q_{n+1}$ and $q_{n+3}$. As
  $h({\rm Frob}_{q_{n+2}}) = 2$, the adjusted representation is not
  special at $q_{n+2}$. For $v \in S \cup \tilde{Q}$,
  $h|_{\Gamma_v} = 2g_v$ so the representation remains special for
  such a $v$ (and becomes ramified at all $q_i \in \tilde{Q}$).
\item $i = n+3$: the same argument as above works with
  $h = f_{n+1} + f_{n+2}$.
\item $i = n+1$: this follows since $Q_{n+1} = \tilde{Q}$ and
  $\rho_{S \cup \tilde{Q}}^{\tilde{Q}-new}$ is not special at
  $q_{n+1}$ by construction.
\item $i \leq n$: we move from
  $\rho_{S \cup \tilde{Q}}^{\tilde{Q}-new}$ mod $p^d$ to
  $\rho_{S \cup Q_i}^{Q_i-new}$ mod $p^d$. This breaks into two cases.
  \begin{itemize}
  \item if $\rho_{S \cup \tilde{Q}}^{\tilde{Q}-new}$ mod $p^d$ is
    unramified at $q_i$ we use a nonzero multiple of $f_i$ to make the
    representation special at $q_{n+1}$. This keeps the representation
    unramifed at $q_i$ and also makes it nonspecial at $q_i$. 
  \item if $\rho_{S \cup \tilde{Q}}^{\tilde{Q}-new}$ mod $p^d$ is
    ramified at $q_i$, using that $f_{n+1}|_{\Gamma_{q_i}} = g_{q_i}$
    we adjust by $f_{n+1}$ to remove ramification at $q_i$. Then as
    $f_{n+1}({\rm Frob}_{q_{n+1}}) = 0$, the representation at $q_{n+1}$ is
    nonspecial before and after adjustment. Now use a nonzero multiple
    of $f_i$ to make the representation special at $q_{n+1}$. This
    keeps the representation unramified at $q_i$ and also makes the
    representation nonspecial at $q_i$.
    \end{itemize}
  \end{itemize}
  This completes the proof of Case (3) of Theorem \ref{thm:main} and
  hence also the proof of the theorem.

\end{proof}

\section{An exact computation of Selmer groups}\label{deformation}

In \S 2, and Theorem \ref{thm:main}, we  consider suitable
$\rbar:\Gamma_S \to G(k)$ and a set $S$ of primes of $F$ (including
all the ramified primes, the primes of $F$ dividing the residue characteristic $p$ of $k$ and all the infinite places) together with balanced local deformation  $\mc{D}_v$ conditions at $v \in S$.  We also consider nice primes $q$ and balanced  local deformation conditions $\mc{D}_q$ at $q$.  Given a finite set of nice  primes $Q$, there is an associated deformation ring $R_{S \cup Q}^{Q-new}$   that parametrizes deformations $\rho:\Gamma_{S \cup Q}  \to G(A)$ of $\rbar$ (of  fixed   mulitplier $\nu$),  for $A$ a complete Noetherian local $W(k)$-algebra with residue field $k$,   that satisfies the local  conditions $\mc{D}_v$ at $v \in S \cup Q$. We also consider the relaxed deformation ring $R_{S \cup Q}$ which parametrizes  deformations (of  fixed   mulitplier $\nu$)  $\rho:\Gamma_{S \cup Q}  \to G(A)$ of $\rbar$  that satisfy the local  conditions $\mc{D}_v$ at $v \in S$, but with no restriction at $v \in Q$.   The complete Noetherian local $W(k)$-algebras $R_{S \cup Q}$ and $R_{S \cup Q}^{Q-new}$ are in general quite mysterious.

A simple consequence of Theorem \ref{thm:main} is that there exists a finite set $Q$ of nice primes such that
$R_{S \cup Q}^{Q-new} \cong W(k)$ and $\rho_{S \cup Q}^{Q-new}$ is
ramified at all primes $q \in Q$. This was first proved for $G=GL_2$
in \cite{KR1} and similar statements for $GL_n$ appear in
\cite[Theorem 1]{Z} and for general $G$ in \cite[Theorem
3.16]{Patrikis-Ann}. There is a natural surjection
$\pi:R_{S \cup Q} \to R_{S \cup Q}^{Q-new}$. The main result of this
section is the following:

\begin{thm} \label{thm:selmer} 
  Let $\rbar: \Gamma_F \to G(k)$ satisfy the assumptions of Theorem
  \ref{thm:main} and let $Q$ be as in the conclusion of the
  theorem. For each $q \in Q$, let $m_q$ be the smallest integer such
  that $\rho_{S \cup Q}^{Q-new}$ is ramified at $q$ modulo
  $p^{m_q}$. Then
  \[
    (\ker(\pi))/(\ker(\pi))^2 \cong \bigoplus_{q \in Q} W(k)/p^{m_q}W(k) \, ;
    \]
  in particular, $(\ker(\pi))/(\ker(\pi))^2$ is a finite $W(k)$-module and $\Spec(R_{S
    \cup Q}^{Q-new})$ is an irreducible component of $\Spec(R_{S \cup Q})$.
\end{thm}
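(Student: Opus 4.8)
The plan is to compute $I := \ker(\pi)$ together with its conormal module $I/I^2$ through the functor $M \mapsto \Hom_{W(k)}(I/I^2, M)$ on finite $W(k)$-modules, and it will be enough to take $M = W(k)/p^nW(k)$. Write $R = R_{S \cup Q}$, $\rho_0 = \rho_{S \cup Q}^{Q-new} : \Gamma_{S \cup Q} \to G(W(k))$ and $\rho_n = \rho_0 \bmod p^n$; recall that since $Q$ is auxiliary $\rho_0$ is special at every $q \in Q$. As $R/I = W(k)$ is formally smooth over $W(k)$, the conormal exact sequence gives $I/I^2 \cong \Omega_{R/W(k)} \otimes_R W(k)$, so $\Hom_{W(k)}(I/I^2, M)$ is the set of $W(k)$-algebra maps $R \to W(k) \oplus M$ (trivial square-zero extension) lifting $\pi$. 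By versality of $R$, this is the set of deformations of $\rho_0$ over $W(k) \oplus M$, with fixed multiplier $\nu$, satisfying $\mc{D}_v$ at $v \in S$ and unrestricted at $q \in Q$, reducing to $\rho_0$ modulo $M$; standard deformation theory identifies this with $H^1_{\mc{L}}(\Gamma_{S \cup Q}, \mr{Ad}^0\rho_0 \otimes_{W(k)} M)$, where $\mr{Ad}^0\rho_0 = \mg{g}^{\mr{der}}_{W(k)}$ with $\Gamma$ acting through $\mr{Ad} \circ \rho_0$ and $\mc{L}$ is the Selmer system with value at $v \in S$ the tangent space $\mc{N}_v(\rho_0)$ of $\mc{D}_v$ at $\rho_0|_{\Gamma_v}$ (free over $W(k)$, by smoothness) and value all of $H^1(\Gamma_q, -)$ at $q \in Q$. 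For $M = W(k)/p^nW(k)$ this reads $\Hom_{W(k)}(I/I^2, W(k)/p^nW(k)) \cong H^1_{\mc{L}}(\Gamma_{S \cup Q}, \mr{Ad}^0\rho_n)$, with condition $\mc{N}_v(\rho_n) = \mc{N}_v(\rho_0) \otimes_{W(k)} W(k)/p^n$ at $v \in S$.

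Next I would use that $R_{S \cup Q}^{Q-new} = W(k)$ has no nontrivial infinitesimal deformations: this says exactly that $H^1_{\mc{D}(\rho_n)}(\Gamma_{S \cup Q}, \mr{Ad}^0\rho_n) = 0$ for all $n$, where $\mc{D}(\rho_n)$ is $\mc{L}$ with the value at $q \in Q$ shrunk to the tangent space $\mc{N}_q(\rho_n)$ of $\mc{D}_q$ at $\rho_n|_{\Gamma_q}$. For the dual side, I filter $\mr{Ad}^0\rho_n$ by the submodules $p^i\mr{Ad}^0\rho_n$: the graded pieces are all $\cong \rbarg$ as $\Gamma_{S \cup Q}$-modules (the $\rho_n$-action reduces mod $p$ to the $\rbar$-action), so $(\mr{Ad}^0\rho_n)^*$ is an iterated extension of copies of $\rbarg^*$, and $\mc{D}(\rho_n)^{\perp}$ is compatible with this filtration with graded pieces the $\rbarg^*$-condition $\mc{N}^{\perp}$, again by smoothness of the $\mc{D}_v$. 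Since $\tilde{Q}$ — hence $Q$ — was chosen auxiliary, the balancedness hypothesis forces $H^1_{\mc{N}}(\Gamma_{S \cup Q}, \rbarg) = H^1_{\mc{N}^{\perp}}(\Gamma_{S \cup Q}, \rbarg^*) = 0$, so a dévissage up the filtration gives $H^1_{\mc{D}(\rho_n)^{\perp}}(\Gamma_{S \cup Q}, (\mr{Ad}^0\rho_n)^*) = 0$. Feeding these two vanishings into the Poitou--Tate exact sequence comparing the Selmer systems $\mc{D}(\rho_n) \subseteq \mc{L}$, which differ only at $Q$, yields
\[
  \Hom_{W(k)}(I/I^2, W(k)/p^nW(k)) \;\cong\; \bigoplus_{q \in Q} \frac{H^1(\Gamma_q, \mr{Ad}^0\rho_n)}{\mc{N}_q(\rho_n)}.
\]

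The heart of the argument — and the step I expect to be the main obstacle — is the purely local claim that, for each nice prime $q \in Q$,
\[
  \ell\!\left(\frac{H^1(\Gamma_q, \mr{Ad}^0\rho_n)}{\mc{N}_q(\rho_n)}\right) = \min(n, m_q),
\]
with $m_q$ as in the statement. Since $\rho_0$ is special at $q$, after conjugation $\rho_0|_{\Gamma_q}$ factors through $H_{\alpha}(W(k))$ with $\alpha \circ \rho_0 = \kappa$ and $\rho_0(\tau_q) = u_{\alpha}(t_q)$ for a topological generator $\tau_q$ of tame inertia, where by definition of $m_q$ one has $v_p(t_q) = m_q - 1$. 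As in Lemma \ref{lem:torlift} and the proof of Lemma \ref{lem:nonsplit}, the computation reduces to the semisimple-rank-one subgroup $S_{\alpha}$, i.e.\ ultimately to $\mg{sl}_2$ with the $\Gamma_q$-action given by a diagonal Frobenius and unipotent inertia $\bigl[\begin{smallmatrix} 1 & t_q \\ 0 & 1 \end{smallmatrix}\bigr]$. When $n \le m_q$ this module is semisimple over $\Gamma_q$: using $\mr{Norm}(q) \not\equiv \pm 1 \bmod p$ one finds $H^1(\Gamma_q, \mr{Ad}^0\rho_n)$ has the unramified $\mg{t}_{\alpha}$-part, the ramified $\mg{g}_{\alpha}$-part and nothing else, while $\mc{N}_q(\rho_n) = H^1(\Gamma_q, (\mg{t}_{\alpha} \oplus \mg{g}_{\alpha}) \otimes W(k)/p^n)$, so the quotient is the rank-one unramified $\mg{l}_{\alpha}$-part $\cong W(k)/p^nW(k)$ (for $n = 1$ this is the content of Lemma \ref{lem:nice}(2)). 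When $n \ge m_q$ the inertial action is genuinely nonsemisimple and one checks by an explicit cocycle computation that the quotient stabilizes at $W(k)/p^{m_q}W(k)$, the obstruction to a ramified unrestricted class becoming special being governed by $t_q$; this is a variant of the local computations of \cite{klr}. Granting it, the displayed isomorphism of the previous paragraph gives $\ell\bigl(\Hom_{W(k)}(I/I^2, W(k)/p^nW(k))\bigr) = \sum_{q \in Q} \min(n, m_q)$ for every $n \ge 1$.

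Finally, a finitely generated $W(k)$-module $N \cong W(k)^a \oplus \bigoplus_i W(k)/p^{e_i}W(k)$ satisfies $\ell(\Hom_{W(k)}(N, W(k)/p^nW(k))) = na + \sum_i \min(e_i, n)$; comparing with the formula above for all $n$ (letting $n \to \infty$ forces $a = 0$, and successive differences recover the multiset $\{e_i\}$) gives $I/I^2 \cong \bigoplus_{q \in Q} W(k)/p^{m_q}W(k)$. In particular $I/I^2$ is a finite $W(k)$-module, killed by $p^N$ with $N = \max_q m_q$, so $p^{Nj} I \subseteq I^{j+1}$ for all $j \ge 0$; since $p \notin I$ (because $R/I = W(k)$), localizing at the prime $I$ and invoking Krull's intersection theorem gives $I R_I \subseteq \bigcap_j (IR_I)^{j+1} = 0$. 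Hence $I$ is a minimal prime of $R$ and $\Spec(R_{S \cup Q}^{Q-new}) = V(I)$ is an irreducible component of $\Spec(R_{S \cup Q})$.
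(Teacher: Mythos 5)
Your global strategy is the same as the paper's: identify $\Hom_{W(k)}(\ker(\pi)/\ker(\pi)^2,\,-)$ with a Selmer group for the condition that is $\mc{N}_v$ at $v\in S$ and unrestricted at $Q$ (the paper's Lemma \ref{lem:eta}, via \cite[Lemma 2.40]{DDT}), kill the strict Selmer and dual Selmer groups mod $p^n$ (the paper's Lemma \ref{lem:van}, which rests on \cite[Lemma 6.1]{FKP} plus $Q$ auxiliary), and reduce to the local quotients $H^1(\Gamma_q,\cdot)/\mc{N}_{q,n}$ at $q\in Q$; your closing argument that finiteness of $\ker(\pi)/\ker(\pi)^2$ forces $\ker(\pi)R_{\ker(\pi)}=0$, hence an irreducible component, is correct. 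Two remarks on the global part: your one-line justification of the mod $p^n$ vanishings (``$R^{Q-new}=W(k)$ has no infinitesimal deformations'' and a d\'evissage ``by smoothness of the $\mc{D}_v$'') is the right idea, but the d\'evissage needs the smooth local conditions to be compatible with the filtration by $p^i$ at every level --- this is exactly what the paper imports from \cite{FKP}; and your Poitou--Tate isomorphism onto $\bigoplus_q H^1(\Gamma_q,\cdot)/\mc{N}_{q,n}$ is equivalent to the paper's route (injectivity from Lemma \ref{lem:van}(3) plus a Greenberg--Wiles length count).

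The genuine gap is the step you yourself flag and then grant: the local claim that for each $q\in Q$ and $n\ge m_q$ the quotient $H^1(\Gamma_q,\rho(\mg{g}^{\mr{der}})_n)/\mc{N}_{q,n}$ has length exactly $m_q$. This is the content of the paper's Lemma \ref{lem:w} (together with the identification of $\mc{N}_{q,n}$ inside the local $H^1$), and it is where most of the work of the proof lies; ``a variant of the local computations of \cite{klr}'' does not discharge it, especially for general $G$. Note also that the reduction ``to $\mg{sl}_2$'' is not automatic: the adjoint module contains all the other root spaces $\mg{g}_\beta$, and it is precisely the \emph{uniqueness} of the root $\alpha$ in the definition of nice primes that makes $H^1(\Gamma_q,(W_\alpha)_n)=0$, $W_\alpha=\oplus_{\beta\neq\pm\alpha}\mg{g}_\beta$. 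The paper then decomposes $\mg{g}^{\mr{der}}=V_\alpha\oplus(\mg{t}_\alpha\cap\mg{g}^{\mr{der}})\oplus W_\alpha$ with $V_\alpha=\mg{g}_\alpha\oplus\mg{l}_\alpha\oplus\mg{g}_{-\alpha}$, and computes $H^1(\Gamma_q,(V_\alpha)_n)\cong W(k)/p^{m_q}\oplus W(k)/p^{m_q}$ for $n\ge m_q$ by running the long exact sequence for $0\to(\mg{g}_\alpha)_n\to(V_2)_n\to(\mg{l}_\alpha)_n\to 0$: the boundary map $H^0(\Gamma_q,(\mg{l}_\alpha)_n)\to H^1(\Gamma_q,(\mg{g}_\alpha)_n)$ has image $\cong W(k)/p^{n-m_q}$ precisely because the ramification of $\rho^{Q-new}_{S\cup Q}$ at $q$ first appears mod $p^{m_q}$, and this is what makes the quotient stabilize at $W(k)/p^{m_q}$. (Minor slips in your sketch: at $n=m_q$ the inertial action is already nonsemisimple, and the unramified part of the local $H^1$ in the unramified range is the full Cartan $\mg{t}\cap\mg{g}^{\mr{der}}$, not just $\mg{t}_\alpha$ --- the surviving quotient is its $\mg{l}_\alpha$ component, as you correctly conclude.) Until this local lemma is proved, the theorem is not established by your argument.
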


\begin{rem}
The abelian group   $ (\ker(\pi))/(\ker(\pi))^2 $ is the Pontryagin dual of a certain Selmer group (see Lemma \ref{lem:eta} below).
To contextualize the result in Theorem \ref{thm:selmer}, we recall that the automorphy lifting results of  \cite{W} proved finiteness of Selmer groups associated to Galois representations  arising from newforms $f$ and related them to special values of $L$-functions. Our result  proves the finiteness of a Selmer group, and a formula for its order,  associated to a very particular Galois representation that we construct in Theorem \ref{thm:main}, and is thus less general in scope (say in the case $G=GL_2$) than \cite{W}. Our methods are those of Galois cohomology, while those of \cite{W} use additionally (congruence properties of)  modular forms. Theorem \ref{thm:h2} below is again known  as a consequence of automorphy lifting theorems in the case of polarized Galois representations (see \cite{Allen}).  Our result proves the vanishing of $H^2(\Gamma_{S \cup Q}, \rho_{S \cup Q}^{Q-new}(\mg{g}^{\mr{der}})
    \otimes_{W(k)} K) $ for the ``designer'' Galois representation we construct in Theorem \ref{thm:main}. While our result is very  tailored  to the representation  $ \rho_{S \cup Q}^{Q-new}$, it is not covered by \cite{Allen} as $\rho_{S \cup Q}^{Q-new}$ may not be polarized.
\end{rem}

Finiteness of $(\ker(\pi))/(\ker(\pi))^2$, in fact that $(\ker(\pi))/(\ker(\pi))^2$ is isomorphic
to a submodule of $\oplus_{q \in Q} W(k)/p^{m_q}W(k)$, in the situation of the above theorem,    was first proved in \cite{KR1} when 
$G= GL_2$ (where equality was
conjectured).  A  version of the result   for $G=GL_n$ was also
proved in \cite[\S 4]{Z}. 
\subsection{Local lifts over $W(k)/(p^n)$ at nice primes}

The results of this subsection are not used in the sequel, but we
include them here to illustrate that at our nice primes $q$, deformations have ``rank one'' ramification.

The lemma that follows is presumably well-known but we include a proof
since we do not know a reference.
\begin{lem} \label{lem:tori}%
  Let $G$ be a split reductive group over $W(k)$, $T$ a split maximal
  torus of $G$, and $\bar{x} \in T(k)$ a regular semisimple
  element.  Let $x_n$ be an element of $G(W(k)/p^n)$ for some $n>1$
  reducing to $\bar{x}$. Then there is a split maximal torus $T'$ of
  $G$ with $x_n \in T'(W(k)/p^n)$ and $T'_k = T_k$.
\end{lem}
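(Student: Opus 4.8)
The plan is to prove the statement by induction on $n$, reducing everything to a single layer of the $p$-adic filtration; write $R_n := W(k)/p^nW(k)$. I would first reduce to the following conjugation assertion: \emph{any $x_n \in G(R_n)$ whose reduction mod $p$ is a regular semisimple element of $T(k)$ is $G(R_n)$-conjugate to an element of $T(R_n)$}. Granting this, pick $g \in G(R_n)$ with $y := g x_n g^{-1} \in T(R_n)$ and set $T' := g^{-1} T g$; then $T'$ is a split maximal torus of $G_{R_n}$ (being a $G(R_n)$-conjugate of $T_{R_n}$) and $x_n = g^{-1} y g \in T'(R_n)$. To see $T'_k = T_k$, reduce mod $p$: $\bar g$ conjugates the regular semisimple element $\bar x := x_n \bmod p$ of $T(k)$ to the regular semisimple element $\bar y \in T(k)$, hence conjugates the identity component of $Z_{G_k}(\bar x)$ onto that of $Z_{G_k}(\bar y)$; since a regular semisimple element of $T(k)$ lies in a unique maximal torus of $G_k$, necessarily $T_k$, both identity components equal $T_k$, so $\bar g \in N_{G_k}(T_k)$ and $T'_k = \bar g^{-1} T_k \bar g = T_k$.

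For the induction, the case $n = 1$ is trivial. Let $n \geq 2$ and assume the conjugation assertion for $n-1$. Given $x_n$, apply the assertion to $x_{n-1} := x_n \bmod p^{n-1}$ to get $g_{n-1} \in G(R_{n-1})$ conjugating it into $T(R_{n-1})$; since $G$ is smooth over $W(k)$ and $R_n \to R_{n-1}$ has nilpotent kernel, lift $g_{n-1}$ to $g_n \in G(R_n)$. Replacing $x_n$ by $g_n x_n g_n^{-1}$ (and composing conjugations at the end) we may assume $x_n$ reduces mod $p^{n-1}$ to some $y \in T(R_{n-1})$; in particular $\bar x := x_n \bmod p$ still lies in $T(k)$ and is regular semisimple. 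Fix any lift $\tilde y \in T(R_n)$ of $y$. Then $x_n$ and $\tilde y$ have the same image in $G(R_{n-1})$, so $x_n = \tilde y \cdot \mr{exp}(p^{n-1}\xi)$ for a unique $\xi \in \mg{g}_k$, where I use the identification of $\ker(G(R_n) \to G(R_{n-1}))$ with the abelian group $(\mg{g}_k,+)$ carrying the $\mr{Ad}$-action of $G(k)$, as recalled above (\cite[\S 3.5]{tilouine-def}). Now I look for $\eta \in \mg{g}_k$ so that conjugation by $h := \mr{exp}(p^{n-1}\eta)$ moves $x_n$ into $T(R_n)$. Since this kernel is abelian and $\bar x$ acts on it through $\mr{Ad}(\bar x)$, a direct computation gives
\[
h x_n h^{-1} \;=\; \mr{exp}\bigl(p^{n-1}\bigl[(1-\mr{Ad}(\bar x))\eta + \mr{Ad}(\bar x)\xi\bigr]\bigr)\cdot \tilde y ,
\]
and as $T(R_n)$ meets the kernel in $\mr{exp}(p^{n-1}\mg{t}_k)$, this lies in $T(R_n)$ precisely when $(1-\mr{Ad}(\bar x))\eta \equiv -\mr{Ad}(\bar x)\xi \pmod{\mg{t}_k}$.

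It remains to solve this linear equation for $\eta$, and here, in the only place the hypothesis on $\bar x$ is used, the answer is immediate. Since $p$ exceeds the Coxeter number of $G$, $p$ is very good (Remark \ref{rem:abs}), so $\mg{g}_k = \mg{t}_k \oplus \bigoplus_{\alpha \in \Phi(G,T)} \mg{g}_\alpha$ as $T$-modules, with $\mr{Ad}(\bar x)$ acting on $\mg{g}_\alpha$ by $\alpha(\bar x)$. Regular semisimplicity of $\bar x$ means $\alpha(\bar x) \neq 1$ for every root $\alpha$, so $1 - \mr{Ad}(\bar x)$ has kernel exactly $\mg{t}_k$ and restricts to an automorphism of $\bigoplus_\alpha \mg{g}_\alpha$; in particular it induces a surjection $\mg{g}_k \twoheadrightarrow \mg{g}_k/\mg{t}_k$, so $\eta$ can be chosen and the induction closes. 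The only fiddly point I anticipate is the bookkeeping in the displayed identity with the exponential parametrization of the reduction kernel; there is no cohomological obstruction, since the whole argument takes place within a single graded piece of the $p$-adic filtration, and the regular-semisimplicity input is exactly the surjectivity of $1 - \mr{Ad}(\bar x)$ modulo $\mg{t}_k$, which is precisely what fails (some $\mg{g}_\alpha$ being $\mr{Ad}(\bar x)$-fixed) if $\bar x$ is only semisimple.
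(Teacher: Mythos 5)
Your argument is correct, and it takes a genuinely different route from the paper's. The paper's proof of Lemma \ref{lem:tori} lifts $x_n$ to an element $x' \in G(W(k))$ using smoothness, takes the identity component $C^0_G(x')$ of its scheme-theoretic centralizer, shows this is flat over $W(k)$ by semicontinuity of fibre dimension, and then invokes Grothendieck's theorems on deformations and splitness of tori (SGA3, Exp.~X) to conclude that $C^0_G(x')$ is a split maximal torus, which serves as $T'$. You instead argue by successive approximation inside the $p$-adic filtration: at each layer the obstruction to conjugating $x_n$ into $T$ is a class in $\mg{g}_k/\mg{t}_k$, and regular semisimplicity of $\bar{x}$ (i.e.\ $\alpha(\bar{x}) \neq 1$ for all $\alpha \in \Phi(G,T)$) makes $1-\mr{Ad}(\bar{x})$ invertible on $\bigoplus_{\alpha}\mg{g}_{\alpha}$, so the obstruction is always removable; your displayed identity and the identification of $T(W(k)/p^n) \cap \ker(G(W(k)/p^n) \to G(W(k)/p^{n-1}))$ with $\mr{exp}(p^{n-1}\mg{t}_k)$ are correct. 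Your approach is more elementary (no SGA3 input; note also that the decomposition $\mg{g}_k = \mg{t}_k \oplus \bigoplus_{\alpha}\mg{g}_{\alpha}$ holds for any split reductive group, so the appeal to the very good prime via Remark \ref{rem:abs} is superfluous). What the paper's approach buys is that $T'$ is produced directly over $W(k)$ and is canonically identified, after reduction, as the connected centralizer of $x_n$; this uniqueness (Remark \ref{rem:unique}) is what gets quoted in the inductive step of Lemma \ref{lem:nicelifts}, and your construction does not hand it to you for free.

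One small repair: the lemma asks for a split maximal torus $T'$ of $G$ over $W(k)$, whereas your $T' = g^{-1}Tg$ with $g \in G(W(k)/p^n)$ is a priori only a torus of the base change $G_{W(k)/p^n}$. This is fixed in one line: since $G$ is smooth and $W(k)$ is $p$-adically complete, the reduction map $G(W(k)) \to G(W(k)/p^n)$ is surjective, so lift $g$ to $\tilde{g} \in G(W(k))$ and set $T' = \tilde{g}^{-1}T\tilde{g}$; then $x_n \in T'(W(k)/p^n)$ because conjugation by the reduction of $\tilde{g}$ agrees with conjugation by $g$, and $T'_k = \bar{g}^{-1}T_k\bar{g} = T_k$ by exactly your normalizer argument (both connected centralizers of the regular semisimple elements $\bar{x}$ and $\bar{y}$ equal $T_k$, so $\bar{g}$ normalizes $T_k$).
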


\begin{proof}
  Since $G$ is smooth, $x_n$ can be lifted to $x' \in G(W(k))$ so it
  suffices to show that there exists $T'$ as in the lemma with
  $x' \in T'(W(k))$.

  Let $C^0_G(x')$ be the identity component of the (scheme-theoretic)
  centralizer of $x'$ in $G$. The (scheme-theoretic) fibre of
  $C^0_G(x')$ over $\spec(k)$ is $T(k)$ since $\bar{x} \in T(k)$ is
  regular.  Its generic fibre has dimension at least $\dim(T)$, since 
  the centralizer of any element of $G(K)$ has dimension bounded below
  by $\dim(T)$. By semicontinuity of dimension it follows that the
  generic fibre also has dimension $\dim(T)$ and $C^0_G(x')$ is flat
  over $W(k)$. By Grothendieck's theorem on deformations of tori
  (\cite[Th\'eor\`eme X.8.1]{sga3II}, it follows that $C^0_G(x')$ is a
  torus which is also split (\cite[Lemma X.3.1]{sga3II}) since $T_k$
  is split. We may thus take $T'$ to be $C^0_G(x')$.
\end{proof}

\begin{rem} \label{rem:unique}
  The proof shows that $T'_{W(k)/p^n}$ is uniquely determined as the
  identity component of the centralizer in $G_{W(k)/p^n}$  of $x_n$.
\end{rem}

Let $F_q$ be a local field of residue characteristic not equal to $p$,
set $\Gamma_q$ to be the absolute Galois group of $F_q$ and consider
an unramified representation $\rbar:\Gamma_q \to G(k)$. We assume that
$\bar{x} = \rbar({\mr Frob}_q)$ is a regular semisimple element in
$T(k)$ and there exists a \emph{unique} root $\alpha \in \Phi(G,T)$
such that $\Gamma_q$ acts on $\mg{g}_{\alpha}$ via $\kappa$. (In
particular, this implies that $N(q)$, the order of the residue field
of $F_q$, is not $1$ mod $p$.)
\begin{lem} \label{lem:nicelifts}%
  Any lift $\rho_n: \Gamma_q \to G(W(k)/p^n)$ of $\rbar$ is tamely
  ramified and is determined as follows: Let $\sigma_q \in \Gamma_q$
  be a lift of ${\mr Frob}_q$ and let $\tau_q \in \Gamma_q$ be a lift
  of a generator of tame inertia. Then there is a split torus
  $T' \subset G$ such that $T_k = T'_k$,
  $\rho_n(\sigma_q) \in T'(W(k)/p^n)$ and
  $\rho_n(\tau_q) \in U_{\alpha'}(W(k)/p^n)$, where
  $\alpha' \in \Phi(G,T')$ is the unique root having the same
  reduction as $\alpha$ modulo $p$.
\end{lem}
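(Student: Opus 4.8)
The plan is in three stages: reduce to the tame quotient, fix the torus, and then analyse the images of Frobenius and tame inertia.

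\textbf{Step 1 (tameness and the torus).} Since $\bar\rho$ is unramified, $\rho_n(I_q)$ lands in $K := \ker\bigl(G(W(k)/p^n) \to G(k)\bigr)$, which is a finite $p$-group: it is filtered by the subgroups $K^{(t)} := \ker\bigl(G(W(k)/p^n) \to G(W(k)/p^t)\bigr)$ with successive quotients $K^{(t)}/K^{(t+1)} \cong \mg{g}_k$. The wild inertia $P_q \subset I_q$ is pro-$\ell$ with $\ell \ne p$ the residue characteristic of $F_q$, so $\rho_n(P_q)$ is simultaneously an $\ell$-group and a $p$-group, hence trivial. Thus $\rho_n$ factors through the tame quotient of $\Gamma_q$, topologically generated by $\sigma_q,\tau_q$ with $\sigma_q\tau_q\sigma_q^{-1} = \tau_q^{N(q)}$, and is determined by $x_n := \rho_n(\sigma_q)$ and $\rho_n(\tau_q)$. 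I would then fix the torus once and for all: lift $x_n$ to some $x' \in G(W(k))$ (possible since $G$ is smooth) and set $T' := C^0_G(x')$; by Lemma~\ref{lem:tori} (see also Remark~\ref{rem:unique}) $T'$ is a split maximal torus defined over $W(k)$ with $T'_k = T_k$ and $x_m := x_n \bmod p^m \in T'(W(k)/p^m)$ for all $m \le n$, and $\alpha$ is the reduction of a \emph{unique} root $\alpha' \in \Phi(G,T')$.

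\textbf{Step 2 (inertia lands in $U_{\alpha'}$).} I would prove $\rho_n(\tau_q) \in U_{\alpha'}(W(k)/p^n)$ by induction on $n$, the case $n = 1$ being trivial; throughout the induction $T'$ and $\alpha'$ are those of Step 1 (they serve at every level since $T'$ is over $W(k)$, and $x'$ is also a lift of $\rho_{n-1}(\sigma_q)$). Writing $g := \rho_n(\tau_q) \in K$, the inductive hypothesis gives $g \bmod p^{n-1} \in U_{\alpha'}(W(k)/p^{n-1})$, so $g = u_{\alpha'}(a)\cdot\exp(p^{n-1}w)$ for some $a \in pW(k)/p^nW(k)$, $w \in \mg{g}_k$; moving the $\mg{g}_{\alpha'}$-component of $w$ into $a$, I may assume $w$ has zero $\mg{g}_{\alpha'}$-component, and with this normalization the decomposition is unique, because $U_{\alpha'}(W(k)/p^n) \cap K^{(n-1)}$ is precisely the line $\mg{g}_{\alpha'}$ inside $K^{(n-1)} \cong \mg{g}_k$. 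Feeding this into the relation $x_n g x_n^{-1} = g^{N(q)}$, and using that $x_n \in T'(W(k)/p^n)$ conjugates $U_{\alpha'}$ by $\alpha'(x_n)$, acts on $K^{(n-1)} \cong \mg{g}_k$ through $\mathrm{Ad}(\bar x)$, and that $u_{\alpha'}(a)$ commutes with $\exp(p^{n-1}w)$ (as $a \in pW(k)$), one gets
\[
u_{\alpha'}\bigl(\alpha'(x_n)a\bigr)\,\exp\bigl(p^{n-1}\mathrm{Ad}(\bar x)w\bigr) \;=\; u_{\alpha'}\bigl(N(q)a\bigr)\,\exp\bigl(p^{n-1}N(q)w\bigr)
\]
(the top layer being $\mathbb F_p$-linear, $N(q)$ acts there as $N(q)\bmod p$). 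Uniqueness of the normalized decomposition then forces $\mathrm{Ad}(\bar x)w = N(q)w$ in $\mg{g}_k$. But $\bar x$ is regular semisimple, so on $\mg{g}_k = \mg{t}_k \oplus \bigoplus_\beta \mg{g}_\beta$ the operator $\mathrm{Ad}(\bar x)$ is $1$ on $\mg{t}_k$ and $\beta(\bar x)$ on $\mg{g}_\beta$; since $N(q) \not\equiv 1 \bmod p$ (a consequence of regular semisimplicity of $\bar x$ together with $\Gamma_q$ acting on $\mg{g}_\alpha$ via $\kappa$, which forces $\alpha(\bar x) = N(q) \ne 1$) and $\alpha$ is the unique root with $\beta(\bar x) = N(q)$, the $N(q)$-eigenspace of $\mathrm{Ad}(\bar x)$ is exactly the line $\mg{g}_\alpha = (\mg{g}_{\alpha'})_k$. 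Hence $w \in \mg{g}_{\alpha'}$, and the normalization forces $w = 0$; so $g = u_{\alpha'}(a) \in U_{\alpha'}(W(k)/p^n)$. Together with $\rho_n(\sigma_q) = x_n \in T'(W(k)/p^n)$ this is the asserted description.

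\textbf{Main obstacle.} The crux is the eigenspace computation in Step 2, specifically the point where the \emph{uniqueness} of $\alpha$ (that $q$ is nice) is indispensable: without it the $N(q)$-eigenspace of $\mathrm{Ad}(\bar x)$ could be larger than a single root line, the inductive step would fail to confine the inertia image to one root subgroup, and the conclusion would be false — precisely the phenomenon flagged in the discussion preceding Lemma~\ref{lem:nice}. The only other point requiring care is the bookkeeping around the normalized decomposition $g = u_{\alpha'}(a)\exp(p^{n-1}w)$ and the identification of $U_{\alpha'}(W(k)/p^n)\cap K^{(n-1)}$ with $\mg{g}_{\alpha'}$, since it is exactly this uniqueness that allows one to read off $\mathrm{Ad}(\bar x)w = N(q)w$ from the defining relation of the tame quotient.
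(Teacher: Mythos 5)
Your proof is correct and follows essentially the same route as the paper's: induction on $n$, Lemma \ref{lem:tori} to place the image of Frobenius in a split torus $T'$ with $T'_k=T_k$, a decomposition of $\rho_n(\tau_q)$ as a $U_{\alpha'}$-element times an element of the top kernel layer $\cong \mg{g}_k$, and the tame relation $\sigma_q\tau_q\sigma_q^{-1}=\tau_q^{N(q)}$ combined with the uniqueness of the root $\alpha$ (and $N(q)\not\equiv 1 \bmod p$) to kill all components outside $\mg{g}_{\alpha'}$. Your only deviations — fixing $T'$ once via a $W(k)$-lift of $\rho_n(\sigma_q)$ instead of realigning the torus at each inductive step via Remark \ref{rem:unique}, and phrasing the final step as an eigenspace computation for $\mathrm{Ad}(\bar x)$ with a normalized decomposition — are cosmetic.
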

 
\begin{proof}
  We will prove the lemma by induction on $n$, the statement for $n=1$
  being obvious.  The statement about tame ramification is clear (for
  all $n$) since $\rbar$ is unramified and the residue characteristic
  of $F_q$ is not $p$. Thus any lift $\rho_n$ is determined by
  $\rho_n(\sigma_q)$ and $\rho_n(\tau_q)$.

  We now assume $n>1$ and the statement known for $n-1$. So letting
  $\rho_{n-1}$ be the reduction of $\rho_n$ modulo $p^{n-1}$, we have
  that there exists a torus $T'$ of $G$ such that $x_{n-1}$, the
  reduction of $\rho_n(\sigma_q)$ modulo $p^{n-1}$, is in
  $T'(W(k)/p^{n-1})$ and
  $\rho_{n-1}(\tau_q) \in U_{\alpha'}(W(k)/p^{n-1})$. By Lemma
  \ref{lem:tori}, $x_n := \rho_n(\sigma_q)$ lies in a split maximal
  torus, which we may assume to be $T'$ (cf. Remark \ref{rem:unique}),
  and we need to show that $\rho_n(\tau_q) \in U_{\alpha'}(W(k)/p^n)$.

  Let $y_n$ be any lift of $\rho_{n-1}(\tau_q)$ in
  $U_{\alpha'}(W(k)/p^n)$ so we may write $\rho_n(\tau_q) = y_n. g$
  where $g \in \ker(G(W(k)/p^n) \to G(W(k)/p^{n-1})) \cong
  \mg{g}_k$. Write $g = t + \sum_{\beta'} g_{\beta'} $, where
  $t \in \mg{t}_k$, $\beta'$ runs over the roots of $T'$, and
  $g_{\beta'} \in \mg{g}_{\beta', k}$.  By the structure of tame
  inertia, we must have
  $x_n \rho_n(\tau_q) x_n^{-1} = \rho_n(\tau)^{N(q)} = (y_n\cdot
  g)^{N(q)}$. But the reduction of $y_n$ in $G(k)$ is the identity, so
  $y_n$ commutes with $g$, hence
  $(y_n\cdot g)^{N(q)} = y_n^{N(q)} \cdot g^{N(q)}$.

  On the other, hand we also have
  $x_n \rho_n(\tau_q)x_n^{-1} = x_n y_n x_n^{-1}\cdot x_n g x_n^{-1}=z_n \cdot (t + \sum_{\beta'} \beta'(x_n)g_{\beta'})$, for some
  $z_n \in U_{\alpha'}(W(k)/p^n)$ (since
  $y_n \in U_{\alpha'}(W(k)/p^n)$). Thus,
  \[
    y_n^{N(q)} \cdot g^{N(q)} =  z_n \cdot (t + \sum_{\beta'}
    \beta'(x_n)g_{\beta'})
  \]
  so
  \[
    z_n^{-1}y_n^{N(q)} = (1-N(q))t + \sum_{\beta'}(
    \beta'(x_n) - N(q)) g_{\beta'}.
  \]
  Since the LHS is in $\mg{g}_{\alpha'}$, using the uniqueness of the
  root $\alpha$ we see that $t = 0$ and $g_{\beta'} = 0$ for all
  $\beta' \neq \alpha'$. This implies that $\rho_n(\tau_q) \in
  U_{\alpha'}(W(k)/p^n)$ as desired.
  \end{proof}

\subsection{Selmer and dual Selmer groups modulo $p^n$}

Let $K$ be the quotient field of $W(k)$. For any finitely generated
free $W(k)$-module $M$ let $M_{\infty}$ denote
$M \otimes_{W(k)}(K/W(k))$ and for $n>0$ let $M_n \cong M/p^nM$ denote
the $p^n$-torsion in $M_{\infty}$. For each $v \in S \cup Q$ and
$n>0$, the choice of smooth local condition $\mc{D}_v$ at each such
prime, the given condition at primes in $S$ and the one defined in \S
\ref{sec:lc} at primes in $Q$, gives rise to a $W(k)$-submodule
${\mc{N}}_{v,n} \subset H^1(\Gamma_v,\rho_{S \cup
  Q}^{Q-new}(\mg{g}^{\mr{der}})_n)$; see, for example, \cite[\S
4]{FKP}. The module
$\mc{N}_{v,1}$ is the tangent space of the deformation condition,
denoted $\mc{N}_v$ in \S \ref{sec:main}. We let
${\mc{N}}_{v,n}^{\perp} \subset H^1(\Gamma_v,\rho_{S \cup
  Q}^{Q-new}(\mg{g}^{\mr{der}})^{*}_n)$ denote the annihilator of
${\mc{N}}_{v,n}$ with respect to the Tate duality pairing
(generalising the one in \ref{eq:ltd}). We let
${\mc{N}}_{v,\infty} \subset H^1(\Gamma_v,\rho_{S \cup
  Q}^{Q-new}(\mg{g}^{\mr{der}})_{\infty})$ be the direct limit of all
the ${\mc{N}}_{v,n}$.

We let ${\mc{N}}_{v,n}' = {\mc{N}}_v$ for $v \in S$ and let
${\mc{N}}_{v,n}' = H^1(\Gamma_v,\rho_{S \cup Q}^{Q-new}(\mg{g}^{\mr{der}})_n)$ for $v \in Q$.
\begin{itemize}
  \item[--]
For $\bullet$ being $n$ or $\infty$, we define the Selmer groups
$H^1_{\mc{N}_{\bullet}}(\Gamma_{S \cup Q}, \rho_{S \cup
  Q}^{Q-new}(\mg{g}^{\mr{der}})_{\bullet})$ to be, as in
Definition \ref{def:selmer}, the kernel of the map
  \[
    H^1(\Gamma_{S \cup Q}, \rho_{S \cup Q}^{Q-new}(\mg{g}^{\mr{der}})_{\bullet}) \to \bigoplus_{v \in S \cup Q} \frac{H^1(\Gamma_v,\rho_{S
        \cup Q}^{Q-new}(\mg{g}^{\mr{der}})_{\bullet})}{{\mc{N}}_{v,\bullet}}
  \]
  and define
  $H^1_{\mc{N}_{\bullet}'}(\Gamma_{S \cup Q}, \rho_{S \cup
    Q}^{Q-new}(\mg{g}^{\mr{der}})_{\bullet} )$ in the same way with
  ${\mc{N}}_{v,\bullet}$ replaced by ${\mc{N}}_{v,\bullet}'$. 
\item[--]
 We also define dual Selmer groups   $H^1_{\mc{N}^{\perp}_n}(\Gamma_{S \cup Q}, \rho_{S \cup
    Q}^{Q-new}(\mg{g}^{\mr{der}})_{n}^*)$
    to be the kernel of the map
  \[
    H^1(\Gamma_{S \cup Q}, \rho_{S \cup Q}^{Q-new}(\mg{g}^{\mr{der}})_{n}^*) \to \bigoplus_{v \in S \cup Q} \frac{H^1(\Gamma_v,\rho_{S
        \cup Q}^{Q-new}(\mg{g}^{\mr{der}})_{n}^*)}{{\mc{N}}_{v,n}^{\perp}}
  \]
and define  $H^1_{\mc{N}'^{\perp}_n}(\Gamma_{S \cup Q}, \rho_{S \cup
  Q}^{Q-new}(\mg{g}^{\mr{der}})_{n}^*)$ similarly.
\end{itemize}

\begin{lem} \label{lem:van} 
  $ $
  \begin{enumerate}
  \item
    $H^1_{\mc{N}_\bullet}(\Gamma_{S \cup Q}, \rho_{S \cup
      Q}^{Q-new}(\mg{g}^{\mr{der}})_{\bullet}) = 0$ and
    $H^1_{\mc{N}^{\perp}_\bullet}(\Gamma_{S \cup Q}, \rho_{S \cup
      Q}^{Q-new}(\mg{g}^{\mr{der}})_{\bullet}^*) = 0$ for all
    $\bullet$ ($= n$ or $\infty$);
\item
   $H^1_{\mc{N}'^{\perp}_{n}}(\Gamma_{S \cup Q}, \rho_{S \cup
     Q}^{Q-new}(\mg{g}^{\mr{der}})_{n}^* ) = 0$ for all $n>0$;
\item The map
  \[
    H^1_{\mc{N}_{n}'}(\Gamma_{S \cup Q}, \rho_{S \cup
      Q}^{Q-new}(\mg{g}^{\mr{der}})_{n} ) \to \bigoplus_{v  \in Q} \frac{H^1(\Gamma_v,\rho_{S
        \cup Q}^{Q-new}(\mg{g}^{\mr{der}})_{n})}{\mc{N}_{v,n}}
  \]
  is an injection for all $n$;
   \end{enumerate}
\end{lem}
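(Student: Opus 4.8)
The whole content of Lemma~\ref{lem:van} lies in part~(1); granted~(1), parts~(2) and~(3) are formal. Throughout I write $M_\bullet := \rho_{S \cup Q}^{Q-new}(\mg{g}^{\mr{der}})_\bullet$, so that $M_1 = \rbarg$ as a $\Gamma_{S\cup Q}$-module and $\mc{N}_{v,1}=\mc{N}_v$. The plan for (1) is to reduce to the residual case and then climb back up. Since $Q$ is auxiliary, $H^1_{\mc{N}}(\Gamma_{S\cup Q},\rbarg)=0$ by Definition~\ref{def:aux}; the condition at $S$ is balanced (Assumption~\ref{ass:2}) and each nice prime $q$ contributes a locally balanced condition $\dim\mc{N}_q = h^0(\Gamma_q,\rbarg)$ by Lemma~\ref{lem:nice}(2), so the Greenberg--Wiles formula \eqref{eq:gw} for $\rbarg$ over $\Gamma_{S\cup Q}$, together with $h^0(\Gamma_{S\cup Q},\rbarg)=h^0(\Gamma_{S\cup Q},\rbarg^*)=0$ from Remark~\ref{rem:abs}, forces $H^1_{\mc{N}^\perp}(\Gamma_{S\cup Q},\rbarg^*)=0$ as well. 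I would also record that $H^0(\Gamma_{S\cup Q},M_\bullet)=H^0(\Gamma_{S\cup Q},M_\bullet^*)=0$ for every $\bullet$: by Lemma~\ref{lem:nonsplit}(1) the image of $\rho_{S\cup Q}^{Q-new}\bmod p^n$ contains the image of $G^{\mr{sc}}(W(k)/p^n)$, and $\mg{g}^{\mr{der}}_k$ has no nonzero $G^{\mr{sc}}(k)$-invariants (Remark~\ref{rem:abs}), so an invariant modulo $p^n$ would, on successive reduction, produce a nonzero invariant modulo $p$.

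To pass from $n=1$ to general $n$ I would run the standard dévissage. The short exact sequence of $\Gamma_{S\cup Q}$-modules $0 \to M_1 \xrightarrow{p^{n-1}} M_n \to M_{n-1} \to 0$, together with the fact that the propagated mod-$p^n$ local conditions form a compatible (Cartesian) system — for $v\in S$ this is part of the data of the smooth condition $\mc{D}_v$, and for $q\in Q$ it is the smooth condition of \S\ref{sec:lc}; cf.\ \cite[\S4]{FKP} — yields a left-exact sequence of Selmer groups
\[
0 \to H^1_{\mc{N}}(\Gamma_{S\cup Q}, M_1) \to H^1_{\mc{N}_n}(\Gamma_{S\cup Q}, M_n) \to H^1_{\mc{N}_{n-1}}(\Gamma_{S\cup Q}, M_{n-1}),
\]
left-exactness at the global term using $H^0(\Gamma_{S\cup Q},M_{n-1})=0$. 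By induction the two outer groups vanish, hence so does the middle one. Applying the same reasoning to the Tate dual sequence $0 \to M_{n-1}^* \to M_n^* \to M_1^* \to 0$ (whose local conditions $\mc{N}_{v,n}^{\perp}$ are compatible by duality, and using $H^0(\Gamma_{S\cup Q},M_1^*)=0$) gives the vanishing of $H^1_{\mc{N}^\perp_n}(\Gamma_{S\cup Q},M_n^*)$. The case $\bullet=\infty$ then follows by taking the (exact) direct limit over $n$, since $M_\infty=\varinjlim M_n$, $\mc{N}_{v,\infty}=\varinjlim\mc{N}_{v,n}$, and $H^1$ of a profinite group commutes with filtered colimits of coefficients, and similarly on the dual side. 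As an alternative for the primal vanishing: since $R_{S\cup Q}^{Q-new}=W(k)$ there is a unique deformation in the problem to every coefficient ring, in particular to $(W(k)/p^n)[\epsilon]/\epsilon^2$, and the set of such deformations extending $\rho_{S\cup Q}^{Q-new}\bmod p^n$ is a torsor under $H^1_{\mc{N}_n}(\Gamma_{S\cup Q},M_n)$, which is therefore $0$; the dual vanishing then follows from Greenberg--Wiles as above.

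Granting (1), part~(2) is immediate: at every place $\mc{N}_{v,n}' \supseteq \mc{N}_{v,n}$ — with equality for $v\in S$, and $\mc{N}_{q,n}'=H^1(\Gamma_q,M_n)$ the full local group for $q\in Q$ — so $(\mc{N}_{v,n}')^{\perp}\subseteq\mc{N}_{v,n}^{\perp}$ and hence $H^1_{\mc{N}'^{\perp}_n}(\Gamma_{S\cup Q},M_n^*) \subseteq H^1_{\mc{N}^\perp_n}(\Gamma_{S\cup Q},M_n^*)=0$. For part~(3), the kernel of the displayed map consists precisely of the classes in $H^1(\Gamma_{S\cup Q},M_n)$ that lie in $\mc{N}_{v,n}$ for every $v\in S$ (the defining condition of $\mc{N}_n'$) and additionally in $\mc{N}_{v,n}$ for every $v\in Q$; that is, it is $H^1_{\mc{N}_n}(\Gamma_{S\cup Q},M_n)$, which is $0$ by~(1). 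The one step that is not pure bookkeeping is the dévissage in~(1): making precise that the mod-$p^n$ local conditions at $S$ and at the nice primes fit into a compatible system, so that the two Selmer sequences above are exact — this is exactly where smoothness of the $\mc{D}_v$ and the explicit shape of the condition at nice primes are used; everything else reduces to \eqref{eq:gw} and the big-image hypothesis on $\rbar$.
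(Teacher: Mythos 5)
Your proposal is correct and follows essentially the same route as the paper: the paper deduces (1) by combining the $n=1$ case (auxiliarity of $Q$ plus balancedness, via Greenberg--Wiles) with Lemma 6.1 of \cite{FKP} and a passage to the limit for $\bullet=\infty$, and then obtains (2) and (3) formally exactly as you do. The only difference is that you sketch the d\'evissage underlying the \cite{FKP} lemma yourself while deferring the same compatibility of the propagated local conditions to \cite[\S 4]{FKP}, which is precisely the content the paper outsources by citation.
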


\begin{proof}
  Item (1) for $\bullet = n$ follows from Lemma 6.1 of \cite{FKP} and
  the fact that since $Q$ is auxiliary, the statement holds for
  $n=1$. For $\bullet = \infty$ it follows by taking limits.

  Item (2) follows from (1) since
  $H^1_{\mc{N}'^{\perp}_{n}}(\Gamma_{S \cup Q}, \rho_{S \cup
    Q}^{Q-new}(\mg{g}^{\mr{der}})_{n}^* )$ is a submodule of
  $H^1_{\mc{N}^{\perp}_n}(\Gamma_{S \cup Q}, \rho_{S \cup
    Q}^{Q-new}(\mg{g}^{\mr{der}})_{n}^*)$ (which is immediate from the
  definitions).

Item (3) follows from the vanishing of   $H^1_{\mc{N}_n}(\Gamma_{S \cup Q}, \rho_{S \cup
    Q}^{Q-new}(\mg{g}^{\mr{der}})_{n})$ in (1) and the definition of the
  Selmer groups.

\end{proof}

\begin{lem} \label{lem:eta} 
  There is a canonical isomorphism
  \[
    \Hom_{W(k)}((\ker(\pi))/(\ker(\pi))^2, K/W(k)) \cong H^1_{\mc{N}'}(\Gamma_{S
      \cup Q}, \rho_{S \cup Q}^{Q-new}(\mg{g}^{\mr{der}})_{\infty}) \ .
  \]
\end{lem}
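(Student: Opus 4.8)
The plan is to realize both sides as linearized incarnations of the same deformation-theoretic object, namely the tangent (or rather ``relative cotangent'') space of the surjection $\pi: R_{S\cup Q} \to R_{S\cup Q}^{Q-new} = W(k)$, and then to identify that with a Selmer group. First I would recall the general principle (cf.\ \cite[\S3.5]{tilouine-def} and the formalism of \cite{FKP}) that for a surjection of complete local $W(k)$-algebras $\pi: R \to W(k)$ with kernel $I$, the $W(k)$-module $\Hom_{W(k)}(I/I^2, M)$, for $M$ a finite-length $W(k)$-module, classifies ``small extensions'' of the $W(k)$-point $\pi$ by $M$ inside the functor pro-represented by $R$. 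Concretely, a homomorphism $I/I^2 \to W(k)/p^nW(k)$ corresponds to an $R$-algebra map $R \to W(k)[\epsilon_n] := W(k) \oplus (W(k)/p^nW(k))$ (with $\epsilon_n$ of square zero and annihilated by $p^n$) lifting $\pi$, i.e.\ to a deformation of $\rho_{S\cup Q}^{Q-new}$ to $W(k)[\epsilon_n]$ which satisfies the local conditions $\mc{D}_v$ at $v \in S$ and is \emph{unrestricted} at $v \in Q$. Passing to the direct limit over $n$ (using $K/W(k) = \varinjlim W(k)/p^nW(k)$) turns $\Hom_{W(k)}(I/I^2, K/W(k))$ into the colimit of these deformation sets.

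Next I would translate ``deformation of $\rho_{S\cup Q}^{Q-new}$ over $W(k)[\epsilon_n]$ with the given local conditions'' into Galois cohomology. Since $\rho_{S\cup Q}^{Q-new}: \Gamma_{S\cup Q} \to G(W(k))$ is a fixed lift and the coefficient ideal $\epsilon_n$ is a square-zero ideal isomorphic as a $\Gamma_{S\cup Q}$-module to $\rho_{S\cup Q}^{Q-new}(\mg{g}^{\mr{der}})_n$ (with $\mu$ fixed forcing values in $\mg{g}^{\mr{der}}$ rather than all of $\mg{g}$), the standard argument shows the set of such deformations modulo strict equivalence is canonically $H^1(\Gamma_{S\cup Q}, \rho_{S\cup Q}^{Q-new}(\mg{g}^{\mr{der}})_n)$, and the condition of lying in $\mc{D}_v$ at $v \in S$ cuts this down to the subspace with local restriction in $\mc{N}_{v,n}$, while imposing nothing at $v \in Q$ corresponds to $\mc{N}_{v,n}' = H^1(\Gamma_v, \rho_{S\cup Q}^{Q-new}(\mg{g}^{\mr{der}})_n)$ there. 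That is exactly the definition of $H^1_{\mc{N}_n'}(\Gamma_{S\cup Q}, \rho_{S\cup Q}^{Q-new}(\mg{g}^{\mr{der}})_n)$. Taking the colimit over $n$ gives $H^1_{\mc{N}'}(\Gamma_{S\cup Q}, \rho_{S\cup Q}^{Q-new}(\mg{g}^{\mr{der}})_\infty)$ on the one side and $\Hom_{W(k)}(I/I^2, K/W(k))$ on the other, and one checks the two colimit identifications are compatible, yielding the canonical isomorphism. One small point to be careful about: the $W(k)$-module structure on $\Hom_{W(k)}(I/I^2, K/W(k))$ must match the natural one on the Selmer group; this is automatic because both arise from scaling the parameter $\epsilon$, but I would state it explicitly.

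The main obstacle, and the step requiring the most care, is the bookkeeping that the deformation $\rho_{S\cup Q}^{Q-new}$ is only a \emph{versal} (not necessarily universal at the relaxed level) deformation and that $\pi$ is a genuine surjection — I need that $I/I^2$ is correctly computed by the ``relative'' deformation functor, i.e.\ that the functor of deformations to $W(k)[\epsilon_n]$ lifting the fixed $W(k)$-point is pro-represented by $\Hom_{W(k)}(I/I^2, -)$. Since $\rho_{S\cup Q}^{Q-new}$ has no infinitesimal automorphisms (Remark \ref{rem:abs} gives $h^0(\Gamma_S, \rbarg) = h^0(\Gamma_S, \rbarg^*) = 0$, so the global deformation functor is representable and the same holds after base change to $W(k)$), this is clean; but I would spell out that representability of $R_{S\cup Q}$ together with $R_{S\cup Q}^{Q-new} = W(k)$ being a quotient makes the identification $\Hom_{W(k)}(I/I^2, A) \cong \{\text{liftings of } \pi \text{ to } R_{S\cup Q} \to W(k)\oplus A\}$ a formal consequence. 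Everything else — the module-structure compatibility, the passage to the limit, the identification of the local conditions — is routine once this foundational point is in place. I would also remark that, combined with Lemma \ref{lem:van}(3) and Lemma \ref{lem:nicelifts} (which pins down the local deformations at nice primes as ``rank one''), this Selmer group injects into $\bigoplus_{q\in Q} H^1(\Gamma_q, -)/\mc{N}_{q}$, setting up the computation of Theorem \ref{thm:selmer}, but that is the content of the subsequent argument rather than of this lemma.
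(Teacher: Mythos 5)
Your argument is correct and is essentially the same as the paper's: the paper simply cites \cite[Lemma 2.40]{DDT}, whose proof is precisely the identification you spell out, namely interpreting $\Hom_{W(k)}(\ker(\pi)/(\ker(\pi))^2, W(k)/p^n)$ as liftings of $\pi$ to $W(k)\oplus \epsilon_n$, hence as deformations of $\rho_{S\cup Q}^{Q-new}$ classified by the Selmer group with conditions $\mc{N}_{v,n}$ at $v\in S$ and relaxed conditions at $Q$, and then passing to the colimit over $n$. Your added care about representability (via $h^0(\Gamma_S,\rbarg)=0$) and the fixed multiplier forcing coefficients in $\mg{g}^{\mr{der}}$ matches the paper's setup.
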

\begin{proof}
  See \cite[Lemma 2.40]{DDT} for the proof in the case of $GL_n$
  (which works in general).
\end{proof}


\subsection{Local computations II}

Let $q$ be a nice prime for $\rbar$ and let $\alpha$ be the root
associated to $q$. Then by definition of the local condition at $q$
(\S \ref{sec:lc}), any deformation of $\rbar|_{\Gamma_q}$ to $W(k)$ in
$\mc{D}_q$ has a representative (up to $\wh{G}(W(k))$-conjugacy)
$\rho:\Gamma_q \to G(W(k))$ factoring through $H_{\alpha}(W(k))$.  We
fix such a $\rho$ and we assume that it is ramified; let $m\geq 1$ be the
maximal integer such that $\rho$ is unramfied modulo $p^m$.

For any root $\beta \in \Phi(G,T)$ such that $\beta \neq \pm \alpha$,
we have that $[\mg{g}_{\alpha}, \mg{g}_{\beta}] \subset \mg{g}_{\alpha+\beta}$ if
$\alpha + \beta$ is a root and it is zero otherwise. It follows from
this and the definition of $H_{\alpha}$, that as an
$H_{\alpha}$-module (via the adjoint action) there is a decomposition
\[
  \mg{g}^{\mr{der}} = V_{\alpha} \oplus (\mg{t}_{\alpha} \cap \mg{g}^{\mr{der}}) \oplus W_{\alpha} \mbox{ with } V_{\alpha} = \mg{g}_{\alpha} \oplus \mg{l}_{\alpha} \oplus \mg{g}_{-\alpha}
 \mbox{ and } W_{\alpha} = \bigoplus_{\beta \neq
  \pm \alpha} \mg{g}_{\beta}\  .
\]
Furthermore, $W_{\alpha}$ has a finite filtration, with each term a
sum of root spaces, such that $U_{\alpha} \subset H_{\alpha}$ acts
trivially on the associated graded and as a $T$-module the associated
graded is simply $\bigoplus_{\beta \neq \pm \alpha} \mg{g}_{\beta}$
with its natural $T$-action. This decomposition and filtration clearly
induce a decomposition and filtration of $\mg{g}^{\mr{der}}$ as a
$\Gamma_q$-module.

\begin{lem} \label{lem:w} 
  $ $
  \begin{enumerate}
  \item $H^1(\Gamma_q, (W_{\alpha})_n) = 0$ for all
    $n>0$;
  \item
    $H^i(\Gamma_q, W_{\alpha}
    \otimes_{W(k)} K) = 0$ for $i=0,1,2$;
  \item $H^i(\Gamma_q, V_{\alpha} \otimes_{W(k)} K) = 0$ for $i=0,1,2$;
  \item For all $n \geq m$,
    \begin{align*}
       H^1(\Gamma_q, (V_{\alpha})_n) 
      = & H^1(\Gamma_q/I_q, (V_{\alpha})_n^{I_q}) \oplus
          \im(H^1(\Gamma_q, (\mg{g}_{\alpha})_n) \to
          H^1(\Gamma_q, (V_{\alpha})_n) \\
      \cong & W(k)/p^m \oplus W(k)/p^m ;
    \end{align*}
    \item
      $\im(H^1(\Gamma_q, (\mg{g}_{\alpha})_{\infty}) \to
      H^1(\Gamma_q, (V_{\alpha})_{\infty}) = 0$;
    \item $H^1(\Gamma_q, (V_{\alpha})_{\infty}) \cong W(k)/p^m$.
  \end{enumerate}
\end{lem}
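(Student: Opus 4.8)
The plan is to deduce all six parts from the structure of $\rho|_{\Gamma_q}$ together with the $H_\alpha$-module filtration of $\mg{g}^{\mr{der}}$ described above. After conjugating by an element of $U_\alpha(W(k))$ --- harmless since $U_\alpha$ is commutative and contains $\rho(I_q)$ --- I would assume $\rho(\mr{Frob}_q) \in T(W(k))$ and $\rho(\tau_q) = \exp(c X_\alpha)$, where $\tau_q$ topologically generates tame inertia, $X_\alpha$ spans $\mg{g}_\alpha$, and $v_p(c) = m$; fix an $\mathfrak{sl}_2$-triple $(X_\alpha, H_\alpha, Y_\alpha)$ with $H_\alpha \in \mg{l}_\alpha$, $Y_\alpha \in \mg{g}_{-\alpha}$. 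The basic tool is the Hochschild--Serre sequence for $I_q \triangleleft \Gamma_q$: since $\Gamma_q/I_q \cong \wh{\Z}$ has cohomological dimension $1$ and $H^{\geq 2}(I_q, M) = 0$ for $p$-primary $M$, one has $H^0(\Gamma_q, M) = (M^{I_q})^{\Gamma_q/I_q}$, $H^2(\Gamma_q, M) = H^1(\Gamma_q/I_q, H^1(I_q, M))$, and a short exact sequence $0 \lr H^1(\Gamma_q/I_q, M^{I_q}) \lr H^1(\Gamma_q, M) \lr H^1(I_q, M)^{\Gamma_q/I_q} \lr 0$. The arithmetic inputs from $q$ being nice are: $\mr{Norm}(q) \not\equiv \pm 1 \bmod p$; regularity of $\rbar(\mr{Frob}_q)$, so $\beta(\rho(\mr{Frob}_q)) \not\equiv 1 \bmod p$ for every root $\beta$; and uniqueness of $\alpha$, so $\beta(\rho(\mr{Frob}_q)) \not\equiv \mr{Norm}(q) \bmod p$ for $\beta \neq \alpha$.

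For (1) and (2) I would run d\'evissage along the filtration of $W_\alpha$, whose graded pieces are the root spaces $\mg{g}_\beta$, $\beta \neq \pm\alpha$, carrying the unramified $\Gamma_q$-action through $\beta \circ \rho$ with Frobenius eigenvalue $\beta(\rho(\mr{Frob}_q))$. By the inputs above this eigenvalue is $\not\equiv 1, \mr{Norm}(q) \bmod p$, so (by the computation of $\Gamma_q$-cohomology with one-dimensional coefficients, as in the proof of Lemma \ref{lem:nice}) $H^\ast(\Gamma_q, -)$ of each graded piece vanishes modulo $p^n$, and --- comparing the lattice $W_\alpha$ with its divisible quotient --- also over $K$. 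D\'evissage then gives $H^\ast(\Gamma_q, (W_\alpha)_n) = 0 = H^\ast(\Gamma_q, W_\alpha \te K)$, which contains (1) and (2).

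For (3), (5) and (6) I would use that, $\rho$ being ramified, $\mr{Ad}(\rho(\tau_q)) = \exp(c\,\mr{ad}\,X_\alpha)$ is a regular unipotent on $V_\alpha$. Over $K$: $(V_\alpha \te K)^{I_q} = \mg{g}_\alpha \te K$ (Frobenius $\mr{Norm}(q)$), and the $I_q$-coinvariants are $\mg{g}_{-\alpha} \te K$ with Frobenius $\mr{Norm}(q)^{-1}$, so $H^1(I_q, V_\alpha \te K)$, the $(-1)$-Tate twist of the coinvariants, has Frobenius eigenvalue $\mr{Norm}(q)^{-2}$; feeding this into Hochschild--Serre and using $\mr{Norm}(q) \not\equiv \pm 1$ kills $H^0$, $H^1$ and $H^2$, which is (3). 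For (6), a direct computation using $v_p(c)=m$ gives $(V_\alpha)_\infty^{I_q} = (\mg{g}_\alpha)_\infty \oplus (\mg{l}_\alpha)_m \oplus (\mg{g}_{-\alpha})_m$ with Frobenius $\mr{Norm}(q), 1, \mr{Norm}(q)^{-1}$ on the three summands; the $H^1(I_q,-)^{\Gamma_q/I_q}$ term again has Frobenius eigenvalue $\mr{Norm}(q)^{-2} \neq 1$ and so vanishes, while $H^1(\Gamma_q/I_q, (V_\alpha)_\infty^{I_q})$ picks up only the $(\mg{l}_\alpha)_m$-summand (the $(\mg{g}_\alpha)_\infty$- and $(\mg{g}_{-\alpha})_m$-summands dying because $\mr{Norm}(q)^{\pm 1}-1$ acts invertibly), giving $H^1(\Gamma_q, (V_\alpha)_\infty) \cong W(k)/p^m$. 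Then (5) is immediate: $H^1(\Gamma_q, (\mg{g}_\alpha)_\infty) \cong K/W(k)$ is divisible (its inflation part vanishes and its restriction part is $K/W(k)$), while $H^1(\Gamma_q, (V_\alpha)_\infty)$ is finite, so the natural map is zero.

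I expect (4) to be the main obstacle, as it is the only part that genuinely sees the integral structure. For $n \geq m$ I would first compute $(V_\alpha)_n^{I_q} = \mg{g}_\alpha \oplus p^{n-m}\mg{l}_\alpha \oplus p^{n-m}\mg{g}_{-\alpha}$ (Frobenius $\mr{Norm}(q), 1, \mr{Norm}(q)^{-1}$ on the summands) and that the image of $\rho(\tau_q)-1$ on $(V_\alpha)_n$ is $p^m\mg{l}_\alpha \oplus p^m\mg{g}_\alpha$. The Hochschild--Serre sequence becomes
\[
0 \lr H^1(\Gamma_q/I_q, (V_\alpha)_n^{I_q}) \lr H^1(\Gamma_q, (V_\alpha)_n) \lr H^1(I_q, (V_\alpha)_n)^{\Gamma_q/I_q} \lr 0
\]
with both outer terms $\cong W(k)/p^m$ --- the left coming from the $\mg{l}_\alpha$-summand of $(V_\alpha)_n^{I_q}$, the right from the $\mg{g}_\alpha$-summand of the $I_q$-coinvariants, whose Frobenius becomes trivial after the Tate twist. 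To produce a splitting I would use the $\Gamma_q$-submodule inclusion $\mg{g}_\alpha \hookrightarrow V_\alpha$: the composite $H^1(\Gamma_q, (\mg{g}_\alpha)_n) \to H^1(\Gamma_q, (V_\alpha)_n) \to H^1(I_q, (V_\alpha)_n)^{\Gamma_q/I_q}$ is onto (it is the isomorphism $H^1(\Gamma_q, (\mg{g}_\alpha)_n) \sr{\sim}{\lr} H^1(I_q, (\mg{g}_\alpha)_n)^{\Gamma_q/I_q} \cong W(k)/p^n$ followed by the surjection onto the $\mg{g}_\alpha$-summand $W(k)/p^m$ of the coinvariants), while the kernel of $H^1(\Gamma_q, (\mg{g}_\alpha)_n) \to H^1(\Gamma_q, (V_\alpha)_n)$ is the image of the connecting map $H^0(\Gamma_q, (V_\alpha/\mg{g}_\alpha)_n) \to H^1(\Gamma_q, (\mg{g}_\alpha)_n)$, which on the $\Gamma_q$-invariant class of $H_\alpha$ is represented by the cocycle $\tau_q \mapsto \mr{Ad}(\rho(\tau_q))H_\alpha - H_\alpha = -2cX_\alpha$ and hence (using $v_p(c)=m$) has image $p^m W(k)/p^n W(k)$, of length $n-m$. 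Therefore $\im(H^1(\Gamma_q, (\mg{g}_\alpha)_n) \to H^1(\Gamma_q, (V_\alpha)_n))$ has length $m$ and maps isomorphically onto the right-hand $W(k)/p^m$, so it is a complement to $H^1(\Gamma_q/I_q, (V_\alpha)_n^{I_q})$; this is the assertion of (4). The care needed is entirely in this last length count with the connecting map, which is also what pins down the threshold $n \geq m$.
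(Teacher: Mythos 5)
Your proposal is correct, and at its core it follows the paper's own strategy: the same decomposition $V_{\alpha} \oplus (\mg{t}_{\alpha}\cap\mg{g}^{\mr{der}}) \oplus W_{\alpha}$, d\'evissage along the filtration of $W_{\alpha}$ for (1)--(2) using that the graded pieces carry unramified characters that are neither trivial nor cyclotomic, and, for (4), exactly the same crucial integral input: the connecting map on the invariant class of $H_{\alpha}$, represented by $\tau_q \mapsto -2cX_{\alpha}$ with $v_p(c)=m$, which cuts the image of $H^1(\Gamma_q,(\mg{g}_{\alpha})_n)$ down to length $m$ (the paper states this as the boundary map $H^0(\Gamma_q,(\mg{l}_{\alpha})_n)\to H^1(\Gamma_q,(\mg{g}_{\alpha})_n)$ having image $W(k)/p^{n-m}$, after first discarding $\mg{g}_{-\alpha}$). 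Where you genuinely diverge is in the bookkeeping and in the order of (5)--(6). You run inflation--restriction for $I_q \triangleleft \Gamma_q$ throughout, so in (4) the total length $2m$ and the direct-sum decomposition fall out of the exact sequence itself: the image of $H^1(\Gamma_q,(\mg{g}_{\alpha})_n)$ maps isomorphically onto the restriction quotient and is therefore a complement to the inflation part, whereas the paper gets the trivial intersection by projecting to $(\mg{l}_{\alpha})_n$ and the totality by computing $h^2$ via local duality and then $h^1$ via the Euler characteristic. For (3) you use that inertia acts through a regular unipotent together with $\mr{Norm}(q)^{\pm 1},\mr{Norm}(q)^{-2}\neq 1$, while the paper invokes the non-splitness of $0\to\mg{g}_{\alpha}\otimes K\to V_2\otimes K\to\mg{l}_{\alpha}\otimes K\to 0$ -- the same ramification input in a different guise. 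Finally, you prove (6) directly by Hochschild--Serre on $(V_{\alpha})_{\infty}$ and deduce (5) from divisibility of $H^1(\Gamma_q,(\mg{g}_{\alpha})_{\infty})$ against finiteness of the target, while the paper deduces (5) from (4) in the limit (a divisible module killed by $p^m$ is zero) and then gets (6) from (4) and (5); both orders are non-circular. Your route buys explicit Frobenius-eigenvalue and inertia-image computations and an explicit splitting, at the cost of writing out the action of $\exp(c\,\mr{ad}X_{\alpha})$; the paper's buys shorter counts from the standard duality and Euler-characteristic formulas. I see no gap: the place you flag as delicate, the length count at the connecting map and the threshold $n\geq m$, is exactly where the paper also concentrates its effort, and your computation agrees with it.
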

\begin{proof}
  Items (1) and (2) follow by induction using the filtration discussed
  above and the fact that the analogous vanishing holds for each
  $\mg{g}_{\beta}$, with $\beta \neq \alpha$. This uses the assumption
  that $q$ is a nice prime so the character giving the
  $\Gamma_q$-action on $\mg{g}_{\beta}$ is neither the trivial
  character nor the cyclotomic character $\kappa$.

  For (3) we note that $V_{\alpha}$ also has a filtration as
  $\Gamma_q$-module
  \[
    0 = V_0 \subset V_1 \subset V_2 \subset V_3 = V_{\alpha}
  \]
  with $V_1 = \mg{g}_{\alpha}$, $V_2/V_1 = \mg{l}_{\alpha}$ and
  $V_3/V_2 = \mg{g}_{-\alpha}$. By the niceness assumption we have
  vanishing of all cohomology for $\mg{g}_{-\alpha}$. We conclude by
  using the known cohomology of the trivial (for $\mg{l}_{\alpha}$)
  and cyclotomic (for $\mg{g}_{\alpha}$) characters and noting that
  the extension
  \[
    0 \to \mg{g}_{\alpha} \otimes_{W(k)} K \to V_2 \otimes_{W(k)} K \to
    \mg{l}_{\alpha} \otimes_{W(k)} K \to 0
  \]
  is non-split since $\rho$ is ramified.

  For (4) we first use the vanishing of all cohomology of
  $\mg{g}_{-\alpha}$ as above to replace $V_{\alpha}$ by $V_2$. We
  then use the exact sequences
  \[
    0 \to (\mg{g}_{\alpha})_n \to (V_2)_n \to (\mg{l}_{\alpha})_n\to 0
    \ ,
  \]
  the known cohomology of the first and third (nonzero) terms and the
  structure of the action of $I_q$ (determined by the definition of
  $\mc{D}_q$). This gives that
  $(V_{\alpha})_n^{I_q} = (\mg{g}_{\alpha})_n \oplus
  (\mg{l}_{\alpha})_m$ and
  $H^1(\Gamma_q/I_q, (V_{\alpha})_n^{I_q}) \cong W(k)/p^m$ (again
  using that $q$ is a nice prime). The fact that
  $\im(H^1(\Gamma_q, (\mg{g}_{\alpha})_n) \to H^1(\Gamma_q,
  (V_{\alpha})_n) \cong W(k)/p^m$ follows by considering the long
  exact sequence of cohomology associated to the short exact sequence
  above: the structure of the action of inertia implies that the image
  of the boundary map
  $H^0(\Gamma_q, (\mg{l}_{\alpha})_n)\to H^1(\Gamma_q,
  (\mg{g}_{\alpha})_n)$ has image isomorphic to $W(k)/p^{n-m}$.

  The two submodules $H^1(\Gamma_q/I_q, (V_{\alpha})_n^{I_q})$ and
  $\im(H^1(\Gamma_q, (\mg{g}_{\alpha})_n) \to H^1(\Gamma_q,
  (V_{\alpha})_n)$ are seen to intersect trivially by using the map
  $ (V_2)_n \to (\mg{l}_{\alpha})_n$. We see that they generate all of
  $H^1(\Gamma_q, (V_{\alpha})_n)$ by computing
  $h^2(\Gamma_q, (V_{\alpha})_n)$ using \eqref{eq:ltd} and then using
  \eqref{eq:euler} to compute $h^1(\Gamma_q, (V_{\alpha})_n)$.
  
  Item (5) holds since a divisible $W(k)$-module which is killed by
  $p^m$ must be zero.

  Item (6) follows from (4) and (5) since the inclusions
  $(\mg{l}_{\alpha})_n \to (\mg{l}_{\alpha})_{\infty}$ induce
  injections on (unramified) cohomology.
  
\end{proof}

\subsection{Proof of Theorem \ref{thm:selmer}}

\begin{proof}[Proof of Theorem \ref{thm:selmer}]
  We apply the computations of Lemma \ref{lem:w} with
  $\rho = \rho_{S \cup Q}^{Q-new}$.
  
  The module
  ${\mc{N}}_{q,n} \subset H^1(\Gamma_q,\rho_{S \cup
      Q}^{Q-new}(\mg{g}^{\mr{der}})_{n})$ for $q \in Q$ corresponds to the
  summand
  \[
    \im(H^1(\Gamma_q, (\mg{g}_{\alpha})_n) \to H^1(\Gamma_q,
  (V_{\alpha})_n)) \oplus H^1(\Gamma_q, (\mg{t}_{\alpha} \cap
  \mg{g}^{\mr{der}}), 
\]
so by Lemma \ref{lem:w} (4) it follows that the quotient
$\frac{H^1(\Gamma_v,\rho_{S \cup
    Q}^{Q-new}(\mg{g}^{\mr{der}})_{n})}{{\mc{N}}_{v,n}}$ is isomorphic
to $W(k)/p^{m_q}$ if $n \geq m_q$. To prove the theorem it suffices,
by Lemmas \ref{lem:van} (3) and \ref{lem:eta}, to show that the length of
$H^1_{\mc{N}_{n}'}(\Gamma_{S \cup Q}, \rho_{S \cup
  Q}^{Q-new}(\mg{g}^{\mr{der}})_{n} )$ as $W(k)$-module is
$\sum_{q \in Q} m_q$  if $n \geq \max_q \{m_q\}$.
  
This follows by comparing the Greenberg--Wiles formula
for the Selmer conditions given by $\mc{N}_n$ and $\mc{N}_n'$. Since
the dual Selmer group for $\mc{N}_n'$ vanishes by (2) of Lemma
\ref{lem:van}, it follows from (1) of that Lemma and the definitions of
${\mc{N}}_{n,v}$ and ${\mc{N}}_{n,v}'$ for all $v \in S \cup Q$, that
the only contribution to the length of
$H^1_{\mc{N}_{n}'}(\Gamma_{S \cup Q}, \rho_{S \cup
  Q}^{Q-new}(\mg{g}^{\mr{der}})_{n} )$ comes from the primes in $Q$.
This contribution is precisely
  \[
    \ell(H^1(\Gamma_v,\rho_{S \cup
      Q}^{Q-new}(\mg{g}^{\mr{der}})_{n})) - \ell({{\mc{N}}_{v,n}}) = m_q .
  \]
\end{proof}

\subsection{Vanishing of $H^2$}

Let $R$ be the universal ring representing all deformations of
$\rbar:\Gamma_{S \cup Q}\to G(k)$ (with fixed determinant). There is a
natural surjection $\xi: R \to R_{S \cup Q}^{Q-new} \cong W(k)$.

\begin{thm} \label{thm:h2} 
$ $
  \begin{enumerate}
   \item $H^2(\Gamma_{S \cup Q}, \rho_{S \cup Q}^{Q-new}(\mg{g}^{\mr{der}}) )  \hookrightarrow  \oplus_{v \in S \cup Q}  H^2(\Gamma_v, \rho_{S \cup Q}^{Q-new}(\mg{g}^{\mr{der}}))$;
  \item  $H^2(\Gamma_v, \rho_{S \cup Q}^{Q-new}(\mg{g}^{\mr{der}}) \otimes_{W(k)} K) = 0$ for all $v \in Q$;
  \item Thus, if
    $H^2(\Gamma_v, \rho_{S \cup Q}^{Q-new}(\mg{g}^{\mr{der}})
    \otimes_{W(k)} K) =0$ for all $v \in S$, then it follows that \\
    $H^2(\Gamma_{S \cup Q}, \rho_{S \cup Q}^{Q-new}(\mg{g}^{\mr{der}})
    \otimes_{W(k)} K) = 0$, and so $\xi$ gives rise to a formally
    smooth (closed) point of $\Spec(R[1/p])$.

\end{enumerate}

\end{thm}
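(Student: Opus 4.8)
The plan is to extract all three parts from the local computations already made in Lemmas~\ref{lem:van} and \ref{lem:w}, combined with global duality; once those lemmas are granted the remaining work is essentially organizational. Throughout I write $M = \rho_{S \cup Q}^{Q-new}(\mg{g}^{\mr{der}})$, a finite free $W(k)$-module with continuous $\Gamma_{S \cup Q}$-action, so that $M_n = M/p^nM$ and $M_n^{*}$ are exactly the modules occurring in Lemma~\ref{lem:van}.

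For (1) I would identify the kernel of the displayed map with the Tate--Shafarevich group $\Sh^2_{S \cup Q}(M)$. Since $\Gamma_{S \cup Q}$ and each $\Gamma_v$ have finite cohomological dimension ($p$ being odd) and the groups $H^i(\Gamma_{S\cup Q}, M_n)$ are finite, one has $H^2(\Gamma_{S \cup Q}, M) = \varprojlim_n H^2(\Gamma_{S\cup Q}, M_n)$ and likewise locally (the relevant $\varprojlim^1$ terms vanish by Mittag--Leffler); left-exactness of $\varprojlim$ then gives $\Sh^2_{S \cup Q}(M) = \varprojlim_n \Sh^2_{S \cup Q}(M_n)$, so it suffices to kill each $\Sh^2_{S \cup Q}(M_n)$. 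By Poitou--Tate duality for finite modules, $\Sh^2_{S \cup Q}(M_n) \cong \Sh^1_{S \cup Q}(M_n^{*})^{\vee}$, and $\Sh^1_{S \cup Q}(M_n^{*})$ --- being cut out by the zero local condition at every place --- is contained in the dual Selmer group $H^1_{\mc{N}'^{\perp}_n}(\Gamma_{S \cup Q}, M_n^{*})$ of Lemma~\ref{lem:van} (the zero condition lies in $\mc{N}_v^{\perp}$ for $v \in S$ and in the zero condition at $v \in Q$). That group vanishes by Lemma~\ref{lem:van}(2), giving (1).

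For (2) I would fix $v = q \in Q$ and use that, by Theorem~\ref{thm:main}, $\rho_{S \cup Q}^{Q-new}|_{\Gamma_q}$ lies in $\mc{D}_q$ and is ramified, so the preamble to Lemma~\ref{lem:w} yields the $\Gamma_q$-stable decomposition $\mg{g}^{\mr{der}} = V_{\alpha} \oplus (\mg{t}_{\alpha} \cap \mg{g}^{\mr{der}}) \oplus W_{\alpha}$. Lemma~\ref{lem:w}(2) and (3) give $H^2(\Gamma_q, W_{\alpha} \otimes_{W(k)} K) = H^2(\Gamma_q, V_{\alpha} \otimes_{W(k)} K) = 0$, and on the torus summand $\Gamma_q$ acts trivially (since $\rho_{S \cup Q}^{Q-new}|_{\Gamma_q}$ factors through $H_{\alpha}$, and $H_{\alpha}$ acts trivially on $\mg{t}_{\alpha}$: the torus acts trivially on its Lie algebra and $U_{\alpha}$ acts trivially on $\mg{t}_{\alpha} \subseteq \ker(d\alpha)$), while $H^2(\Gamma_q, K) = 0$ because $\mr{Norm}(q) \not\equiv 1 \bmod p$ forces $H^0(\Gamma_q, (W(k)/p^n)(\kappa)) = 0$, hence $H^2(\Gamma_q, W(k)/p^n) = 0$ by local Tate duality and then $H^2(\Gamma_q, W(k)) = 0$ in the limit. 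Adding the three summands gives (2).

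For (3) I would tensor the injection of (1) with the flat $W(k)$-module $K$ and use the standard comparison $H^i(\Gamma, M) \otimes_{W(k)} K \cong H^i(\Gamma, M \otimes_{W(k)} K)$, valid for $\Gamma$ profinite of finite cohomological dimension and $M$ finite free over $W(k)$, applied to $\Gamma = \Gamma_{S \cup Q}$ and to each $\Gamma_v$; this produces an injection of $H^2(\Gamma_{S \cup Q}, M \otimes_{W(k)} K)$ into $\bigoplus_{v \in S \cup Q} H^2(\Gamma_v, M \otimes_{W(k)} K)$, whose target vanishes by (2) and the hypothesis at $v \in S$, so $H^2(\Gamma_{S \cup Q}, M \otimes_{W(k)} K) = 0$. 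To conclude, $\xi$ composed with $W(k) \hookrightarrow K$ is a point of $\Spec(R[1/p])$ with residue field the finite extension $K/\QQ_p$, hence a closed point, whose complete local ring pro-represents the fixed-multiplier deformation functor of the $K$-valued representation $\rho_{S \cup Q}^{Q-new} \otimes_{W(k)} K$; its tangent space is $H^1(\Gamma_{S \cup Q}, M \otimes_{W(k)} K)$ and its obstructions lie in $H^2(\Gamma_{S \cup Q}, M \otimes_{W(k)} K) = 0$, so that ring is a power series ring over $K$ and $\xi$ is a formally smooth closed point. The genuinely substantive inputs being Lemmas~\ref{lem:van} and \ref{lem:w}, which are in hand, the one place that requires care is the passage between the integral statements and their $K$-coefficient versions: expressing $\Sh^2$ of the free module $M$ as $\varprojlim_n \Sh^2(M_n)$ in (1), and invoking the $\otimes_{W(k)} K$ comparison together with the generic-fibre deformation-theory dictionary in (3) --- none of it deep, but that is where the bookkeeping lives.
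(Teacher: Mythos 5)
Your proposal is correct, and parts of it diverge from the paper's own argument in an interesting way. For part (1) you and the paper do essentially the same thing: Poitou--Tate identifies the kernel of the localization map on $H^2(\cdot, M_n)$ with (the dual of) a group of everywhere-locally-trivial classes in $H^1(\cdot, M_n^*)$, which sits inside a dual Selmer group killed by Lemma \ref{lem:van} (the paper quotes part (1) of that lemma, you part (2); either works), and then one passes to the inverse limit -- your explicit Mittag--Leffler bookkeeping is exactly the content of the paper's ``by taking inverse limits.'' For part (2) your route is genuinely different: the paper applies local Tate duality once, reducing to $H^0(\Gamma_q, \mg{g}^{\mr{der}}(1)\otimes K)=0$, which it proves directly by noting that a Frobenius eigenvalue computation (using the uniqueness of $\alpha$) confines invariants to $\mg{g}_{-\alpha}(1)$ and that ramified inertia in $U_\alpha$ then kills them; you instead reuse the decomposition $V_\alpha \oplus (\mg{t}_\alpha\cap\mg{g}^{\mr{der}}) \oplus W_\alpha$ and Lemma \ref{lem:w}(2),(3) (where the ramification hypothesis enters via the non-split extension), finishing the central torus summand by hand with $\mr{Norm}(q)\not\equiv 1 \bmod p$ and duality. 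Your version buys economy by recycling Lemma \ref{lem:w}, at the cost of importing its full strength where the paper's two-line $H^0$ computation suffices; both hinge on the same two inputs, niceness of $q$ and ramification of $\rho^{Q-new}_{S\cup Q}$ at $q$, guaranteed by Theorem \ref{thm:main}. For part (3) the paper says only that it ``follows directly,'' and your spelled-out argument -- tensoring the injection with $K$, the comparison $H^i(\Gamma, M)\otimes_{W(k)}K \cong H^i(\Gamma, M\otimes_{W(k)}K)$, and the identification of the completed local ring of $R[1/p]$ at the closed point $\xi$ with the fixed-multiplier deformation ring of the $K$-valued representation, whose obstruction space is the vanishing $H^2$ -- is the standard justification and is the right level of detail.
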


\begin{rem} ${ }$
  \begin{itemize}
  \item The condition
    $H^2(\Gamma_v, \rho_{S \cup Q}^{Q-new}(\mg{g}^{\mr{der}})
    \otimes_{W(k)} K) = 0$ is referred to as
    $\rho_{S \cup Q}^{Q-new}(\mg{g}^{\mr{der}})$ being generic at $v$.
  \item Instead of the condition
    $H^2(\Gamma_v, \rho_{S \cup Q}^{Q-new}(\mg{g}^{\mr{der}})
    \otimes_{W(k)} K) = 0$ for all $v \in S$ we could assume the
    stronger condition that $H^2(\Gamma_v, \rbarg) = 0$ for all
    $v \in S$; this is perhaps more intrinsic since it does not depend
    on the set $Q$.
  \end{itemize}
\end{rem}

\begin{proof}

  For any $n \geq 1$, using the Poitou--Tate exact sequence (see,
  e.g., the proof of Theorem 2.18 of \cite{DDT}) we see that the dual
  of
  $H^1_{\mc{N}_n^{\perp}}(\Gamma_{S \cup Q}, \rho_{S \cup
    Q}^{Q-new}(\mg{g}^{\mr{der}})_n^*) $ surjects onto the kernel of
  $H^2(\Gamma_{S \cup Q}, \rho_{S \cup
    Q}^{Q-new}(\mg{g}^{\mr{der}})_n) \rightarrow \oplus_{v \in S \cup
    Q} H^2(\Gamma_v, \rho_{S \cup Q}^{Q-new} (\mg{g}^{\mr{der}})_n)$,
  so by Lemma \ref{lem:van} (1) it follows that this map is injective
  for all $n$.  The first of the lemma follows from this by taking
  inverse limits.

  For the second part, by Tate duality it suffices to show that
  $H^0(\Gamma_q, \rho_{S \cup Q}^{Q-new}(\mg{g}^{\mr{der}})^*
  \otimes_{W(k)} K ) = 0$ for all $q \in Q$. Since $Q$ consists of
  nice primes, there is a \emph{unique} root $\alpha$ of $\Phi(G,T)$,
  with $T$ the identity component of the centralizer of
  $\rbar({\rm Frob}_q)$ in $G$ (assumed to be a maximal split torus of
  $G$), such that $\Gamma_q$ acts on $\mg{g}_{\alpha}$ (the
  corresponding root space) by $\kappa$. By considering the action of
  a lift of Frobenius, it follows that the $\Gamma_q$-invariants of
  $(\mg{g}^{\mr{der}})^* = \mg{g}^{\mr{der}}(1)$ must be contained in
  $\mg{g}_{-\alpha}(1)$. However, since
  $\rho_{S \cup Q}^{Q-new}(\mg{g}^{\mr{der}})$ is ramified and inertia
  lies in $U_{\alpha}$, we see that the only $\Gamma_q$-invariant
  element in $\mg{g}_{-\alpha}(1)$ is $0$.


The third  part follows directly from the first two parts.

\end{proof}

\section{Level lowering mod $p^n$ (via Theorem \ref{thm:main}) and modularity lifting}\label{sec:R=T}

Using Theorem \ref{thm:main} applied to odd irreducible
representations $\rbar:\Gamma_{\Q} \ra GL_2(k)$ we sketch a different proof
of modularity lifting along the lines of \S 4 of \cite{K}.  The method of  \cite{K} gives an approach to modularity lifting
which does not use patching techniques of Wiles, Taylor-Wiles.

We recall the situation of \cite{K}  and   then sketch an  approach to automorphy lifting  of loc. cit. which uses the quantitative level lowering results of the present paper. We consider an  irreducible, semistable  $\rhobar:G_\Q \ra GL_2(k)$ with a $k$ a finite field of characteristic $p>3$ (which by the remark before \cite[\S 2.1]{K} implies that our main theorem applies to $\rhobar$),  that  arises from a newform $f \in S_2(\Gamma_0(N(\rhobar)p^\delta)$ with $\delta=0,1$ according to whether $\rho$ is finite flat at $p$ or not. We assume that the minimal Selmer group for $\rhobar$ is non-zero (as otherwise we have an easy $R=\T$ theorem in the minimal case). Let $S$ be the set of places of $\Q$  dividing $N(\rhobar)p$ and the infinite place.

Consider a  finite set of primes $Q=\{q_1,\cdots, q_n\}$  such that $q_i$ is not $\pm 1 $ mod $p$ and prime to $N(\rhobar)p$, that are special
for $\rhobar$, i..e $\rhobar({\rm Frob}_{q_i})$ has eigenvalues with ratio $q_i$.  As  in \cite{K} we define rings $R_{S \cup Q}^{Q-new}$ that parametrize semistable  deformations of $\rhobar$ that have (in particular)

--   determinant the $p$-adic cyclotomic character, 

-- unramified outside the primes that divide $N(\rhobar)p$ and primes in $Q$, 

-- finite flat at $p$ if $\rhobar$ is finite flat at $p$, 

-- and Steinberg at $Q$.

Now we define the related Hecke rings  $\T_Q^{Q-new}$. For  a
subset $\alpha$ of $Q$ consider the module
$H^1(X_0(N(\rhobar)p^{\delta}Q),W(k))^{\alpha-{\rm new}}$ which is
defined as the maximal torsion-free quotient of the quotient of
$H^1(X_0(N(\rhobar)p^{\delta}Q),W(k))$ by the $W(k)$-submodule spanned
by the images of $H^1(X_0(N(\rhobar)p^{\delta}{Q \over q}),W(k))^{2}$
in $H^1(X_0(N(\rhobar)p^{\delta}Q),W(k))$, as $q$ runs through the
primes of $\alpha$, under the standard degeneracy maps (here and below
for a finite set of primes $Q$ we abusively denote by $Q$ again the
product of the primes in it).  We consider the standard action of
Hecke operators $T_r$ for all primes $T_r$ (note that we are using
$T_r$ for operators that normally get called $U_r$).

 By \cite{DT}
there is a maximal ideal $\sf m$ of the $W(k)$-algebra generated by
the action of these $T_r$'s such that:

\begin{itemize}

\item  $T_r -a_r \in \sf m$ for $a_r $
a lift to $W(k)$ of the trace of $\rhobar({\rm Frob}_r)$ when 
$(r,N(\rhobar)p^{\delta}Q)=1$;

\item   for $r \in Q$, $T_r -\tilde{\alpha_r} \in \sf m$
where $\alpha_r$ is the unique root of
the characteristic polynomial of $\rhobar({\rm Frob}_r)$ congruent to
$\pm 1$,  and $\tilde{\alpha_r}$ is
any lift of $\alpha_r$ to $W(k)$;

\item for  $r|N(\rhobar)p$:

--  if $r|N(\rhobar)$, or $r=p$ and $\rhobar$ is ordinary at $p$,  it is the scalar by which (the arithmetic
Frobenius) ${\rm Frob}_r$ acts on the unramified quotient of
$\rhobar|_{\Gamma_r}$  and $\tilde{\alpha_r}$ is
any lift of $\alpha_r$ to $W(k)$;

-- if $r=p$ and $\rhobar$ is not
ordinary at $p$, $T_p \in \sf m$.

\end{itemize}

 Then we define
${\T}_Q^{\alpha-{\rm new}}$ to be the localisation at $\sf m$ of
the $W(k)$-algebra generated by the action of these Hecke operators on
the finite flat $W(k)$-module
$H^1(X_0(N(\rhobar)p^{\delta}Q),W(k))^{\alpha-{\rm new}}$.  An analog
of Lemma 1 of \cite{K} gives that we have natural surjective maps
$R_{S \cup Q}^{\alpha-{\rm new}} \rightarrow {\T}_Q^{\alpha-{\rm new}}$
(where we take care of the fact that $T_r$'s are in the
image, including $r|N(\rhobar)p^{\delta}Q$,  using local-global compatibility results of Carayol as in Section 2 of [W]).

We need properties of ${\T}_Q^{\alpha-{\rm new}}$ that are
accessible (see \cite{H} and \cite{DT}) by exploiting another
description of these algebras that we recall for orienting the reader
(although we do not make explicit use of the alternative descriptions
in this sketch). For this fix a subset $\beta$ of the prime divisors
of $N(\rhobar)p^{\delta}$. Denote by $B_{\alpha,\beta}$ the quaternion
algebra over ${\bf Q}$ ramified at the primes in $\alpha \cup \beta$
and further at $\infty$ if the cardinality $n'=|\alpha \cup \beta|$ is
odd.  Denote by ${\bf A}$ the adeles over ${\bf Q}$.  For the standard
open compact subgroup
$U_{\alpha,\beta}:=U_0(N(\rhobar)p^{\delta}Q\alpha^{-1}\beta^{-1})$ of
the ${\bf A}$-valued points of the algebraic group $G_{\alpha,\beta}$
(over ${\bf Q}$) corresponding to $B_{\alpha,\beta}$,
$G_{\alpha,\beta}({\bf A})$, we consider the coset space
${\mathcal X}_{U_{\alpha,\beta}}=G_{\alpha,\beta}({\bf Q})\backslash
G_{\alpha,\beta}({\bf A}) /U_{\alpha,\beta}.$ Depending on whether
$n'$ is odd or even, this double coset space either is merely a finite
set of points, or can be given the structure of a Riemann surface
(that is compact if $n' \neq 0$ and can be compactified by adding
finitely many points if $n'$ is 0).  If $n'$ is odd we consider the
space of functions ${\mathcal S}_{U_{\alpha,\beta}}:=$
$\{f:{\mathcal X}_{U_{\alpha,\beta}} \rightarrow W(k)\}$ modulo the
functions which factorise through the norm map, and in the case of
$n'$ even we consider the first cohomology of the corresponding
Riemann surface ${\mathcal X}_{U_{\alpha,\beta}}$, i.e.,
${\mathcal S}_{U_{\alpha,\beta}}:=H^1({\mathcal
  X}_{U_{\alpha,\beta}},W(k))$. These $W(k)$-modules have the standard
action of Hecke operators $T_r$.  

By
the results of \cite{DT} and the Jacquet--Langlands correspondence there is
a maximal ideal that we denote by $\sf m$ again in the support of the
$W(k)$-algebra generated by the action of the $T_r$'s on
${\mathcal S}_{U_{\alpha,\beta}}$ characterised as before.  We denote
the localisation at $\sf m$ of this Hecke algebra by
${\T'}_Q^{\alpha \cup \beta-{\rm new}}$.  Then by the
Jacquet-Langlands correspondence, which gives an isomorphism
${\bf T'}_Q^{\alpha \cup \beta-{\rm new}} \otimes {\bf Q}_p \simeq
{\bf T}_Q^{\alpha-{\rm new}} \otimes {\bf Q}_p$ that takes $T_r$ to
$T_r$, and the freeness of ${\T'}_Q^{\alpha \cup \beta-{\rm new}}$
and ${\T}_Q^{\alpha-{\rm new}}$ as $W(k)$-modules (which for the
former is a standard consequence of $\sf m$ being non-Eisenstein), we
have
${\T'}_Q^{\alpha \cup \beta-{\rm new}} \simeq {\T}_Q^{\alpha-{\rm new}}$, an isomorphism of local $W(k)$-algebras.
We consider an auxiliary set $Q=\{q_1,\cdots,q_n\}$ as in  Theorem \ref{thm:example} (see also Example
\ref{ex:gl2}) such that
$R_{S \cup Q}^{Q-new} \simeq {\T}_Q^{Q-new} \simeq W(k)$.

\begin{thm}\label{thm:augmentations}
Let 
$\rhobar:\Gamma_{\Q} \ra GL_2(k)$  be an odd, semistable, irreducible modular mod $p>3$ surjective representation that arises from $S_2(\Gamma_0(N(\rhobar)p^\delta)$.  Assume that the minimal dual Selmer for $\rhobar$ is not zero. Then there is a finite ordered set of primes $Q=\{q_1,\cdots,q_n\}$ that are special for $\rhobar$ such that:

\begin{itemize}

\item $R_{S \cup Q}^{Q-new}=W(k)$ and $R_{S \cup Q}^{Q-new}=\T_Q^{Q-new}$, and the corresponding Galois representation $\rho^{Q-new}:\Gamma_{\Q} \rightarrow GL_2(W(k))$, has the following  properties:

\item 
$\rho^{Q-new} (\tau_q)$, for $\tau_q$ a generator of the $\Z_p$-quotient of the inertia group $I_q$ at $q$,   is of the form
$\begin{pmatrix}
  1 &  p^d\\
  0 & 1 \\
  \end{pmatrix},$
for all $q \in Q$ for an integer $d \geq 1$;

\item For each $1 \leq i \leq r$, there
is a subset $Q_i$ of $Q$ that omits $q_i$ and contains
$\{q_1,\cdots,q_{i-1}\}$, such that $R_{S \cup Q_i}^{Q_i-new}=\T_{Q_i}^{Q_i-new}=W(k)$
and $\rho^{Q-new}$ is congruent 
modulo $p^d$ to $\rho^{Q_i-new}$.

\end{itemize}

\end{thm}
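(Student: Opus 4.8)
The plan is to deduce Theorem~\ref{thm:augmentations} from Theorem~\ref{thm:main} (applied to $G = GL_2$ over $F = \Q$) together with the $R = \T$ input provided by \cite{DT} in the smooth case, exactly as indicated in the paragraph preceding the statement. The hypotheses match: $\rhobar$ is odd, irreducible, surjective, semistable and modular with $p > 3$, so by Example~\ref{ex:gl2} and the remarks following \cite[\S 2.1]{K} the hypotheses of Theorem~\ref{thm:main} are satisfied; the assumption that the minimal dual Selmer group is nonzero gives us the nonvanishing of $n$ (equivalently the minimal Selmer group is nonzero since the minimal conditions are balanced) needed to invoke Theorem~\ref{thm:main}. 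First I would record that Theorem~\ref{thm:main} produces an ordered set $Q = \{q_1,\dots,q_m\}$ of nice (here: special) primes, each prime to $N(\rhobar)p$ and with norm not $\pm 1 \bmod p$, and an integer $d \geq 2$, such that $Q$ is auxiliary, $\rho_{S\cup Q}^{Q\text{-new}} \colon \Gamma_{S\cup Q} \to GL_2(W(k))$ is ramified mod $p^d$ at every $q \in Q$ and unramified mod $p^{d-1}$ there, and such that for each $i$ there is an auxiliary $Q_i \subset Q$ omitting $q_i$, containing $\{q_1,\dots,q_{i-1}\}$, with $\rho_{S\cup Q_i}^{Q_i\text{-new}} \equiv \rho_{S\cup Q}^{Q\text{-new}} \bmod p^{d-1}$. (The slight index shift between the $p^{d-1}$ of Theorem~\ref{thm:main} and the $p^d$ in the statement here is cosmetic; I would just relabel, as is done in Theorem~\ref{thm:selmer}'s statement, so that $d$ denotes the congruence modulus.)

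Next I would translate the deformation-theoretic output into the automorphic statement. Since $Q$ is auxiliary, the smooth balanced global deformation ring $R_{S\cup Q}^{Q\text{-new}}$ is $W(k)$ (Lemma~\ref{lem:sds} and the discussion after Definition~\ref{def:aux}), and similarly $R_{S\cup Q_i}^{Q_i\text{-new}} = W(k)$ for each $i$. By the analogue of Lemma~1 of \cite{K} recalled above there are surjections $R_{S\cup Q}^{Q\text{-new}} \thra \T_Q^{Q\text{-new}}$ and $R_{S\cup Q_i}^{Q_i\text{-new}} \thra \T_{Q_i}^{Q_i\text{-new}}$; the target Hecke algebras are nonzero (the relevant spaces of newforms are nonzero because, e.g., level raising at the primes of $Q$ — which is exactly possible since each $q \in Q$ is special for $\rhobar$ — produces forms new at $Q$, by Ribet's level-raising theorem, or alternatively because $\rho_{S\cup Q}^{Q\text{-new}}$ is a characteristic-zero point which by \cite{DT} is automorphic as the deformation ring is smooth). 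A nonzero surjection from $W(k)$ onto a nonzero flat $W(k)$-algebra is an isomorphism, so $R_{S\cup Q}^{Q\text{-new}} = \T_Q^{Q\text{-new}} = W(k)$ and likewise $R_{S\cup Q_i}^{Q_i\text{-new}} = \T_{Q_i}^{Q_i\text{-new}} = W(k)$. In particular the versal representation $\rho^{Q\text{-new}} = \rho_{S\cup Q}^{Q\text{-new}}$ is (up to the fixed embedding $\iota$) the Galois representation attached to a newform $f \in S_2(\Gamma_0(N(\rhobar)p^\delta \prod_{q\in Q} q))$ new at $Q$, and similarly each $\rho^{Q_i\text{-new}}$ comes from a newform $f_i$ new at $Q_i$; the congruence $\rho^{Q\text{-new}} \equiv \rho^{Q_i\text{-new}} \bmod p^d$ is then a congruence $f \equiv f_i \bmod p^d$ of Hecke eigenvalues, which is the second bullet.

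For the shape of inertia: each $q \in Q$ is a nice prime, so by the description of the local condition $\mc{D}_q$ in \S\ref{sec:lc} (or Lemma~\ref{lem:nicelifts}), a lift in $\mc{D}_q$ factors through $H_\alpha(W(k))$ with $\alpha$ the standard root of $SL_2$, the torus part unramified, and inertia mapping into $U_\alpha(W(k))$; choosing a generator $\tau_q$ of the $\Z_p$-quotient of $I_q$ and using that $\rho^{Q\text{-new}}$ is ramified at $q$ exactly modulo $p^d$ (after relabelling so $d = m_q$; note $m_q$ is the same for all $q$ only after possibly shrinking — in fact the theorem as stated uses a common $d$, which is delivered directly by Theorem~\ref{thm:main}, which produces a single $d$, taking $d = 2$ in all three cases of its proof), we get $\rho^{Q\text{-new}}(\tau_q) = \left(\begin{smallmatrix} 1 & p^d \\ 0 & 1\end{smallmatrix}\right)$ after a suitable choice of basis and generator. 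I expect the only genuinely delicate point is the bookkeeping around the common modulus $d$: Theorem~\ref{thm:main} guarantees ramification mod $p^d$ at \emph{all} $q \in Q$ with the \emph{same} $d$ (equal to $2$ in its proof) and the congruences $\rho_{S\cup Q_i}^{Q_i\text{-new}} \equiv \rho_{S\cup Q}^{Q\text{-new}} \bmod p^{d-1}$, so one must be careful that the ``$d$'' appearing in the inertia shape, the ``$d$'' in the congruence, and the minimal ramification exponent $m_q$ are reconciled consistently — but this is exactly what the statement of Theorem~\ref{thm:main} (together with the remark after it pinning $d = 2$) provides, so no new argument is needed beyond careful transcription. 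Everything else — the passage from Galois representations to modular forms — is standard given \cite{DT}, \cite{W}, and the compatibility results of Carayol invoked above.
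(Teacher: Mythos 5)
Your proposal is correct and follows essentially the same route as the paper: invoke Theorem \ref{thm:main} (with hypotheses checked via Example \ref{ex:gl2} and the remarks from \cite{K}) for the deformation-theoretic assertions, then use the surjection $R_{S\cup Q}^{Q-new}\thra \T_Q^{Q-new}$ together with the level-raising results of \cite{DT} (nonvanishing plus $W(k)$-flatness of the localized Hecke algebra) to get $R=\T=W(k)$, with the inertia shape and the relabelling of $d$ read off from the local condition at nice primes. The paper's own proof is just a terser version of this same argument.
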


If the minimal dual Selmer for $\rhobar$ is zero, then as  there is a minimal modular lift of $\rhobar$, we have an automorphy lifting theorem in the minimal case. Thus the non-vanishing of the minimal dual Selmer is not too burdensome an assumption.

\begin{proof}

The assertions related to the set of primes $Q$ and subsets $Q_i$ with  $R_Q^{Q-new},R_{Q_i}^{Q_i-new}=W(k)$ and 
the congruences between $\rho^{Q_i-new}$ and $\rho^{Q-new}$ follow from Theorem \ref{thm:main} and its specialisation Theorem \ref{thm:example}. The assertions that $R_{Q_i}^{Q_i-new}=\T_{Q_i}^{Q_i-new}$ follow from this upon  using the level raising results of \cite{DT}.

\end{proof}

We sketch the proof  of the result below to go from restricted modularity lifting theorems to more general ones: we are reproving known results by a new method which exploits the independent level lowering congruences our work produces in Theorem \ref{thm:augmentations}.

\begin{cor}
 Keeping the notation above, we deduce an isomorphism  $R_{S \cup Q}=\T_Q$.
\end{cor}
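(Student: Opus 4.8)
The plan is to derive $R_{S\cup Q}\cong\T_Q$ from Wiles's numerical isomorphism criterion (in the form of Proposition \ref{prop:ci}), using Theorem \ref{thm:selmer} to pin down the deformation side and Theorem \ref{thm:augmentations} to pin down the Hecke side, along the lines of \S4 of \cite{K}. First I would assemble the inputs needed to apply the criterion. As recalled above (the analogue of Lemma 1 of \cite{K}, the bad primes being handled by Carayol's local-global compatibility as in \cite{W}), the Galois representation carried by $\T_Q$ makes $\T_Q$ an $R_{S\cup Q}$-algebra, so there is a surjection $\varphi\colon R_{S\cup Q}\twoheadrightarrow\T_Q$; the ring $\T_Q$ is reduced ($\mathfrak m$ being non-Eisenstein) and finite flat over $W(k)$; and the eigenform $f=\rho^{Q-new}$ gives an augmentation $\T_Q\to\T_Q^{Q-new}=W(k)$ which, by the equality $R_{S\cup Q}^{Q-new}=\T_Q^{Q-new}$ of Theorem \ref{thm:augmentations}, is compatible with $\pi\colon R_{S\cup Q}\to R_{S\cup Q}^{Q-new}=W(k)$. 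With these in hand, $\varphi$ will be an isomorphism of complete intersections, finite flat over $W(k)$, as soon as one proves
\[
\ell\bigl((\ker\pi)/(\ker\pi)^2\bigr)\ \le\ \ell\bigl(W(k)/\eta_{\T_Q}\bigr),
\]
where $\eta_{\T_Q}$ denotes the congruence ideal of $\T_Q\to W(k)$.

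Next I would compute the two sides. The left side is $\sum_{q\in Q}m_q$ by Theorem \ref{thm:selmer} (and $m_q=d$ for every $q\in Q$ by the remark following Theorem \ref{thm:main}). For the right side I would exploit the web of level-lowering congruences of Theorem \ref{thm:augmentations}. Since $R_{S\cup Q}^{Q-new}=W(k)$, the form $f$ is the \emph{unique} $Q$-new eigenform of level $N(\rhobar)p^\delta\prod_{q\in Q}q$ lifting $\rhobar$, so every other eigenform of that level congruent to $f$ modulo $p$ is $q$-old for some $q\in Q$. Peeling off the primes of $Q$ one at a time, and using Ihara's lemma together with the quantitative (mod $p^{m_q}$, not merely mod $p$) form of the level-raising theorems of \cite{DT}, \cite{Ribet}, \cite{Ribet1}, I would show that the congruence ideal factors, $\eta_{\T_Q}=\prod_{q\in Q}\eta_q$, with $\eta_q$ measuring the congruence between $f$ — which is special at $q$ with $\rho^{Q-new}(\tau_q)$ conjugate to $\begin{pmatrix}1 & p^{m_q}\\ 0 & 1\end{pmatrix}$ — and the $q$-old forms, so that $\ell(W(k)/\eta_q)\ge m_q$. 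The independence needed for these contributions to \emph{add} rather than overlap is exactly what the ordered tower $\{q_1,\dots,q_{i-1}\}\subset Q_i\not\ni q_i$ of auxiliary sets provides: for each $i$ the eigenform $\rho^{Q_i-new}$, pushed forward to level $N(\rhobar)p^\delta\prod_{q\in Q}q$ by the degeneracy maps, is $q_i$-old, is new at $q_1,\dots,q_{i-1}$, and is congruent to $f$ modulo $p^{m_{q_i}}$. Summing gives $\ell(W(k)/\eta_{\T_Q})\ge\sum_{q\in Q}m_q$; since $\T_Q$ is reduced and, by the congruences just exhibited, $\eta_{\T_Q}\ne 0$, the criterion then forces equality and yields that $\varphi$ is an isomorphism.

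The step I expect to be the crux is the factorization $\eta_{\T_Q}=\prod_{q\in Q}\eta_q$ together with the bound $\ell(W(k)/\eta_q)\ge m_q$ — that is, converting the independence of the level-lowering congruences of Theorem \ref{thm:augmentations} into an honest lower bound for the Hecke congruence ideal. This is the one place where genuine automorphic input is used: the precise level-raising relations at the auxiliary primes and an Ihara-lemma argument at each $q\in Q$, carried out in the spirit of the comparison of congruence ideals between minimal and non-minimal Hecke algebras in \cite{W}. Everything else — the reduction to the length inequality, the value of the left-hand side, and the conclusion — is formal once Theorems \ref{thm:selmer} and \ref{thm:augmentations} and the numerical criterion are granted.
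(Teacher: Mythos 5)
Your overall shape — run the Wiles numerical/isomorphism criterion on $R_{S\cup Q}\twoheadrightarrow \T_Q$, bound the cotangent side by Theorem \ref{thm:selmer}, and bound the congruence ideal from below using the independent congruences of Theorem \ref{thm:augmentations} — is the right one, but the crux step as you describe it does not go through, and it is not how the paper argues. The bound $\ell\bigl(W(k)/\eta_{\T_Q}\bigr)\ge \sum_{q\in Q} m_q$ cannot be extracted from ``Ihara's lemma together with a quantitative mod $p^{m_q}$ form of the level-raising theorems of \cite{DT}, \cite{Ribet}, \cite{Ribet1}'': those are mod $p$ statements about a form that already exists at the lower level, whereas $f$ is new at every $q\in Q$, and the mod $p^{d}$ congruences in play are precisely the Galois-cohomological output of Theorem \ref{thm:augmentations}, not something level-raising supplies. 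Moreover, the additivity you assert (``summing gives $\ell(W(k)/\eta_{\T_Q})\ge\sum m_q$'') is exactly the unproved point: having $m$ distinct eigenforms each congruent to $f$ modulo $p^{d}$ only yields $\eta_{\T_Q}\subseteq (p^{d})$ by the annihilator argument; to make the per-prime contributions multiply one needs a factorization of congruence ideals along the tower of new quotients $\T_Q\to\T_Q^{\alpha\text{-new}}$, and that factorization is exactly where the Gorenstein property of $\T_Q^{\alpha\text{-new}}$ (the results of \cite{H}, available because the $q\in Q$ are not $1$ mod $p$) enters — an input you never invoke.

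The paper's proof avoids any one-shot global factorization: it drops the newness conditions one prime at a time, proving successively $R_{S\cup Q}^{Q\setminus\{q_n\}\text{-new}}=\T_Q^{Q\setminus\{q_n\}\text{-new}}$, etc. At each stage the cotangent space of the partially relaxed deformation ring at $\pi$ injects into $W(k)/p^{d}$ per newly relaxed prime (Lemma \ref{lem:w}, or \cite[Proposition 2]{K}), while the congruence ideal of the corresponding Hecke quotient is shown to lie in $(p^{d})$ times the previous one by the Gorenstein factorization $\pi(\Ann_{\T'}(\ker\pi))=\pi(\Ann_{\T'}(\ker\beta))\,\pi(\Ann_{\T}(\ker\pi))$ together with an elementary annihilator argument: the augmentation $\pi'$ attached to $\rho^{Q_i\text{-new}}$ is congruent to $\pi$ mod $p^{d}$ but, being old at $q_i$ and new at $q_1,\dots,q_{i-1}$ (this is where the ordered tower is used), does not factor through the previous new quotient, so any $y\in\Ann_{\T'}(\ker\beta)$ has $\pi'(y)=0$ and hence $\pi(y)\in p^{d}W(k)$. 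The Wiles--Lenstra isomorphism criterion is then applied at each stage and the result propagates to $R_{S\cup Q}=\T_Q$. If you want to keep your one-shot formulation you would have to reproduce this chain of Gorenstein factorizations anyway, so the missing ingredients in your write-up are concretely: (i) Gorenstein-ness of the intermediate new quotients, (ii) the congruence-ideal factorization/annihilator argument replacing the appeal to Ihara's lemma, and (iii) the resulting justification that the $m_q$'s add.
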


\begin{proof}

The
argument is very similar to loc.~cit.~and the main ingredients are:
\begin{itemize}
\item[--] Wiles's numerical isomorphism criterion
\item[--] Level raising results of Diamond and Taylor in \cite{DT}
\item[--] Quantitative  level lowering as in Theorem \ref{thm:augmentations}
\item[--] Gorenstein properties of Hecke algebras arising from Shimura curves
(results of  \cite{H}; we may invoke these as for $q \in Q$, $q$ is not 1 mod $p$).
\end{itemize}

In loc.~cit.~we used an idea of level substitution mod $p^n$ to deduce
an $R_Q=\T_Q$ theorem from $R_Q^{Q-new}=\T_Q^{Q-new}$. 
Here we use use Theorem \ref{thm:augmentations}  instead of the level substitution step
of \cite[\S 4]{K}.   The strategy is to drop the newness conditions at   $\{q_n\}, \{q_n,q_{n-1}\},\ldots$, and prove successively starting with $R_{S\cup Q}^{Q-new}=\T_Q^{Q-new}$, that $R_{S\cup Q}^{Q\backslash \{q_n\}-new}=\T_Q^{Q\backslash \{q_n\}-new}$,   $R_{S\cup Q}^{Q\backslash \{q_n,q_{n-1}\}-new}=\T_Q^{Q\backslash \{q_n,q_{n-1}\}-new}, \ldots$, ultimately proving $R_{S \cup Q}=\T_Q$. We will focus on the first step, i.e., deduce $R_{S \cup Q}^{Q_n-new}=\T_Q^{Q_n-new}$ from $W(k)=R_{S\cup Q}^{Q-new}=\T_Q^{Q-new}$, where $Q_n=Q \backslash \{q_n\}$.

Using Theorem \ref{thm:augmentations}, we have  augmentations  $\pi:\T_Q \ra \T_Q^{Q-new}=W(k)$,
$\pi': \T_Q \ra \T_Q^{Q_n-new}=W(k)$ such
that $\pi$ and $\pi'$ are congruent mod $p^d$ and $\pi'$ does not factor through $\T_Q^{Q-new}$. Further it is known
(see Section 8 of \cite{H} and note that we are assuming our primes $q$
are not 1 mod $p$) the Hecke algebra $\T_Q^{Q_n-new}$ is Gorenstein. We denote  below, abusing notation,  by  $\pi$ all the morphisms $\T_Q^{\alpha-new} \ra W(k)$ induced by the augmentation $\pi:\T_Q \ra W(k)$ for all subsets $\alpha$ of $Q$.
  
  We claim that  the ideal
$\pi({\rm Ann}_{{\bf T}_{Q}^{Q_n-{\rm new}}}({\rm
  ker}(\pi)))$ of $W(k)$ is contained
in  $$p^n\pi({\rm Ann}_{{\bf T}_{Q }^{Q-{\rm
    new}}}({\rm ker}(\pi)))=(p^d).$$

To prove the claim 
the two ingredients are the morphism $\pi':{\T}_{Q }^{Q_r-{\rm
    new}} \rightarrow W(k)$ which {\it does not} factor through ${\T}_{Q}^{Q-{\rm
    new}}$ by construction, and the  fact   that
 the Hecke algebra $\T_Q^{Q_n-new}$ is  Gorenstein.  
 For ease of notation set 
   ${\T}'={\T}_{Q}^{Q_n-{new}}$
   and ${\T}={\T}_{Q }^{Q-{new}}$ and  $\beta: {\T}' \rightarrow {\T}$ the natural map. Recall that we are denoting by $\pi$ both the 
    fixed augmentation $\pi: \T \ra W(k)$ and its pullback to  $\T'$

    We justify the claim using these 2 ingredients as in \cite[proof
    of claim, pg. 216]{K} (see also Lemma \ref{gor}), an argument that
    is due to the referee of that paper.  Because ${\T}$ and ${\T}'$
    Gorenstein, we have that
$$\pi({\rm Ann}_{{\T}'}({\rm ker}(\pi)))=\pi({\rm
Ann}_{{\T}'}({\rm ker}(\beta)))\pi({\rm Ann}_{{\T}}({\rm ker}(\pi)))$$ as
ideals of $W(k)$. Choose a $x \in {\rm ker}(\beta)$ such
that $\pi'(x) \neq 0$.
Consider $y \in {\rm Ann}_{{\T}'}({\rm ker}(\beta))$. Then as $xy=0$ this
implies $\pi'(xy)=0$, which implies $\pi'(y)=0$, 
which implies $\pi(y) \in p^{d}W(k)$ as $\pi'$ and $\pi$
are congruent mod $p^{d}$ which proves the claim. 

This combined with the injection of the cotangent space at (the pull-back) $\pi:R_Q^{Q_r-new}$ into $W(k)/p^nW(k)$ (see Lemma \ref{lem:w}  above or \cite[Proposition 2]{K}), and the Wiles numerical criterion gives an isomorphism $R_Q^{Q_n-new} =\T_Q^{Q_n-new}$. Iterating the argument yields that $R_Q=\T_Q$.

\end{proof}

  \appendix
  
  \section{Wiles numerical isomorphism criterion}\label{sec:app}
  
  Wiles, in his work on the modularity of elliptic curves \cite{W},
  proved a numerical criterion for a complete Noetherian local
  $\cO$-algebra $A$ which is:
\begin{itemize}
\item[--]  finite flat over $\cO$,
\item[--]  and  with an {\it augmentation} $\pi_A: A \rightarrow \cO$ with  $\Phi_A:=(\ker \pi_A)/(\ker \pi_A)^2$ a finite abelian group
\end{itemize}
to be a complete intersection of dimension $1$.  Here $\cO$ is a
discrete valuation ring that is finite over $\Z_p$.  He used this
criterion to deduce that isomorphisms of deformation rings and Hecke
algebras at minimal level also imply such isomorphisms at non-minimal
levels.
 
 We generalize  in \S \ref{Wiles}  the Wiles criterion, which was later refined by Lenstra \cite{lenstra},  for rings $A$ that we do not assume are finite over $\cO$, but assume have  depth at least one (weaker than assuming $A$ is flat over $\cO$).  Our  results,  Propositions \ref{prop:ci} and   \ref{prop:isom},   are refinements of a theorem that is due to Wiles and Lenstra \cite[Theorem 5.27]{DDT}.  The proof of Proposition \ref{prop:ci}  is easily deduced from the work of Wiebe \cite{wiebe} and that of Proposition \ref{prop:isom}  is a variant of arguments of  Wiles and Lenstra.  
 
 Our motivation was to find a sufficient criterion given $(A,\pi_A)$
 with $\Phi_A$ finite, for $A$ being of dimension one.  Taking a cue
 from the Wiles--Lenstra criterion, we guessed this should be implied
 by the numerical equality $\#\Phi_A=\#\Oc/\eta_A$, with
 $\eta_A=\pi_A(\Ann(\ker(\pi_A)))$.
 Proposition \ref{prop:ci} verifies that the numerical equality
 implies $A$ is a complete intersection of dimension 1 if and only if
 $A$ is of depth one.  We raise the question (without assuming that
 $A$ is of depth one) whether just the numerical equality implies that
 $A$ is of dimension 1.

\subsection{A refinement of Wiles's numerical isomorphism criterion}\label{Wiles}

All rings will be commutative, Noetherian and also local and
complete. Some of the statements make sense without the completeness
condition but they can be immediately reduced to the complete case.

We try to follow as much as possible the notations of \cite{DDT}. So
$\mc{O}$ will always be a discrete valuation ring but we do not
usually assume that it is a finite extension of $\Z_p$ or that the
rings being considered are $\mc{O}$-algebras.

For a complete local ring $(A, m_A)$ as above we consider a surjection
$\pi_A:A \to \mc{O}$ such that $\Phi_A := \ker (\pi_A)/(\ker (\pi_A))^2$
is of finite length.  It follows, as in \cite[Section 5.2]{DDT}, that
for $\eta_A := \pi_A (\Ann (\ker (\pi_A)))$ we have that $\mc{O}/\eta_A$
is also of finite length and
\begin{equation}
\ell(\Phi_A) \geq \ell(\mc{O}/\eta_A) \ .
\end{equation}
Hereafter, we will refer to the data $\pi_A:A \to \mc{O}$
as above simply as an augmented ring.

Our first goal is to prove the following generalisation of the
Wiles--Lenstra criterion for complete intersections.
\begin{prop} \label{prop:ci} Let $\pi_A: A \to \mc{O} $ be an
  augmented ring such that $\ell(\Phi_A)$ is finite and
  $\mr{depth}(A) \geq 1$. Then $A$ is a complete intersection ring iff
  $\ell(\Phi_A) = \ell(\mc{O}/\eta_A)$.
\end{prop}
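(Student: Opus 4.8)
The plan is to reduce the proposition, by means of an explicit presentation of $A$, to the complete-intersection criterion of Wiebe. First I would invoke the Cohen structure theorem to write $A = S/I$ with $(S,\mathfrak{n})$ a complete regular local ring, chosen so that the augmentation lifts: there is a surjection $S \twoheadrightarrow \mathcal{O}$ whose kernel $\mathfrak{P}$ contains $I = \ker(S \to A)$. As $S$ and $\mathcal{O} = S/\mathfrak{P}$ are both regular, $\mathfrak{P}$ is generated by part of a regular system of parameters, say $\mathfrak{P} = (y_1,\dots,y_m)$; hence $\mathfrak{P}/\mathfrak{P}^2$ is free of rank $m$ over $\mathcal{O}$ and, $S$ being Cohen--Macaulay, $\dim S = m+1$. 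The conormal sequence identifies $\Phi_A = \mathfrak{P}/(\mathfrak{P}^2 + I)$ with $\mathcal{O}^m/\Xi$, where $\Xi \subseteq \mathcal{O}^m$ is the image of $I$; since $\Phi_A$ has finite length, $\Xi$ has full rank $m$. Likewise $\mathrm{Ann}_A(\mathfrak{p}_A) = (I :_S \mathfrak{P})/I$ with $\mathfrak{p}_A = \mathfrak{P}/I$, so $\eta_A$ is the image of $(I :_S \mathfrak{P})$ in $\mathcal{O} = S/\mathfrak{P}$. Because $\Phi_A$ is a finite module over the discrete valuation ring $\mathcal{O}$, the structure theorem gives $\ell(\Phi_A) = \ell(\mathcal{O}/\mathrm{Fitt}_0^{\mathcal{O}}(\Phi_A))$; combined with the inclusion $\mathrm{Fitt}_0^{\mathcal{O}}(\Phi_A) \subseteq \eta_A$ (which is exactly the inequality recalled just before the statement, reinterpreted via Fitting ideals), the numerical equality $\ell(\Phi_A) = \ell(\mathcal{O}/\eta_A)$ becomes the equality of ideals $\eta_A = \mathrm{Fitt}_0^{\mathcal{O}}(\Phi_A)$ in $\mathcal{O}$.

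For the implication ``$A$ is a complete intersection $\Rightarrow$ equality'' I would argue directly. A complete intersection is equidimensional, and $\ker \pi_A$ is a minimal prime of $A$ with $A/\ker \pi_A = \mathcal{O}$ of dimension $1$ (since $\Phi_A$ has finite length and $\ker \pi_A \neq \mathfrak{m}_A$, the localization $A_{\ker \pi_A}$ is a field), so $\dim A = 1$; hence $I$ is generated by $\dim S - \dim A = m$ elements $f_1,\dots,f_m$ forming an $S$-regular sequence. Writing $f_i = \sum_j c_{ij} y_j$ with $C = (c_{ij}) \in M_m(S)$ and putting $\Delta = \det C$, Wiebe's determinant lemma --- that for a regular sequence $y_1,\dots,y_m$ and an ideal $\mathfrak{a} = (f_1,\dots,f_m) \subseteq (y_1,\dots,y_m)$ one has $\big(\mathfrak{a} :_S (y_1,\dots,y_m)\big) = \mathfrak{a} + (\Delta)\,S$ --- yields that $\eta_A$ is generated by the image $\bar\Delta$ of $\Delta$ in $\mathcal{O}$. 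On the other hand the rows of $\overline{C} = C \bmod \mathfrak{P} \in M_m(\mathcal{O})$ generate $\Xi$, so $\mathrm{Fitt}_0^{\mathcal{O}}(\Phi_A) = (\det \overline{C})\,\mathcal{O} = (\bar\Delta)\,\mathcal{O} = \eta_A$, which by the first paragraph is the numerical equality.

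For the converse, ``equality $\Rightarrow$ $A$ is a complete intersection'', the plan is to identify the ideal-theoretic equality $\eta_A = \mathrm{Fitt}_0^{\mathcal{O}}(\Phi_A)$ with (an instance of) Wiebe's criterion: under the hypothesis $\mathrm{depth}(A) \geq 1$, this equality is equivalent to $I$ being minimally generated by $m = \mathrm{ht}(I)$ elements, hence --- as $S$ is Cohen--Macaulay, so $\mathrm{grade}(I) = \mathrm{ht}(I) = m$ --- by a regular sequence, i.e., to $A = S/I$ being a complete intersection. Concretely one first extracts from the equality that $\dim A = 1$ (this is where the depth hypothesis is used, to rule out associated primes of $I$ of coheight $>1$), so that $I$ is unmixed of height $m$; one then chooses a regular sequence $f_1,\dots,f_m$ in $I$ whose images span a finite-index sublattice of $\mathfrak{P}/\mathfrak{P}^2$, forms the complete intersection $B = S/(f_1,\dots,f_m) \twoheadrightarrow A$ (for which equality already holds, by the second paragraph), and runs a length bookkeeping --- comparing the conormal sequences of $A$ and $B$ and the determinant formula for each --- to conclude that the equality for $A$ forces the conormal module $I/I^2$ to be free of rank $m$ over $A$, whence $I$ is generated by $m$ elements.

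The main obstacle is precisely this last step: passing from the numerical coincidence to the freeness of $I/I^2$, equivalently to the \emph{actual} complete-intersection property rather than merely the matching of invariants. What must be excluded is the possibility that $A$ has the right length invariants while $I$ still requires more than $m$ generators; this is exactly the ``almost complete intersection'' phenomenon analyzed by Wiebe, and it is his Koszul-homology computations, valid under $\mathrm{depth}(A) \geq 1$, that rule it out. Thus the bulk of the work --- beyond the essentially formal manipulations above --- is the faithful translation of the hypothesis $\ell(\Phi_A) = \ell(\mathcal{O}/\eta_A)$ into the setting of \cite{wiebe} and the verification that his criterion's hypotheses are met.
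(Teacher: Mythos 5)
The easy direction of your proposal (complete intersection $\Rightarrow$ equality) is fine, and your formal setup in the first paragraph --- identifying the numerical equality with the equality of ideals $\eta_A=\Fitt_0^{\mc{O}}(\Phi_A)$, equivalently $\Fitt_A(\ker\pi_A)=\Ann_A(\ker\pi_A)$ after applying $\pi_A$ --- is exactly how the paper begins; your explicit determinant computation for the forward direction is the content of the result of Tate (Wiebe's Satz 2) that the paper quotes. The genuine gap is in the hard direction, and it occurs at two linked points. First, your preliminary claim that $\dim A=1$ follows because ``the depth hypothesis rules out associated primes of $I$ of coheight $>1$'' is incorrect: $\mathrm{depth}(A)\geq 1$ only rules out $m_A$ (coheight $0$) as an associated prime; together with $\eta_A\neq 0$ it shows $\ker\pi_A$ is an associated prime of $A$, but it does not exclude minimal primes of coheight $\geq 2$. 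This is not a cosmetic issue, because your construction of the auxiliary complete intersection $B=S/(f_1,\dots,f_m)$ needs a length-$m$ regular sequence inside $I$, which exists only if $\mathrm{ht}(I)=m$, i.e.\ only if $\dim A=1$ --- precisely the unproved statement. Second, the decisive step, passing from the numerical coincidence to the complete-intersection property (your ``freeness of $I/I^2$''), is explicitly left open and deferred to unspecified ``Koszul-homology computations'' of Wiebe; but that step \emph{is} the theorem, so as written the proposal does not constitute a proof.

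For comparison, the paper closes exactly this hole by a different and shorter route that never needs $\dim A=1$ in advance. From the equality one gets $\Fitt_A(\ker\pi_A)=\Ann_A(\ker\pi_A)$, using $\mathrm{depth}(A)\geq1$ and $\ell(\Phi_A)<\infty$ to see that $\Ann_A(\ker\pi_A)\cap\ker\pi_A=0$, so $\pi_A$ is injective on $\Ann_A(\ker\pi_A)$. Prime avoidance then produces a non-zero-divisor $x\in A$ with $\pi_A(x)$ a uniformizer, so that in $\ov{A}=A/(x)$ the image of $\ker\pi_A$ is the whole maximal ideal $m_{\ov{A}}$; a short computation (using that $x$ is a non-zero-divisor and $\pi_A(x)$ is not a unit) shows $\Ann_A(\ker\pi_A)$ has nonzero image in $\ov{A}$, hence $\Fitt(m_{\ov{A}})\neq0$. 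The single quotable input is then Wiebe's Satz 3 (Bruns--Herzog, Theorem 2.3.16): $\Fitt_0$ of the maximal ideal is nonzero iff the ring is an Artinian complete intersection. This delivers simultaneously that $\ov{A}$ is zero-dimensional and a complete intersection, whence $A$ is a one-dimensional complete intersection --- the dimension statement comes out as a conclusion rather than being needed as a hypothesis. If you want to salvage your Wiles--Lenstra-style route (resolve $A$ by a complete intersection $B$ and prove $B\cong A$), you would still have to prove $\dim A=1$ first, and the endgame would require a Gorenstein duality argument of the kind the paper uses for its Proposition on the isomorphism criterion, not mere length bookkeeping with conormal sequences.
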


The main improvement over the original result of Wiles, as extended by
Lenstra, is that we have no other finiteness assumption on $A$. The
condition on the depth is implied by the finite free condition in the
setup of Wiles.

\begin{proof}
  We first prove the forward direction:

  For an ideal $I$ in a ring $R$, we denote by $\Fitt(I)$ the zeroth
  Fitting ideal of $I$ and recall that $\Fitt(I) \subset \Ann(I)$.
  Therefore $\Fitt(\ker (\pi_A)) \subset \Ann(\ker (\pi_A))$. The
  equality $\ell(\Phi_A) = \ell(\mc{O}/\eta_A)$ implies that
  $\pi_A(\Fitt(\ker (\pi_A))) = \pi_A(\Ann(\ker (\pi_A)))$. Since
  $\Phi_A$ is finite, the supports of $\ker(\pi_A)$ and
  $\Ann(\ker(\pi_A))$ intersect only in the closed point of
  $\spec(A)$.  Since the depth of $A$ is at least one, $A$ cannot
  contain any nonzero submodule supported on the closed point of
  $\spec(A)$, hence $\Ann(\ker (\pi_A)) \cap \ker (\pi_A) = \{0\}$ and
  $\pi_A$ is injective when restricted to $\Ann (\ker (\pi_A))$. It
  follows that $\Fitt( \ker (\pi_A)) = \Ann(\ker (\pi_A))$.

  The set of zero-divisors in any ring is the union of the finte set
  of associated primes. Since the maximal ideal of $A$ is not an
  associated prime, it follows from the ``prime avoidance lemma''
  (\cite[Lemma 1.2.2]{bruns-herzog}) that there exists a non-zero
  divisor $x \in A$ such that $\pi_A(x)$ is a uniformizer of $\mc{O}$;
  equivalently, $x$ and $\ker (\pi_A)$ generate $m_A$.
  Let $\ov{A} := A/(x)$ and let $p: A \to \ov{A}$ be the quotient
  map. By construction, we have that $p(\ker (\pi_A)) = m_{\ov{A}}$ and
  so $p(\Ann (\ker (\pi_A))) \subset \Ann (m_{\ov{A}})$. 

  Let $y$ be any element of $\Ann (\ker (\pi_A))$ with $\pi_A(y)$ a
  generator of $\eta_A$. If $p(y) = 0$, then there exists $a \in A$ so
  that $y = ax$. For any $b \in \ker \pi_A$, we have $by = abx = 0$.
  Since $x$ is a non-zero divisor, we must have $ab = 0$ and so
  $a \in \Ann (\ker (\pi_A))$. But since $y = ax$ and $\pi_A(x)$ is not
  a unit, this is a contradiction. Thus,
  $p(\Ann (\ker (\pi_A))) \neq \{0\}$ and so $\Fitt(m_{\ov{A}}) \neq
  0$.
  It then follows from a theorem of Wiebe (\cite[Satz 3]{wiebe},
  \cite[Theorem 2.3.16]{bruns-herzog}) that $\ov{A}$ is a zero
  dimensional complete intersection ring and so, since $x$ is a
  non-zero divisor, that $A$ is also a complete intersection ring (of
  dimension one).

  The converse direction is an easy consequence of a result of Tate as
  in \cite[Satz 2]{wiebe}: this shows that
  $\Fitt(\ker (\pi_A)) = \Ann(\ker (\pi_A))$ and so the desired
  equality follows by applying $\pi_A$. Moreover, since $A$ is a one
  dimensional complete intersection, its depth is equal to one.
\end{proof}

We note that de Smit and Schoof \cite{dSRS} have also used Wiebe's
theorem to give a generalisation of the the Wiles--Lenstra criterion,
but in a different direction.

\smallskip

The depth condition in Proposition \ref{prop:ci} is essential: if $A$
is a complete intersection and $I \subset (\ker (\pi_A))^n$ is any
ideal, then the equality of lengths continues to hold for $A/I$ (with
its induced augmentation) if $n \gg 0$ (cf. \cite[Remark 5.2.5]{DDT}).
However, all examples that we know of are of dimension one. We are
thus led to ask:
\begin{ques}\label{first}
 Let $\pi_A: A \to \mc{O} $ be an augmented ring. If $\ell(\Phi_A)$
  is finite and equal to $\ell(\mc{O}/\eta_A)$,  then is $A$ one
  dimensional?
\end{ques}

We say that an augmented ring $(A,\pi_A)$ is well presented  if $A\simeq \cO[[X_1,\cdots,X_r]]/(f_1,\cdots,f_r)$. An answer to the following related question would also be relevant in applications:
\begin{ques}
 Let $\pi_A: A \to \mc{O} $ be an augmented ring that is well-presented. If $\ell(\Phi_A)$
  is finite and equal to $\ell(\mc{O}/\eta_A)$,  then is $A$  a complete intersection of dimension one?
\end{ques}

A stronger version of Question  \ref{first}, suggested by some computer calculations, is
\begin{ques}
 Let $\pi_A: A \to \mc{O} $ be an augmented ring. Is it always true
 that 
\[
\ell(\Phi_A) \geq \ell(\mc{O}/\eta_A) + \dim(A) -1 \ ?
\]
\end{ques}

  Here is a modest result in support of an affirmative answer to the question above.
  
\begin{lem}
  Let $\pi_A: A \to \mc{O} $ be an augmented ring of dimension $d$. If
  $A$ is a quotient of a regular local ring $R$ with
  $\dim(R) = d + 1$, then
  $\ell(\Phi_A) \geq \ell(\mc{O}/\eta_A) + d - 1$.
\end{lem}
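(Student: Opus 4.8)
The plan is to read off both $\Phi_A$ and $\eta_A$ from a minimal presentation of $A$ over $R$, the decisive point being that the hypothesis $\dim R = \dim A + 1$ makes the defining ideal divisible by a principal ideal. First I would set $\wp := \ker(\pi_A)$ and let $J \subseteq R$ be its preimage, so $R/J \cong \mc{O}$; since $R$ is regular, $J$ has height $d$ and is generated by a regular sequence $x_1,\dots,x_d$ extending to a regular system of parameters, and $J/J^2$ is free of rank $d$ over $\mc{O}$. Writing $A = R/I$ with $I \subseteq J$, the conormal exact sequence $I/IJ \lr J/J^2 \lr \Phi_A \lr 0$ together with the finiteness of $\Phi_A$ shows the first map has image of finite colength in $J/J^2 \cong \mc{O}^d$; after changing the $x_i$ by an element of $GL_d(R)$ and the generators of $I$ by an element of $GL_m(R)$, Smith normal form over the DVR $\mc{O}$ gives $\Phi_A \cong \bigoplus_{i=1}^d \mc{O}/(\delta_i)$ with $\delta_1 \mid \cdots \mid \delta_d$ nonzero and the image of $x_i$ generating the $i$-th summand; in particular $\ell(\Phi_A) = \sum_i \ell(\mc{O}/(\delta_i))$ and $\Ann_{\mc{O}}(\Phi_A) = (\delta_d)$.

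Next I would extract the summand ``$+(d-1)$''. Since $A/\wp = \mc{O}$ has dimension one we always have $d \geq 1$, and for $d = 1$ the assertion is just the inequality $\ell(\Phi_A) \geq \ell(\mc{O}/\eta_A)$ recalled above; so suppose $d \geq 2$. The claim is that each $\delta_i$ is then a non-unit: as $\delta_1 \mid \delta_i$ it suffices to treat $\delta_1$, and if $\delta_1$ were a unit the Smith form would give $x_1 \in I + J^2$, whence (Nakayama) $J = I + (x_2,\dots,x_d)$ and, writing $x_1 = \iota + \xi$ with $\iota \in I$, $\xi \in J^2$, the element $\iota$ lies in $I$ and is $\equiv x_1 \pmod{\mg{m}_R^2}$, hence part of a regular system of parameters; then $R' := R/(\iota)$ is regular local of dimension $d$ and $A$ is a quotient of $R'$, which forces $A \cong R'$ because $\dim A = d = \dim R'$ and $R'$ is a domain --- contradicting that $\Phi_A = \wp/\wp^2$ would then be $\mc{O}$-free of rank $d-1 \geq 1$. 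Granting this, $\ell(\mc{O}/(\delta_i)) \geq 1$ for $i < d$, so $\ell(\Phi_A) \geq \ell(\mc{O}/(\delta_d)) + (d-1)$, and it remains only to show $\ell(\mc{O}/\eta_A) \leq \ell(\mc{O}/(\delta_d))$, i.e. $(\delta_d) \subseteq \eta_A$.

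For this I would use the hypothesis $\dim R = \dim A + 1$ essentially: $\operatorname{ht}(I) = 1$ and $R$ is a UFD (Auslander--Buchsbaum), so $I = h\mg{c}$ with $h \in R$ and $\operatorname{ht}(\mg{c}) \geq 2$ (if $\mg{c} = R$ then $I$ is principal, which for $d \geq 2$ would force $\Phi_A$ to have infinite length). Choosing the regular sequence $x_1,\dots,x_d$ generating $J$ so that $h$ is coprime to each $x_i$ (possible for $d \geq 2$, perhaps after enlarging the residue field), one gets $(I : J) = h\,(\mg{c} : J)$; finiteness of $\mc{O}/\eta_A$ then forces $h \notin J$ (otherwise $\eta_A$ maps into $\ov{h}\,\mc{O} = 0$), hence $\mg{c} \subseteq J$. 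Because multiplication by $\ov{h}$ is injective on the free $\mc{O}$-module $J/J^2$ one has $(J^2 : h) = J^2$, which yields the length identity $\ell(\Phi_A) = d\cdot v(\ov{h}) + \ell\big(J/(J^2+\mg{c})\big)$; comparing this with $v\big(\pi_A(h(\mg{c}:J))\big) = v(\ov{h}) + v\big(\ov{(\mg{c}:J)}\big)$ and using $\operatorname{ht}(\mg{c}) \geq 2$ should give $(\delta_d) \subseteq \eta_A$, completing the argument. The hard part will be precisely this last comparison --- controlling the image in $\mc{O}$ of the colon ideal $(\mg{c}:J)$ in terms of the conormal module $J/(J^2+\mg{c})$ of the ``height $\geq 2$ part'' of $I$; the remaining ingredients (the conormal sequence, the Smith normal form, and the non-unit property of the $\delta_i$) are routine.
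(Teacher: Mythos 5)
There is a genuine gap, and it is exactly at the step you flag as "the hard part": the reduction to showing $(\delta_d) \subseteq \eta_A$, i.e.\ $\ell(\mc{O}/\eta_A) \leq \ell(\mc{O}/(\delta_d))$, is not merely hard --- it is false under the hypotheses of the lemma. Take $\mc{O} = \Z_p$, $R = \Z_p[[x,y]]$ (so $d = 2$), $\pi$ the map sending $x,y \mapsto 0$, $J = (x,y)$, and $A = R/I$ with $I = p\,\mg{c}$, $\mg{c} = (px + y^2,\, py + x^2)$. Then $\dim A = 2$ (the component $V(p)$ has dimension $2$), $\Phi_A = J/(I+J^2) \cong (\Z_p/p^2)^2$, so $\delta_1 = \delta_2 = p^2$ and $\ell(\Phi_A) = 4$. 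On the other hand $A' := R/\mg{c}$ is a one-dimensional complete intersection, finite flat over $\Z_p$, with $\Phi_{A'} \cong (\Z_p/p)^2$; by the complete intersection direction of Proposition \ref{prop:ci} (or the Jacobian-determinant formula) $\eta_{A'} = (p^2)$, and since $(I:J) = p\,(\mg{c}:J)$ (as $p \notin J$ and $x \in J \setminus (p)$) one gets $\eta_A = p\,\eta_{A'} = (p^3)$. So $\ell(\mc{O}/\eta_A) = 3 > 2 = \ell(\mc{O}/(\delta_d))$ and $(\delta_d) \not\subseteq \eta_A$, even though the lemma itself holds there with equality, $4 = 3 + (d-1)$. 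The problem with your accounting is that bounding $\eta_A$ by the single largest elementary divisor discards too much: $\eta_A$ can be as deep as (essentially) the full Fitting ideal, and only the trivial bound $\ell(\Phi)\geq\ell(\mc{O}/\eta)$ is available for the "height $\geq 2$ part."

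The fix is already contained in your third paragraph, which is essentially the paper's proof aimed at the wrong target. You have the two identities $\ell(\Phi_A) = d\cdot v(\ov{h}) + \ell(\Phi_{R/\mg{c}})$ and $\ell(\mc{O}/\eta_A) = v(\ov{h}) + \ell(\mc{O}/\eta_{R/\mg{c}})$ coming from $I = h\mg{c}$ with $h \notin J$, $\mg{c} \subseteq J$. Subtract them and use the trivial inequality $\ell(\Phi_{R/\mg{c}}) \geq \ell(\mc{O}/\eta_{R/\mg{c}})$ (legitimate, since $\ell(\Phi_{R/\mg{c}}) = \ell(\Phi_A) - d\,v(\ov{h}) < \infty$) together with $v(\ov{h}) \geq 1$ ($h$ is a non-unit of $R$ because $\operatorname{ht}(I)=1$, so $\pi_A(h) \in \mg{m}_{\mc{O}}$): this gives $\ell(\Phi_A) - \ell(\mc{O}/\eta_A) \geq (d-1)v(\ov{h}) \geq d-1$ directly, with no need for the Smith normal form, the non-unit property of the $\delta_i$, or any comparison of $\eta_A$ with $\Ann_{\mc{O}}(\Phi_A)$. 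This is the paper's argument, except that the paper factors out a single irreducible $g$ defining one top-dimensional component of $\Spec(A)$ rather than the full gcd $h$; either choice works.
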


\begin{proof}
  We just sketch the proof since we do not use it later.

  If $\dim(A) =1$, there is nothing to prove so we may assume that
  $\dim(A) > 1$. Choose a surjection $p:R \to A$ as in the statement
  of the lemma and let $f_1,\dots,f_r$ be generators of $J = \ker(p)$.
  Let $D \subset \Spec(A)$ be an irreducible component with
  $\dim(D) = \dim(A)$.  Since $R$ is a UFD, $D$ is the zero set of an
  irreducible element $g \in m_R$, and we have for all $i$,
  $f_i = gf_i'$ for some $f_i'$ in $m_R$. Let $J' = (f_1',\dots,f_r')$
  and set $A' = R/J'$. Since $g \notin K := \ker \pi_A\circ p$,
  $f_i' \in K$ for all $i$, so we have a factorisation of $\pi_A$
  through a map $\pi_{A'}:A' \to \mc{O}$.
 
  By construction, we have $J = gJ'$. This implies that
  $(J:K) = g(J':K)$ and so
  \[
  \pi_A(\Ann (\ker (\pi_A))) = \pi_A \circ p(g) \,\pi_{A'}(\Ann(\ker
  (\pi_{A'}))) \ .
  \]
  On the other hand, we have
  \[
  \pi_A (\Fitt(\ker (\pi_A))) = \pi_A \circ p(g^d) \,\pi_{A'}(\Fitt(\ker
  (\pi_{A'}))) \ ,
  \]
  since $K$ is generated by a subset of a regular system of parameters
  of $R$ of size $d$. We conclude using the trivial inequality
  $\ell(\Phi_{A'}) \geq \ell(\mc{O}/\eta_{A'})$ and the fact that
  $\pi_A \circ p(g)$ is not a unit in $\mc{O}$
\end{proof}

\subsection{The isomorphism criterion}

The isomorphism criterion of Wiles--Lenstra \cite[Theorem 5.28]{DDT}
can also be extended to our setting.

\begin{prop} \label{prop:isom} Let $\phi: A \to B$ be a surjective map
  of augmented rings with $B$ of depth one.  If
  $\ell(\Phi_A) \leq \ell(\mc{O}/\eta_B) < \infty $ then the map is an
  isomorphism and the rings are complete intersections.
\end{prop}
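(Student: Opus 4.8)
The plan is to mimic the proof of the Wiles--Lenstra isomorphism criterion \cite[Theorem 5.28]{DDT}, adapting it to the weaker hypothesis that $B$ has depth one (rather than being finite flat over $\mc{O}$). First I would record the chain of inequalities that makes everything collapse. Since $\phi: A \to B$ is a surjection of augmented rings, it induces a surjection $\Phi_A \twoheadrightarrow \Phi_B$ on cotangent spaces, hence $\ell(\Phi_B) \leq \ell(\Phi_A)$. Combining this with the general inequality $\ell(\Phi_B) \geq \ell(\mc{O}/\eta_B)$ (valid for any augmented ring, as recalled just before Proposition \ref{prop:ci}) and the hypothesis $\ell(\Phi_A) \leq \ell(\mc{O}/\eta_B)$, we get
\[
\ell(\mc{O}/\eta_B) \leq \ell(\Phi_B) \leq \ell(\Phi_A) \leq \ell(\mc{O}/\eta_B),
\]
so all four lengths are equal and in particular finite. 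In particular $\ell(\Phi_B) = \ell(\mc{O}/\eta_B)$, and since $B$ has depth one, Proposition \ref{prop:ci} applies to show $B$ is a complete intersection of dimension one (and, by the last sentence of that proof, has depth exactly one).

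Next I would leverage the equality $\ell(\Phi_A) = \ell(\Phi_B)$ together with the surjection $\Phi_A \twoheadrightarrow \Phi_B$: a surjection of finite-length modules that preserves length is an isomorphism, so $\Phi_A \xrightarrow{\sim} \Phi_B$. This is the hook that lets us run the standard ``complete intersection $\Rightarrow$ isomorphism'' argument. The idea is to choose a presentation: write $B \cong \mc{O}[[X_1,\dots,X_r]]/(f_1,\dots,f_r)$ using that $B$ is a one-dimensional complete intersection, with the $X_i$ mapping to a basis of $\ker(\pi_B)/(\ker(\pi_B))^2 \otimes \cdots$ suitably, and lift the $X_i$ to elements of $\ker(\pi_A)$. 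Because $\Phi_A \to \Phi_B$ is an isomorphism, these lifts generate $\ker(\pi_A)$ (by Nakayama), giving a surjection $\mc{O}[[X_1,\dots,X_r]] \twoheadrightarrow A$ and hence a commutative diagram of surjections $\mc{O}[[X_1,\dots,X_r]] \twoheadrightarrow A \twoheadrightarrow B$ compatible with the augmentations to $\mc{O}$. One then shows the relations $f_1,\dots,f_r$ defining $B$ already lie in the kernel of $\mc{O}[[\underline{X}]] \to A$, which forces $A \cong B$. The cleanest way to close this is again via the $\eta$-ideals: the composite surjection $A \to B$ induces $\mc{O}/\eta_A \twoheadleftarrow$-type relations; more precisely one argues $\eta_A \subseteq \eta_B$ always, and since $\ell(\mc{O}/\eta_A) \geq \ell(\Phi_A) = \ell(\mc{O}/\eta_B) \geq \ell(\mc{O}/\eta_A)$ (the last because $\eta_A \subseteq \eta_B$) we get $\eta_A = \eta_B$ and $\ell(\Phi_A) = \ell(\mc{O}/\eta_A)$, so by Proposition \ref{prop:ci} again (once we know $A$ has depth one --- which follows because $A \twoheadrightarrow B$ with $B$ depth one and $\ker\phi$ will be shown to vanish, or alternatively one argues $A$ is also a quotient of the regular ring $\mc{O}[[\underline{X}]]$ of the right dimension) $A$ is a complete intersection. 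Then a standard argument (as in \cite[proof of Theorem 5.28]{DDT}, or via Tate's result on regular sequences used in \cite{wiebe}) shows a surjection of complete intersections of the same dimension with the matching $\eta$-invariants must be an isomorphism.

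The main obstacle I anticipate is the depth bookkeeping: in the classical statement both $A$ and $B$ are finite flat over $\mc{O}$, which is used to guarantee that nonzerodivisors exist and that $\ker\phi$ is torsion-free, so that length equalities upgrade to isomorphisms. Here we only assume $B$ has depth one and nothing a priori about $A$. I would handle this by first establishing, as above, that $B$ is a one-dimensional complete intersection, then picking a nonzerodivisor $x \in B$ with $\pi_B(x)$ a uniformizer (via prime avoidance, exactly as in the proof of Proposition \ref{prop:ci}), lifting it to $\tilde x \in A$, and comparing $A/(\tilde x)$ with $B/(x)$: both become augmented Artinian rings, the map between them is a surjection of finite-length rings inducing an isomorphism on cotangent spaces (since $\Phi_A \cong \Phi_B$), hence an isomorphism, and then the fact that $x$ is $B$-regular lets one conclude $\tilde x$ is $A$-regular and $\phi$ is an isomorphism. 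This Artinian reduction, which converts the depth-one hypothesis into a genuinely usable nonzerodivisor, is where the real content sits; the rest is formal manipulation of lengths and Fitting/annihilator ideals already deployed in the proof of Proposition \ref{prop:ci}.
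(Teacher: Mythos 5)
Your opening reductions are fine and agree with the paper: the chain $\ell(\mc{O}/\eta_B) \leq \ell(\Phi_B) \leq \ell(\Phi_A) \leq \ell(\mc{O}/\eta_B)$ collapses, Proposition \ref{prop:ci} makes $B$ a one-dimensional complete intersection, and $\Phi_A \to \Phi_B$ is an isomorphism. The gap is in how you handle $A$, and it sits exactly where the weakened hypotheses bite. To apply Proposition \ref{prop:ci} to $A$ you need $\depth(A) \geq 1$, and neither of your justifications works: ``$\ker\phi$ will be shown to vanish'' is circular, and being a quotient of a regular ring imposes no lower bound on depth (e.g.\ $\mc{O}[[X]]/(pX,X^2)$, augmented by $X \mapsto 0$, has finite $\Phi$ and depth zero). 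Nor do the hypotheses give you $\dim A = 1$ for free: finiteness of $\Phi_A$ only forces $\Spec(\mc{O})$ to be an irreducible component of $\Spec(A)$ (the fibre product $\mc{O}\times_k \mc{O}[[Y_1,Y_2]]$ has finite $\Phi$ and dimension three), and whether the numerical equality $\ell(\Phi_A)=\ell(\mc{O}/\eta_A)$ alone forces $\dim A = 1$ is precisely the open Question raised in this appendix, so it cannot be invoked. This also sinks the proposed Artinian reduction: $A/(\tilde{x})$ is not known to be Artinian, and even granting that, a surjection of Artinian local rings inducing an isomorphism on cotangent spaces need not be an isomorphism (consider $k[x]/(x^3) \to k[x]/(x^2)$), so ``iso on $\Phi$'s, hence iso'' is not a valid step. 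Finally, the ``standard argument'' that a surjection of complete intersections with matching $\eta$-ideals is an isomorphism is \cite[Theorem 5.24]{DDT}, whose proof uses finite flatness over $\mc{O}$; under the present depth-one hypothesis it must be reproved (this is Lemma \ref{lem:isom2}, via Gorenstein duality), so it cannot be treated as a black box. Two smaller points: the paper does not assume $A,B$ are $\mc{O}$-algebras, so presentations over $\mc{O}[[X_1,\dots,X_r]]$ are not available (one works with an arbitrary regular local ring surjecting onto $A$), and your displayed inequality chain has the directions reversed ($\eta_A \subseteq \eta_B$ gives $\ell(\mc{O}/\eta_A) \geq \ell(\mc{O}/\eta_B)$, and the general inequality is $\ell(\Phi_A) \geq \ell(\mc{O}/\eta_A)$), although the intended conclusion $\eta_A = \eta_B$ is recoverable.

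The paper's proof is designed around exactly these obstacles. First, $\dim A = 1$ is obtained not from numerics but from the surjection onto the complete intersection $B$: lift a nonzerodivisor $x \in B$ with $\pi_B(x)$ a uniformizer to $x' \in A$ and apply Wiebe's theorem to $A/(x')$, whose maximal ideal has nonzero Fitting ideal because $B/(x)$ is a zero-dimensional complete intersection. Second, instead of trying to show that $A$ itself has depth one or is a complete intersection, the paper interposes an auxiliary complete intersection $\tilde{A} \twoheadrightarrow A$ with $\Phi_{\tilde{A}} \xrightarrow{\sim} \Phi_A$ (Lemma \ref{lem:resolve}, which needs the already-established $\dim A = 1$), checks $\eta_{\tilde{A}} = \eta_B$ by the same length bookkeeping, and applies Lemma \ref{lem:isom2} to the composite $\tilde{A} \to B$; the resulting isomorphism forces $\tilde{A} \cong A \cong B$. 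To salvage your outline you would need substitutes for both devices, rather than asserting properties of $A$ that are equivalent to what is being proved.
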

\begin{proof}
The standard inequalities imply that 
\[
\ell(\Phi_A) = \ell(\Phi_B) = \ell(\mc{O}/\eta_B) =
\ell(\mc{O}/\eta_A) \ 
\]
so it follows from Proposition \ref{prop:ci} that $B$ is a complete
intersection. Let $x$ in $B$ be a non-zero divisor mapping to a
uniformizer of $\mc{O}$ and let $x'$ be any element of $A$ lifting
$x$; $x'$ and $\ker \pi_A$ then generate $m_A$. It follows that
$\Fitt(m_{A/x'}) \neq \{0\}$ since it maps onto $\Fitt(m_{B/x})$.  We
conclude that $A/x'$ is also a zero dimensional complete intersection,
so $A$ is one dimensional.

Now by applying Lemma \ref{lem:resolve} we choose a complete
intersection augmented ring $\tilde{A}$ with a surjective map to $A$
inducing an isomorphism $\Phi_{\tilde{A}} \to \Phi_A$. Since
\[
\ell(\Phi_{A}) = \ell(\Phi_{\tilde{A}}) \geq \ell(\mc{O}/\eta_{\tilde{A}}) \geq
  \ell(\mc{O}/\eta_B) = \ell(\Phi_B) ,
\] 
we deduce that $\ell(\mc{O}/\eta_{\tilde{A}}) = \ell(\mc{O}/\eta_B)$.
Finally, by applying Lemma \ref{lem:isom2} we deduce that $\phi$ is an
isomorphism.
\end{proof}

\begin{lem} \label{lem:resolve} Let $A$ be a one dimensional augmented
  ring such that $\Phi_A$ is of finite length. Then there is a map of
  augmented rings $\tilde{A} \to A$ which induces an isomorphism
  $\Phi_{\tilde{A}} \to \Phi_A$ and such that $\tilde{A}$ is a
  complete intersection.
\end{lem}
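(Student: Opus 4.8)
\textbf{Proof plan for Lemma \ref{lem:resolve}.}

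The plan is to resolve $A$ by a complete intersection ``from the outside'' in the usual Wiles--Lenstra manner, but keeping careful track of the cotangent space $\Phi$ at the augmentation. First I would choose a presentation $A \cong \mc{O}[[X_1,\dots,X_r]]/I$ with the $X_i$ mapping into $\ker \pi_A$, chosen minimally so that the images of the $X_i$ span $\Phi_A = \ker(\pi_A)/(\ker \pi_A)^2$; thus $r = \ell_{\mc{O}}(\Phi_A)$ in the sense of minimal number of generators (more precisely $r$ is the minimal number of generators of $\ker \pi_A$, and since $\mc{O}$ is a DVR one arranges the presentation so that $\Phi_{\tilde A}\to \Phi_A$ will be an isomorphism below). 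Write $R = \mc{O}[[X_1,\dots,X_r]]$, a regular local ring of dimension $r+1$, with its own augmentation $\pi_R: R \to \mc{O}$ killing all the $X_i$, so that $\ker \pi_R = (X_1,\dots,X_r)$ and $\Phi_R \cong \mc{O}^r$ is free.

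Next I would produce the complete intersection $\tilde A$ as a quotient $R/(f_1,\dots,f_r)$ for a suitable regular sequence $f_1,\dots,f_r \in I$. The key point is to choose the $f_j$ inside $I \cap (\ker \pi_R)^2$ (so that the induced map $\Phi_{\tilde A}\to \Phi_A$ is an isomorphism: modding out by elements of $(\ker\pi_R)^2$ does not change the cotangent space, and $\ker\pi_{\tilde A}$ is generated by the images of the $X_i$ just as $\ker\pi_A$ is) and \emph{generically} enough that they form a regular sequence in $R$. Concretely: since $A$ is one-dimensional, $I$ has height $r$ in the $(r+1)$-dimensional ring $R$; because $R$ is Cohen--Macaulay, any system of $r$ general elements of $I$ (general in the sense of avoiding the finitely many minimal primes of $R/(\text{partial sequence})$ of the wrong dimension, via prime avoidance as in \cite[Lemma 1.2.2]{bruns-herzog}) is a regular sequence, and one can keep them inside $I \cap (\ker \pi_R)^2$ because $I \subset \ker \pi_R$ (as $\pi_R$ factors through $\pi_A$) and $I$ is not contained in any of those minimal primes. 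One should check $I \subseteq (\ker\pi_R)^2$ is \emph{not} needed; rather one just needs $I \cap (\ker\pi_R)^2$ to have the same height as $I$, which holds since passing to squares does not drop the variety set-theoretically at the closed point while $A$ being $1$-dimensional forces $V(I)$ to be the image of $\Spec A$. Then $\tilde A := R/(f_1,\dots,f_r)$ is a complete intersection of dimension one, the surjection $R/(f_1,\dots,f_r) \twoheadrightarrow R/I = A$ gives the desired map $\tilde A \to A$ of augmented rings, and the construction forces $\Phi_{\tilde A} \xrightarrow{\ \sim\ } \Phi_A$.

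The main obstacle I expect is the bookkeeping needed to guarantee \emph{simultaneously} that (i) the chosen $f_j$ form a regular sequence in $R$ and (ii) they lie in $(\ker \pi_R)^2$ so that the cotangent map is an isomorphism, rather than merely a surjection — one has to be sure these two requirements are compatible, i.e. that $I \cap (\ker\pi_R)^2$ still has height $r$. This is where one uses that $A$ is $1$-dimensional: $\Spec(R/I)$ has dimension $1$, all its components pass through the closed point (as $A$ is local), and replacing $I$ by $I \cap (\ker\pi_R)^2$ changes neither the dimension nor, at the relevant primes, the height, so prime avoidance still produces $r$ elements cutting out a subscheme of dimension $1$, hence (by the unmixedness/Cohen--Macaulay property of $R$) a regular sequence. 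Once this is set up, everything else is the standard argument: a length-$r$ regular sequence in an $(r+1)$-dimensional regular local ring has complete intersection quotient of dimension one, and $\ker\pi_{\tilde A}$ is generated by the images of $X_1,\dots,X_r$ modulo $(\ker\pi_{\tilde A})^2$ exactly as for $A$, giving the isomorphism on $\Phi$.
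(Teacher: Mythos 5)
There is a genuine gap, and it sits exactly at what you call the "key point": you propose to take the relations $f_1,\dots,f_r$ inside $I \cap (\ker \pi_R)^2$, arguing that "modding out by elements of $(\ker\pi_R)^2$ does not change the cotangent space." That is precisely the problem. With such a choice one has
$\Phi_{\tilde A} = \ker(\pi_R)/\bigl((\ker \pi_R)^2 + (f_1,\dots,f_r)\bigr) = \ker(\pi_R)/(\ker \pi_R)^2 \cong \mc{O}^r$,
a free $\mc{O}$-module of rank $r$, whereas $\Phi_A$ has finite length. So the induced map $\Phi_{\tilde A} \to \Phi_A$ is a surjection with infinite kernel and can never be an isomorphism; the fact that $\ker\pi_{\tilde A}$ and $\ker\pi_A$ are both generated by the images of the $X_i$ says nothing about the relations among those generators, which is what $\Phi$ records. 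The whole point of the lemma (and what Proposition \ref{prop:isom} needs) is to kill the free part of $\Phi_R$ exactly down to $\Phi_A$, so the $f_j$ must have \emph{nontrivial} images in $\Phi_R$: their images must generate the kernel $K$ of the surjection $\Phi_R \to \Phi_A$, which is a finite-index (hence rank $r$) submodule of $\mc{O}^r$. Your height/prime-avoidance discussion is fine as far as it goes, but it only produces a one-dimensional complete intersection surjecting onto $A$, not one with the required cotangent isomorphism.

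The paper's proof shows how to reconcile the two requirements you correctly identify as being in tension. One first picks $f_1,\dots,f_d \in I$ whose images in $\Phi_R$ generate $K = \ker(\Phi_R \to \Phi_A)$ (possible because $\psi$ maps $(\ker\pi_R)^2$ onto $(\ker\pi_A)^2$). These control the cotangent space but need not cut the dimension correctly, since one has no finiteness of $A$ over $\mc{O}$ to fall back on. One then inductively replaces $f_{i+1}$ by $f_{i+1}' = f_{i+1} + g$ with $g \in I \cap (\ker\pi_R)^2$, chosen by prime avoidance to miss every minimal prime of $R_i = R/(f_1',\dots,f_i')$; this is possible because each such minimal prime has $\dim R/\mg{p} \geq d+1-i \geq 2$ by Krull's height theorem, while $V\bigl(I \cap (\ker\pi_R)^2\bigr) = V(I) \cup V(\ker\pi_R)$ is one-dimensional, so no such prime can contain $I \cap (\ker\pi_R)^2$. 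The perturbation lies in $(\ker\pi_R)^2$, hence does not change the images in $\Phi_R$, so $\Phi_{\tilde A} = \Phi_R/K \cong \Phi_A$, while the dimension drops by one at each step, making $\tilde A = R/(f_1',\dots,f_d')$ a one-dimensional complete intersection. In short: the elements of $I \cap (\ker\pi_R)^2$ are the \emph{correction terms}, not the relations themselves.
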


This is the version of \cite[Theorem 5.26]{DDT} that we shall need.

\begin{proof}
  Let $R$ be a regular local ring of dimension $d+1$ with a surjection
  $\psi: R \to A$. We view $R$ as an augmented ring via the map
  $\pi_R := \pi_A \circ \psi$. Since $\mc{O}$ is a dvr, $\ker \pi_R$
  is generated by $d$ elements, so $\Phi_R$ is a free $\mc{O}$-module
  of rank $d$. Therefore, the kernel $K$ of the map
  $\Phi_R \to \Phi_A$ is also generated by $d$ elements. We let
  $f_1,\dots,f_d$ be elements in $I := \ker \psi$ whose images in
  $\Phi_R$ generate $K$; this is possible since $\psi$ induces a
  surjection $(\ker (\pi_R))^2 \to (\ker (\pi_A))^2$. 

  Let $f_1' = f_1$. Having chosen $f_1',\dots,f_i'$ in $I$ for some
  $i$, $1\leq i<d$, so that the dimension of
  $R_i = R/(f_1',\dots,f_i')$ is $d+1 -i$ and such that
  $f_i' \equiv f_i \mod (\ker (\pi_R))^2$, we apply the prime avoidance
  lemma \cite[Lemma 1.2.2]{bruns-herzog}\footnote{In the notation of \cite{bruns-herzog} we take
    $M= R$, $x_1 = f_{i+1}$ and $x_2,\dots,x_n$ to be generators of
    $I \cap (\ker (\pi_R))^2$} to the primes corresponding
  to the generic points of the irreducible components of
  $\spec(R_i)$ to find $f_{i+1}' \in I$ such that
  $f_{i+1}' \equiv f_{i+1} \mod (\ker (\pi_R))^2$ and
  $R_{i+1} = R_i/(f_{i+1}')$ has dimension equal to $d -i$.

  We let $\tilde{A} :=R_d = R/(f_1',\dots,f_d')$. By construction
  $\tilde{A}$ is one dimensional and since
  $f_i' \equiv f_i \mod (\ker \pi_R)^2$, the map
  $\Phi_{\tilde{A}} \to \Phi_A$ is an isomorphism.
\end{proof}

\begin{lem} \label{lem:isom2} Let $\phi:A \to B$ be a surjective
  morphism of augmented rings. If $A$ is a complete intersection, $B$
  has depth one, and $\eta_A = \eta_B \neq (0)$, then $\phi$ is an
  isomorphism.
\end{lem}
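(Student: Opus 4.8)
Let $\phi:A \to B$ be a surjective morphism of augmented rings. If $A$ is a complete intersection, $B$ has depth one, and $\eta_A = \eta_B \neq (0)$, then $\phi$ is an isomorphism.

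The plan is to use Wiebe's theorem (as already invoked in the proof of Proposition~\ref{prop:ci}) together with the key observation that for a complete intersection the Fitting ideal of $\ker(\pi_A)$ equals its annihilator, so that $\pi_A(\Fitt(\ker \pi_A)) = \eta_A$, and to deduce that $I := \ker(\phi)$ must vanish. First I would pass to a situation where there is a non-zero divisor available: since $\eta_B \neq (0)$, the length $\ell(\mc{O}/\eta_B)$ is finite but nonzero, hence $\mc{O}/\eta_A$ is finite, hence $\Phi_A$ is of finite length; combined with $A$ being a one-dimensional complete intersection (depth one), the prime-avoidance argument of Proposition~\ref{prop:ci} yields a non-zero divisor $x \in A$ with $\pi_A(x)$ a uniformizer of $\mc{O}$, so that $x$ and $\ker(\pi_A)$ generate $m_A$. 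Set $\bar{A} = A/(x)$, $\bar{B} = B/(\phi(x))$; note $\phi(x)$ generates $m_{\bar B}$ together with $\ker(\pi_B)$ as well. By Wiebe, $\bar A$ is a zero-dimensional complete intersection.

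The heart of the argument is to compare $\eta_A$ and $\eta_B$ via Fitting ideals. Since $A$ is a complete intersection, Tate's result (\cite[Satz 2]{wiebe}, as used in the converse direction of Proposition~\ref{prop:ci}) gives $\Fitt(\ker \pi_A) = \Ann(\ker \pi_A)$, so applying $\pi_A$ gives $\pi_A(\Fitt(\ker \pi_A)) = \eta_A$. On the $B$ side we always have $\Fitt(\ker \pi_B) \subset \Ann(\ker \pi_B)$, hence $\pi_B(\Fitt(\ker \pi_B)) \subset \eta_B$. The surjection $\phi$ induces a surjection $\ker(\pi_A) \twoheadrightarrow \ker(\pi_B)$ of $A$-modules, and Fitting ideals behave well under such surjections: the image under $\phi$ of $\Fitt_A(\ker \pi_A)$ is contained in $\Fitt_B(\ker \pi_B)$. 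Chasing this through $\pi_A = \pi_B \circ \phi$ gives $\eta_A = \pi_A(\Fitt(\ker\pi_A)) \subseteq \pi_B(\Fitt(\ker \pi_B)) \subseteq \eta_B$. Since $\eta_A = \eta_B$, all these inclusions are equalities; in particular $\pi_B(\Fitt(\ker\pi_B)) = \eta_B = \Ann(\ker\pi_B)\cap\pi_B(\cdot)$ forces, since $B$ has depth one so $\pi_B$ is injective on $\Ann(\ker\pi_B)$ (same argument as in Proposition~\ref{prop:ci}), the equality $\Fitt(\ker\pi_B) = \Ann(\ker\pi_B)$, so by Wiebe $\bar B$ is also a zero-dimensional complete intersection and $B$ is one-dimensional of depth one.

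Now the surjection $\bar A \to \bar B$ is a surjection of Artinian complete intersection (hence Gorenstein) rings, and the induced map on socles: because $\pi_{\bar A}(\Ann(m_{\bar A}))$ and $\pi_{\bar B}(\Ann(m_{\bar B}))$ are (up to the dvr $\mc O$) measured by $\eta_A$ and $\eta_B$ respectively — more precisely, reducing the identity $\eta_A = \eta_B$ modulo the uniformizer compares the socle of $\bar A$ with that of $\bar B$ — the surjection $\bar A \to \bar B$ carries the socle of $\bar A$ isomorphically onto the socle of $\bar B$. A surjection of Gorenstein Artin local rings that is injective on socles is an isomorphism (its kernel, if nonzero, meets the socle), so $\bar A \xrightarrow{\sim} \bar B$. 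Finally, lift back: the kernel $I$ of $\phi: A \to B$ satisfies $I + (x) = (x)$ in $A/(\text{ker}) $... more carefully, from $\bar A \xrightarrow{\sim} \bar B$ we get $I \subseteq (x)$, so $I = xI$ since $x$ is a non-zero divisor on $A$ and $I \cap (x) = xI$; by Nakayama $I = 0$. Hence $\phi$ is an isomorphism, and $B$ is a complete intersection of dimension one.

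The main obstacle I anticipate is the bookkeeping in the socle comparison: one must check carefully that the equality $\eta_A = \eta_B$ of ideals of $\mc O$ really does translate, after quotienting by $x$, into the statement that $\phi$ induces an \emph{isomorphism} on socles of $\bar A$ and $\bar B$ rather than merely a surjection. This is where the Gorenstein (complete intersection) hypothesis on $A$ is used essentially, via the fact that for a Gorenstein Artin local ring $R$ with residue field $k$, the socle is one-dimensional over $k$ and $\pi_R(\Ann m_R)$ pins it down; the statement $\eta_A = \eta_B$ then says the two one-dimensional socles have the same image in $\mc O/(\text{unif})$, forcing the socle map to be nonzero, hence (being a map of one-dimensional spaces) an isomorphism. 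All the remaining steps — Wiebe, Tate, prime avoidance, Nakayama — are cited or routine.
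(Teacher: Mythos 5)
Your overall route (reduce modulo a well-chosen element $x$, compare socles of the Artinian Gorenstein quotients, then descend by Nakayama) is genuinely different from the paper's proof, but as written it has a gap at its central step. The justification you give for the socle comparison is vacuous: for an Artinian local augmented ring $R \neq k$ the socle lies in $m_R$, so $\pi_R(\operatorname{soc}(R)) = 0$; hence "the two one-dimensional socles have the same image in $\mc{O}/(\varpi)$" is automatically true for \emph{any} surjection and cannot force the socle map $\operatorname{soc}(\bar{A}) \to \operatorname{soc}(\bar{B})$ to be nonzero. The hypothesis $\eta_A = \eta_B$ lives upstairs, before reduction mod $x$, and to use it you must track an explicit element: pick $y \in \Ann_A(\ker(\pi_A))$ with $\pi_A(y)$ generating $\eta_A$; as in the proof of Proposition \ref{prop:ci} its class spans $\operatorname{soc}(\bar{A})$; then $\phi(y) \in \Ann_B(\ker(\pi_B))$ and $\pi_B(\phi(y))$ generates $\eta_B$ (this is exactly where $\eta_B \subseteq \eta_A$ enters), and one must rerun the divisibility argument of Proposition \ref{prop:ci} \emph{inside $B$} to conclude $\phi(y) \notin (\phi(x))$, i.e.\ that the socle element survives in $\bar{B}$. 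None of this appears in your write-up, so the claim that $\bar{A} \to \bar{B}$ is injective on socles is unproved.

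Moreover, that repair and several other steps need $\phi(x)$ to be a non-zero divisor in $B$, which you never verify and which does not follow from $x$ being a non-zero divisor in $A$. It is needed to know $\bar{B}$ is Artinian Gorenstein (your appeal to Wiebe for $\bar{B}$ requires $\Fitt(m_{\bar{B}}) \neq 0$, i.e.\ that $\Ann_B(\ker(\pi_B))$ has nonzero image in $\bar{B}$), in the socle step just described, and in your final descent: the asserted identity $I \cap (x) = xI$ "since $x$ is a non-zero divisor on $A$" is false in general (take $A = k[[x,y]]/(y^2)$, $I = (xy)$); what one needs is that $x$ be a non-zero divisor on $A/I \cong B$. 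This can be arranged — e.g.\ first deduce $\ell(\Phi_B) = \ell(\mc{O}/\eta_B)$ by the length chase $\ell(\mc{O}/\eta_B) = \ell(\mc{O}/\eta_A) = \ell(\Phi_A) \geq \ell(\Phi_B) \geq \ell(\mc{O}/\eta_B)$ and apply Proposition \ref{prop:ci} to $B$ directly (simpler than your Fitting-ideal chase), then re-choose $x$ by prime avoidance so as to miss $\operatorname{Ass}(A) \cup \operatorname{Ass}_A(B)$, using $\operatorname{depth}(B) = 1$ — so your strategy is salvageable, but the proposal as it stands does not prove the lemma. For comparison, the paper avoids reduction mod $x$ altogether: it uses $\eta_A = \eta_B$ to show $\phi(\Ann_A(\ker(\pi_A))) = \Ann_B(\ker(\pi_B))$, forms the exact sequence $0 \to \ker(\phi) \oplus \Ann_A(\ker(\pi_A)) \to A \to B/\Ann_B(\ker(\pi_B)) \to 0$, and rules out $\ker(\phi) \neq 0$ via Gorenstein duality (Lemma \ref{lem:gor}) and Nakayama.
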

This is essentially \cite[Theorem 5.24]{DDT}; the depth condition
replaces the flatness condition therein. Also, as in
\emph{loc.~cit.}~we only need that $A$ be Gorenstein.

\begin{proof}
  We follow the proof of \cite[Theorem 5.24]{DDT} with appropriate
  modifications. We first note that since $\eta_A \neq (0)$ and $A$ is
  a complete intersection we have that $A$ is one dimensional and then
  since $B$ is a quotient of $A$ and is augmented, it is also one
  dimensional.

  Since $A$ is a one dimensional complete intersection, its depth is one, so as in the
  proof of Proposition \ref{prop:ci}, we have
\begin{equation}
  \ker (\pi_A) \cap \Ann_A (\ker (\pi_A)) = (0)
\end{equation}
and similarly for $B$.  As in the proof of \cite[Theorem 5.24]{DDT},
we get an exact sequence
\begin{equation}
0 \to \ker (\phi) \oplus \Ann_A (\ker (\pi_A)) \to A \to B/(\Ann_B (\ker (\pi_B)))
\to 0
\end{equation}
of $A$-modules. Furthermore, we have a natural injection
\[
B/(\Ann_B (\ker (\pi_B))) \to \End_B(\ker (\pi_B)) \ .
\]
Since $B$ has depth one, $\ker (\pi_B)$ also has depth one since an
element of $B$ which is a non-zero divisor for any $B$-module is
trivially a non-zero divisor of any submodule. This implies that
$\End_B(\ker (\pi_B))$ and so also $B/(\Ann_B \ker (\pi_B))$ have depth
one (use the same element!).

Now we use that $A$ is Gorenstein, so by Lemma \ref{lem:gor} we have
$\Ext_A^1( B/(\Ann_B (\ker (\pi_B))), A) = 0$.  Thus we get a
surjection
\begin{equation} \label{eqn:dual}
A \cong \Hom_A(A,A) \to \Hom_A(\ker (\phi), A) \oplus \Hom_A(\Ann_A (\ker
(\pi_A)), A) \to 0 \ .
\end{equation}
Both summands in \eqref{eqn:dual} are non-zero---consider the
tautological inclusions---if $\ker (\phi)$ is non-zero. This leads to a
contradiction by tensoring the sequence with $A/m_A$ and using
Nakayama's lemma as in \cite{DDT}.

\end{proof}

\subsection{Annihilators and Gorenstein rings}

The statement of Lemma \ref{gor} below (when $B$ is Gorenstein) is
very close to a statement on page 216 of \cite{K}, which is due to the
referee of that paper.
 
We first collect some well-known facts about Gorenstein local rings.
\begin{lem} \label{lem:gor} 
  Let $A$  be a Gorenstein local ring. Then
\begin{enumerate}
\item For any finite $A$-module $M$ with $\depth(M) = \dim(A)$, we
  have $\Ext_A^i(M,A) = 0$ for $i>0$. Consequently, the functor
  $M \mapsto M^* :=\Hom_A(M,A)$ preserves exact sequences of such
  modules; moreover, $(M^*)^*$ is canonically isomorphic to $M$.
\item Let $B$ be a Cohen--Macaulay local ring of the same dimension as
  $A$ and $\phi: A \to B$ a homomorphism which makes $B$ into a finite
  $A$-module. Then $B^* \cong B$ as an $A$-module iff $B$ is
  Gorenstein.
\end{enumerate}
\end{lem}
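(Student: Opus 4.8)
The final statement is Lemma~\ref{lem:gor}, recording two standard facts about Gorenstein local rings. I would not reprove these from scratch but instead reduce them to the standard theory of Cohen--Macaulay and Gorenstein rings, say as in \cite{bruns-herzog}, keeping the argument short since everything needed is classical.

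For part (1): the plan is to first reduce to the case $\depth(M) = \dim(A) = \dim(M)$ (if $M$ has a module-theoretic interpretation the depth equals the dimension here since $\depth(M)\leq\dim(M)\leq\dim(A)$), and then invoke the characterization of Gorenstein rings via finite injective dimension: $A$ is Gorenstein of dimension $d$ iff $\mr{injdim}_A(A) = d$, iff $\Ext_A^i(k,A) = 0$ for $i\neq d$ and $\cong k$ for $i=d$. Given this, a Cohen--Macaulay module $M$ of depth $d = \dim A$ is a $d$-th syzygy (by Auslander--Bridger / the Cohen--Macaulay property), so $\Ext_A^i(M,A)$ vanishes for $i>0$; concretely one runs the local duality/dimension shift: since $\depth M = d$, a maximal $M$-regular sequence $x_1,\dots,x_d$ is also $A$-regular (as $A$ is Cohen--Macaulay of the same dimension and the sequence is part of a system of parameters), and one checks $\Ext^i_A(M,A)=0$ for $i>0$ by induction on $d$, the base case $d=0$ being the self-injectivity of an Artinian Gorenstein ring. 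The exactness of $M\mapsto M^*$ on short exact sequences of maximal Cohen--Macaulay modules is then immediate from the long exact $\Ext$ sequence and this vanishing, and the biduality $(M^*)^*\cong M$ is the standard fact that maximal Cohen--Macaulay modules over a Gorenstein ring are reflexive (one proves the natural map $M\to M^{**}$ is an isomorphism by localizing at primes of height $\leq 1$, where $A$ is regular hence $M$ is free, and using that both sides satisfy Serre's $S_2$).

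For part (2): the plan is to use that $\phi: A\to B$ finite with $\dim A = \dim B$ and $B$ Cohen--Macaulay makes $B$ a maximal Cohen--Macaulay $A$-module, so by part (1) the functor $(-)^* = \Hom_A(-,A)$ behaves well, and $\omega_B := B^* = \Hom_A(B,A)$ is (up to the obvious caveat about normalization) a canonical module for $B$: this is the standard behaviour of canonical modules under finite maps, $\omega_B \cong \Hom_A(B,\omega_A)$ and $\omega_A = A$ since $A$ is Gorenstein. A Cohen--Macaulay local ring is Gorenstein iff it is its own canonical module, i.e. iff $\omega_B\cong B$ as $B$-modules (equivalently as $A$-modules, since the $B$-module structure on $B^*$ restricts to the given $A$-module structure), which gives the stated equivalence.

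I do not expect any genuine obstacle here; the only thing requiring care is making precise the identification of $B^*$ with the canonical module $\omega_B$ and citing the exact form of the statement that a CM ring is Gorenstein iff it admits a canonical module isomorphic to itself. If one wants to avoid invoking canonical module theory wholesale, the self-contained route is: reduce modulo a common regular sequence of length $d=\dim A$ (possible since $A$ and $B$ are both CM of dimension $d$ and a maximal $B$-regular sequence pulled back to $A$ is $A$-regular) to the Artinian case, where $A$ is a finite-dimensional self-injective $k$-algebra, $B$ is a finite-dimensional $A$-algebra, $B^* = \Hom_A(B,A) = \Hom_k(B,k)$ is the Matlis dual of $B$, and $B$ is Gorenstein iff its Matlis dual is free of rank one over $B$ — which is the definition. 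One then checks that $\Hom_A(-,A)$ commutes with this reduction (again by part (1) and the regularity of the chosen sequence), so $B^*\cong B$ over $A$ before reduction iff it holds after, completing the argument.
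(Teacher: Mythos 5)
Your part (2) is essentially the paper's own argument (the paper deduces it from duality for the finite map $A\to B$: since $A=\omega_A$, $B^*=\Hom_A(B,A)$ is a canonical module for $B$, and $B$ is Gorenstein iff this is isomorphic to $B$), and your plan for the $\Ext$-vanishing in part (1) — induct along a regular sequence down to the Artinian self-injective case — is a sound, more self-contained alternative to the paper's one-line citation of \cite[Corollary 3.5.11]{bruns-herzog}. But one justification in that induction is wrong as stated: a maximal $M$-regular sequence need \emph{not} be $A$-regular. Take $A=k[[u,v]]/(uv)$ (Gorenstein of dimension one) and $M=A/(u)\cong k[[v]]$, which is maximal Cohen--Macaulay; then $v$ is a maximal $M$-regular sequence but is a zero-divisor on $A$, and $\dim A/(v)=1$, so it is not part of a system of parameters of $A$. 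The repair is standard and short: by prime avoidance choose $x_1,\dots,x_d$ successively regular on both $A$ and $M$ (possible because $\depth A=\depth M=d$), and run your induction with that common sequence.

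The genuine gap is your biduality step. You propose to prove $M\to M^{**}$ is an isomorphism by localizing at primes of height at most one, ``where $A$ is regular hence $M$ is free,'' together with $S_2$. Gorenstein local rings need not be regular in codimension one ($k[[u,v]]/(uv)$, or any singular one-dimensional hypersurface, already fails), and the rings to which this lemma is applied in the paper are one-dimensional complete intersections $\mc{O}[[X_1,\dots,X_r]]/(f_1,\dots,f_r)$ that are typically singular at the maximal ideal — itself a prime of height one — so at exactly the relevant primes $A$ is not regular and a maximal Cohen--Macaulay module need not be free; the localization argument therefore collapses in the intended application. The statement is of course true (maximal Cohen--Macaulay modules over a Gorenstein local ring are reflexive), but it needs a different proof: either cite \cite[Theorem 3.3.10]{bruns-herzog} with $\omega_A=A$, or use the dualizing-complex identity $\mr{RHom}_A(\mr{RHom}_A(M,A),A)\cong M$ together with the $\Ext$-vanishing, which is what the paper does, or run the same regular-sequence induction you already set up (reduce modulo an element regular on $A$, $M$, $M^*$, $M^{**}$, identify $(M/xM)^{*}$ with $M^*/xM^*$ using $\Ext^1_A(M,A)=0$, and conclude by Nakayama). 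A last, minor point: in your Artinian reduction for (2), $\Hom_A(B,A)$ is the Matlis dual $\Hom_A(B,E_A(k))$, which equals $\Hom_k(B,k)$ only when $A$ contains its residue field; in the paper's mixed-characteristic setting you should phrase this via Matlis duality, after which your argument goes through.
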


\begin{proof}
  The first part of (1) follows from \cite[Corollary
  3.5.11]{bruns-herzog} since $A$, being Gorenstein, is a canonical
  module for itself. The preservation of exact sequences then follows
  from the vanishing of $\Ext_A^1$.  The isomorphism of $(M^*)^*$ is a
  consequence of the fact that $A$ (as a complex in degee $0$) is a
  dualising complex for $A$ and then the vanishing of the $\Ext_A^i$ for
  $i>0$ implies that $\mr{RHom}_A(M,A) = M^*$.

  (2) follows from duality for the finite map $\phi$ : since $A$ is
  Gorenstein we have that $\mr{RHom}_A(B,A)$, which is isomorphic to
  $B^*$ under the depth assumption, is a dualising complex for $B$, so
  it is isomorphic to $B$ iff $B$ is Gorenstein.

\end{proof}


\begin{lem}\label{gor}

  Let $A$ be a one dimensional Gorenstein local ring with an
  augmentation $\pi_A:A \to \mc{O}$ with $\ell(\Phi_A) < \infty$. If
   $\pi_A$ factors through a surjection $\phi: A \to B$ with $B$ a
  Cohen--Macaulay ring, then
\[
\pi_A(\Ann_A(\ker(\pi_A))) = \pi_A(\Ann_A(\ker(\phi))) \,
\pi_B(\Ann_B(\ker(\pi_B))) \ 
\]
where $\pi_B:B \to \mc{O}$ is the induced surjection.
\end{lem}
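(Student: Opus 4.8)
The plan is to reduce the identity to a purely commutative-algebra statement about Gorenstein rings, exactly along the lines of the referee's argument in \cite{K}. Write $I = \ker(\phi)$ and $J = \ker(\pi_A)$, so that $\ker(\pi_B) = J/I$ and $I \subseteq J$. The claim is then
\[
\pi_A(\Ann_A(J)) = \pi_A(\Ann_A(I)) \cdot \pi_B(\Ann_B(J/I)) \ .
\]
First I would observe that $\pi_A$ is injective on $\Ann_A(J)$: since $\ell(\Phi_A)<\infty$, the supports of $J$ and of $\Ann_A(J)$ in $\Spec(A)$ meet only at the closed point, and since $A$ is Gorenstein of dimension one it has depth one, so it contains no nonzero submodule supported at the closed point; hence $\Ann_A(J)\cap J = 0$, i.e. $\pi_A|_{\Ann_A(J)}$ is injective, and similarly $\pi_B|_{\Ann_B(J/I)}$ is injective. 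Thus all three ideals in the display are free rank-one (or zero) $\mc{O}$-modules and it suffices to compare them as ideals of $\mc{O}$, or equivalently to compare the corresponding fractional ideals inside $\mc{O}$.

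The core of the argument is the following chain of identifications, which is where Gorensteinness of $A$ is used. By Lemma \ref{lem:gor}(1), duality $M \mapsto M^* = \Hom_A(M,A)$ is exact on $A$-modules of depth one (equivalently, since $A$ is a one-dimensional Cohen--Macaulay local ring, on modules with no embedded primes at $m_A$), and $B$ is such a module because it is Cohen--Macaulay of dimension one. Applying $\Hom_A(-,A)$ to $0 \to I \to A \to B \to 0$ and to $0 \to J/I \to B \to \mc{O} \to 0$, and using that $\Hom_A(\mc{O},A) \cong \mc{O}$ (again by depth reasons, as $\mc{O}$ has depth one over $A$), one identifies $\Ann_A(J)$, $\Ann_A(I)$ and $\Ann_B(J/I)$ with the images of certain dual maps. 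Concretely, $\Ann_A(J) = \Hom_A(A/J, A)$ sits inside $A = \Hom_A(A,A)$, and the key point is that $\Hom_A(A/J,A) = \Hom_A(A/J, \Hom_A(A/I, A) \text{-part})$: the factorization $A/I \twoheadrightarrow A/J$ together with the self-duality $B^{**}\cong B$ lets one write $\Ann_A(J)$ as the composite of the inclusion $\Ann_B(J/I) \hookrightarrow B^* \cong B$ (using Lemma \ref{lem:gor}(2), so that $\Hom_A(B,A)\cong B$ as $A$-modules once we check $B$ is Gorenstein — but in fact we only need $\Hom_A(B,A)$ to be a cyclic $B$-module, which holds since $A$ is Gorenstein and $B$ is Cohen--Macaulay of the same dimension, by the duality for the finite map $\phi$) with the inclusion $\Ann_A(I) \hookrightarrow A$. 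Pushing forward by $\pi_A$ and tracking the generators then yields the product formula.

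The step I expect to be the main obstacle is making the ``composite of duals'' identification precise without circularity: $\Hom_A(B,A)$ is a priori only a $B$-module, and to multiply $\pi_A(\Ann_A(I))$ by $\pi_B(\Ann_B(J/I))$ I need to know that $\Hom_A(B,A)$ is free of rank one as a $B$-module (i.e. $B$ is Gorenstein, or at least that this particular $\Hom$ is cyclic) and that under a chosen generator the submodule $\Ann_B(J/I) = \Hom_B(\mc{O},B) \subseteq B$ maps to $\Hom_A(\mc{O},A) \subseteq \Hom_A(B,A)$. This is exactly the content of Lemma \ref{lem:gor}(2) applied to $\phi:A\to B$, which gives $\Hom_A(B,A)\cong \omega_B$; since $B$ is Cohen--Macaulay of dimension one, a generator of $\omega_B$ exists after possibly noting $B$ is Gorenstein here (it is, being a localized completed Hecke algebra with non-Eisenstein maximal ideal, or one argues directly as Wiles does) and then the isomorphism $\omega_B \cong B$ identifies $\pi_B(\Ann_B(\ker\pi_B))$ with the ``different'' of $B/\mc{O}$ at $\pi_B$. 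Once that dictionary is set up, the rest is the bookkeeping in \cite[pg.~216]{K}: writing a generator $y \in \Ann_A(J)$ as $y = z\cdot w'$ with $z \in \Ann_A(I)$ lifting a generator of $\Ann_B(J/I)$ and $w'$ chosen so that $zw' \in \Ann_A(J)$, and checking $\pi_A(y) = \pi_A(z)\pi_B(\bar z)$ up to a unit by comparing the two self-dual exact sequences; the injectivity of $\pi_A$ on annihilators established above guarantees there is no ambiguity, so equality of ideals follows from equality of one pair of generators up to units.
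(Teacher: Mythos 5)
There is a genuine gap: your argument secretly assumes that $B$ is Gorenstein, which is not a hypothesis of the lemma. The pivot of your sketch is that $\Hom_A(B,A)$ is a cyclic (rank one free) $B$-module, so that a chosen generator lets you transport $\Ann_B(\ker(\pi_B)) = \Hom_B(\mc{O},B)$ into $\Hom_A(\mc{O},A)$ and multiply generators. But duality for the finite map $\phi$ only gives $\Hom_A(B,A) \cong \omega_B$, the canonical module of $B$, and $\omega_B$ is cyclic \emph{iff} $B$ is Gorenstein --- that equivalence is exactly Lemma \ref{lem:gor}(2), so your claim that cyclicity ``holds since $A$ is Gorenstein and $B$ is Cohen--Macaulay of the same dimension'' is circular/false for general Cohen--Macaulay $B$. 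Your fallback (``$B$ is Gorenstein here, being a localized completed Hecke algebra with non-Eisenstein maximal ideal'') imports an assumption foreign to the statement: the lemma is pure commutative algebra with no Hecke algebra in its hypotheses, and Remark \ref{rem:cm} makes clear that the Cohen--Macaulay (non-Gorenstein) case is precisely the generality one wants. Even granting $B$ Gorenstein, the final step (``write a generator $y\in\Ann_A(J)$ as $y=zw'$ \dots comparing the two self-dual exact sequences'') is not an argument yet; it restates the multiplicativity to be proved without exhibiting the factorization.

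The paper's proof avoids any identification $B^*\cong B$. It applies $\Hom_A(\mc{O},-)$ to $0\to\ker(\phi)\to A\to B\to 0$; since $A$ is Gorenstein and $\mc{O}$, $B$, $\ker(\phi)$ all have depth one, the relevant $\Ext^1_A(-,A)$ vanish and one identifies $\Hom_A(\mc{O},A)\cong\pi_A(\Ann_A(\ker(\pi_A)))$ and $\Hom_A(\mc{O},B)\cong\pi_B(\Ann_B(\ker(\pi_B)))$ as ideals of $\mc{O}$ (your injectivity observations are used here and are fine). This reduces the lemma to the single length identity $\ell(\Ext^1_A(\mc{O},\ker(\phi)))=\ell(\mc{O}/\pi_A(\Ann_A(\ker(\phi))))$, which is then proved by dualizing the same sequence to $0\to B^*\to A\to \ker(\phi)^*\to 0$ and applying $\Hom_A(-,\mc{O}^*)$, computing the cokernel of $A\to\Hom_A(\Ann_A(\ker(\phi)),\mc{O}^*)$. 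Only $B^*=\Ann_A(\ker(\phi))$ as a submodule of $A$ is used, never a $B$-module generator of $B^*$; that is what makes the Cohen--Macaulay hypothesis on $B$ suffice. If you want to salvage your route, you would have to replace ``generator of $\omega_B$'' by a length/cokernel computation of this kind, at which point you are essentially reproducing the paper's argument.
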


\begin{proof}
  If $\phi$ is an isomorphism the formula is a tautology, so may
  assume this is not the case. Then $\ker(\phi)$, being a nonzero
  submodule of $A$, has depth one. Of course $B$ and $\mc{O}$ also
  have depth one. Applying the functor $\Hom_A(\mc{O}, -)$ to the
  exact sequence of $A$-modules
\begin{equation} \label{eq:ker}
0 \to \ker(\phi) \to A \to B \to 0
\end{equation}
and using Lemma \ref{lem:gor}, we get an exact sequence
\begin{equation*} \label{eq:coker}
0 \to \Hom_A(\mc{O},\ker(\phi)) \to \Hom_A(\mc{O}, A) \to
\Hom_A(\mc{O}, B) \to \Ext^1_A(\mc{O}, \ker(\phi)) \to 0 .
\end{equation*}
Now
$\Hom_A(\mc{O},A) = \Ann_A(\ker(\pi_A)) \cong
\pi_A(\Ann(\ker(\pi_A)))$ where the second isomorphism follows from
the fact that $A$ has depth one and the kernel of the map
$\Ann_A(\ker(\pi_A)) \to \pi_A(\Ann_A(\ker(\pi_A)))$ is supported on
the closed point of $\spec(A)$ by $\ell(\Phi_A) < \infty$. The same
statements hold with $A$ replaced by $B$ since $B$ also has depth one
and $\ell(\Phi_B) \leq \ell(\Phi_A)$. It follows that to prove the
lemma we must show that
\begin{equation} \label{eq:mainformula}%
\ell(\Ext_A^1(\mc{O}, \ker(\phi))) =
\ell(\mc{O}/\pi_A(\Ann_A(\ker(\phi)))) .
\end{equation}

Dualising the sequence \eqref{eq:ker} (and using Lemma \ref{lem:gor})
we get an exact sequence
\begin{equation*} \label{eq:dker}%
  0 \to B^* \to A^* \to \ker(\phi)^* \to 0 .
\end{equation*}
Of course $A^* = A$ and $B^* = \Ann_A(\ker(\phi))$. We now apply the
functor $\Hom_A(-, \mc{O}^*)$ to this sequence to get an exact sequence
\begin{equation*} \label{eq:dual}%
  0 \to \Hom_A(\ker(\phi)^*, \mc{O}^*) \to \Hom_A(A, \mc{O}^*) \stackrel{p}{\to}
  \Hom_A(\Ann_A(\ker(\phi)), \mc{O}^*) \to \Ext_A^1(\ker(\phi)^*, \mc{O}^*) \to 0
  .
\end{equation*}
Since
$\mc{O}^* = \Hom_A(\mc{O},A) = \Ann_A(\ker(\pi_A)) \cong
\pi_A(\Ann_A(\ker(\pi_A))) \subset \mc{O}$ the map $p$ is the same as
the natural map
$\pi_A(\Ann(\ker(\pi_A))) \to \Hom_{\mc{O}}(\Ann_A(\ker(\phi))
\otimes_A \mc{O}, \pi_A(\Ann(\ker(\pi_A))))$ induced by the inclusion
of $\ker(\phi)$ in $A$. Tensoring the exact sequence
\[
  0 \to \Ann_A(\ker(\phi)) \to A \to A/\Ann_A(\ker(\phi)) \to 0
\]
with $\mc{O}$ we see that the kernel of the natural surjective map
$\Ann_A(\ker(\phi)) \otimes_A \mc{O} \to \pi_A(\Ann_A(\ker(\phi)))$ is
torsion: this follows from the condition $\ell(\Phi_A) < \infty$ which
implies that $\ell(\mr{Tor}_1^A(A/\Ann(\ker(\phi)), \mc{O}))$ is
finite. Thus,
\[ \Hom_{\mc{O}}(\Ann_A(\ker(\phi)) \otimes_A \mc{O},
  \pi_A(\Ann(\ker(\pi_A)))) = \Hom_{\mc{O}}(\pi_A(\Ann_A(\ker(\phi))),
  \pi_A(\Ann(\ker(\pi_A)))) ,
\]
so we deduce that
$\ell(\mr{Coker}(p)) = \ell(\mc{O}/\pi_A(\Ann_A(\ker(\phi))))$. By
duality, i.e, Lemma \ref{lem:gor}, it follows that
\eqref{eq:mainformula} holds, so the lemma is proved.
\end{proof}

\begin{rem} \label{rem:cm}
  Although we only use Lemma \ref{gor} when $B$ is Gorenstein, the
  case that $A$ is a complete intersection (so Gorenstein) and $B$
  is Cohen--Macaulay plays a crucial role in other applications \cite{bkm1}.
\end{rem}

\bibliographystyle{alpha}
\bibliography{useful}

\end{document}